\numberwithin{equation}{section}
\theoremstyle{plain}
\newtheorem{theorem}{Theorem}
\newtheorem{lemma}{Lemma}
\newtheorem{corollary}{Corollary}
\theoremstyle{definition}
\newtheorem{proof}{Proof}
\newtheorem{example}{Example}
\numberwithin{equation}{section} \allowdisplaybreaks
\newcounter{aa}
\let\mathcal=\mathscr
\begin{document}

\author[Anton\,S.~Galaev]{A\,S.~Galaev}

\udk{}

\title{Holonomy groups of Lorentzian manifolds}

\maketitle \markright{Holonomy groups of Lorentzian manifolds}

\begin{fulltext}

\begin{abstract}
In this paper, a survey of the recent results about the
classification of the connected holonomy groups of the Lorentzian
manifolds is given. A simplification of the construction of the
Lorentzian metrics with all possible connected holonomy groups is
obtained. As the applications, the Einstein equation, Lorentzian
manifolds with parallel and recurrent spinor fields, conformally
flat Walker metrics and the classification of 2-symmetric
Lorentzian manifolds are considered.

Bibliography: 123 titles.
\end{abstract}

\begin{keywords}
Lorentzian manifold, holonomy group, holonomy algebra, Walker
manifold, Einstein equation, recurrent spinor field, conformally
flat manifold, 2-symmetric Lorentzian manifold.
\end{keywords}

\setcounter{tocdepth}{2} \tableofcontents


\section{Introduction}
\label{sec1}

The notion of the holonomy group  was introduced for the first
time in the works of \'E.~Cartan~\cite{42} and~\cite{44},
in~\cite{43} he used the holonomy groups in order to obtain the
classification of the Riemannian symmetric spaces. The holonomy
group of a pseudo-Riemannian  manifold is the Lie subgroup of the
Lie group of pseudo-orthogonal transformations of the tangent
space at a point of the manifold and it consists of parallel
transports along piece-wise smooth loops at this point. Usually
one considers the connected holonomy group, i.e.,  the connected
component of the identity of the holonomy group, for its
definition it is necessary to consider parallel transports along
contractible loops. The Lie algebra corresponding to the holonomy
group is called the holonomy algebra. The holonomy group of a
pseudo-Riemannian manifold is an invariant of the corresponding
Levi-Civita connection; it gives information about the curvature
tensor and about parallel sections of the vector bundles
associated to the manifold,  such as the tensor bundle or the
spinor bundle.

An important result is the Berger classification of the connected
irreducible holonomy groups of Riemannian manifolds~\cite{23}. It
turns out that the connected holonomy group of an~$n$-dimensional
indecomposable not locally symmetric Riemannian manifold is
contained in the following list: $\operatorname{SO}(n)$;
$\operatorname{U}(m)$, $\operatorname{SU}(m)$ ($n=2m$);
$\operatorname{Sp}(m)$, $\operatorname{Sp}(m)\cdot
\operatorname{Sp}(1)$ ($n=4m$); $\operatorname{Spin}(7)$ ($n=8$);
$G_2$~($n=7$). Berger obtained merely a list of possible holonomy
groups, and the problem to show that there exists a manifold with
each of these holonomy groups arose. In particular, this resulted
to the famous Calabi-Yau Theorem~\cite{123}. Only in 1987
Bryant~\cite{38} constructed examples of Riemannian manifolds with
the holonomy groups~$\operatorname{Spin}(7)$ and~$G_2$. Thus the
solution of this problem required more then thirty years. The
de~Rham decomposition Theorem~\cite{48} reduces the classification
problem for the connected  holonomy groups of Riemannian manifolds
to the case of the irreducible  holonomy groups.

Indecomposable Riemannian manifolds with special (i.е., different
from $\operatorname{SO}(n)$)  holonomy groups have important
geometric properties. Manifolds with the most of these holonomy
groups are Einstein or Ricci-flat and admit parallel spinor
fields. These properties ensured that  the Riemannian manifolds
with special holonomy groups found applications in theoretical
physics (in string theory, supersymmetry theory and
M-theory)~\cite{25},~\cite{45},~\cite{79},
\cite{88},~\cite{89},~\cite{103}. In this connection during the
last 20 years appeared a great number of works, where
constructions of complete and compact  Riemannian manifolds with
special holonomy groups are described, let us cite only some of
these works:~\cite{18},~\cite{20},~\cite{47},~\cite{51},
\cite{88},~\cite{89}. It is important to note that in the string
theory and M-theory it is assumed that our space is locally a
product
\begin{equation}
\label{eq1.1} \mathbb{R}^{1,3}\times M
\end{equation}
of the Minkowski apace~$\mathbb{R}^{1,3}$ and of some compact
Riemannian manifold~$M$ of dimension~$6$,~$7$ or~$8$ and with the
holonomy group $\operatorname{SU}(3)$, $G_2$ or
$\operatorname{Spin}(7)$, respectively. Parallel spinor fields
on~$M$ define supersymmetries.

It is natural to consider the classification problem of connected
holonomy groups of pseudo-Riemannian manifolds, and first of all
of Lorentzian manifolds.

There is the Berger classification of connected irreducible
holonomy groups of pseudo-Riemannian manifolds~\cite{23}. However,
in the case of  pseudo-Riemannian manifolds it is not enough to
consider only irreducible holonomy groups. The Wu decomposition
Theorem~\cite{122} allows to restrict the consideration to the
connected weakly irreducible holonomy groups. A weakly irreducible
 holonomy group does not preserve any nondegenerate proper vector subspace of
 the tangent space. Such holonomy group may preserve degenerate subspace of the tangent space.
 In this case the holonomy group is not reductive.   Therein lies the main problem.

A long time there were solely results about the holonomy groups of
four-dimensional Lorentzian
manifolds~\cite{10},~\cite{81},~\cite{87},
~\cite{91},~\cite{92},~\cite{102},~\cite{109}. In these works the
classification of the connected  holonomy groups is obtained, the
relation with the Einstein equation, the Petrov classification of
the gravitational fields~\cite{108} and with other problems of
General relativity is considered.

In~1993, B\'erard-Bergery and Ikemakhen made the first step
towards the classification of the connected holonomy groups for
Lorentzian manifolds of arbitrary dimension~\cite{21}. We describe
all subsequent steps of the classification and its consequences.

In Section~\ref{sec2} of the present paper, definitions and some
known results about the holonomy groups of Riemannian and
pseudo-Riemannian manifolds are set out.

In Section~\ref{sec3} we start to study the holonomy algebras
$\mathfrak{g}\subset\mathfrak{so}(1,n+1)$ of Lorentzian
manifolds~$(M,g)$ of dimension~$n+2\geqslant 4$. The Wu Theorem
allows to assume that the holonomy algebra is weakly irreducible.
If $\mathfrak{g}\ne\mathfrak{so}(1,n+1)$, then~$\mathfrak{g}$
preserves an isotropic line of the tangent space and it is
contained in the maximal
subalgebra~$\mathfrak{sim}(n)\subset\mathfrak{so}(1,n+1)$
preserving this line. First of all we give a geometric
interpretation~\cite{57} of the classification by B\'erard-Bergery
and Ikemakhen~\cite{21} of weakly irreducible subalgebras
in~$\mathfrak{g}\subset\mathfrak{sim}(n)$. It turns out that these
algebras are exhausted by the Lie algebras of transitive groups of
similarity transformations of the Euclidean space~$\mathbb{R}^n$.

Next we study the question, which of the obtained subalgebras
$\mathfrak{g}\subset\mathfrak{sim}(n)$ are the holonomy algebras
of Lorentzian manifolds. First of all, it is necessary to classify
the Berger subalgebras  $\mathfrak{g}\subset\mathfrak{sim}(n)$,
these algebras are spanned by the images of the elements of the
space~$\mathscr{R}(\mathfrak{g})$ of the algebraic curvature tensors
(tensors, satisfying the first Bianchi identity) and they are
candidates to the holonomy algebras. In~Section~\ref{sec4} we
describe the structure of the spaces of curvature
tensors~$\mathscr{R}(\mathfrak{g})$ for the  subalgebras
$\mathfrak{g}\subset\mathfrak{sim}(n)$~\cite{55} and reduce the
classification problem for the Berger algebras to the
classification problem for the weak Berger algebras
$\mathfrak{h}\subset\mathfrak{so}(n)$, these algebras are spanned
by the images of the elements of the
space~$\mathscr{P}(\mathfrak{h})$, consisting of the linear maps
from~$\mathbb{R}^n$ to~$\mathfrak{h}$ and satisfying some
identity. Next we find the  curvature tensor of the Walker
manifolds, i.e., manifolds with the holonomy algebras
$\mathfrak{g}\subset\mathfrak{sim}(n)$.

In~Section~\ref{sec5} the results of computations of the
spaces~$\mathscr{P}(\mathfrak{h})$ from~\cite{59} are given. This
gives the complete structure of the spaces of  curvature tensors
for the holonomy algebras $\mathfrak{g}\subset\mathfrak{sim}(n)$.
The space~$\mathscr{P}(\mathfrak{h})$ appeared as the space of
values of a component of the curvature tensor of a Lorentzian
manifold. Later it turned out that to this space belongs also a
component of the  curvature tensor of a Riemannian
supermanifold~\cite{63}.

Leistner~\cite{100} classified weak Berger algebras, showing in a
far non-trivial way that they are exhausted by the  holonomy
algebras of  Riemannian spaces. The natural problem  to get a
direct simple proof of this fact arises. In~Section~\ref{sec6} we
give such a proof from~\cite{68} for the case of semisimple not
simple irreducible Lie
algebras~$\mathfrak{h}\subset\mathfrak{so}(n)$. The Leistner
Theorem implies the classification of the Berger
subalgebras~$\mathfrak{g}\subset\mathfrak{sim}(n)$.

In~Section~\ref{sec7} we prove that all  Berger algebras may be
realized as the holonomy algebras of Lorentzian manifolds, we
greatly simplify the constructions of the metrics from~\cite{56}.
By this we complete the classification of the holonomy algebras of
Lorentzian manifolds.

The problem to construct examples of Lorentzian manifolds with
various  holonomy groups and additional global geometric
properties springs up. In~\cite{17},~\cite{19} constructions of
globally hyperbolic Lorentzian manifolds  with some classes of the
holonomy groups are given. The global hyperbolicity is a strong
casuality condition in Lorentzian geometry that generalizes the
general notion of  completeness in Riemannian geometry.
In~\cite{95} some constructions using the Kaluza-Klein idea are
suggested. In the papers~\cite{16},~\cite{97},~\cite{101} various
global geometric properties of  Lorentzian manifolds with
different holonomy groups are studied. The holonomy groups are
discussed in the recent survey on global  Lorentzian
geometry~\cite{105}. In~\cite{16} Lorentzian manifolds with
disconnected holonomy groups are considered, some examples are
given. In~\cite{70},~\cite{71} we give algorithms allowing to
compute the holonomy algebra of an arbitrary Lorentzian manifold.

Next we consider some applications of the obtained classification.

In~Section~\ref{sec8} we study the relation of the  holonomy
algebras and the Einstein equation. The subject is motivated by
the paper by the theoretical physicists Gibbons and
Pope~\cite{76}, in which the problem of finding the the Einstein
metrics with the holonomy algebras in~$\mathfrak{sim}(n)$ was
proposed, examples were considered and their physical
interpretation was given. We find the  holonomy algebras of the
Einstein Lorentzian manifolds~\cite{60},~\cite{61}. Next we show
that on each Walker manifold there exist special coordinates
allowing to simplify appreciably the Einstein equation~\cite{74}.
Examples of Einstein metrics from~\cite{60},~\cite{62} are given.

In~Section~\ref{sec9} results about Riemannian and Lorentzian
manifolds admitting recurrent spinor fields~\cite{67} are
presented. Recurrent spinor fields generalize parallel spinor
fields. Simply connected Riemannian manifolds with parallel spinor
fields were classified in~\cite{121} in terms of their holonomy
groups. Similar problem for  Lorentzian manifolds was considered
in~\cite{40},~\cite{52}, and it was solved
in~\cite{98},~\cite{99}. The relation of the  holonomy groups of
Lorentzian manifolds with the solutions of some other spinor
equations is discussed in~\cite{12},~\cite{13},~\cite{17} and in
physical literature that is cited below.

In~Section~\ref{sec10} the local classification of  conformally
flat Lorentzian manifolds with special holonomy groups~\cite{66}
is obtained.  The
corresponding local metrics  are certain extensions of Riemannian
spaces of constant sectional curvature to Walker metrics. It is
noted that earlier there was a problem to find examples of such
metrics in dimension~4~\cite{75},~\cite{81}.

In~Section~\ref{sec11} we obtain the classification of 2-symmetric
Lorentzian manifolds, i.e., manifold satisfying the
condition~$\nabla^2 R=0$, $\nabla R\ne\nobreak0$. We discuss and
simplify the proof of this result from~\cite{5}, demonstrating the
applications of the holonomy groups theory. The classification
problem for 2-symmetric manifolds was studied also
in~\cite{28},~\cite{29},~\cite{90},~\cite{112}.

Lorentzian manifolds with weakly irreducible not irreducible
holonomy groups admit parallel distributions of isotropic lines;
such manifolds are also called the Walker
manifolds~\cite{37},~\cite{120}. These manifolds are studied  in
geometric and physical literature.
In~works~\cite{35},~\cite{36},~\cite{77} the hope is expressed
that the Lorentzian manifolds with special  holonomy groups will
find applications in theoretical physics, e.g., in M-theory and
string theory. It is suggested to replace the
manifold~\eqref{eq1.1} by an indecomposable  Lorentzian manifold
with an appropriate
 holonomy group. Recently in connection with the 11-dimensional supergravity theory  appeared physical
 works, where  11-dimensional Lorentzian manifolds admitting spinor fields satisfying some
 equation are studied. At that the holonomy groups are used~\cite{11},~\cite{53},~\cite{113}.
Let us mention also the works~\cite{45},~\cite{46},~\cite{78}. All
that shows the importance of the study of the  holonomy groups of
Lorentzian manifolds and the related geometric structures.

I the case of  pseudo-Riemannian manifold of signatures different
from the Riemannian and Lorentzian ones the classification of the
holonomy groups is absent. There are some partial results
only~\cite{22},~\cite{26},~\cite{27},
~\cite{30},~\cite{58},~\cite{65},~\cite{69},~\cite{73},~\cite{85}.

Finally let us mention some other results about holonomy groups.
The consideration of the cone over a Riemannian manifold allows to
obtain  Riemannian metrics with special holonomy groups and
interpret the Killing spinor fields as the parallel spinor fields
on the cone~\cite{34}. To that in the paper~\cite{4} the holonomy
groups of the cones over pseudo-Riemannian manifolds, and in
particular over Lorentzian manifolds, are studied. There are
results about irreducible holonomy groups of linear torsion-free
connections~\cite{9},~\cite{39}, \cite{104},~\cite{111}. The
holonomy groups are defined also for manifolds with conformal
metrics,in particular, these groups allow to decide if there are
Einstein metrics in the conformal class~\cite{14}. The notion of
the  holonomy group is used also for connections on
supermanifolds~\cite{1},~\cite{63}.

The author is thankful to D.\,V~Alekseevsky for useful discussions
and suggestions.

\section{Holonomy groups and algebras: definitions and facts}
\label{sec2}

In this section we recall some definitions and known facts about
holonomy groups of pseudo-Riemannian
manifolds~\cite{25},~\cite{88}, ~\cite{89},~\cite{94}. All
manifolds are assumed to be connected.

\subsection[{Holonomy groups of connections in vector bundles}]{Holonomy groups of connections in vector bundles}
\label{ssec2.1} Let $M$ be a smooth  manifold and~$E$ be a vector
bundle over~$M$ with a connection~$\nabla$. The connection defines
the parallel transport: for any piece-wise smooth curve
$\gamma\colon[a,b]\subset\mathbb{R}\to M$ an isomorphism
$$
\tau_{\gamma}\colon E_{\gamma(a)}\to E_{\gamma(b)}
$$ of the vector spaces is defined.
Let us fix a point $x\in M$. The holonomy group~$G_x$ of the
connection~$\nabla$ at the point~$x$ is the group consisting of
parallel transports along all piecewise smooth loops at the
point~$x$. If we consider only null-homotopic loops, we get  the
restricted holonomy group $G^0_x$.  If the manifold~$M$ is simply
connected, then $G^0_x=G_x$. It is known that the group $G_x$ is a
Lie subgroup of the Lie group~$\operatorname{GL}(E_x)$ and the
group~$G^0_x$ is the connected identity component of the Lie
group~$G_x$. Let~$\mathfrak{g}_x\subset\mathfrak{gl}(E_x)$ be the
corresponding Lie algebra; this algebra is called the holonomy
algebra of the connection~$\nabla$ at the point~$x$. The holonomy
groups  at different points of a  connected manifold are
isomorphic, and one can speak about the holonomy group
$G\subset\operatorname{GL}(m,\mathbb{R})$, or about the holonomy
algebra $\mathfrak{g}\subset\mathfrak{gl}(m,\mathbb{R})$ of the
connection~$\nabla$ (here $m$ is the rank of the vector
bundle~$E$). In the case of a simply connected manifold, the
holonomy algebra determines the holonomy group uniquely.

Recall that a section $X\in\Gamma(E)$ is called parallel if
$\nabla X=0$. This is equivalent to the condition that  for any
piece-wise smooth curve $\gamma:[a,b]\to M$ holds $\tau_\gamma
X_{\gamma(a)}=X_{\gamma(b)}$. Similarly, a subbundle $F\subset E$
is called parallel  if for any  section~$X$ of the subbundle~$F$
and for any vector field~$Y$ on~$M$, the section~$\nabla_YX$ again
belongs to~$F$. This is equivalent to the property, that for any
piece-wise smooth curve  $\gamma\colon[a,b]\to M$ it holds
$\tau_{\gamma}F_{\gamma(a)}=F_{\gamma(b)}$.

The importance of holonomy groups shows the following fundamental
principle.

\begin{theorem}
\label{th1} There exists a one-to-one correspondence between
parallel sections $X$ of the bundle $E$ and vectors $X_x\in E_x$
invariant with respect to~$G_x$.
\end{theorem}

Let us describe this correspondence. Having a parallel section~$X$
it is enough to take the value $X_x$ at the point~$x\in M$.
Since~$X$ is invariant under the parallel transports, the
vector~$X_x$ is invariant under   the holonomy group. Conversely,
for a given vector~$X_x$  define the section~$X$. For any point
$y\in M$ put $X_y=\tau_{\gamma}X_x$,
 where $\gamma$ is any curve beginning at  $x$ and ending at the point $y$. The value $X_y$ does not depend
 on the choice of the curve~$\gamma$.

A similar result holds for subbundles.

\begin{theorem}
\label{th2} There exists a one-to-one correspondence between
parallel subbundles $F\subset E$ and vector subspaces ${F_x\subset
E_x}$ invariant with respect to~$G_x$.
\end{theorem}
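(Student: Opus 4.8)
The plan is to follow exactly the scheme used for Theorem~\ref{th1}, replacing invariant vectors by invariant subspaces and parallel sections by parallel subbundles. First I would define the two maps that are to be shown mutually inverse. In one direction, to a parallel subbundle $F\subset E$ I assign the fibre $F_x\subset E_x$. Since $F$ is parallel, we have $\tau_\gamma F_{\gamma(a)}=F_{\gamma(b)}$ for every piece-wise smooth curve; applying this to loops $\gamma$ based at $x$ gives $\tau_\gamma F_x=F_x$, so $F_x$ is invariant under the holonomy group $G_x$. This direction is immediate from the definition of a parallel subbundle quoted above.

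In the other direction, to a $G_x$-invariant subspace $F_x\subset E_x$ I assign a subbundle $F$ by setting $F_y=\tau_\gamma F_x$, where $\gamma$ is any piece-wise smooth curve from $x$ to $y$. The first thing to check is that this is well defined, i.e.\ independent of $\gamma$. If $\gamma_1,\gamma_2$ both run from $x$ to $y$, then $\gamma_2^{-1}\gamma_1$ is a loop at $x$, so $\tau_{\gamma_2^{-1}\gamma_1}F_x=F_x$ by $G_x$-invariance, which rearranges via the group property $\tau_{\gamma_2^{-1}\gamma_1}=\tau_{\gamma_2}^{-1}\tau_{\gamma_1}$ to $\tau_{\gamma_1}F_x=\tau_{\gamma_2}F_x$. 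Thus $F_y$ depends only on $y$.

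I would then verify that $F$ is a smooth subbundle and that it is parallel. For smoothness it suffices to exhibit, near any point, a smooth frame of $F$: choosing a basis of $F_x$ and parallel-transporting it along a smoothly varying family of curves produces smooth local sections spanning $F$, since parallel transport of a smooth section along smoothly varying curves is smooth. For parallelism, given a curve $\delta$ from $y$ to $z$ and a curve $\gamma$ from $x$ to $y$, the concatenation $\delta\gamma$ runs from $x$ to $z$, whence $\tau_\delta F_y=\tau_\delta\tau_\gamma F_x=\tau_{\delta\gamma}F_x=F_z$; this is precisely the condition $\tau_\delta F_{\delta(a)}=F_{\delta(b)}$ defining a parallel subbundle.

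Finally I would confirm that the two assignments are mutually inverse. Starting from a $G_x$-invariant $F_x$ and reconstructing, the fibre at $x$ is recovered by taking $\gamma$ constant, giving back $F_x$; conversely, starting from a parallel subbundle $F$, its reconstruction has fibre $\tau_\gamma F_x=F_{\gamma(b)}$ by parallelism, so it agrees with $F$ at every point. The only genuinely non-formal step is the smoothness of the reconstructed subbundle, and even that reduces to the standard smooth dependence of parallel transport on the curve; the remaining verifications are direct consequences of the group property $\tau_{\delta\gamma}=\tau_\delta\tau_\gamma$ and the defining invariance of $F_x$.
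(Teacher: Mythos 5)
Your proposal is correct and follows exactly the scheme the paper itself indicates: the paper proves Theorem~\ref{th1} by the assignment $X\mapsto X_x$ and the reconstruction $X_y=\tau_\gamma X_x$, and then states that "a similar result holds for subbundles," which is precisely the argument you carry out (with the well-definedness, parallelism, and smoothness checks made explicit). No discrepancy with the paper's approach.
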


The next theorem proven by  Ambrose and Singer~\cite{8} shows the
relation of the holonomy algebra and the curvature tensor~$R$ of
the connection~$\nabla$.

\begin{theorem}
\label{th3} Let $x\in M$. The Lie algebra~$\mathfrak{g}_x$ is
spanned by the operators of the following form:
$$
\tau_{\gamma}^{-1}\circ
R_y(X,Y)\circ\tau_{\gamma}\in\mathfrak{gl}(E_x),
$$
where $\gamma$
 is an arbitrary piece-wise smooth
 curve beginning at the point~$x$ and anding at a point~$y\in M$, and $Y,Z\in T_yM$.
\end{theorem}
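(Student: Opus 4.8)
The plan is to compare the holonomy algebra $\mathfrak g_x$ with the subspace $\mathfrak h_x\subset\mathfrak{gl}(E_x)$ spanned by all operators $\tau_\gamma^{-1}\circ R_y(X,Y)\circ\tau_\gamma$, and to establish the two inclusions $\mathfrak h_x\subseteq\mathfrak g_x$ and $\mathfrak g_x\subseteq\mathfrak h_x$ separately.

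First I would prove the elementary inclusion $\mathfrak h_x\subseteq\mathfrak g_x$. For $y=x$ the operator $R_x(X,Y)$ is recovered as the leading term in the expansion of the parallel transport around a small loop spanned by the flows of two vector fields extending $X$ and $Y$: a loop enclosing parameter area $s$ contributes $\mathrm{id}-s\,R_x(X,Y)+o(s)$, so $R_x(X,Y)$ is tangent at the identity to a curve in $G_x$ and hence lies in $\mathfrak g_x$. For a general endpoint $y$ one uses that a loop at $y$ is conjugate, via $\gamma$, to a loop at $x$, so that $\tau_\gamma\,\mathfrak g_x\,\tau_\gamma^{-1}=\mathfrak g_y\ni R_y(X,Y)$; transporting back gives $\tau_\gamma^{-1}\circ R_y(X,Y)\circ\tau_\gamma\in\mathfrak g_x$, and taking the span yields $\mathfrak h_x\subseteq\mathfrak g_x$.

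For the reverse inclusion I would pass to the principal bundle $P$ of frames of $E$ with connection form $\omega$ and curvature form $\Omega$, and fix a frame $u\in P$ over $x$. By the reduction theorem, the holonomy subbundle $P(u)$ of all frames reachable from $u$ along horizontal curves is a principal subbundle whose structure group is $G_x$; in particular its vertical tangent space at $u$ is exactly $\mathfrak g_x$, and it remains to identify this vertical space with $\mathfrak h_x$. Consider the distribution $\Delta$ on $P$ assigning to each point the horizontal subspace together with the vertical directions given by the values of $\Omega$. The structure equation shows that for horizontal lifts $\tilde X,\tilde Y$ the vertical component of $[\tilde X,\tilde Y]$ equals the fundamental vector field of $-\Omega(\tilde X,\tilde Y)$, so $\Delta$ is the smallest distribution containing the horizontal subspaces and closed under brackets.

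The main obstacle is to show that $\Delta$ is involutive, so that by the Frobenius theorem it is integrable and $P(u)$ is its maximal integral manifold through $u$. Involutivity rests on the second Bianchi identity together with the $\mathrm{Ad}$-invariance of the span of curvature values along $P(u)$, which together guarantee that the vertical part of $\Delta$ is a genuine Lie subalgebra stable along the fibre. Granting this, the vertical tangent space of $P(u)$ at $u$ is the span of all values $\Omega_v(\tilde X,\tilde Y)$ taken over the points $v\in P(u)$, that is over all endpoints $y$ reachable by horizontal, hence parallel, transport. Reading these values back through the frame identification reproduces precisely the operators $\tau_\gamma^{-1}\circ R_y(X,Y)\circ\tau_\gamma$, whence $\mathfrak g_x\subseteq\mathfrak h_x$. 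Combining the two inclusions gives $\mathfrak g_x=\mathfrak h_x$, which is the assertion.
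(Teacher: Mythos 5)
The paper does not actually prove this statement: it is the classical Ambrose--Singer theorem, quoted with a reference to~\cite{8} (the standard textbook treatment being in~\cite{94}), so there is no in-paper argument to compare yours with. Your architecture --- the easy inclusion via the infinitesimal holonomy of small loops plus conjugation by $\tau_\gamma$, then the reverse inclusion via the holonomy subbundle $P(u)$ and an integrable distribution on the frame bundle --- is exactly the standard route, and the first half is fine.

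The gap is in the step you yourself flag as ``the main obstacle''. First, the distribution $\Delta$ cannot be defined pointwise as ``horizontal plus the vertical directions given by the values of $\Omega$'' at that point: that prescription need not have constant rank and need not be smooth, so the Frobenius theorem does not apply to it. The correct object is $\Delta_w=H_w\oplus\{A^*_w\mid A\in\mathfrak{g}'\}$, where $\mathfrak{g}'$ is the \emph{fixed} subspace spanned by all curvature values $\Omega_w(\tilde X,\tilde Y)$ as $w$ ranges over the whole of $P(u)$ (after using the reduction theorem to assume the structure group is $G_x$). Second, involutivity of this $\Delta$ does not rest on the second Bianchi identity; that identity plays no role anywhere in the Ambrose--Singer theorem, and invoking it indicates the verification was not carried out. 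What one actually checks is: (i) $[A^*,B^*]=[A,B]^*$ is tangent to $\Delta$ because $\mathfrak{g}'$ is $\operatorname{Ad}(G_x)$-invariant --- this follows from the equivariance $\Omega_{wa}=\operatorname{Ad}(a^{-1})\circ\Omega_w$ together with the stability of $P(u)$ under the right action of $G_x$ --- hence an ideal; (ii) $[A^*,\tilde X]$ is horizontal because the horizontal distribution is right-invariant; (iii) the vertical part of $[\tilde X,\tilde Y]$ is the fundamental vector field of $-\Omega(\tilde X,\tilde Y)\in\mathfrak{g}'$, by the structure equation, as you note. With these three cases supplied, the remainder of your argument (every point of $P(u)$ is joined to $u$ by a horizontal, hence $\Delta$-tangent, curve, so $P(u)$ lies in the maximal integral leaf, whence $\dim\mathfrak{g}_x\leqslant\dim\mathfrak{g}'$, and equality follows from the easy inclusion) closes correctly.
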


\subsection{Holonomy groups of pseudo-Riemannian manifolds}
\label{ssec2.2} Let us consider pseudo-Riemannian manifolds.
Recall that a pseudo-Riemannian manifold of signature $(r,s)$ is a
smooth manifold~ $M$ equipped with a smooth field~$g$ of symmetric
non-degenerate bilinear forms of signature $(r,s)$ ($r$ is the
number of minuses) at each point.  If $r=0$, then such manifold is
called a Riemannian manifold. If $r=1$,  then~$(M,g)$ is a
Lorentzian manifold. In this case  for the contentious we assume
that $s=n+1$, $n\geqslant 0$.

On the tangent bundle~$TM$ of a  pseudo-Riemannian manifold~$M$
one canonically gets the  Levi-Civita connection~$\nabla$ defined
by the following two conditions: the field of forms~$g$ is
parallel $(\nabla g=0)$ and the torsion is zero
$(\operatorname{Tor}=0)$. Denote by~$\mathrm{O}(T_xM,g_x)$ the
group of linear transformation of the space~$T_xM$ preserving the
form~$g_x$. Since the metric~$g$ is parallel, $G_x\subset
\mathrm{O}(T_xM,g_x)$. The tangent space~$(T_xM,g_x)$ can be
identified with the pseudo-Euclidean space~$\mathbb{R}^{r,s}$, the
metric of this space we denote  by the symbol~$g$. Then we may
identify the  holonomy group~$G_x$ with a Lie subgroup
in~$\mathrm{O}(r,s)$, and the holonomy algebra~$\mathfrak{g}_x$
with a subalgebra in~$\mathfrak{so}(r,s)$.

The connection~$\nabla$ is in a natural way extendable to a
connection in the tensor bundle $\otimes^p_qTM$, the holonomy
group of this connection coincides with the natural representation
of the group~$G_x$ in the  tensor space~$\otimes^p_qT_xM$. The
following statement follows from Theorem~\ref{th1}.

\begin{theorem}
\label{th4} There exists a one-to-one correspondence between
parallel tensor fields~$A$ of type~$(p,q)$  and tensors
$A_x\in\otimes^p_qT_xM$  invariant with respect to~$G_x$.
\end{theorem}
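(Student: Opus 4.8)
The plan is to deduce the statement directly from Theorem~\ref{th1} by specializing the vector bundle $E$ to the tensor bundle $\otimes^p_q TM$. The Levi-Civita connection $\nabla$ extends canonically to this bundle, as recalled just above the theorem, and a parallel tensor field $A$ of type $(p,q)$ is by definition precisely a parallel section of $E=\otimes^p_q TM$, since the condition $\nabla A=0$ is the same in both formulations. Thus the left-hand sides of the two correspondences coincide, and it remains only to match the right-hand sides.

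For this I would invoke the fact, stated in the paragraph preceding the theorem, that the holonomy group of the induced connection on $\otimes^p_q TM$ coincides with the natural representation of $G_x$ on the tensor space $\otimes^p_q T_xM$. Consequently, a tensor $A_x\in\otimes^p_q T_xM$ is invariant under the holonomy group of $E$ at $x$ if and only if it is invariant under $G_x$ acting in the natural way. Applying Theorem~\ref{th1} to $E=\otimes^p_q TM$ then yields a one-to-one correspondence between parallel sections of $E$, which are exactly the parallel tensor fields of type $(p,q)$, and vectors of $E_x$ invariant under the holonomy group of $E$, which are exactly the tensors $A_x\in\otimes^p_q T_xM$ invariant under $G_x$. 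This is precisely the claim.

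The one point that genuinely requires care, and which I would regard as the crux, is the identification of the parallel transport in $\otimes^p_q TM$ with the natural tensorial extension of the parallel transport in $TM$, so that holonomy-invariance in the bundle sense reduces to $G_x$-invariance of the tensor in the algebraic sense. This compatibility is asserted in the preamble to the theorem and follows from the fact that $\nabla$ acts on tensors as a derivation commuting with contractions and preserving the tensor type; granting it, the remainder of the argument is a formal application of Theorem~\ref{th1} requiring no further computation.
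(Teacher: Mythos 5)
Your proposal is correct and follows exactly the route the paper takes: the paper derives Theorem~\ref{th4} from Theorem~\ref{th1} applied to the canonically extended connection on $\otimes^p_qTM$, using precisely the stated fact that the holonomy group of this connection is the natural representation of $G_x$ on $\otimes^p_qT_xM$. Your identification of the compatibility of parallel transport with the tensorial extension as the only point requiring care is also the implicit content of the paper's one-line justification.
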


Thus if we know the  holonomy group of a manifold, then the
geometric problem of finding the parallel  tensor fields on the
 manifold can be reduced to the more simple  algebraic problem of finding
the invariants of the  holonomy group. Let us consider several examples illustrating this
 principle.

Recall that a pseudo-Riemannian manifold~$(M,g)$ is called flat
if~$(M,g)$ admits local parallel fields of frames. We get
that~$(M,g)$ is flat if and only if  $G^0=\{\operatorname{id}\}$
(or $\mathfrak{g}=\{0\}$). Moreover, from the Ambrose-Singer
Theorem it follows that the last equality is equivalent to the
nullity of the  curvature tensor.

Next,  a pseudo-Riemannian manifold $(M,g)$ is called
\textit{pseudo-K\"ahlerian} if on~$M$ there exists a parallel
field of endomorphisms~$J$ with the properties
$J^2=-\operatorname{id}$ and $g(JX,Y)+g(X,JY)=0$ for all vector
fields~$X$ and~$Y$ on~$M$. It is obvious that a pseudo-Riemannian
manifold $(M,g)$ of signature  $(2r,2s)$ is pseudo-K\"ahlerian if
and only if $G\subset \operatorname{U}(r,s)$.

For an arbitrary subalgebra $\mathfrak{g}\subset
\mathfrak{so}(r,s)$ let
\begin{align*}
\mathscr{R}(\mathfrak{g})=\bigl\{R\in\operatorname{Hom}
(\wedge^2\mathbb{R}^{r,s},\mathfrak{g})\mid{}
&R(X,Y)Z+R(Y,Z)X+R(Z,X)Y=0
\\
&\text{for all} \ X,Y,Z\in \mathbb{R}^{r,s}\bigr\}.
\end{align*}
The space~$\mathscr{R}(\mathfrak{g})$ is called the space of
curvature tensors of type~$\mathfrak{g}$. We denote
by~$L(\mathscr{R}(\mathfrak{g}))$ the vector subspace
of~$\mathfrak{g}$ spanned by the elements of the form $R(X,Y)$ for
all $R\in\mathscr{R}(\mathfrak{g})$, $X,Y\in \mathbb{R}^{r,s}$.
From the Ambrose-Singer Theorem and the first Bianchi identity it
follows that if $\mathfrak{g}$ is the holonomy algebra of a
pseudo-Riemannian space $(M,g)$ at a point~$x\in M$, then $R_x\in
\mathscr{R}(\mathfrak{g})$, i.e., the knowledge of the  holonomy
algebra allows to get restrictions on the curvature tensor, this
will be used repeatedly below. Moreover, it holds
$L(\mathscr{R}(\mathfrak{g}))=\mathfrak{g}$. A subalgebra
$\mathfrak{g}\subset \mathfrak{so}(r,s)$ is called a Berger
algebra if the equality
$L(\mathscr{R}(\mathfrak{g}))=\mathfrak{g}$ is fulfilled. It is
natural to consider the
 Berger algebras  as the candidates to the holonomy algebras
of pseudo-Riemannian manifolds. Each element
$R\in\mathscr{R}(\mathfrak{so}(r,s))$ has the property
\begin{equation}
\label{eq2.1} (R(X,Y)Z,W)=(R(Z,W)X,Y),\qquad
X,Y,Z,W\in\mathbb{R}^{r,s}.
\end{equation}

Theorem~\ref{th3} does not give a good way to find the  holonomy
algebra. Sometimes it is possible to use the following theorem.

\begin{theorem}
\label{th5} If the pseudo-Riemannian manifold $(M,g)$ is analytic,
then the holonomy algebra~$\mathfrak{g}_x$ is generated by the
following operators:
$$
R(X,Y)_x,\nabla_{Z_1} R(X,Y)_x,\nabla_{Z_2}\nabla_{Z_1}
R(X,Y)_x,\ldots \in\mathfrak{so}(T_xM,g_x),
$$
where $X,Y,Z_1,Z_2,\ldots\in T_xM$.
\end{theorem}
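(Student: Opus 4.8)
The plan is to show that the Lie subalgebra $\mathfrak{h}_x\subset\mathfrak{so}(T_xM,g_x)$ generated by the operators $R(X,Y)_x,\nabla_{Z_1}R(X,Y)_x,\nabla_{Z_2}\nabla_{Z_1}R(X,Y)_x,\dots$ coincides with $\mathfrak{g}_x$. Since $\mathfrak{so}(T_xM,g_x)$ is finite-dimensional, $\mathfrak{h}_x$ is a closed linear subspace, a fact I will use repeatedly. I would prove the two inclusions separately: the inclusion $\mathfrak{h}_x\subseteq\mathfrak{g}_x$ is elementary and valid even without analyticity, while the reverse inclusion $\mathfrak{g}_x\subseteq\mathfrak{h}_x$ is where analyticity enters.

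For $\mathfrak{h}_x\subseteq\mathfrak{g}_x$ I would argue as follows. Fix a curve $\gamma$ with $\gamma(0)=x$ and $\dot\gamma(0)=Z_1$, let $\tau_t$ denote parallel transport along $\gamma|_{[0,t]}$, and let $X(t),Y(t)$ be the parallel vector fields along $\gamma$ with $X(0)=X$, $Y(0)=Y$. By Theorem~\ref{th3} the operator $\tau_t^{-1}\circ R_{\gamma(t)}(X(t),Y(t))\circ\tau_t$ lies in $\mathfrak{g}_x$ for every $t$. Differentiating at $t=0$ and using the definition of the covariant derivative via parallel transport yields $\nabla_{Z_1}R(X,Y)_x$, which therefore lies in the closed subspace $\mathfrak{g}_x$. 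Iterating this differentiation along suitable curves produces every higher operator $\nabla_{Z_k}\cdots\nabla_{Z_1}R(X,Y)_x$ inside $\mathfrak{g}_x$; since $\mathfrak{g}_x$ is a Lie algebra, the generated algebra $\mathfrak{h}_x$ is contained in it.

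For the reverse inclusion I would invoke Theorem~\ref{th3} again: $\mathfrak{g}_x$ is spanned by the operators $\tau_\gamma^{-1}\circ R_y(U,V)\circ\tau_\gamma$, so it suffices to place each of these in $\mathfrak{h}_x$. By analyticity of $(M,g)$ I may take $\gamma$ to be an analytic curve (a general curve is handled by breaking it into analytic, e.g. geodesic, segments). For such a $\gamma$ the operator-valued function $G(t)=\tau_t^{-1}\circ R_{\gamma(t)}(X(t),Y(t))\circ\tau_t$, with $X(t),Y(t)$ parallel, is real-analytic in $t$. Its Taylor coefficients at $t=0$ are, up to combinatorial corrections coming from the acceleration of $\gamma$, exactly the iterated covariant derivatives $\nabla_{\dot\gamma}\cdots\nabla_{\dot\gamma}R(X,Y)_x$; in all cases each coefficient is a linear combination of the generators of $\mathfrak{h}_x$ and hence lies in $\mathfrak{h}_x$. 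Because the Taylor series converges and $\mathfrak{h}_x$ is closed, $G(t)\in\mathfrak{h}_x$ for all $t$, which gives $\tau_\gamma^{-1}\circ R_y(U,V)\circ\tau_\gamma\in\mathfrak{h}_x$ and therefore $\mathfrak{g}_x\subseteq\mathfrak{h}_x$.

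The main obstacle is the bookkeeping in this last step. First, one must check carefully that the derivatives $G^{(k)}(0)$ are expressible through $\nabla^j R_x$ with $j\le k$; this is a Leibniz-type computation in which the contributions of $\tfrac{d}{dt}\tau_t^{-1}$ and $\tfrac{d}{dt}\tau_t$ combine, via $\nabla g=0$, with the covariant derivative of $R$, and for a non-geodesic $\gamma$ extra lower-order terms weighted by covariant derivatives of $\dot\gamma$ appear, all remaining inside $\mathfrak{h}_x$. Second, passing from a single analytic curve to an arbitrary piecewise-smooth curve reaching a distant point requires chaining analytic segments; here I would use that the transported subalgebra $\tau_\gamma\mathfrak{h}_x\tau_\gamma^{-1}$ is independent of $\gamma$ and parallel, so the local argument propagates along the whole curve. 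Justifying this parallel propagation together with the convergence on a fixed interval is the technically delicate part; the two easy ingredients are Theorem~\ref{th3} and the finite-dimensionality, hence closedness, of $\mathfrak{h}_x$.
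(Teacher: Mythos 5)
The paper does not actually prove Theorem~\ref{th5}: it is the classical theorem on the infinitesimal holonomy algebra of an analytic connection, quoted from the standard literature (Kobayashi--Nomizu~\cite{94}). Your outline follows exactly that classical route: the easy inclusion $\mathfrak{h}_x\subseteq\mathfrak{g}_x$ by differentiating the Ambrose--Singer generators inside the closed subspace $\mathfrak{g}_x$, and the hard inclusion by Taylor-expanding $G(t)=\tau_t^{-1}\circ R_{\gamma(t)}(X(t),Y(t))\circ\tau_t$ along an analytic curve and using closedness of $\mathfrak{h}_x$. Both parts are sound as far as they go, including your remark that for a non-geodesic analytic curve the Taylor coefficients acquire lower-order terms weighted by the acceleration, which still lie among the generators.

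The gap is in the reduction of the Ambrose--Singer generators to analytic curves. ``Breaking a general piecewise-smooth curve into geodesic segments'' does not work as stated: the operator $\tau_\gamma^{-1}\circ R_y(U,V)\circ\tau_\gamma$ depends on the parallel transport along $\gamma$ itself, and replacing $\gamma$ by a broken geodesic with the same endpoints changes $\tau_\gamma$. What you actually need is that the family $y\mapsto\mathfrak{h}_y$ of infinitesimal holonomy algebras is invariant under parallel transport along \emph{arbitrary} piecewise-smooth curves; you invoke this (``the transported subalgebra is independent of $\gamma$ and parallel''), but that is precisely the point requiring proof, and your Taylor argument only establishes it along analytic curves. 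Two standard ways to close the gap: (i) deduce from the two-sided local Taylor argument that $\dim\mathfrak{h}_y$ is locally constant, then apply the linear-ODE lemma of Kobayashi--Nomizu (the sections $t\mapsto\tau_t^{-1}(\nabla^kR)_{\sigma(t)}(\cdots)\tau_t$ have derivatives in their own span, so a span of constant rank is constant along any smooth curve); or (ii) chain the local equality $\tau\,\mathfrak{h}_x\,\tau^{-1}=\mathfrak{h}_z$ over short geodesic segments to get parallelism for broken geodesics, and then pass to an arbitrary $\gamma$ by approximating $\tau_\gamma$ with parallel transports along inscribed broken geodesics, using continuity of parallel transport and closedness of $\mathfrak{h}_x$. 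With either supplement your sketch becomes a complete proof; without one of them the final step does not go through.
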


A subspace $U\subset \mathbb{R}^{r,s}$ is called non-degenerate if
the restriction of the form~$g$ to this subspace is
non-degenerate. A Lie subgroup $G\subset \mathrm{O}(r,s)$ (or a
subalgebra $\mathfrak{g}\subset \mathfrak{so}(r,s)$) is called
called irreducible if it does not preserve any proper vector
subspace of  $\mathbb{R}^{r,s}$; $G$ (or $\mathfrak{g}$) is called
weakly irreducible if it does not preserve any proper
non-degenerate vector subspace of $\mathbb{R}^{r,s}$.

It is clear that a subalgebra $\mathfrak{g}\subset
\mathfrak{so}(r,s)$ is irreducible (resp. weakly irreducible) if
and only if the corresponding connected Lie subgroup  $G\subset
\operatorname{SO}(r,s)$ is irreducible (resp. weakly irreducible).
If a subgroup $G\subset \mathrm{O}(r,s)$ is irreducible, then it
is weakly irreducible. The converse holds only for positively and
negatively definite metrics~$g$.

Let us consider two pseudo-Riemannian manifolds $(M,g)$ and
$(N,h)$. Let ${x\in M}$,  $y\in N$, and let~$G_x$,~$H_y$ be the
corresponding  holonomy groups. The product of the manifolds $M\times
N$ is a  pseudo-Riemannian manifold with respect to the
metric~$g+h$.  A pseudo-Riemannian manifold is called (locally)
indecomposable if it is not a (local) product of pseudo-Riemannian
manifolds. Denote by~$F_{(x,y)}$ the holonomy group of the
manifold $M\times N$ at the point $(x,y)$. It holds
$F_{(x,y)}=G_x\times H_y$. This statement has the following
inverse one.

\begin{theorem}
\label{th6} Let $(M,g)$ be a pseudo-Riemannian manifold, and $x\in
M$. Suppose that the restricted  holonomy group~$G^0_x$ is not
weakly irreducible. Then the space~$T_xM$ admits an orthogonal
 decomposition (with respect to~$g_x$) into the direct sum of non-degenerate subspaces:
$$
T_xM=E_0\oplus E_1\oplus\cdots\oplus E_t,
$$
at that, $G^0_x$ acts trivially on~$E_0$, $G^0_x(E_i)\subset E_i$
($i=1,\dots,t$), and $G^0_x$ acts weakly irreducibly on~$E_i$
($i=1,\dots,t$). There exist a flat pseudo-Riemannian submanifold
$N_0\subset M$ and locally indecomposable pseudo-Riemannian
submanifolds $N_1,\dots,N_t\subset M$ containing the point~$x$
such that $T_xN_i=E_i$ ($i=0,\dots,t$). There exist open subsets
 $U\subset M$, $U_i\subset N_i$
($i=0,\dots,t$) containing the point~$x$ such that
$$
U=U_0\times U_1\times\cdots\times U_t,\qquad g\big|_{TU\times
TU}=g\big|_{TU_0\times TU_0}+g\big|_{TU_1\times TU_1}+
\cdots+g\big|_{TU_t\times TU_t}.
$$
Moreover, there exists a decomposition
$$
G^0_x=\{\operatorname{id}\}\times H_1\times\cdots\times H_t,
$$
where $H_i=G^0_x\big|_{E_i}$ are normal Lie subgroups in~$G^0_x$
($i=1,\dots,t$).

Furthermore, if the manifold~$M$ is simply connected and complete,
then there exists a global decomposition
$$
M=N_0\times N_1\times\cdots\times N_t.
$$
\end{theorem}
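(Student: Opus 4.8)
The plan is to follow the classical de~Rham--Wu strategy: translate the invariant subspaces of the holonomy representation into parallel distributions, then into a product structure. First I would establish the purely algebraic decomposition of $(T_xM,g_x)$ under $G^0_x$. Since $G^0_x$ is not weakly irreducible, it preserves some proper non-degenerate subspace $V$; because $G^0_x\subset\operatorname{O}(T_xM,g_x)$ and $V$ is non-degenerate, the orthogonal complement $V^\perp$ is again $G^0_x$-invariant and non-degenerate, and $T_xM=V\oplus V^\perp$. Iterating this splitting on factors that are still not weakly irreducible, and using $\dim T_xM<\infty$ to guarantee termination, I obtain an orthogonal direct sum of non-degenerate, $G^0_x$-invariant, weakly irreducible subspaces. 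Any such piece on which $G^0_x$ acts trivially must be one-dimensional, since a higher-dimensional non-degenerate space always contains a non-null vector and hence a proper non-degenerate invariant subspace; collecting all of these pieces gives the non-degenerate subspace $E_0$, and relabelling the remaining pieces gives $E_1,\dots,E_t$ with $G^0_x$ acting weakly irreducibly on each.

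Next I would globalize. Each $E_i$ is $G^0_x$-invariant, so by Theorem~\ref{th2} (applied on a simply connected neighborhood, where the local holonomy is $G^0_x$) parallel transport of $E_i$ is well defined and yields a parallel distribution $\mathcal E_i\subset TM$ with $(\mathcal E_i)_x=E_i$. Because $\nabla$ is torsion-free, each parallel distribution is involutive: for sections $X,Y$ of $\mathcal E_i$ one has $[X,Y]=\nabla_XY-\nabla_YX\in\mathcal E_i$. Hence $\mathcal E_i$ integrates to a totally geodesic foliation whose leaf $N_i$ through $x$ satisfies $T_xN_i=E_i$. The restricted holonomy of $N_i$ is $G^0_x\big|_{E_i}$: it is trivial for $i=0$, whence $N_0$ is flat by the characterization that $\mathfrak g=\{0\}$ is equivalent to flatness; it is weakly irreducible for $i\geqslant 1$, whence $N_i$ is locally indecomposable.

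Then I would assemble the local product. The distributions $\mathcal E_0,\dots,\mathcal E_t$ are mutually orthogonal, parallel and span $TM$, so choosing coordinates adapted to the corresponding foliations identifies a neighborhood $U$ of $x$ with $U_0\times\cdots\times U_t$; parallelism and orthogonality force the mixed components of $g$ to vanish and each block to depend only on the coordinates of its own factor, giving the stated splitting of $g$. The holonomy decomposition $G^0_x=\{\operatorname{id}\}\times H_1\times\cdots\times H_t$ with $H_i=G^0_x\big|_{E_i}$ follows from the product formula $F_{(x,y)}=G_x\times H_y$ quoted just before the theorem; since the factors pairwise commute and together generate $G^0_x$, each $H_i$ is normal.

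The main obstacle is the final, global statement. Upgrading the local product to a global isometry $M=N_0\times\cdots\times N_t$ is exactly the Wu globalization of the de~Rham theorem, and this is where completeness and simple connectedness enter: completeness makes the totally geodesic leaves complete, so each leaf through $x$ is itself a complete manifold, while simple connectedness removes the monodromy that could otherwise obstruct extending the local product. Concretely, I would define the natural map $N_0\times\cdots\times N_t\to M$ built from the foliations, show it is a local isometry using the already-established local splitting, and then invoke completeness together with simple connectedness to conclude that it is a global isometric diffeomorphism. Carrying out this last patching argument carefully is the genuinely hard part; the preceding algebraic and local steps are essentially formal.
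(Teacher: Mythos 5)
The paper itself does not prove Theorem~\ref{th6}: it attributes the local statement to Borel and Lichnerowicz, the global Riemannian case to de~Rham, and the pseudo-Riemannian case to Wu, so there is no internal argument to compare yours against. Your algebraic splitting of $(T_xM,g_x)$ into non-degenerate weakly irreducible invariant summands, the passage via Theorem~\ref{th2} to parallel, involutive, totally geodesic distributions, and the local product of the metric in adapted coordinates all follow the classical route and are essentially sound (for the local metric splitting one must still carry out the standard computation, using $\nabla g=0$ and parallelism of each distribution, that each block of $g$ depends only on the coordinates of its own factor; you name the right ingredients).

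There are, however, two genuine gaps. First, deducing $G^0_x=\{\operatorname{id}\}\times H_1\times\cdots\times H_t$ from the formula $F_{(x,y)}=G_x\times H_y$ applied to $U=U_0\times\cdots\times U_t$ only controls the holonomy of the small neighbourhood $U$, which is in general strictly smaller than the restricted holonomy of $M$ at $x$: the latter is generated by lassos based at $x$ that travel arbitrarily far in $M$ before going around a small loop. What the parallelism of the distributions gives directly is only an injective homomorphism of $G^0_x$ into the product of its restrictions; surjectivity onto $H_1\times\cdots\times H_t$ requires the Ambrose--Singer theorem (Theorem~\ref{th3}) together with the observation that $R_y(X,Y)=0$ whenever $X$ and $Y$ lie in different factors at an arbitrary point $y$, so that the holonomy algebra splits into a direct sum of ideals, and then connectedness of $G^0_x$. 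As written your justification is circular (you assume the factors commute inside $G^0_x$ to prove the product decomposition). Second, and more seriously, the global statement is precisely the content of Wu's theorem, and ``invoke completeness together with simple connectedness'' is not an argument in the indefinite setting: de~Rham's globalization relies on minimizing geodesics and the Hopf--Rinow theorem, and a complete pseudo-Riemannian manifold need not be geodesically connected nor have surjective exponential maps. Wu's proof replaces this mechanism by a different one (a careful analysis of the complete totally geodesic leaves and of the holonomy bundle), and your sketch contains no idea for that replacement; a naive imitation of the Riemannian patching argument would fail at exactly this point.
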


Local statement of this theorem for the case of Riemannian
manifolds proved Borel and Lichnerowicz~\cite{31}. The global
statement for the case of Riemannian manifolds proved
de~Rham~\cite{48}. The statement of the theorem for
pseudo-Riemannian manifolds proved Wu~\cite{122}.

In~\cite{70} algorithms for finding the de~Rham decomposition for
Riemannian manifolds and the Wu decomposition for  Lorentzian
manifold are given. For that the analysis of the parallel bilinear
forms on the manifold is used.

From Theorem~\ref{th6} it follows that a pseudo-Riemannian
manifold is locally indecomposable if and only if its restricted
holonomy group is weakly irreducible.

It is important to note that the Lie algebras of the Lie
groups~$H_i$ from Theorem~\ref{th6} are Berger algebras. The next
theorem is the  algebraic version of Theorem~\ref{th6}.

\begin{theorem}
\label{th7} Let $\mathfrak{g}\subset\mathfrak{so}(p,q)$ be a
Berger  subalgebra that is not irreducible. Then there exists the
following orthogonal decomposition
$$
\mathbb{R}^{p,q}=V_0\oplus V_1\oplus\cdots\oplus V_r
$$
and the decomposition
$$
\mathfrak{g}=\mathfrak{g}_1\oplus\cdots\oplus \mathfrak{g}_r
$$
into a direct sum of ideals such that~$\mathfrak{g}_i$
annihilates~$V_j$ for $i\ne j$ and  $\mathfrak{g}_i\subset
\mathfrak{so}(V_i)$ is a weakly irreducible Berger subalgebra.
\end{theorem}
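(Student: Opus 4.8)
The plan is to reduce Theorem~\ref{th7} to a single splitting lemma and then iterate. The lemma states: if a Berger algebra $\mathfrak g\subset\mathfrak{so}(p,q)$ preserves a proper nondegenerate subspace $V\subset\mathbb R^{p,q}$, then, writing $W=V^\perp$ (which is nondegenerate and, since $\mathfrak g$ preserves the metric, also $\mathfrak g$-invariant), one has an orthogonal direct sum of ideals $\mathfrak g=\mathfrak g_V\oplus\mathfrak g_W$ with $\mathfrak g_V\subset\mathfrak{so}(V)$ and $\mathfrak g_W\subset\mathfrak{so}(W)$, each again a Berger algebra on its factor. Granting the lemma, the theorem follows: if $\mathfrak g$ is weakly irreducible one takes $r=1$, $V_1=\mathbb R^{p,q}$, $V_0=0$; otherwise $\mathfrak g$ preserves a proper nondegenerate subspace, the lemma applies, and one recurses on each factor, which is again Berger. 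Since a factor cannot be split further precisely when the corresponding ideal acts weakly irreducibly, and the dimension strictly drops at each step, the process terminates in an orthogonal decomposition into nondegenerate subspaces on which the corresponding ideal acts weakly irreducibly. The factors carrying the trivial action are split further into nondegenerate lines and collected into $V_0$, which is nondegenerate as the orthogonal complement of the nondegenerate space $V_1\oplus\cdots\oplus V_r$.

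Everything rests on the lemma, whose heart is two curvature computations for $R\in\mathscr R(\mathfrak g)$. Since $\mathfrak g$ preserves both $V$ and $W$, every $R(X,Y)\in\mathfrak g$ is block diagonal, $R(X,Y)=R(X,Y)'+R(X,Y)''$ with $R(X,Y)'\in\mathfrak{so}(V)$ and $R(X,Y)''\in\mathfrak{so}(W)$. First I would show that the mixed curvature vanishes, $R(v,w)=0$ for $v\in V$, $w\in W$. For the $V$-block, apply the symmetry~\eqref{eq2.1} in the form $(R(a,b)c,d)=(R(c,d)a,b)$ with $a,b\in V$, $c=w\in W$, $d\in V$: the left-hand side $(R(a,b)w,d)$ vanishes because $R(a,b)w$ lies in $W$ and is thus orthogonal to $d\in V$, so $(R(w,d)a,b)=0$ for all $a,b\in V$, and nondegeneracy of $(\cdot,\cdot)|_V$ forces $R(w,d)'=0$. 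The $W$-block is killed by the symmetric choice $a,b\in W$, $c\in V$, $d\in W$. Hence $R(v,w)=0$.

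The second computation shows that the pure curvatures are pure. For $v,v'\in V$ and $w\in W$ the first Bianchi identity gives $R(v,v')w+R(v',w)v+R(w,v)v'=0$; the last two terms are mixed and hence zero by the first step, so $R(v,v')w=0$ for all $w\in W$, i.e.\ $R(v,v')''=0$; symmetrically $R(w,w')'=0$. Thus, writing $X=X_V+X_W$ and $Y=Y_V+Y_W$, one has $R(X,Y)=R(X_V,Y_V)+R(X_W,Y_W)$ with $R(X_V,Y_V)\in\mathfrak{so}(V)$ and $R(X_W,Y_W)\in\mathfrak{so}(W)$, and both summands lie in $\mathfrak g=L(\mathscr R(\mathfrak g))$ (take $X,Y\in V$, resp.\ $X,Y\in W$). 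Therefore $\mathfrak g=\mathfrak g_V\oplus\mathfrak g_W$, where $\mathfrak g_V$ and $\mathfrak g_W$ are spanned by the $R(X_V,Y_V)$ and the $R(X_W,Y_W)$ respectively. Being block-diagonal operators supported on complementary orthogonal blocks, $[\mathfrak g_V,\mathfrak g_W]=0$, so each factor is an ideal annihilating the other subspace. Finally, restricting any $R\in\mathscr R(\mathfrak g)$ to $\wedge^2V$ yields an element of $\mathscr R(\mathfrak g_V)$, whence $\mathfrak g_V=L(\mathscr R(\mathfrak g_V))$ is Berger, and likewise for $\mathfrak g_W$; this is what lets the induction proceed.

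The main obstacle is exactly this interplay of~\eqref{eq2.1} and the Bianchi identity: weak irreducibility by itself forces no splitting of $\mathfrak g$, and it is the Berger hypothesis $\mathfrak g=L(\mathscr R(\mathfrak g))$ together with the curvature symmetries that transports the orthogonal decomposition of $\mathbb R^{p,q}$ to a decomposition of $\mathfrak g$ into ideals. A secondary point needing care is the pseudo-Riemannian subtlety that the full fixed-point set of $\mathfrak g$ may be degenerate; this is why $V_0$ must be taken as a maximal nondegenerate trivial summand rather than the entire fixed space, the degenerate fixed directions being absorbed into the weakly irreducible factors.
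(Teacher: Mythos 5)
The paper does not actually prove Theorem~\ref{th7}: it is stated without proof as ``the algebraic version'' of the Wu decomposition Theorem~\ref{th6}, with the splitting of the holonomy algebra into Berger ideals recorded as a known fact. So there is nothing in the text to compare against line by line; judged on its own, your argument is correct and is precisely the standard algebraic proof of the Borel--Lichnerowicz type splitting. The two curvature computations are the right mechanism: the pair symmetry~\eqref{eq2.1} (valid for every $R\in\mathscr R(\mathfrak g)$ because $\mathscr R(\mathfrak g)\subset\mathscr R(\mathfrak{so}(p,q))$) kills the mixed curvature $R(v,w)$, and the first Bianchi identity then forces $R(X,Y)=R(X_V,Y_V)+R(X_W,Y_W)$ with each summand block-supported; combined with $\mathfrak g=L(\mathscr R(\mathfrak g))$ this yields $\mathfrak g=\mathfrak g_V\oplus\mathfrak g_W$ with $\mathfrak g_V=\mathfrak g\cap\mathfrak{so}(V)$, each factor an ideal and again Berger via restriction of curvature tensors. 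The induction on dimension and the assembly of $V_0$ from the nondegenerate trivial summands are handled correctly, and your closing remark --- that in the pseudo-Riemannian setting the full fixed-point set of $\mathfrak g$ may be degenerate, so $V_0$ cannot be taken to be the whole fixed space --- is exactly the subtlety that distinguishes this statement from its Riemannian ancestor and explains why only weak irreducibility, not irreducibility, can be demanded of the factors.
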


\subsection{Connected irreducible holonomy groups of Riemannian and
pseudo-Riemannian manifolds} \label{ssec2.3} In the previous
subsection we have seen that the classification problem for the
subalgebras $\mathfrak{g}\subset\mathfrak{so}(r,s)$ with the
property $L(\mathscr{R}(\mathfrak{g}))=\mathfrak{g}$ can be
reduced to the classification problem for  the weakly irreducible
subalgebra $\mathfrak{g}\subset\mathfrak{so}(r,s)$ satisfying this
property. For the subalgebra $\mathfrak{g}\subset\mathfrak{so}(n)$
the weak irreducibility is equivalent to the irreducibility.
Recall that a pseudo-Riemannian manifold $(M,g)$ is called locally
symmetric if its curvature tensor satisfies the equality $\nabla
R=0$. For any locally symmetric Riemannian manifold there exists a
simply connected Riemannian manifold  with the same restricted
holonomy group.  Simply connected Riemannian symmetric spaces were
classified by \'E.~Cartan~\cite{25},~\cite{43},~\cite{82}. If the
holonomy group of such a space is irreducible, then it coincides
with the isotropy representation. Thus connected irreducible
holonomy groups of locally symmetric Riemannian manifolds are
known.

It is important to note that there exists a one-to-one
correspondence between simply connected indecomposable symmetric
Riemannian manifolds~$(M,g)$ and simple $\mathbb{Z}_2$-graded Lie
algebras $\mathfrak{g}=\mathfrak{h}\oplus\mathbb{R}^n$ such that
$\mathfrak{h}\subset\mathfrak{so}(n)$. The subalgebra
$\mathfrak{h}\subset\mathfrak{so}(n)$ coincides with the holonomy
algebra of the manifolds~$(M,g)$. The space $(M,g)$ can be
reconstructed using its  holonomy algebra
$\mathfrak{h}\subset\mathfrak{so}(n)$ and the value
$R\in\mathscr{R}(\mathfrak{h})$ of curvature tensor of the space
$(M,g)$ at some point. For that let us define the Lie algebra
structure on the vector space
$\mathfrak{g}=\mathfrak{h}\oplus\mathbb{R}^n$ in the following
way:
$$
[A,B]=[A,B]_{\mathfrak{h}},\quad [A,X]=AX,\quad
[X,Y]=R(X,Y),\qquad A,B\in\mathfrak{h},\quad X,Y\in\mathbb{R}^n.
$$
Then, $M=G/H$, where $G$ is a simply connected Lie group with the
Lie algebra~$\mathfrak{g}$, and $H\subset G$ the connect Lie
subgroup corresponding to the
subalgebra~$\mathfrak{h}\subset\mathfrak{g}$.

In 1955 Berger obtained a list of possible connected irreducible
holonomy groups of Riemannian manifolds~\cite{23}.

\begin{theorem}
\label{th8} If $G\subset \operatorname{SO}(n)$ is a connected Lie
subgroup such that its Lie algebra
$\mathfrak{g}\subset\mathfrak{so}(n)$ satisfies the condition
$L(\mathscr{R}(\mathfrak{g}))=\mathfrak{g}$, then either~$G$ is
the holonomy group of a locally symmetric Riemannian space, or~$G$
is one of the following groups: $\operatorname{SO}(n)$;
$\operatorname{U}(m)$, $\operatorname{SU}(m)$, $n=2m$;
$\operatorname{Sp}(m)$, $\operatorname{Sp}(m)\cdot
\operatorname{Sp}(1)$, $n=4m$; $\operatorname{Spin}(7)$, $n=8$;
$G_2$, $n=7$.
\end{theorem}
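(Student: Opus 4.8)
The plan is to reduce to an irreducible action and then run a case analysis governed by representation theory. Since the metric is definite, weak irreducibility coincides with irreducibility, so I may assume $\mathfrak{g}\subset\mathfrak{so}(n)$ acts irreducibly on $V=\mathbb{R}^n$. The first thing I would record is that $L(\mathscr{R}(\mathfrak{g}))$ is automatically an ideal of $\mathfrak{g}$: for $A\in\mathfrak{g}$ and $R\in\mathscr{R}(\mathfrak{g})$ the naturally acted tensor $A\cdot R$ again lies in $\mathscr{R}(\mathfrak{g})$, and expanding $(A\cdot R)(X,Y)=[A,R(X,Y)]-R(AX,Y)-R(X,AY)$ shows that $[A,R(X,Y)]\in L(\mathscr{R}(\mathfrak{g}))$. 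Next I would split the problem by Schur's lemma: the commutant of $\mathfrak{g}$ in $\operatorname{End}(V)$ is $\mathbb{R}$, $\mathbb{C}$ or $\mathbb{H}$, i.e. the representation is of real, complex or quaternionic type.

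The complex and quaternionic types are the easier branches. In the complex case a parallel orthogonal complex structure $J$ forces $\mathfrak{g}\subset\mathfrak{u}(m)$ with $n=2m$; inspecting the action of the (at most one-dimensional) centre spanned by $J$ together with the Berger condition isolates exactly $\mathfrak{u}(m)$ and $\mathfrak{su}(m)$. In the quaternionic case $\mathfrak{g}\subset\mathfrak{sp}(m)\oplus\mathfrak{sp}(1)$ with $n=4m$, and the same kind of analysis of $\mathscr{R}$ for these classical algebras leaves only $\mathfrak{sp}(m)$ and $\mathfrak{sp}(m)\oplus\mathfrak{sp}(1)$. Here one works directly with the known decompositions of $S^2(\mathfrak{g})$ for the classical series.

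The real type is the heart of the proof and, I expect, the main obstacle. The commutant being $\mathbb{R}$ forces $\mathfrak{g}$ to be semisimple; write $\mathfrak{g}=\mathfrak{g}_1\oplus\cdots\oplus\mathfrak{g}_k$. If $\mathfrak{g}$ is simple, then because $L(\mathscr{R}(\mathfrak{g}))$ is an ideal the Berger condition $L(\mathscr{R}(\mathfrak{g}))=\mathfrak{g}$ collapses to the single requirement $\mathscr{R}(\mathfrak{g})\neq0$. This is now a purely representation-theoretic question about the irreducible orthogonal representation $V$: realising $\mathscr{R}(\mathfrak{g})$, via the pair symmetry~\eqref{eq2.1}, as the first-Bianchi kernel inside $S^2(\mathfrak{g})$, I would parametrise $V$ by its highest weight, decompose into irreducible $\mathfrak{g}$-summands, and decide for which pairs $(\mathfrak{g},V)$ this kernel is nonzero. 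Berger's device here is a Casimir-eigenvalue and dimension estimate that eliminates all but finitely many highest weights; carrying this estimate through every simple type is the laborious core of the argument. For $k\geq2$ factors the tensor-product structure of $V$ (after complexification $V_{\mathbb{C}}=W_1\otimes\cdots\otimes W_k$) forces every admissible curvature tensor to be $\mathfrak{g}$-invariant, so such $\mathfrak{g}$ turn out to be isotropy algebras of symmetric spaces.

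Finally I would separate the symmetric from the non-symmetric survivors using the second Berger criterion. One considers the space of admissible first covariant derivatives, i.e. the elements of $V^*\otimes\mathscr{R}(\mathfrak{g})$ satisfying the differential Bianchi identity; if this space vanishes, every manifold with holonomy $\mathfrak{g}$ has $\nabla R=0$ and hence $\mathfrak{g}$ is a symmetric holonomy algebra, which is the first alternative of the theorem. Applying this test to the finite list produced in the previous step, the non-symmetric possibilities are exactly $\mathfrak{so}(n)$, $\mathfrak{u}(m)$, $\mathfrak{su}(m)$, $\mathfrak{sp}(m)$, $\mathfrak{sp}(m)\oplus\mathfrak{sp}(1)$, $\mathfrak{spin}(7)$ and $\mathfrak{g}_2$, giving precisely the groups listed; any further algebra meeting the first criterion (for instance $\mathfrak{spin}(9)$) fails the second criterion and is absorbed into the symmetric alternative.
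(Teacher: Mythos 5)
Your overall strategy is the one the paper attributes to Berger: reduce to an irreducible compact representation, observe that $L(\mathscr{R}(\mathfrak{g}))$ is an ideal, use the classification of irreducible representations of compact Lie algebras to show $\mathscr{R}(\mathfrak{g})=0$ for most candidates, and then invoke the second criterion ($\mathscr{R}^\nabla(\mathfrak{g})=0$ forces $\nabla R=0$) to absorb the symmetric survivors such as $\mathfrak{spin}(9)$ into the first alternative. So the route is essentially the paper's, not a new one.

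There is, however, a concrete gap in the branch for semisimple non-simple $\mathfrak{g}$ of real type. You claim that for $k\geqslant 2$ simple factors the tensor-product structure $V\otimes\mathbb{C}=W_1\otimes\cdots\otimes W_k$ forces every element of $\mathscr{R}(\mathfrak{g})$ to be $\mathfrak{g}$-invariant, hence that all such $\mathfrak{g}$ are symmetric. Two algebras in the theorem's own list refute this: $\mathfrak{so}(4)\cong\mathfrak{su}(2)\oplus\mathfrak{su}(2)$ with $\mathbb{R}^4\otimes\mathbb{C}=\mathbb{C}^2\otimes\mathbb{C}^2$, and $\mathfrak{sp}(m)\oplus\mathfrak{sp}(1)$ with $\mathbb{R}^{4m}\otimes\mathbb{C}=\mathbb{C}^{2m}\otimes\mathbb{C}^2$. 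Both are of \emph{real} type in your Schur trichotomy (the commutant in $\operatorname{End}(V)$ is $\mathbb{H}$ only for $\mathfrak{sp}(m)$ itself; adjoining the $\mathfrak{sp}(1)$ factor cuts the commutant down to $\mathbb{R}$), so they land exactly in the branch your claim would eliminate, yet $\mathscr{R}(\mathfrak{so}(4))$ is the full space of four-dimensional curvature tensors and $\mathscr{R}(\mathfrak{sp}(m)\oplus\mathfrak{sp}(1))=\mathscr{R}_0\oplus\mathscr{R}_1$ with $\mathscr{R}_0\ne 0$, so neither consists of invariant tensors. The correct assertion, which the paper records, is that the Bianchi identity kills $\mathscr{R}(\mathfrak{g})$ for representations with more than one tensor factor \emph{with several exceptions}, and those exceptions must be analysed separately; two of them survive into the final list. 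A related slip: your complex branch cannot ``isolate exactly $\mathfrak{u}(m)$ and $\mathfrak{su}(m)$'' at the level of the first criterion, because the holonomy algebras of irreducible Hermitian symmetric spaces also satisfy $L(\mathscr{R}(\mathfrak{g}))=\mathfrak{g}$ (they have $\mathscr{R}=\mathscr{R}_1\ne 0$) and are removed only by the second criterion at the very end. With these repairs the plan closes, but as written the $k\geqslant 2$ step would delete $\operatorname{SO}(4)$ and $\operatorname{Sp}(m)\cdot\operatorname{Sp}(1)$ from the answer.
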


The initial  Berger list contained also the Lie group
$\operatorname{Spin}(9)\subset\operatorname{SO}(16)$. In~\cite{2}
D.~V.~Alekseevsky showed that Riemannian manifolds with the
holonomy group $\operatorname{Spin}(9)$ are locally symmetric. The
list of possible connected irreducible holonomy groups of not
locally symmetric Riemannian manifolds from Theorem~\ref{th8}
coincides with the list of connected Lie groups $G\subset
\operatorname{SO}(n)$ acting transitively on the unite sphere
$S^{n-1}\subset \mathbb{R}^n$ (if we exclude from the last list
the Lie groups $\operatorname{Spin}(9)$ and
$\operatorname{Sp}(m)\cdot T$, where $T$ is the circle). Having
observed that, in 1962 Simons obtained in~\cite{114} a direct
proof of the  Berger result. A more simple and geometric proof
very recently found
 Olmos~\cite{107}.

The proof of the Berger Theorem~\ref{th8} is based on the
classification of the irreducible real representations of the real
compact Lie algebras. Each such representation can be obtained
from the fundamental representations using the tensor products and
the decompositions into the irreducible components. The Berger
proof is reduced to the verification of the fact that such
representation (with several exceptions) cannot be the holonomy
representation: from the Bianchi identity it follows that
$\mathscr{R}(\mathfrak{g})=\{0\}$ if the representation contains
more then one tensor efficient. It remains to investigate only the
fundamental representations that are explicitly described by
\'E.~Cartan. Using complicated computations it is possible to show
that from the Bianchi identity it follows that either  $\nabla
R=0$, or $R=0$ except for the several exclusions given in
Theorem~\ref{th8}.

Examples of Riemannian manifolds with the holonomy groups
$\operatorname{U}(n/2)$, $\operatorname{SU}(n/2)$,
$\operatorname{Sp}(n/4)$ и $\operatorname{Sp}(n/4)\cdot
\operatorname{Sp}(1)$ constructed Calabi, Yau and Alekseevsky. In
1987 Bryant~\cite{40} constructed examples of Riemannian manifolds
with the holonomy groups $\operatorname{Spin}(7)$ and~$G_2$. This
completes the classification of the connected holonomy groups of
Riemannian manifolds.

Let us give the description of the geometric structures on
Riemannian manifolds with the holonomy groups form
Theorem~\ref{th8}.

{\leftskip = 16pt \parindent=-16pt

$\operatorname{SO}(n)$: This is the holonomy group of  Riemannian
manifolds of general position. There are no additional geometric
structures related to the holonomy group on such manifolds.

$\operatorname{U}(m)$ ($n=2m$): Manifolds with this holonomy group
are  K\"ahlerian, on each of these manifolds there exists a
parallel complex structure.

$\operatorname{SU}(m)$ ($n=2m$):  Each of the manifolds with this
holonomy group are K\"ahlerian and not Ricci-flat. They are called
special K\"ahlerian or Calabi-Yau manifolds.

$\operatorname{Sp}(m)$ ($n=4m$): On each  manifold with this
holonomy there exists a parallel quaternionic structure, i.e.
parallel complex structures~$I$,~$J$,~$K$ connected by the
relations $IJ=-JI=K$. These manifolds  are called
hyper-K\"ahlerian.

$\operatorname{Sp}(m)\cdot \operatorname{Sp}(1)$ ($n=4m$): On each
manifold with this holonomy group there exists a parallel
three-dimensional subbundle of the bundle of the endomorphisms of
the tangent spaces that locally is generated by a quaternionic
structure.

$\operatorname{Spin}(7)$ ($n=8$), $G_2$ ($n=7$): Manifolds with
these  holonomy groups are Ricci-flat. On a manifold with the
holonomy group $\operatorname{Spin}(7)$ there exists a parallel
4-form, on each manifold with the holonomy group~$G_2$ there
exists a parallel 3-form.

}

\noindent Thus indecomposable  Riemannian manifolds with special
(i.e., different from  $\operatorname{SO}(n)$) holonomy groups
have important geometric properties. Because of these properties
 Riemannian manifolds with special
holonomy groups found applications in theoretical physics (in
strings theory and M-theory)~\cite{45},~\cite{79},~\cite{89}.

The spaces~$\mathscr{R}(\mathfrak{g})$ for irreducible holonomy
algebras of Riemannian manifolds
$\mathfrak{g}\subset\mathfrak{so}(n)$ computed
Alekseevsky~\cite{2}. For $R\in \mathscr{R}(\mathfrak{g})$ define
the corresponding Ricci tensor asserting
$$
\operatorname{Ric}(R)(X,Y)=\operatorname{tr}(Z\mapsto R(Z,X)Y),
$$
$X,Y\in \mathbb{R}^n$. The space~$\mathscr{R}(\mathfrak{g})$
admits the following decomposition into the direct sum of
$\mathfrak{g}$-modules:
$$
\mathscr{R}(\mathfrak{g})=\mathscr{R}_0(\mathfrak{g})\oplus
\mathscr{R}_1(\mathfrak{g})\oplus\mathscr{R}^{\,\prime}(\mathfrak{g}),
$$
where~$\mathscr{R}_0(\mathfrak{g})$ consisting of the curvature
tensors with zero Ricci tensors, $\mathscr{R}_1(\mathfrak{g})$
consists of tensors annihilated by the Lie algebra~$\mathfrak{g}$
(this space is either trivial or one-dimension), and
$\mathscr{R}^{\,\prime}(\mathfrak{g})$ is the complement to these
two subspaces. If
$\mathscr{R}(\mathfrak{g})=\mathscr{R}_1(\mathfrak{g})$, then each
Riemannian manifold with the holonomy algebra
$\mathfrak{g}\subset\mathfrak{so}(n)$ is locally symmetric. Such
subalgebras $\mathfrak{g}\subset\mathfrak{so}(n)$ are called
 \textit{symmetric Berger algebras}. The holonomy
algebras of irreducible Riemannian symmetric spaces are exhausted
by the  algebras~$\mathfrak{so}(n)$, $\mathfrak{u}(n/2)$,
$\mathfrak{sp}(n/4)\oplus\mathfrak{sp}(1)$ and by symmetric Berger
algebras $\mathfrak{g}\subset\mathfrak{so}(n)$. For the holonomy
algebras~$\mathfrak{su}(m)$, $\mathfrak{sp}(m)$, $G_2$
and~$\mathfrak{spin}(7)$ it holds
$\mathscr{R}(\mathfrak{g})=\mathscr{R}_0(\mathfrak{g})$, and this
shows that the manifolds with such holonomy algebras are
Ricci-flat. Next, for
$\mathfrak{g}=\mathfrak{sp}(m)\oplus\mathfrak{sp}(1)$ it holds
$\mathscr{R}(\mathfrak{g})=\mathscr{R}_0(\mathfrak{g})\oplus
\mathscr{R}_1(\mathfrak{g}),$ consequently the corresponding
manifolds are Einstein manifolds.

The next theorem, proven by Berger in 1955, gives the
classification of possible connected irreducible  holonomy groups
of pseudo-Riemannian manifolds~\cite{23}.

\begin{theorem}
\label{th9} If $G\subset \operatorname{SO}(r,s)$ is a connected
irreducible Lie subgroup such that its Lie algebra
$\mathfrak{g}\subset\mathfrak{so}(r,s)$ satisfies the condition
$L(\mathscr{R}(\mathfrak{g}))=\mathfrak{g}$, then either~$G$ is
the holonomy group of a locally symmetric  pseudo-Riemannian
space, or~$G$ is one of the following groups:
$\operatorname{SO}(r,s)$; $\operatorname{U}(p,q)$,
$\operatorname{SU}(p,q)$, $r=2p$, $s=2q$;
$\operatorname{Sp}(p,q)$, $\operatorname{Sp}(p,q)\cdot
\operatorname{Sp}(1)$, $r=4p$, $s=4q$;
$\operatorname{SO}(r,\mathbb{C})$, $s=r$;
$\operatorname{Sp}(p,\mathbb{R})\cdot
\operatorname{SL}(2,\mathbb{R})$, $r=s=2p$;
$\operatorname{Sp}(p,\mathbb{C})\cdot
\operatorname{SL}(2,\mathbb{C})$, $r=s=4p$;
$\operatorname{Spin}(7)$, $r=0$, $s=8$;
$\operatorname{Spin}(4,3)$, $r=s=4$;
$\operatorname{Spin}(7)^{\mathbb{C}}$, $r=s=8$; $G_2$, $r=0$,
$s=7$; $G_{2(2)}^*$, $r=4$, $s=3$; $G_2^\mathbb{C}$, $r=s=7$.
\end{theorem}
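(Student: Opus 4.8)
The plan is to follow exactly the scheme sketched above for the Riemannian Theorem~\ref{th8}, but to remove the use of compactness by complexifying. Write $V=\mathbb{R}^{r,s}$ and let $\mathfrak{g}\subset\mathfrak{so}(r,s)=\mathfrak{so}(V)$ be the irreducible subalgebra with $L(\mathscr{R}(\mathfrak{g}))=\mathfrak{g}$. First I would record the structural reductions. Since $\mathfrak{g}$ acts irreducibly and preserves the nondegenerate form~$g$, it is reductive, so $\mathfrak{g}=\mathfrak{z}\oplus\mathfrak{g}'$ with $\mathfrak{z}$ the center and $\mathfrak{g}'$ semisimple. By Schur's lemma the commutant of $\mathfrak{g}$ in $\operatorname{End}(V)$ is a real division algebra $\mathbb{K}\in\{\mathbb{R},\mathbb{C},\mathbb{H}\}$, and $\mathfrak{z}$ lies inside this commutant; this trichotomy is what ultimately distinguishes the real type (leading to $\operatorname{SO}(r,s)$, $G_2$ forms, $\operatorname{Spin}(7)$ forms), the complex type (leading to $\operatorname{U}(p,q)$, $\operatorname{SU}(p,q)$, $\operatorname{SO}(r,\mathbb{C})$, $\operatorname{Spin}(7)^{\mathbb{C}}$, $G_2^{\mathbb{C}}$, and the complex symplectic factor) and the quaternionic type (leading to $\operatorname{Sp}(p,q)$ and $\operatorname{Sp}(p,q)\cdot\operatorname{Sp}(1)$). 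The center is constrained by $\dim_{\mathbb{R}}\mathfrak{z}\le\dim_{\mathbb{R}}\mathbb{K}-1$, which already limits abelian contributions to a single $\mathfrak{u}(1)$ in the complex case and an $\mathfrak{sp}(1)$ in the quaternionic case.

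Next I would pass to the complexification $\mathfrak{g}^{\mathbb{C}}\subset\mathfrak{so}(n,\mathbb{C})$ acting on $V^{\mathbb{C}}$, $n=r+s$, splitting $V^{\mathbb{C}}$ according to $\mathbb{K}$ (it stays irreducible when $\mathbb{K}=\mathbb{R}$ and decomposes into two conjugate halves when $\mathbb{K}=\mathbb{C}$ or $\mathbb{H}$). Because $\mathscr{R}(\mathfrak{g})$ is defined purely by the first Bianchi identity, it is the real form of the analogous space over $\mathbb{C}$, and it is a $\mathfrak{g}^{\mathbb{C}}$-submodule of $\operatorname{Hom}(\wedge^2 W,\mathfrak{g}^{\mathbb{C}})$. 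Now I would invoke the representation theory of complex semisimple Lie algebras: a faithful irreducible module decomposes as an outer tensor product $W=W_1\otimes\cdots\otimes W_k$ over the simple ideals of $\mathfrak{g}'^{\mathbb{C}}$. The decisive Bianchi computation — the same one quoted for Theorem~\ref{th8} — shows that as soon as more than one nontrivial tensor factor is present the space of curvature tensors vanishes, $\mathscr{R}(\mathfrak{g})=\{0\}$, contradicting the Berger condition. The only survivor with two factors is the symplectic pattern $\mathfrak{sp}(p)\otimes\mathfrak{sl}(2)$, whose several real forms produce exactly $\operatorname{Sp}(p,q)\cdot\operatorname{Sp}(1)$, $\operatorname{Sp}(p,\mathbb{R})\cdot\operatorname{SL}(2,\mathbb{R})$ and $\operatorname{Sp}(p,\mathbb{C})\cdot\operatorname{SL}(2,\mathbb{C})$.

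This reduces everything to the case of a \emph{simple} $\mathfrak{g}^{\mathbb{C}}$ acting by a single irreducible (hence fundamental, by the tensor-factor argument applied again to the weight) representation. Here I would carry out Berger's weight estimate for $\dim\mathscr{R}(\mathfrak{g})$: writing the first Bianchi identity as a $\mathfrak{g}^{\mathbb{C}}$-equivariant contraction and running over \'E.~Cartan's list of fundamental representations, one finds $\mathscr{R}(\mathfrak{g})=\{0\}$ for all but a short explicit list of pairs (algebra, representation). The survivors split into two types according to the decomposition $\mathscr{R}(\mathfrak{g})=\mathscr{R}_0\oplus\mathscr{R}_1\oplus\mathscr{R}'$ recalled in the text: those with $\mathscr{R}(\mathfrak{g})=\mathscr{R}_1(\mathfrak{g})$ are the symmetric Berger algebras, which are absorbed into the ``holonomy group of a locally symmetric space'' alternative of the statement, while the genuinely non-symmetric survivors are the complex models $\mathfrak{so}(n,\mathbb{C})$ (with its Hermitian real forms giving $\operatorname{SO}(r,s)$, $\operatorname{U}(p,q)$), together with the exceptional entries $\mathfrak{g}_2^{\mathbb{C}}$ and $\mathfrak{spin}(7,\mathbb{C})$.

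Finally I would descend back to the reals: for each complex candidate enumerate the real forms that embed in $\mathfrak{so}(r,s)$ for the prescribed signature and are compatible with the Schur type $\mathbb{K}$, verifying in each case that the Berger condition indeed holds (equivalently that $\mathscr{R}$ is nonzero and not purely $\mathscr{R}_1$). This produces precisely the listed groups with their signatures, including the exceptional $\operatorname{Spin}(7)$ ($r=0,s=8$), $\operatorname{Spin}(4,3)$ ($r=s=4$), $\operatorname{Spin}(7)^{\mathbb{C}}$ ($r=s=8$), $G_2$ ($r=0,s=7$), $G_{2(2)}^*$ ($r=4,s=3$), $G_2^{\mathbb{C}}$ ($r=s=7$). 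The main obstacle is unquestionably the simple case: the explicit weight/Bianchi computation that eliminates every fundamental representation except the handful of exceptions is the technical heart, and correctly matching each complex exception to its admissible real forms and signatures — without over- or under-counting — is the delicate bookkeeping that makes the real-form step nontrivial as well.
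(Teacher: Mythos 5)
Your proposal is correct and follows essentially the same route the paper indicates: the key fact that $L(\mathscr{R}(\mathfrak{g}))=\mathfrak{g}$ holds if and only if $L(\mathscr{R}(\mathfrak{g}(\mathbb{C})))=\mathfrak{g}(\mathbb{C})$, reduction to the complexified Berger/Cartan analysis underlying Theorem~\ref{th8}, and then enumeration of the real forms compatible with the signature. The extra structural detail you supply (Schur trichotomy, tensor-factor elimination, the $\mathfrak{sp}\otimes\mathfrak{sl}(2)$ survivor) is exactly the machinery the paper's sketch of the Riemannian case already presupposes.
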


The proof of Theorem~\ref{th9} uses the fact that a subalgebra
$\mathfrak{g}\subset\mathfrak{so}(r,s)$ satisfies the condition
$L(\mathscr{R}(\mathfrak{g}))=\mathfrak{g}$ if and only if its
complexification $\mathfrak{g}(\mathbb{C})\subset
\mathfrak{so}(r+s,\mathbb{C})$ satisfies the condition
$L(\mathscr{R}(\mathfrak{g}(\mathbb{C})))=\mathfrak{g}(\mathbb{C})$.
In other words, in Theorem~\ref{th9} are listed connected real Lie
groups such that their Lie algebras exhaust the real forms of the
complexifications of the Lie algebras for the Lie groups from
Theorem~\ref{th8}.

In 1957 Berger~\cite{24} obtained a list of connected irreducible
holonomy groups of pseudo-Riemannian symmetric spaces (we do not
give this list here since it is too large).

\section{Weakly irreducible subalgebras in~$\mathfrak{so}(1,n+1)$}
\label{sec3}

In this section we give a geometric interpretation from~\cite{57}
of the classification by B\'erard-Bergery and Ikemakhen~\cite{21} of
weakly irreducible subalgebras in~$\mathfrak{so}(1,n+1)$.

We start to study holonomy algebras of Lorentzian manifolds.
Consider a connected Lorentzian manifold $(M,g)$ of dimension
$n+2\geqslant 4$. We identify the tangent space at some point of
the manifold $(M,g)$  with the Minkowski space
$\mathbb{R}^{1,n+1}$. We will denote the Minkowski metric
on~$\mathbb{R}^{1,n+1}$ by the symbol~$g$. Then the holonomy
algebra~$\mathfrak{g}$ of the manifold $(M,g)$ at that point is
identified with a subalgebra of the Lorentzian Lie algebra
$\mathfrak{so}(1,n+1)$. By Theorem~\ref{th6}, $(M,g)$ is not
locally a product of pseudo-Riemannian manifolds if and only if
its  holonomy algebra $\mathfrak{g}\subset\mathfrak{so}(1,n+1)$ is
weakly irreducible. Therefore we will assume that
$\mathfrak{g}\subset\mathfrak{so}(1,n+1)$ is weakly irreducible.
If~$\mathfrak{g}$ is irreducible, then
$\mathfrak{g}=\mathfrak{so}(1,n+1)$. This follows from the Berger
results. In fact, $\mathfrak{so}(1,n+1)$ does not contain any
proper irreducible subalgebra; direct geometric proofs of this
statement can be found in~\cite{50} and~\cite{33}. Thus we may
assume that $\mathfrak{g}\subset\mathfrak{so}(1,n+1)$ is weakly
irreducible and  not irreducible; then~$\mathfrak{g}$ preserves a
degenerate subspace $U\subset\mathbb{R}^{1,n+1}$ and also the
isotropic line $\ell=U\cap U^\bot\subset\mathbb{R}^{1,n+1}$. We
fix an arbitrary isotropic vector  $p\in\ell$, then
$\ell=\mathbb{R} p$. Let us fix some other isotropic  vector~$q$
such that $g(p,q)=1$. The subspace $E\subset\mathbb{R}^{1,n+1}$
orthogonal to the vectors~$p$ and~$q$ is Euclidean; usually we
will denote this  space by~$\mathbb{R}^n$. Let $e_1,\dots,e_n$ be
an orthogonal basis in~$\mathbb{R}^n$. We get the Witt basis
$p,e_1,\dots,e_n,q$ of the space~$\mathbb{R}^{1,n+1}$.

Denote by $\mathfrak{so}(1,n+1)_{\mathbb{R} p}$ the maximal
subalgebra in~$\mathfrak{so}(1,n+1)$ preserving the isotropic
line~$\mathbb{R} p$. The Lie algebra
$\mathfrak{so}(1,n+1)_{\mathbb{R} p}$ can be identified with the
following matrix Lie algebra:
$$
\mathfrak{so}(1,n+1)_{\mathbb{R} p}=\left\{\begin{pmatrix} a &X^t
& 0
\\
0 & A & -X
\\
0 & 0 & -a
\end{pmatrix}\biggm| a\in \mathbb{R},\ X\in \mathbb{R}^n,\
A \in \mathfrak{so}(n)\right\}.
$$
We identify the above matrix with the triple $(a,A,X)$. We obtain
the subalgebras~$\mathbb{R}$, $\mathfrak{so}(n)$, $\mathbb{R}^n$
in~$\mathfrak{so}(1,n+1)_{\mathbb{R}p}$. It is clear
that~$\mathbb{R}$ commutes with~$\mathfrak{so}(n)$,
and~$\mathbb{R}^n$ is an ideal; we also have$$
[(a,A,0),(0,0,X)]=(0,0,aX+AX).
$$

We get the decomposition\footnote{Let $\mathfrak{h}$ be a Lie
algebra. We write
$\mathfrak{h}=\mathfrak{h}_1\oplus\mathfrak{h}_2$ if
$\mathfrak{h}$ is the direct sum of the ideals
$\mathfrak{h}_1,\mathfrak{h}_2\subset\mathfrak{h}$. We write
$\mathfrak{h}=\mathfrak{h}_1\ltimes\mathfrak{h}_2$
if~$\mathfrak{h}$ is the direct sum of a subalgebra
$\mathfrak{h}_1\subset\mathfrak{h}$ and an ideal
$\mathfrak{h}_2\subset\mathfrak{h}$. In the corresponding
situations for the Lie groups we use the symbols~$\times$
and~$\rightthreetimes$.}
$$
\mathfrak{so}(1,n+1)_{\mathbb{R} p}=
(\mathbb{R}\oplus\mathfrak{so}(n))\ltimes\mathbb{R}^n.
$$
Each weakly irreducible not irreducible subalgebra
$\mathfrak{g}\subset \mathfrak{so}(1,n+1)$ is conjugated to a
 weakly irreducible subalgebra
in~$\mathfrak{so}(1,n+1)_{\mathbb{R} p}$.

Let $\operatorname{SO}^0(1,n+1)_{\mathbb{R} p}$ be the connected
Lie subgroup of the Lie group $\operatorname{SO}(1,n+1)$
preserving the isotropic line~$\mathbb{R} p$. The
subalgebras~$\mathbb{R}$, $\mathfrak{so}(n)$,
$\mathbb{R}^n\subset\mathfrak{so}(1,n+1)_{\mathbb{R}p}$ correspond
to the following Lie subgroups:
\begin{gather*}
\left\{\begin{pmatrix}
a & 0 & 0\\ 0 & \operatorname{id} & 0\\ 0 & 0 & 1/a\\
\end{pmatrix}\biggm|a\in \mathbb{R},\ a>0\right\},\qquad
\left\{\begin{pmatrix}
1 & 0 & 0\\ 0 & f & 0\\ 0 & 0 & 1 \\
\end{pmatrix}\biggm| f\in \operatorname{SO}(n)\right\},
\\
\left\{\begin{pmatrix}
1 &X^t & -X^tX/2 \\ 0 & \operatorname{id} & -X\\ 0 & 0& 1 \\
\end{pmatrix}\biggm|X\in \mathbb{R}^n\right\}\subset
\operatorname{SO}^0(1,n+1)_{\mathbb{R}p}.
\end{gather*}
We obtain the decomposition
$$
\operatorname{SO}^0(1,n+1)_{\mathbb{R} p}=
(\mathbb{R}^+\times\operatorname{SO}(n))\rightthreetimes\mathbb{R}^n.
$$

Recall that each subalgebra $\mathfrak{h}\subset\mathfrak{so}(n)$
is compact and there exists the decomposition
$$
\mathfrak{h}=\mathfrak{h}'\oplus\mathfrak{z}(\mathfrak{h}),
$$
wher $\mathfrak{h}'=[\mathfrak{h},\mathfrak{h}]$ is the commutant
of~$\mathfrak{h}$, and $\mathfrak{z}(\mathfrak{h})$ is the center
of~$\mathfrak{h}$~\cite{118}.

The next result belongs to B\'erard-Bergery and Ikemakhen~\cite{21}.

\begin{theorem}
\label{th10} A subalgebra $\mathfrak{g}\subset
\mathfrak{so}(1,n+1)_{\mathbb{R} p}$ is weakly irreducible if and
only if~$\mathfrak{g}$ is a Lie algebra of one of the following
types.

{\rm\textbf{Type~1}}:
$$
\mathfrak{g}^{1,\mathfrak{h}}=(\mathbb{R}\oplus\mathfrak{h})\ltimes
\mathbb{R}^n=\left\{\begin{pmatrix}
a &X^t & 0\\ 0 & A &-X \\ 0 & 0 & -a \\
\end{pmatrix}\biggm| a\in \mathbb{R},\ X\in \mathbb{R}^n,\
A \in \mathfrak{h}\right\},
$$
where $\mathfrak{h}\subset\mathfrak{so}(n)$ is a subalgebra.

{\rm\textbf{Type~2}}:
$$
\mathfrak{g}^{2,\mathfrak{h}}=\mathfrak{h}\ltimes\mathbb{R}^n=
\left\{\begin{pmatrix}
0 &X^t & 0\\ 0 & A &-X \\ 0 & 0 & 0 \\
\end{pmatrix}\biggm|X\in \mathbb{R}^n,\ A \in \mathfrak{h}\right\},
$$
where $\mathfrak{h}\subset\mathfrak{so}(n)$ is a subalgebra.

{\rm\textbf{Type~3}}:
\begin{align*}
\mathfrak{g}^{3,\mathfrak{h},\varphi}&=\{(\varphi(A),A,0)\mid
A\in\mathfrak{h}\}\ltimes\mathbb{R}^n
\\
&=\left\{\begin{pmatrix}
\varphi(A) &X^t & 0\\ 0 & A &-X \\ 0 & 0 & -\varphi(A) \\
\end{pmatrix}\biggm| X\in \mathbb{R}^n,\ A \in \mathfrak{h}\right\},
\end{align*}
where $\mathfrak{h}\subset\mathfrak{so}(n)$ is a subalgebra
satisfying the condition $\mathfrak{z}(\mathfrak{h})\ne\{0\}$, and
 $\varphi\colon\mathfrak{h}\to\mathbb{R}$ is a non-zero linear map
 with the property $\varphi\big|_{\mathfrak{h}'}=0$.

{\rm\textbf{Type~4}}:
\begin{align*}
\mathfrak{g}^{4,\mathfrak{h},m,\psi}&=\{(0,A,X+\psi(A))\mid
A\in\mathfrak{h},\ X\in \mathbb{R}^m\}
\\
&=\left\{\begin{pmatrix}
0 &X^t&\psi(A)^t & 0\\ 0 & A&0 &-X \\ 0 & 0 & 0 &-\psi(A) \\
0&0&0&0\\ \end{pmatrix}\biggm| X\in \mathbb{R}^{m},\ A\in
\mathfrak{h}\right\},
\end{align*}
where exists an orthogonal decomposition
$\mathbb{R}^n=\mathbb{R}^m\oplus\mathbb{R}^{n-m}$ such that
$\mathfrak{h}\subset\mathfrak{so}(m)$,
$\dim\mathfrak{z}(\mathfrak{h})\geqslant n-m$, and
$\psi\colon\mathfrak{h}\to \mathbb{R}^{n-m}$ is a surjective
linear map with the property $\psi\big|_{\mathfrak{h}'}=0$.
\end{theorem}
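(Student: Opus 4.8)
The plan is to read off three invariants from the decomposition $\mathfrak{so}(1,n+1)_{\mathbb{R}p}=(\mathbb{R}\oplus\mathfrak{so}(n))\ltimes\mathbb{R}^n$ and reconstruct $\mathfrak g$ from them. Writing elements as triples $(a,A,X)$, set $\mathfrak h=\operatorname{pr}_{\mathfrak{so}(n)}(\mathfrak g)$, the ideal $L=\mathfrak g\cap\mathbb{R}^n$, and $\mathfrak a=\operatorname{pr}_{\mathbb{R}}(\mathfrak g)\subseteq\mathbb{R}$. Since $\mathbb{R}^n$ is an ideal, $L$ is an ideal of $\mathfrak g$, and from $[(a,A,0),(0,0,X)]=(0,0,aX+AX)$ one checks that $L$ is $\mathfrak h$-invariant. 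As $\mathfrak h\subset\mathfrak{so}(n)$ is compact, hence reductive with $\mathfrak h=\mathfrak h'\oplus\mathfrak z(\mathfrak h)$, there is an orthogonal $\mathfrak h$-invariant splitting $\mathbb{R}^n=L\oplus L^\perp$. Likewise $\mathfrak k=\operatorname{pr}_{\mathbb{R}\oplus\mathfrak{so}(n)}(\mathfrak g)$ is a subalgebra of $\mathbb{R}\oplus\mathfrak h$ surjecting onto $\mathfrak h$, with $\mathfrak g/L\cong\mathfrak k$. The whole classification reduces to determining how $L$, $\mathfrak k$, and the ``shift'' of $L^\perp$ fit together, and the four types are exactly the admissible configurations.

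The crux is a workable reformulation of weak irreducibility. A direct computation in the Witt basis gives $(a,A,X)p=ap$, $(a,A,X)w=g(X,w)\,p+Aw$ for $w\in\mathbb{R}^n$, and $(a,A,X)q=-aq-X$. Hence for an $\mathfrak h$-invariant subspace $W\subseteq\mathbb{R}^n$ and a functional $\ell\colon W\to\mathbb{R}$ the ``graph'' $\{\ell(w)p+w\mid w\in W\}$ is a nondegenerate (Euclidean) $\mathfrak g$-invariant subspace precisely when
\begin{equation*}
g(X,w)=\ell(Aw)-a\,\ell(w),\qquad (a,A,X)\in\mathfrak g,\ w\in W. \tag{$\ast$}
\end{equation*}
By analysing the position of the isotropic vector $p$ relative to an arbitrary nondegenerate invariant $U$ and its orthocomplement (exactly one of which is positive definite, so contains no isotropic vector), one shows that the existence of \emph{any} proper nondegenerate invariant subspace is equivalent to the solvability of $(\ast)$ for some nonzero $\mathfrak h$-invariant $W$. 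I expect this equivalence --- in particular the case analysis on $p$ --- to be the main obstacle, and it must not be oversimplified: taking $a=A=0$ in $(\ast)$ yields only $W\perp L$, and already for $n=2$, $\mathfrak h=\mathfrak{so}(2)$ there exist subalgebras with $\operatorname{pr}_{\mathbb{R}^n}(\mathfrak g)=\mathbb{R}^n$ that nonetheless preserve such a graph, so weak irreducibility is \emph{not} mere surjectivity of the translational projection.

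With criterion $(\ast)$ in hand, first treat the case $L=\mathbb{R}^n$. Then $W\perp L$ forces $W=0$, so $(\ast)$ has no nonzero solution and $\mathfrak g$ is automatically weakly irreducible; moreover $\{0\}\times\mathbb{R}^n\subseteq\mathfrak g$ gives $\mathfrak g=\mathfrak k\ltimes\mathbb{R}^n$. It remains to classify subalgebras $\mathfrak k\subseteq\mathbb{R}\oplus\mathfrak h$ surjecting onto $\mathfrak h$. The intersection $\mathfrak k\cap\mathbb{R}$ is either $\mathbb{R}$ or $0$: if it is $\mathbb{R}$ then $\mathfrak k=\mathbb{R}\oplus\mathfrak h$, which is \textbf{Type~1}; if it is $0$ then $\mathfrak k$ is the graph of a linear map $\varphi\colon\mathfrak h\to\mathbb{R}$, which, being a homomorphism into the abelian $\mathbb{R}$, satisfies $\varphi|_{\mathfrak h'}=0$. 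For $\varphi=0$ this is \textbf{Type~2}, and for $\varphi\neq 0$ (which forces $\mathfrak z(\mathfrak h)\neq\{0\}$) this is \textbf{Type~3}.

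Finally treat $L=\mathbb{R}^m\subsetneq\mathbb{R}^n$, so $L^\perp\neq 0$. Decompose $L^\perp$ into irreducible $\mathfrak h$-modules. On a submodule $W$ where $\mathfrak h$ acts nontrivially the relevant equation $(\ast)$ defines a $1$-cocycle of $\mathfrak k$ with values in $W^{*}$; since $\mathfrak k$ is reductive and $W$ has no trivial summand, this cocycle is a coboundary, so $(\ast)$ is solvable and $\mathfrak g$ is not weakly irreducible. Hence $\mathfrak h$ must act trivially on all of $L^\perp$, i.e. $\mathfrak h\subseteq\mathfrak{so}(m)$. On the (now trivial) module $L^\perp$ condition $(\ast)$ degenerates to $g(X,w)=-a\,\ell(w)$, and combining its unsolvability with the bracket relation $[(a,0,0),(0,A,X)]=(0,0,aX+AX)$ forces $\mathfrak a=0$ together with surjectivity of the induced map $\psi\colon\mathfrak h\to L^\perp$ sending $A$ to the $L^\perp$-component of its translational lift; the subalgebra condition again gives $\psi|_{\mathfrak h'}=0$, and surjectivity yields $\dim\mathfrak z(\mathfrak h)\geqslant n-m$. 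This is exactly \textbf{Type~4}. The converse --- that each of the four families is weakly irreducible --- is the easy half, following in Types~1--3 from $L=\mathbb{R}^n$ and in Type~4 from surjectivity of $\psi$, since both render $(\ast)$ unsolvable for every nonzero $W$.
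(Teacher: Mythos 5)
You have, in effect, reconstructed the original algebraic proof of B\'erard-Bergery and Ikemakhen rather than the proof given in this paper. The paper's route is geometric: Theorem~\ref{th11} identifies $\operatorname{SO}^0(1,n+1)_{\mathbb{R}p}$ with $\operatorname{Sim}^0(n)$ by letting it act on $\partial L^{n+1}\setminus\{\mathbb{R}p\}\simeq\mathbb{R}^n$, shows that weak irreducibility of the subgroup corresponds to transitivity of its action on $\mathbb{R}^n$, and then Theorem~\ref{th12} quotes the classification of connected transitive subgroups of $\operatorname{Sim}^0(n)$ from the theory of homogeneous Riemannian spaces of negative curvature. Your criterion $(\ast)$ is exactly the infinitesimal form of ``$G$ preserves a proper affine subspace'' (a graph over $W$ corresponds to an affine subspace parallel to $W^\perp$), and your cohomological step ($H^1(\mathfrak{k},W)=0$ when the rotational part moves $W$ nontrivially) is the algebraic shadow of the fact that such a group automatically has an invariant affine subspace. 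Your version is self-contained, stays at the Lie-algebra level, and produces the matrix normal forms directly without invoking the external classification; what it gives up is the interpretation of the four types as the four kinds of transitive similarity groups (with homotheties, isometries only, screw homotheties, screw isometries), which is the stated point of the paper's treatment. The argument is sound, and you have correctly isolated the delicate points; the two that still need to be written out in full are (i)~the reduction of an arbitrary proper nondegenerate invariant subspace to a graph over an $\mathfrak{h}$-invariant $W\subseteq\mathbb{R}^n$, using that the only isotropic vectors in $p^\perp$ are the multiples of $p$, and (ii)~in the case $L\ne\mathbb{R}^n$ with $\operatorname{pr}_{\mathbb{R}}\mathfrak{g}\ne 0$, the vanishing of the translational cocycle on the ideal $\{a=0\}$, which rests on the invertibility of $\operatorname{ad}_{A_0}-\operatorname{id}$ on a compact Lie algebra; both are routine but neither is one line.
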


The subalgebra $\mathfrak{h}\subset\mathfrak{so}(n)$ associated
above with a  weakly irreducible subalgebra $\mathfrak{g}\subset
\mathfrak{so}(1,n+1)_{\mathbb{R} p}$ is called \textit{the
orthogonal part} of the Lie algebra~$\mathfrak{g}$.

The proof of this theorem given in~\cite{21} is algebraic and it
does not give any interpretation of the obtained algebras. We give
a geometric proof of this result together with an illustrative
interpretation.

\begin{theorem}
\label{th11} There exists a Lie groups isomorphism
$$
\operatorname{SO}^0(1,n+1)_{\mathbb{R}p}\simeq\operatorname{Sim}^0(n),
$$
where $\operatorname{Sim}^0(n)$ is the connected Lie group of the
similarity transformations of the Euclidean space~$\mathbb{R}^n$.
Under this isomorphism  weakly irreducible Lie subgroups
from~$\operatorname{SO}^0(1,n+\nobreak1)_{\mathbb{R} p}$
correspond to transitive Lie subgroups
in~$\operatorname{Sim}^0(n)$.
\end{theorem}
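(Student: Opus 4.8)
The plan is to obtain the isomorphism from a geometric action and then match subgroups through it. Consider the isotropic lines of $\mathbb{R}^{1,n+1}$ other than $\mathbb{R}p$. Each such line contains a unique isotropic vector $v$ with $g(v,p)=1$, and the isotropy condition forces $v=v(x)=q+x-\tfrac{1}{2}|x|^2p$ for a unique $x\in\mathbb{R}^n$; thus the paraboloid section $\{v:g(v,v)=0,\ g(v,p)=1\}$ is parametrized by $\mathbb{R}^n$. The group $\operatorname{SO}^0(1,n+1)_{\mathbb{R}p}$ permutes these lines and so acts on $\mathbb{R}^n$. Computing this action on the three one-parameter families coming from the decomposition $\operatorname{SO}^0(1,n+1)_{\mathbb{R}p}=(\mathbb{R}^+\times\operatorname{SO}(n))\rightthreetimes\mathbb{R}^n$: the unipotent (null-rotation) element with parameter $X$ sends $v(x)\mapsto v(x-X)$, the rotation by $f\in\operatorname{SO}(n)$ sends $v(x)\mapsto v(fx)$, and the homothety $\operatorname{diag}(a,\operatorname{id},1/a)$ sends the line $[v(x)]$ to $[v(ax)]$ after renormalizing $g(\,\cdot\,,p)=1$. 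Hence every element acts as a similarity $x\mapsto afx+b$, and the resulting homomorphism $\operatorname{SO}^0(1,n+1)_{\mathbb{R}p}\to\operatorname{Sim}^0(n)$ respects the two matching semidirect decompositions $(\mathbb{R}^+\times\operatorname{SO}(n))\rightthreetimes\mathbb{R}^n$. It is a bijection because the ratio $a$, the rotation $f$ and the translation $b$ are read off from the induced similarity and each factor corresponds bijectively; this is the required isomorphism. Conceptually it is the identification of $\operatorname{SO}^0(1,n+1)$ with the conformal group of $S^n=\partial\mathbb{H}^{n+1}$, under which the stabilizer of the point $\infty=[\mathbb{R}p]$ acts on $S^n\setminus\{\infty\}\cong\mathbb{R}^n$ by similarities.

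Since the isomorphism carries connected subgroups to connected subgroups bijectively, it remains to prove that a connected $G\subset\operatorname{SO}^0(1,n+1)_{\mathbb{R}p}$ is weakly irreducible if and only if the corresponding $\widetilde{G}\subset\operatorname{Sim}^0(n)$ acts transitively on $\mathbb{R}^n$. For the direction ``weakly irreducible $\Rightarrow$ transitive'' I would invoke Theorem~\ref{th10}: a weakly irreducible $\mathfrak{g}$ is of one of the four listed types, and each of them already moves the origin onto all of $\mathbb{R}^n$. Indeed, Types~1--3 contain the entire null-rotation ideal $\mathbb{R}^n$, which maps onto the full translation group of $\operatorname{Sim}^0(n)$, so $\widetilde{G}$ is transitive; for Type~4 the subgroup contains the translations by $\mathbb{R}^m$ together with the flows of the elements $(0,A,\psi(A))$, whose $\mathbb{R}^{n-m}$-component is the genuine translation $t\mapsto t\,\psi(A)$ (because $\mathfrak{h}\subset\mathfrak{so}(m)$ acts trivially on $\mathbb{R}^{n-m}$), and the surjectivity of $\psi$ then supplies every remaining direction. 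Thus all four types are transitive.

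For the converse I would argue contrapositively and geometrically. If $G$ is not weakly irreducible it preserves a proper nondegenerate subspace $V$; then $\mathbb{R}^{1,n+1}=V\oplus V^{\perp}$ splits into a Lorentzian and a Euclidean invariant factor, and the Lorentzian factor determines a proper totally geodesic subspace of $\mathbb{H}^{n+1}$ whose boundary $\Sigma\subset S^n$ is a $G$-invariant round sphere of dimension at most $n-1$. Everything then hinges on the position of $\infty=[\mathbb{R}p]$ relative to $\Sigma$. If $\infty\in\Sigma$, then in the chart $\Sigma\setminus\{\infty\}$ is a proper affine subspace of $\mathbb{R}^n$ invariant under $\widetilde{G}$, so the orbit of any of its points stays inside it and $\widetilde{G}$ is not transitive. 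If $\infty\notin\Sigma$, then $\Sigma$ is a genuine round sphere in $\mathbb{R}^n$, and a similarity preserving a round sphere must fix its centre and act as an isometry, so $\widetilde{G}$ fixes a point of $\mathbb{R}^n$ and again is not transitive. This yields ``transitive $\Rightarrow$ weakly irreducible''.

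The main obstacle is precisely this last step: converting the linear datum of an invariant nondegenerate subspace into the conformal picture and controlling the incidence of the fixed line $\mathbb{R}p$ with the associated boundary sphere, i.e. the dichotomy ``sphere through $\infty$'' (an invariant affine subspace) versus ``sphere missing $\infty$'' (an invariant bounded sphere with a fixed centre). One must also dispose of the degenerate extremes---$\Sigma$ a pair of points, or the Lorentzian factor a timelike line giving a $G$-fixed point of $\mathbb{H}^{n+1}$ and hence a fixed centre in $\mathbb{R}^n$---so as to be certain that no transitive configuration can preserve a proper nondegenerate subspace. A purely linear-algebraic rendering of the same dichotomy, using the decomposition $p=p_V+p_{V^{\perp}}$ and the eigenvalue of $\mathfrak{g}$ on $\mathbb{R}p$, is available as an alternative but is less transparent.
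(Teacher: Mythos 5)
Your construction of the isomorphism is in substance the paper's own: there too $\operatorname{SO}^0(1,n+1)_{\mathbb{R}p}$ is realized as the stabilizer of a point of the boundary $\partial L^{n+1}\simeq S^n$ acting by similarities on the complement, the only cosmetic difference being that the paper normalizes the isotropic lines by a spherical section of the cone where you use the paraboloid section $g(v,p)=1$; the three computations for the homothety, rotation and translation factors are identical. The divergence is in the second assertion. The paper handles both directions at once by showing that $G$ preserves no proper non-degenerate subspace of $\mathbb{R}^{1,n+1}$ if and only if the corresponding subgroup of $\operatorname{Sim}^0(n)$ preserves no proper affine subspace of $\mathbb{R}^n$, and then quotes~\cite{3},~\cite{7} for the equivalence of the latter condition with transitivity. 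Your contrapositive argument (invariant non-degenerate subspace $\Rightarrow$ invariant boundary sphere $\Rightarrow$ invariant proper affine subspace, or a fixed centre) is a correctly worked-out version of the ``easy'' half of that equivalence, including the degenerate cases of a timelike invariant line and a zero-dimensional sphere. But for the direction ``weakly irreducible $\Rightarrow$ transitive'' you invoke Theorem~\ref{th10} and verify transitivity type by type. This is logically admissible only because Theorem~\ref{th10} has an independent algebraic proof in the literature; within the paper the entire purpose of Theorem~\ref{th11}, combined with Theorem~\ref{th12}, is to give a geometric re-derivation of Theorem~\ref{th10}, so your argument would be circular in that programme. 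What your proposal omits, and what the paper's citation of~\cite{3},~\cite{7} supplies, is the genuinely non-trivial fact that a connected subgroup of $\operatorname{Sim}^0(n)$ preserving no proper affine subspace must already act transitively; a self-contained proof in the spirit of the paper should replace the appeal to Theorem~\ref{th10} by that statement.
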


\begin{proof}
We consider the boundary~$\partial L^{n+1}$ of the Lobachevskian
space
$$
\partial L^{n+1}=\{\mathbb{R}X\mid X\in\mathbb{R}^{1,n+1},\
g(X,X)=0,\ X\ne 0\}
$$
as the set of lines of the isotropic cone
$$
C=\{X\in \mathbb{R}^{1,n+1}\mid g(X,X)=0\}.
$$
Let us identify~$\partial L^{n+1}$ with the~$n$-dimensional unite
sphere~$S^n$ in the following way. Consider the basis
$e_0,e_1,\dots,e_n,e_{n+1}$ of the space~$\mathbb{R}^{1,n+1}$,
where
$$
e_0=\frac{\sqrt{2}}{2}(p-q),\quad e_{n+1}=\frac{\sqrt{2}}{2}(p+q).
$$
Consider the vector  subspace $E_1=E \oplus \mathbb{R}
e_{n+1}\subset\mathbb{R}^{1,n+1}$. Each isotropic line intersects
the affine  subspace $e_0+E_1$ at a unique point. The intersection
 $(e_0+E_1)\cap C$ constitutes the set
$$
\{X\in \mathbb{R}^{1,n+1} \mid x_0=1,\
x_1^2+\cdots+x_{n+1}^2=1\},
$$
which is the $n$-dimensional sphere~$S^n$. This gives us the
identification $\partial L^{n+1}\simeq S^n$.

The group  $\operatorname{SO}^0(1,n+1)_{\mathbb{R}p}$ acts
on~$\partial L^{n+1}$ (as the group of conformal transformations)
and it preserves the point $\mathbb{R} p\in\partial L^{n+1}$,
i.e., $\operatorname{SO}^0(1,n+1)_{\mathbb{R} p}$ acts on the
Euclidean  space $\mathbb{R}^n\simeq
\partial L^{n+1}\setminus\{\mathbb{R} p\}$ as the group of similarity transformations. Indeed, the
computations show that the elements
$$
\begin{pmatrix}
a & 0 & 0\\ 0 & \operatorname{id} & 0\\ 0 & 0 & 1/a \\
\end{pmatrix},
\begin{pmatrix}
1 & 0 & 0\\ 0 & f & 0\\ 0 & 0 & 1 \\
\end{pmatrix},
\begin{pmatrix}
1 & X^t & -X^tX/2 \\ 0 & \operatorname{id} & -X\\ 0 & 0& 1 \\
\end{pmatrix}\in \operatorname{SO}^0(1,n+1)_{\mathbb{R}p}
$$
act on~$\mathbb{R}^n$ as the homothety  $Y\mapsto aY$, the special
orthogonal transformation $f\in \operatorname{SO}(n)$ and the
translation $Y\mapsto Y+X$, respectively. Such transformations
generate the Lie group $\operatorname{Sim}^0(n)$. This gives the
isomorphism
$\operatorname{SO}^0(1,n+1)_{\mathbb{R}p}\simeq\operatorname{Sim}^0(n)$.
Next, it is easy to show that a subgroup $G\subset
\operatorname{SO}^0(1,n+1)_{\mathbb{R} p}$ does not preserve any
proper non-degenerate subspace in~$\mathbb{R}^{1,n+1}$ if and only
if the corresponding subgroup $G\subset \operatorname{Sim}^0(n)$
does not preserve any proper affine subspace in~$\mathbb{R}^n$.
The last condition is equivalent to the transitivity of the action
of~$G$ on~$\mathbb{R}^n$~\cite{3},~\cite{7}.
\end{proof}

It remains to classify connected transitive Lie subgroups
in~$\operatorname{Sim}^0(n)$. This is easy to do  using the
results from~\cite{3},~\cite{7} (see~\cite{57}).

\begin{theorem}
\label{th12} A connected subgroup
$G\subset\operatorname{Sim}^0(n)$ is transitive  if and only
if~$G$ is conjugated to a group of one of the following types.

{\rm\textbf{Type~1}}: $G=(\mathbb{R}^+\times H)\rightthreetimes
\mathbb{R}^n$, where $H\subset \operatorname{SO}(n)$ is a
connected Lie subgroup.

{\rm\textbf{Type~2}}: $G=H\rightthreetimes \mathbb{R}^n$.

{\rm\textbf{Type~3}}: $G=(\mathbb{R}^\Phi \times
H)\rightthreetimes \mathbb{R}^n$, where $\Phi\colon
\mathbb{R}^+\to \operatorname{SO}(n)$ is a homomorphism and
$$
\mathbb{R}^\Phi=\{a\cdot\Phi(a)\mid a\in \mathbb{R}^+\}\subset
\mathbb{R}^+\times \operatorname{SO}(n)
$$
is a group of screw homotheties of~$\mathbb{R}^n$.

{\rm\textbf{Type~4}}: $G=(H\times U^{\Psi})\rightthreetimes W$,
where exists an orthogonal decomposition  $\mathbb{R}^n=U\oplus
W$, $H\subset \operatorname{SO}(W)$,  $\Psi\colon U\to
\operatorname{SO}(W)$ is an injective homomorphism, and
$$
U^{\Psi}=\{\Psi(u)\cdot u\mid u\in U\}\subset
\operatorname{SO}(W)\times U
$$
is a group of screw  isometries of~$\mathbb{R}^n$.
\end{theorem}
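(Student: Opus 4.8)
The plan is to work at the Lie algebra level: a connected subgroup $G\subset\operatorname{Sim}^0(n)$ is determined by $\mathfrak{g}\subset\mathfrak{sim}(n)=(\mathbb{R}\oplus\mathfrak{so}(n))\ltimes\mathbb{R}^n$, and transitivity is detected on $\mathfrak{g}$. Let $\theta\colon\mathfrak{sim}(n)\to\mathbb{R}\oplus\mathfrak{so}(n)$ be the projection killing the translation ideal $\mathbb{R}^n$, put $\bar{\mathfrak{g}}=\theta(\mathfrak{g})$ and $W=\mathfrak{g}\cap\mathbb{R}^n$. Since $\mathbb{R}^n$ is an ideal, $W$ is invariant under every $a\cdot\operatorname{id}+A$ with $(a,A,\cdot)\in\mathfrak{g}$; as these operators lie in $\mathbb{R}\cdot\operatorname{id}\oplus\mathfrak{so}(n)$, the orthogonal complement $U=W^{\perp}$ is invariant too, so $\mathbb{R}^n=U\oplus W$ is a $\bar{\mathfrak{g}}$-invariant orthogonal splitting. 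The infinitesimal generator of $(a,A,X)$ at $Y\in\mathbb{R}^n$ is $aY+AY+X$, so $G$ is transitive exactly when this evaluation is surjective for every $Y$; by \cite{3},\cite{7} this is equivalent to $G$ preserving no proper affine subspace, the criterion already used at the end of the proof of Theorem~\ref{th11}. (Granting Theorem~\ref{th10} one could instead simply exponentiate the four algebra types, but I prefer the direct similarity-group argument, since that is what yields the geometric picture and avoids presupposing the algebraic classification.)

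I would then organize the classification by two invariants of $\mathfrak{g}$: the dilation functional $\delta\colon\mathfrak{g}\to\mathbb{R}$, the composite of $\theta$ with the projection onto the homothety factor, and the translation subspace $W$. Consider first $\delta\ne 0$, so $\mathfrak{g}$ contains a genuine homothety direction. The key step, for which I would invoke \cite{3},\cite{7}, is that transitivity then forces $W=\mathbb{R}^n$: on the quotient $\mathbb{R}^n/W\cong U$ the induced group has no nontrivial translations (a pure translation in $\mathfrak{g}$ lies in $W$, hence projects to $0$ on $U$) and yet must act transitively, which is impossible unless $U=0$. Once $W=\mathbb{R}^n$, the answer depends only on how $\operatorname{im}\delta$ sits inside $\bar{\mathfrak{g}}$: if the pure homothety $(1,0,0)$ lies in $\mathfrak{g}$ one reads off $G=(\mathbb{R}^{+}\times H)\rightthreetimes\mathbb{R}^n$ (Type~1), while if the homothety appears only coupled to a rotation, that coupling is a homomorphism $\Phi\colon\mathbb{R}^{+}\to\operatorname{SO}(n)$ into the centralizer of $H$ whose slaved line exponentiates to the screw-homothety group $\mathbb{R}^{\Phi}$, giving $G=(\mathbb{R}^{\Phi}\times H)\rightthreetimes\mathbb{R}^n$ (Type~3).

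It remains to treat $\delta=0$, where $\mathfrak{g}\subset\mathfrak{so}(n)\ltimes\mathbb{R}^n$ is infinitesimally isometric. If $W=\mathbb{R}^n$ then $G=H\rightthreetimes\mathbb{R}^n$ with $H$ the connected subgroup of $\bar{\mathfrak{g}}$ (Type~2). The remaining case $W\subsetneq\mathbb{R}^n$ is, I expect, the main obstacle, since here the complementary directions $U=W^{\perp}$ are reached only in combination with rotations of $W$: each $u\in U$ occurs as the $U$-component of a translation part only together with some rotation of $W$, which defines a linear map $\Psi\colon U\to\mathfrak{so}(W)$, necessarily injective because $\Psi(u)=0$ with $(0,0,u)\in\mathfrak{g}$ would force $u\in W\cap W^{\perp}=0$. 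Exponentiating the elements $(0,\Psi(u),u)$ produces fixed-point-free screw isometries $U^{\Psi}$, and one checks that preservation of no proper affine subspace is precisely the statement that $\Psi$ is injective and compatible with the splitting, recovering $G=(H\times U^{\Psi})\rightthreetimes W$ (Type~4). Finally, each of the four families manifestly leaves no proper affine subspace invariant and hence is transitive, which closes the classification; the only nontrivial inputs are the transitivity criterion and the structure of transitive similarity groups supplied by \cite{3},\cite{7}.
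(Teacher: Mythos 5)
Your overall strategy --- classifying $\mathfrak g\subset\mathfrak{sim}(n)$ by the dilation functional $\delta$ and the translation ideal $W=\mathfrak g\cap\mathbb R^n$, with transitivity detected by the affine-subspace criterion of \cite{3}, \cite{7} --- is sound, and it is essentially what \cite{57} does; the survey itself gives no proof of Theorem~\ref{th12} beyond citing those sources. But the one step you argue in detail, namely that $\delta\ne0$ forces $W=\mathbb R^n$, is justified by an invalid argument. You claim the induced group on $U=\mathbb R^n/W$ contains no nontrivial translations because pure translations of $G$ project trivially; that only shows the images of pure translations are trivial. Elements of $G$ whose rotational part acts nontrivially on $W$ but trivially on $U$ still induce genuine translations of $U$ --- this is exactly what happens for Type~4, where the screw isometries $\Psi(u)\cdot u$ induce the full translation group of $U$. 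Note also that your argument nowhere uses the hypothesis $\delta\ne0$, while the conclusion $W=\mathbb R^n$ is false without it (Type~4 again); an argument that does not use its hypothesis, and whose conclusion fails without that hypothesis, cannot be correct. The auxiliary claim that a transitive group of similarities without translations is impossible is, in any case, not something you may assume: it is of the same depth as the statement being proved.

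The step can be repaired using the fixed point of a proper similarity. If $\delta\ne0$, pick $\xi=(1,A_0,X_0)\in\mathfrak g$. For $t\ne0$ the similarity $\exp(t\xi)$ has ratio $e^t\ne1$ and hence a unique fixed point; conjugating $G$ by the translation to that point we may assume $X_0=0$. Then $\operatorname{ad}_\xi$ acts on the ideal $\mathbb R^n\subset\mathfrak{sim}(n)$ as $\operatorname{id}+A_0$, whose eigenvalues all have real part~$1$, and on $\mathbb R\oplus\mathfrak{so}(n)$ as $(b,B)\mapsto(0,[A_0,B])$, whose eigenvalues are purely imaginary. Splitting the $\operatorname{ad}_\xi$-invariant subspace $\mathfrak g$ along the corresponding generalized eigenspaces gives $\mathfrak g=\bigl(\mathfrak g\cap(\mathbb R\oplus\mathfrak{so}(n))\bigr)\oplus\bigl(\mathfrak g\cap\mathbb R^n\bigr)$, so the translation part of every element of $\mathfrak g$ lies in $W$; transitivity at the origin then yields $W=\mathbb R^n$, and your dichotomy between Types~1 and~3 goes through. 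The remaining loose ends in your Type~4 discussion (well-definedness of $\Psi$ modulo $\mathfrak g\cap\mathfrak{so}(n)$, why the rotational parts act trivially on $U$ and commute with $H$) are of the kind the paper itself delegates to \cite{3}, \cite{7}, so I would not count them as gaps of the same severity, but they each deserve at least a sentence.
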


It is easy to show that the  subalgebras
$\mathfrak{g}\subset\mathfrak{so}(1,n+1)_{\mathbb{R}p}$,
corresponding to the subgroups $G\subset\operatorname{Sim}^0(n)$
from the last theorem exhaust the Lie algebras from
Theorem~\ref{th10}. In what follows we will denote the Lie algebra
$\mathfrak{so}(1,n+1)_{\mathbb{R} p}$ by~$\mathfrak{sim}(n)$.

\section{Curvature tensors and classification of Berger algebras}
\label{sec4}

In this section we consider the structure of the spaces  of the
curvature tensors~$\mathscr{R}(\mathfrak{g})$ for subalgebras
$\mathfrak{g}\subset\mathfrak{sim}(n)$. Together with the result
by Leistner~\cite{100} about the classification of weak
 Berger algebras this will give a classification of the
 Berger subalgebras $\mathfrak{g}\subset\mathfrak{sim}(n)$. Next we find the curvature tensor
 of the Walker manifolds, i.e., manifolds
with the holonomy algebras $\mathfrak{g}\subset\mathfrak{sim}(n)$.
The results of this section are published in~\cite{55},~\cite{66}.

\subsection{Algebraic curvature tensors and classification of Berger algebras}
\label{ssec4.1} By the investigation of the
space~$\mathscr{R}(\mathfrak{g})$ for subalgebras
$\mathfrak{g}\subset\mathfrak{sim}(n)$ appears the space
\begin{align}
\nonumber \mathscr{P}(\mathfrak{h})=\{P\in \operatorname{Hom}
(\mathbb{R}^n,\mathfrak{h})\mid{}& g(P(X)Y,Z)+g(P(Y)Z,X)
\\
\label{eq4.1} &+g(P(Z)X,Y)=0,\ X,Y,Z\in \mathbb{R}^n\},
\end{align}
where $\mathfrak{h}\subset\mathfrak{so}(n)$ is a subalgebra. The
space~$\mathscr{P}(\mathfrak{h})$ is called the space of weak
curvature tensors for~$\mathfrak{h}$. Denote
by~$L(\mathscr{P}(\mathfrak{h}))$ the vector  subspace
in~$\mathfrak{h}$ spanned by the elements of the form~$P(X)$ for
all $P\in\mathscr{P}(\mathfrak{h})$ and $X\in \mathbb{R}^n$. It is
easy to show~\cite{55},~\cite{100} that if  $R\in
\mathscr{R}(\mathfrak{h})$, then for each $Z\in \mathbb{R}^n$ it
holds $P(\,\cdot\,)=R(\,\cdot\,,Z)\in\mathscr{P}(\mathfrak{h})$.
By this reason the algebra~$\mathfrak{h}$ is called a weak Berger
algebra if it holds $L(\mathscr{P}(\mathfrak{h}))=\mathfrak{h}$.
The structure of the $\mathfrak{h}$-module on the
space~$\mathscr{P}(\mathfrak{h})$ is introduced in the natural
way:
$$
P_\xi(X)=[\xi,P(X)]- P(\xi X),
$$
where $P\in\mathscr{P}(\mathfrak{h})$, $\xi\in\mathfrak{h}$, $X\in
\mathbb{R}^n$. This implies that the subspace
$L(\mathscr{P}(\mathfrak{h}))\subset\mathfrak{h}$ is an ideal
in~$\mathfrak{h}$.

It is convenient to identify the Lie algebra
$\mathfrak{so}(1,n+1)$  with the space of  bivectors
$\wedge^2\mathbb{R}^{1,n+1}$ in such a way that
$$
(X\wedge Y)Z=g(X,Z)Y-g(Y,Z)X,\qquad X,Y,Z\in\mathbb{R}^{1,n+1}.
$$
Then the element $(a,A,X)\in\mathfrak{sim}(n)$ corresponds to the
bivector $-ap\wedge q+A-p\wedge X$, where
$A\in\mathfrak{so}(n)\simeq\wedge^2\mathbb{R}^n$.

The next theorem from~\cite{55} provides the structure of the
space of the curvature tensors for the weakly irreducible
subalgebras $\mathfrak{g}\subset\mathfrak{sim}(n)$.

\begin{theorem}
\label{th13} Each curvature tensor
$R\in\mathscr{R}(\mathfrak{g}^{1,\mathfrak{h}})$ is uniquely
determined by the elements
$$
\lambda\in\mathbb{R},\quad \vec{v}\in \mathbb{R}^n,\quad
R_0\in\mathscr{R}(\mathfrak{h}),\quad
P\in\mathscr{P}(\mathfrak{h}),\quad T\in\odot^2\mathbb{R}^n
$$
in the following way:
\begin{gather}
R(p,q)=-\lambda p\wedge q-p\wedge \vec{v},\quad
R(X,Y)=R_0(X,Y)+p\wedge (P(X)Y-P(Y)X), \label{eq4.2}
\\
R(X,q)=-g(\vec v,X)p\wedge q+P(X)-p\wedge T(X),\quad R(p,X)=0,
\label{eq4.3}
\end{gather}
$X,Y\in\mathbb{R}^n$. In particular, there exists an isomorphism
of the  $\mathfrak{h}$-modules
$$
\mathscr{R}(\mathfrak{g}^{1,\mathfrak{h}})\simeq\mathbb{R}\oplus
\mathbb{R}^n\oplus\odot^2\mathbb{R}^n\oplus\mathscr{R}(\mathfrak{h})
\oplus\mathscr{P}(\mathfrak{h}).
$$
Next,
\begin{align*}
\mathscr{R}(\mathfrak{g}^{2,\mathfrak{h}})&=
\{R\in\mathscr{R}(\mathfrak{g}^{1,\mathfrak{h}})\mid\lambda=0, \
\vec v=0\},
\\
\mathscr{R}(\mathfrak{g}^{3,\mathfrak{h},\varphi})&=
\{R\in\mathscr{R}(\mathfrak{g}^{1,\mathfrak{h}})\mid\lambda=0,\
R_0\in\mathscr{R}(\ker\varphi),\ g(\vec v,\,\cdot\,)=
\varphi(P(\,\cdot\,))\},
\\
\mathscr{R}(\mathfrak{g}^{4,\mathfrak{h},m,\psi})&=
\{R\in\mathscr{R}(\mathfrak{g}^{2,\mathfrak{h}})\mid
R_0\in\mathscr{R}(\ker\psi),\
\operatorname{pr}_{\mathbb{R}^{n-m}}{\!}\circ{} T=\psi\circ
P\}.\end{align*}
\end{theorem}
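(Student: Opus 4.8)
The plan is to use the Witt decomposition $\mathbb{R}^{1,n+1}=\mathbb{R}p\oplus\mathbb{R}^n\oplus\mathbb{R}q$ to split $\wedge^2\mathbb{R}^{1,n+1}$ into the four blocks $\mathbb{R}\,p\wedge q$, $p\wedge\mathbb{R}^n$, $q\wedge\mathbb{R}^n$ and $\wedge^2\mathbb{R}^n$, and to read off the value of $R\in\mathscr{R}(\mathfrak{g}^{1,\mathfrak{h}})$ on each block. Writing an element of $\mathfrak{sim}(n)$ as a triple $(a,A,V)$, I will use the three action formulas $(a,A,V)p=ap$, $(a,A,V)Y=g(V,Y)p+AY$ for $Y\in\mathbb{R}^n$, and $(a,A,V)q=-aq-V$; these let me translate every Bianchi relation into separate conditions on the $p\wedge q$, $\mathfrak{h}$ and $\mathbb{R}^n$ components. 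Throughout I may use the pair symmetry $(R(X,Y)Z,W)=(R(Z,W)X,Y)$ of \eqref{eq2.1}, since $\mathscr{R}(\mathfrak{g}^{1,\mathfrak{h}})\subset\mathscr{R}(\mathfrak{so}(1,n+1))$.

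First I would establish the vanishing $R(p,X)=0$ of \eqref{eq4.3}: applying \eqref{eq2.1} gives $(R(p,X)Z,W)=(R(Z,W)p,X)$, and since every $R(Z,W)\in\mathfrak{g}\subset\mathfrak{sim}(n)$ preserves $\mathbb{R}p$ we have $R(Z,W)p\in\mathbb{R}p$, which is $g$-orthogonal to $X\in\mathbb{R}^n$; nondegeneracy of $g$ then forces $R(p,X)=0$. Next, the Bianchi identity on the triple $(p,X,Y)$ reduces to $R(X,Y)p=0$, i.e. the $p\wedge q$ component of $R(X,Y)$ vanishes; feeding this back through \eqref{eq2.1} with $Z=p$, $W=q$ shows that the $\mathfrak{h}$ component of $R(p,q)$ is zero, so $R(p,q)=(\lambda,0,\vec v)$ for some $\lambda\in\mathbb{R}$, $\vec v\in\mathbb{R}^n$, which is the first relation of \eqref{eq4.2}. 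For $R(X,Y)$ with $X,Y\in\mathbb{R}^n$ I split the Bianchi identity on $(X,Y,Z)$ into its $p$ part and its $\mathbb{R}^n$ part: the $\mathbb{R}^n$ part says exactly that the $\mathfrak{h}$ component $R_0$ of $R(X,Y)$ lies in $\mathscr{R}(\mathfrak{h})$, while the $p$ part is a cyclic identity on the $\mathbb{R}^n$ component $H$ of $R(X,Y)$.

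The identification of $H$ and $P$ is the heart of the argument. Pairing $R(X,Y)$ against $R(q,Z)$ via \eqref{eq2.1} (using $R(X,Y)q=-H(X,Y)$) expresses $H$ through the $\mathfrak{h}$ component $P$ of $R(X,q)$; the cyclic identity just obtained then becomes precisely the defining relation \eqref{eq4.1} of $\mathscr{P}(\mathfrak{h})$, so that $P\in\mathscr{P}(\mathfrak{h})$, and a short manipulation with the $\mathscr{P}$ identity and the skew symmetry of $P(\,\cdot\,)$ yields $H(X,Y)=P(Y)X-P(X)Y$, the second formula in \eqref{eq4.2}. It remains to treat $R(X,q)$: the Bianchi identity on $(p,q,X)$ identifies its $p\wedge q$ component with $g(\vec v,X)$, the pairing \eqref{eq2.1} on $(X,q;Y,q)$ shows that its $\mathbb{R}^n$ component $T$ is symmetric, i.e. $T\in\odot^2\mathbb{R}^n$, and its $\mathfrak{h}$ component is the same $P$ as above; this is \eqref{eq4.3}. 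Conversely, plugging arbitrary data $(\lambda,\vec v,R_0,P,T)$ with $R_0\in\mathscr{R}(\mathfrak{h})$, $P\in\mathscr{P}(\mathfrak{h})$, $T\in\odot^2\mathbb{R}^n$ into \eqref{eq4.2}, \eqref{eq4.3} I would check that the only remaining Bianchi triple, namely $(q,X,Y)$, is satisfied automatically, which gives the claimed bijection; the $\mathfrak{h}$-equivariance of each of the five identifications is routine and yields the module isomorphism.

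Finally, for Types~2--4 I would use that $\mathfrak{g}^{2,\mathfrak{h}},\mathfrak{g}^{3,\mathfrak{h},\varphi},\mathfrak{g}^{4,\mathfrak{h},m,\psi}\subset\mathfrak{g}^{1,\mathfrak{h}}$ and simply impose that the image of $R$ lie in the relevant subalgebra, translating this into conditions on $(\lambda,\vec v,R_0,P,T)$ block by block. Types~2 and~3 are immediate: demanding that every $p\wedge q$ component vanish gives $\lambda=0$, $\vec v=0$, and demanding $a=\varphi(A)$ on each block gives $\lambda=0$, $R_0\in\mathscr{R}(\ker\varphi)$ and $g(\vec v,\,\cdot\,)=\varphi(P(\,\cdot\,))$. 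The one case needing a genuine observation is Type~4: since there $\mathfrak{h}\subset\mathfrak{so}(m)$ annihilates $\mathbb{R}^{n-m}$, the vector $H(X,Y)=P(Y)X-P(X)Y$ automatically lies in $\mathbb{R}^m$, so the condition coming from $R(X,Y)$ collapses to $\psi\circ R_0=0$, i.e. $R_0\in\mathscr{R}(\ker\psi)$, while the condition coming from $R(X,q)$ reads $\operatorname{pr}_{\mathbb{R}^{n-m}}\circ T=\psi\circ P$. I expect the main obstacle to be purely bookkeeping: fixing once and for all the sign conventions in the triple-versus-bivector dictionary (so that, e.g., $P$ is read off from $R(X,q)$ rather than $R(q,X)$) and carrying them consistently through the four Bianchi triples and the pair symmetry; the Type~4 reduction is the only place where a structural fact, rather than a computation, is required.
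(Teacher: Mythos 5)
Your argument is correct and complete: the block decomposition of $R$ along the Witt basis, the use of the pair symmetry \eqref{eq2.1} to kill $R(p,X)$ and to transfer the $\mathbb{R}^n$-component of $R(X,Y)$ onto the $\mathfrak{h}$-component $P$ of $R(X,q)$, the verification that the remaining Bianchi triple $(q,X,Y)$ reduces to the symmetry of $T$, and the observation that $P(Y)X\in\mathbb{R}^m$ collapses the Type~4 condition on $R(X,Y)$ to $R_0\in\mathscr{R}(\ker\psi)$ all check out. The paper itself only cites this theorem to~\cite{55} without reproducing the proof, and your computation is essentially the argument given there, so nothing further is needed.
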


\begin{corollary}[\cite{55}]
\label{cor1} A weakly irreducible subalgebra
$\mathfrak{g}\subset\mathfrak{sim}(n)$ is a Berger  algebra  if
and only if its orthogonal part
$\mathfrak{h}\subset\mathfrak{so}(n)$ is a weak Berger algebra.
\end{corollary}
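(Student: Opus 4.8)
The plan is to exploit the explicit description of $\mathscr{R}(\mathfrak{g})$ furnished by Theorem~\ref{th13}, reading off the $\mathfrak{sim}(n)$-component of each curvature operator and comparing the span $L(\mathscr{R}(\mathfrak{g}))$ with $\mathfrak{g}$ slot by slot. By Theorem~\ref{th10} a weakly irreducible $\mathfrak{g}\subset\mathfrak{sim}(n)$ is of one of the four types, all governed by the same orthogonal part $\mathfrak{h}\subset\mathfrak{so}(n)$; I will treat Type~1 in full and then obtain the remaining types by imposing the linear constraints recorded in Theorem~\ref{th13}.

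First I would rewrite the operators $R(p,q)$, $R(X,Y)$, $R(X,q)$ from~\eqref{eq4.2}--\eqref{eq4.3} in the triple notation $(a,A,X)\in\mathfrak{sim}(n)$ via the dictionary $-a\,p\wedge q+A-p\wedge X$. A short computation gives $R(p,q)=(\lambda,0,\vec v)$, $R(X,Y)=(0,R_0(X,Y),-(P(X)Y-P(Y)X))$, $R(X,q)=(g(\vec v,X),P(X),T(X))$, and $R(p,X)=0$. For Type~1 the parameters $\lambda,\vec v,T,R_0,P$ are independent. Choosing $\vec v=0$ and varying $\lambda$ shows the $\mathbb{R}$-slot lies in $L(\mathscr{R}(\mathfrak{g}))$, and varying $\vec v$ with $\lambda=0$ gives the whole $\mathbb{R}^n$-slot. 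Since these two slots are already exhausted, the $\mathfrak{so}(n)$-slot contributed by $R(X,q)$ and $R(X,Y)$ reduces, after subtracting reachable components, to the span of $\{P(X)\}$ together with $\{R_0(X,Y)\}$. Using the identity recorded before the theorem, $R_0(\,\cdot\,,Z)\in\mathscr{P}(\mathfrak{h})$, so $L(\mathscr{R}(\mathfrak{h}))\subseteq L(\mathscr{P}(\mathfrak{h}))$ and the $\mathfrak{so}(n)$-slot is exactly $L(\mathscr{P}(\mathfrak{h}))$. Thus $L(\mathscr{R}(\mathfrak{g}^{1,\mathfrak{h}}))=\mathbb{R}\oplus L(\mathscr{P}(\mathfrak{h}))\oplus\mathbb{R}^n$, which equals $\mathfrak{g}^{1,\mathfrak{h}}=\mathbb{R}\oplus\mathfrak{h}\oplus\mathbb{R}^n$ precisely when $L(\mathscr{P}(\mathfrak{h}))=\mathfrak{h}$, i.e. when $\mathfrak{h}$ is a weak Berger algebra.

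For the remaining types I would impose the constraints of Theorem~\ref{th13} and repeat the bookkeeping. In Type~2 one has $\lambda=\vec v=0$ while $T$ stays free, so the $\mathbb{R}^n$-slot is recovered from $T(X)$ (take $T=\operatorname{id}$) and the $\mathfrak{so}(n)$-slot is again $L(\mathscr{P}(\mathfrak{h}))$. In Types~3 and~4 the $\mathbb{R}$-slot, resp. the $\mathbb{R}^{n-m}$-part of the $\mathbb{R}^n$-slot, is no longer independent but is tied to the $\mathfrak{so}(n)$-slot through $\varphi$, resp. $\psi$: the operator $R(X,q)$ now produces the coupled element $(\varphi(P(X)),P(X),\cdot)$, resp. $(0,P(X),T(X))$ with $\operatorname{pr}_{\mathbb{R}^{n-m}}T(X)=\psi(P(X))$, which already lies on the graph cutting out $\mathfrak{g}$. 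One then checks that the complementary slots ($\mathbb{R}^n$ in Type~3, $\mathbb{R}^m$ in Type~4) are still freely reachable by setting $P=0$ and varying $T$, and that the term $R_0\in\mathscr{R}(\ker\varphi)$, resp. $\mathscr{R}(\ker\psi)$, adds nothing new since $L(\mathscr{R}(\ker\varphi))\subseteq L(\mathscr{P}(\ker\varphi))\subseteq L(\mathscr{P}(\mathfrak{h}))$. Consequently $L(\mathscr{R}(\mathfrak{g}))$ coincides with $\mathfrak{g}$ after $\mathfrak{h}$ is replaced by $L(\mathscr{P}(\mathfrak{h}))$, and the Berger condition is again equivalent to $L(\mathscr{P}(\mathfrak{h}))=\mathfrak{h}$.

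The hard part will be the coupled cases, Types~3 and~4. Since $R(X,Y)\in\mathfrak{g}$ by the very definition of $\mathscr{R}(\mathfrak{g})$, no component transverse to $\mathfrak{g}$ can ever appear, so what must be verified is only that the graph of $\varphi$, resp. $\psi$, is filled out to its full extent, which I claim happens exactly when $L(\mathscr{P}(\mathfrak{h}))=\mathfrak{h}$; this requires combining the graph elements $(\varphi(P(X)),P(X))$ from $R(X,q)$ with the elements $R_0(X,Y)$ from $R(X,Y)$ and noting both sit in $\{(\varphi(A),A):A\in L(\mathscr{P}(\mathfrak{h}))\}$. A minor point worth isolating for Type~4, where $\mathfrak{h}\subset\mathfrak{so}(m)\subset\mathfrak{so}(n)$, is that any $P\in\mathscr{P}(\mathfrak{h})$ automatically vanishes on $\mathbb{R}^{n-m}$ and restricts to an element of $\mathscr{P}(\mathfrak{h})$ computed inside $\mathfrak{so}(m)$, so that $L(\mathscr{P}(\mathfrak{h}))$ is unambiguous; this is precisely what lets the single weak Berger condition on $\mathfrak{h}$ control all four types uniformly.
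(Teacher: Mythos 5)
Your proposal is correct and is essentially the paper's own argument: the corollary is obtained by reading off $L(\mathscr{R}(\mathfrak{g}))$ from the explicit description of $\mathscr{R}(\mathfrak{g})$ in Theorem~\ref{th13} via the bivector dictionary $(a,A,X)\leftrightarrow -ap\wedge q+A-p\wedge X$, noting that the $\mathfrak{so}(n)$-components are spanned by $P(X)$ and $R_0(X,Y)$ with $L(\mathscr{R}(\mathfrak{h}))\subseteq L(\mathscr{P}(\mathfrak{h}))$, so that in each of the four types the Berger condition reduces to $L(\mathscr{P}(\mathfrak{h}))=\mathfrak{h}$. Your slot-by-slot bookkeeping, including the coupling through $\varphi$ and $\psi$ in types~3 and~4 and the observation that $P\in\mathscr{P}(\mathfrak{h})$ vanishes on $\mathbb{R}^{n-m}$, matches the intended derivation.
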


\begin{corollary}[\cite{55}]
\label{cor2} A weakly irreducible subalgebra
$\mathfrak{g}\subset\mathfrak{sim}(n)$ such that its orthogonal
part $\mathfrak{h}\subset\mathfrak{so}(n)$ is the holonomy algebra
of a Riemannian manifold is a Berger algebra.
\end{corollary}

Corollary~\ref{cor1} reduces the classification problem of the
 Berger algebras for Lorentzian
manifolds to the classification problem of the weak Berger
algebras.

\begin{theorem}[\cite{55}]
\label{th14} {\rm (I)} For each weak  Berger algebra
$\mathfrak{h}\subset\mathfrak{so}(n)$ there exists an orthogonal
decomposition
\begin{equation} \label{eq4.4}
\mathbb{R}^{n}=\mathbb{R}^{n_1}\oplus\cdots\oplus\mathbb{R}^{n_s}
\oplus\mathbb{R}^{n_{s+1}}
\end{equation}
and the corresponding decomposition of  $\mathfrak{h}$ into the
direct sum of ideals
\begin{equation}
\label{eq4.5}
\mathfrak{h}=\mathfrak{h}_1\oplus\cdots\oplus\mathfrak{h}_s\oplus\{0\}
\end{equation}
such that $\mathfrak{h}_i(\mathbb{R}^{n_j})=0$ for $i\ne j$,
$\mathfrak{h}_i\subset\mathfrak{so}(n_i)$ and the representation
of~$\mathfrak{h}_i$ in~$\mathbb{R}^{n_i}$ is irreducible.

{\rm (II)} Suppose that  $\mathfrak{h}\subset\mathfrak{so}(n)$ is
a subalgebra with the decomposition from the part~{\rm (I)}. Then
holds the equality
$$
\mathscr{P}(\mathfrak{h})=\mathscr{P}(\mathfrak{h}_1)\oplus
\cdots\oplus\mathscr{P}(\mathfrak{h}_s).
$$
\end{theorem}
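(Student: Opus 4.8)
The plan is to treat part~(I) as a structural (decomposition) result for the compact Lie algebra $\mathfrak{h}\subset\mathfrak{so}(n)$, and then to deduce part~(II) by a direct computation with the cyclic identity~\eqref{eq4.1}, exploiting the block structure furnished by~(I).

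For part~(I), I would first invoke compactness: every subalgebra of $\mathfrak{so}(n)$ is compact, so its representation on $\mathbb{R}^n$ is completely reducible. Set $\mathbb{R}^{n_{s+1}}:=\{v\in\mathbb{R}^n\mid\mathfrak{h}v=0\}$, the subspace of $\mathfrak{h}$-fixed vectors; its orthogonal complement $W$ is $\mathfrak{h}$-invariant, and $\mathfrak{h}$ acts on $W$ faithfully and without nonzero fixed vectors. A first useful observation is that every $P\in\mathscr{P}(\mathfrak{h})$ vanishes on $\mathbb{R}^{n_{s+1}}$: for $X$ fixed and arbitrary $Y,Z$, skew-symmetry of $P(Y),P(Z)\in\mathfrak{so}(n)$ together with $P(Y)X=P(Z)X=0$ collapses~\eqref{eq4.1} to $g(P(X)Y,Z)=0$ for all $Y,Z$, whence $P(X)=0$. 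Thus $\mathbb{R}^{n_{s+1}}$ is exactly the trivial summand appearing in~\eqref{eq4.4}.

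The core of part~(I) is to split $W$ into irreducible $\mathfrak{h}$-submodules $\mathbb{R}^{n_1},\dots,\mathbb{R}^{n_s}$ and simultaneously split $\mathfrak{h}$ into ideals $\mathfrak{h}_i\subset\mathfrak{so}(n_i)$, each acting irreducibly on $\mathbb{R}^{n_i}$ and trivially on the other blocks. Here the weak Berger hypothesis $L(\mathscr{P}(\mathfrak{h}))=\mathfrak{h}$ is indispensable, and this is the step I expect to be the \emph{main obstacle}. The strategy is to feed vectors from distinct irreducible components into the cyclic identity~\eqref{eq4.1}: for $X,Y,Z$ lying in three components and using $g(P(X)Y,Z)=-g(Y,P(X)Z)$, one obtains relations among the off-diagonal blocks of the endomorphisms $P(X)$. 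The aim is to show that for $X$ in one component $P(X)$ cannot link inequivalent components and---crucially---that two isomorphic nontrivial irreducible summands cannot be glued by a common simple ideal without violating the spanning condition $L(\mathscr{P}(\mathfrak{h}))=\mathfrak{h}$. This multiplicity-freeness is precisely what makes, for instance, the diagonal action of $\mathfrak{so}(3)$ on $\mathbb{R}^3\oplus\mathbb{R}^3$ fail to be weak Berger; it is the genuinely nontrivial content. Once it is established, grouping the components and letting $\mathfrak{h}_i$ be the ideal acting on $\mathbb{R}^{n_i}$ yields~\eqref{eq4.5}, with $\mathfrak{h}_i(\mathbb{R}^{n_j})=0$ for $i\ne j$.

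Granting the decomposition of part~(I), part~(II) should follow cleanly. Given $P\in\mathscr{P}(\mathfrak{h})$, write $P(X)=\sum_i P^{(i)}(X)$, where $P^{(i)}(X)$ is the $\mathfrak{h}_i$-component. Since $\mathfrak{h}_i$ acts only on $\mathbb{R}^{n_i}$, for $X\in\mathbb{R}^{n_a}$, $Y\in\mathbb{R}^{n_b}$, $Z\in\mathbb{R}^{n_c}$ one has $g(P(X)Y,Z)=\delta_{bc}\,g(P^{(b)}(X)Y,Z)$, and similarly for the cyclic permutations, so~\eqref{eq4.1} splits according to the component labels. Taking $a=b=c=i$ gives exactly $P^{(i)}|_{\mathbb{R}^{n_i}}\in\mathscr{P}(\mathfrak{h}_i)$; taking two labels equal to $i$ and the third in a different block (including the trivial block $\mathbb{R}^{n_{s+1}}$) forces $g(P^{(i)}(Z)X,Y)=0$ for all $X,Y\in\mathbb{R}^{n_i}$, hence $P^{(i)}(Z)=0$ there, since $\mathfrak{h}_i\subset\mathfrak{so}(n_i)$ acts faithfully on $\mathbb{R}^{n_i}$; three distinct labels give no constraint. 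Consequently each $P^{(i)}$ is supported on $\mathbb{R}^{n_i}$ and lies in $\mathscr{P}(\mathfrak{h}_i)$, giving $\mathscr{P}(\mathfrak{h})\subseteq\bigoplus_i\mathscr{P}(\mathfrak{h}_i)$. The reverse inclusion is immediate: each $\mathscr{P}(\mathfrak{h}_i)$ embeds into $\mathscr{P}(\mathfrak{h})$ by extension by zero, and the summands land in distinct ideals over distinct blocks, so the sum is direct.
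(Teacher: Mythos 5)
Your part~(II) is correct and essentially complete, and your preliminary observation that every $P\in\mathscr{P}(\mathfrak{h})$ vanishes on the subspace of $\mathfrak{h}$-fixed vectors is also right. The genuine gap is in part~(I): you explicitly flag the splitting of $\mathfrak{h}$ into ideals as ``the main obstacle'' and then never carry it out, so the heart of the theorem is left unproved. What you do not seem to notice is that the missing step is exactly the computation you already wrote down in part~(II), only it must be run on an \emph{arbitrary} orthogonal decomposition $\mathbb{R}^n=\bigoplus_a V_a$ into irreducible (or trivial) $\mathfrak{h}$-submodules, which exists by compactness, rather than on the decomposition that (I) is supposed to produce. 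Since $P(X)\in\mathfrak{h}$ preserves every $V_a$, for $X\in V_a$ and $Y,Z\in V_b$ with $b\ne a$ the second and third terms of~\eqref{eq4.1} vanish (they are inner products of a vector of $V_b$, resp.\ $V_a$, with a vector of the complementary summand), so $g(P(X)Y,Z)=0$ and hence $P(X)|_{V_b}=0$. Therefore $\mathfrak{h}_a:=\operatorname{span}\{P(X)\mid P\in\mathscr{P}(\mathfrak{h}),\ X\in V_a\}$ is contained in $\mathfrak{so}(V_a)$ and annihilates every $V_b$ with $b\ne a$. The weak Berger hypothesis gives $\mathfrak{h}=L(\mathscr{P}(\mathfrak{h}))=\sum_a\mathfrak{h}_a$; the sum is direct because the supports are pairwise disjoint; and each $\mathfrak{h}_a$ is an ideal by the module structure recalled in Section~4.1, since $[\xi,P(X)]=P_\xi(X)+P(\xi X)\in\mathfrak{h}_a$ for $X\in V_a$. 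Collecting the summands with $\mathfrak{h}_a=0$ into $\mathbb{R}^{n_{s+1}}$ then yields~\eqref{eq4.4} and~\eqref{eq4.5}.

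In particular, no separate ``multiplicity-freeness'' lemma of the kind you anticipate is needed: a pair of isomorphic summands glued by a diagonally embedded simple ideal is excluded automatically, because for such an $\mathfrak{h}$ the computation above forces $\mathscr{P}(\mathfrak{h})=0$, contradicting $L(\mathscr{P}(\mathfrak{h}))=\mathfrak{h}$ (this is precisely why the diagonal $\mathfrak{so}(3)$ on $\mathbb{R}^3\oplus\mathbb{R}^3$ is not weak Berger). Note also that the survey states Theorem~\ref{th14} with a reference and contains no proof of it; the argument in the cited source proceeds along exactly the lines just described, so once you supply the paragraph above your proposal matches it.
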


B\'erard-Bergery and Ikemakhen~\cite{21} proved that the  orthogonal
part $\mathfrak{h}\subset\mathfrak{so}(n)$ of a  holonomy algebra
$\mathfrak{g}\subset\mathfrak{sim}(n)$ admits the decomposition of
the part~(I) of Theorem~\ref{th14}.

\begin{corollary}[\cite{55}]
\label{cor3} Suppose that  $\mathfrak{h}\subset\mathfrak{so}(n)$
is a  subalgebra  admitting the decomposition as in part~{\rm (I)}
of Theorem~{\rm\ref{th14}}. Then~$\mathfrak{h}$ is a weak Berger
algebra  if and only if the algebra~$\mathfrak{h}_i$ is a weak
Berger algebra for all $i=1,\dots,s$.
\end{corollary}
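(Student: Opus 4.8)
The plan is to deduce the corollary directly from part~(II) of Theorem~\ref{th14}, which supplies the $\mathfrak{h}$-module decomposition
$$
\mathscr{P}(\mathfrak{h})=\mathscr{P}(\mathfrak{h}_1)\oplus\cdots\oplus\mathscr{P}(\mathfrak{h}_s).
$$
First I would make precise how each summand sits inside $\mathscr{P}(\mathfrak{h})$: an element $P_i\in\mathscr{P}(\mathfrak{h}_i)$ is a linear map $\mathbb{R}^{n_i}\to\mathfrak{h}_i$, and it is regarded as an element of $\operatorname{Hom}(\mathbb{R}^n,\mathfrak{h})$ by extending it by zero on the complementary factors $\mathbb{R}^{n_j}$ ($j\ne i$) and $\mathbb{R}^{n_{s+1}}$ of the decomposition~\eqref{eq4.4}, composed with the inclusion $\mathfrak{h}_i\hookrightarrow\mathfrak{h}$. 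With this identification every $P\in\mathscr{P}(\mathfrak{h})$ is written uniquely as $P=P_1+\cdots+P_s$ with $P_i\in\mathscr{P}(\mathfrak{h}_i)$, and the image of each $P_i$ lies in $\mathfrak{h}_i$.

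Next I would compute $L(\mathscr{P}(\mathfrak{h}))$ in terms of this decomposition. Since $P(X)=\sum_{i=1}^{s}P_i(X)$ and each value $P_i(X)$ lies in $\mathfrak{h}_i$, the span of all elements $P(X)$ is contained in the sum of the spans of the $P_i(X)$; conversely each $P_i$, regarded as a map that vanishes off $\mathbb{R}^{n_i}$, already belongs to $\mathscr{P}(\mathfrak{h})$, so every element $P_i(X)$ with $X\in\mathbb{R}^{n_i}$ occurs among the generators. Hence
$$
L(\mathscr{P}(\mathfrak{h}))=L(\mathscr{P}(\mathfrak{h}_1))\oplus\cdots\oplus L(\mathscr{P}(\mathfrak{h}_s)),
$$
where $L(\mathscr{P}(\mathfrak{h}_i))\subset\mathfrak{h}_i$ and the sum is direct because the $\mathfrak{h}_i$ form a direct sum of ideals. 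The factor $\mathbb{R}^{n_{s+1}}$, on which $\mathfrak{h}$ acts trivially, contributes nothing, consistently with the absence of a corresponding summand on the right-hand side.

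Finally I would read off the equivalence. Comparing this direct sum with $\mathfrak{h}=\mathfrak{h}_1\oplus\cdots\oplus\mathfrak{h}_s$ from~\eqref{eq4.5}, and using that $L(\mathscr{P}(\mathfrak{h}_i))\subset\mathfrak{h}_i$ for each $i$, the equality $L(\mathscr{P}(\mathfrak{h}))=\mathfrak{h}$ holds if and only if $L(\mathscr{P}(\mathfrak{h}_i))=\mathfrak{h}_i$ for every $i=1,\dots,s$; that is, $\mathfrak{h}$ is a weak Berger algebra if and only if each $\mathfrak{h}_i$ is. The only genuine content is Theorem~\ref{th14}(II); granting it, the argument is a formal matching of two direct-sum decompositions, and the single point to verify is that the spanning operation $L(\,\cdot\,)$ cannot mix distinct summands $\mathfrak{h}_i$ — which is guaranteed by $L(\mathscr{P}(\mathfrak{h}_i))\subset\mathfrak{h}_i$ together with the directness of $\mathfrak{h}=\bigoplus_i\mathfrak{h}_i$.
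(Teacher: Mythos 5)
Your argument is correct and is exactly the route the paper intends: Corollary~\ref{cor3} is stated as an immediate consequence of the decomposition $\mathscr{P}(\mathfrak{h})=\mathscr{P}(\mathfrak{h}_1)\oplus\cdots\oplus\mathscr{P}(\mathfrak{h}_s)$ from part~(II) of Theorem~\ref{th14}, together with the observation that $L(\mathscr{P}(\mathfrak{h}_i))\subset\mathfrak{h}_i$ and $\mathfrak{h}=\mathfrak{h}_1\oplus\cdots\oplus\mathfrak{h}_s$. Your explicit bookkeeping of how each summand embeds and why $L(\,\cdot\,)$ respects the direct sum fills in precisely the details the paper leaves implicit.
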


Thus it is enough to consider irreducible weak Berger algebras
 $\mathfrak{h}\subset\mathfrak{so}(n)$. It turns out that these
algebras are irreducible holonomy algebras of Riemannian
manifolds. This far non-trivial statement proved
Leistner~\cite{100}.

\begin{theorem}[\cite{100}]
\label{th15} An irreducible subalgebra
$\mathfrak{h}\subset\mathfrak{so}(n)$ is a weak Berger algebra if
and only if it is the  holonomy algebra of a Riemannian manifold.
\end{theorem}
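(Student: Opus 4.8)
The implication that a Riemannian holonomy algebra is weak Berger is immediate. If $\mathfrak{h}\subset\mathfrak{so}(n)$ is the holonomy algebra of a Riemannian manifold, then by the Ambrose--Singer Theorem~\ref{th3} together with the first Bianchi identity $\mathfrak{h}$ is a Berger algebra, so $L(\mathscr{R}(\mathfrak{h}))=\mathfrak{h}$. Fixing $Z\in\mathbb{R}^n$ and sending $R\in\mathscr{R}(\mathfrak{h})$ to $P=R(\,\cdot\,,Z)$ yields, as noted after~\eqref{eq4.1}, an element of $\mathscr{P}(\mathfrak{h})$, whence $P(X)=R(X,Z)\in L(\mathscr{P}(\mathfrak{h}))$. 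Therefore $\mathfrak{h}=L(\mathscr{R}(\mathfrak{h}))\subseteq L(\mathscr{P}(\mathfrak{h}))\subseteq\mathfrak{h}$, so $\mathfrak{h}$ is weak Berger. Only the converse requires real work.

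For the converse I would first reformulate the defining relation~\eqref{eq4.1}. Using the identification $\mathfrak{h}\subset\mathfrak{so}(n)\simeq\wedge^2\mathbb{R}^n$, the cyclic sum in~\eqref{eq4.1} is, up to a constant, the total antisymmetrization of the trilinear form $g(P(X)Y,Z)$, so that $\mathscr{P}(\mathfrak{h})=\ker\mu$ for the $\mathfrak{h}$-equivariant Bianchi map $\mu\colon\mathbb{R}^n\otimes\mathfrak{h}\to\wedge^3\mathbb{R}^n$. Since every subalgebra of $\mathfrak{so}(n)$ is compact, hence reductive, I would complexify and study $\mathscr{P}(\mathfrak{h})\otimes\mathbb{C}=\ker\mu^{\mathbb{C}}$ as a module over $\mathfrak{h}^{\mathbb{C}}$ by highest-weight theory. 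Because $\mathfrak{h}$ acts irreducibly on $\mathbb{R}^n$, the complexified module $\mathbb{C}^n$ is either irreducible (the real, orthogonal type) or splits as $W\oplus\overline{W}$ (the complex or quaternionic type); in each case one is reduced to classifying the irreducible complex modules $V$ of a complex semisimple Lie algebra $\mathfrak{s}$ for which $\ker\mu\ne 0$.

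The heart of the argument is to show that the condition $\ker\mu\ne 0$ together with the generation requirement $L(\ker\mu)=\mathfrak{s}$ forces the pair $(\mathfrak{s},V)$ into a short list, all of whose members already appear in Berger's list (Theorem~\ref{th8}). Concretely, I would decompose $V\otimes\mathfrak{s}$ and $\wedge^3 V$ into irreducible summands and compare highest weights, showing that $\mu$ is injective on all but a few components, so that $\ker\mu$ vanishes except for a handful of representations. The semisimple but not simple case, where $V=V_1\otimes\cdots\otimes V_k$ is an outer tensor product, is treated separately using the simplified argument of Section~\ref{sec6} (from~\cite{68}); the decomposition of Theorem~\ref{th14} then reassembles the irreducible ideals, and Corollary~\ref{cor3} ensures that the reducible case yields nothing new.

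The main obstacle is precisely this weight-by-weight verification over all irreducible representations of the compact simple Lie algebras, and above all over the spinor representations. It is there that the generic vanishing $\mathscr{P}(\mathfrak{h})=0$ fails and the exceptional holonomies $G_2$ and $\mathfrak{spin}(7)$, as well as the cases that must be excluded, arise: one has to compute $\ker\mu$ explicitly and check by hand that $L(\ker\mu)=\mathfrak{h}$ holds only in the Berger cases. This spinorial analysis is the genuinely deep, far non-trivial content of the theorem. Once the list of irreducible weak Berger algebras is seen to coincide with Berger's list, the conclusion is formal, since every entry of that list is realized by an actual Riemannian manifold by the results of Calabi, Yau, Alekseevsky and Bryant.
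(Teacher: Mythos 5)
Your first paragraph is correct and complete: it is exactly the standard argument that a Riemannian holonomy algebra is a weak Berger algebra (the Ambrose--Singer Theorem~\ref{th3} gives $L(\mathscr{R}(\mathfrak{h}))=\mathfrak{h}$, and the observation after~\eqref{eq4.1} that $R(\,\cdot\,,Z)\in\mathscr{P}(\mathfrak{h})$ gives $L(\mathscr{R}(\mathfrak{h}))\subset L(\mathscr{P}(\mathfrak{h}))\subset\mathfrak{h}$). Your reformulation of~\eqref{eq4.1} as the kernel of an $\mathfrak{h}$-equivariant map $\mu\colon\mathbb{R}^n\otimes\mathfrak{h}\to\wedge^3\mathbb{R}^n$ is also correct, since the trilinear form $g(P(X)Y,Z)$ is already skew in $Y$ and $Z$, so its cyclic sum is a multiple of its full alternation.

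For the converse, however, be aware that the paper does not prove this theorem: it is Leistner's result, cited from~\cite{100}, and Section~\ref{sec6} only summarizes his strategy and supplies a new, simpler argument (from~\cite{68}) for the single sub-case of semisimple non-simple irreducible $\mathfrak{h}$, via Tanaka prolongations and the correspondence with quaternionic-K\"ahlerian symmetric spaces. Measured against that summary, your outline is faithful in broad strokes but misses the structural dichotomy that makes the proof tractable: for representations of complex type ($\mathfrak{h}\subset\mathfrak{u}(n/2)$) Leistner does not compare weights in $V\otimes\mathfrak{s}$ and $\wedge^3V$ at all --- he identifies $\mathscr{P}(\mathfrak{h})\simeq(\mathfrak{h}\otimes\mathbb{C})^{(1)}$ with the first prolongation and invokes the known classification of irreducible representations with non-trivial prolongation; only for representations of real type does he derive a criterion in terms of weights and run the case-by-case check, first through the simple and then through the semisimple algebras. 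More importantly, your proposal defers precisely that case-by-case verification, which is the entire mathematical substance of the theorem; as written it is a correct plan of attack rather than a proof. Finally, your phrasing suggests that $\ker\mu\ne0$ already singles out the Berger list, but Table~\ref{tab2} records $\pi^{\mathbb{H}}_{1,0}(\mathfrak{sp}(2))\oplus\mathfrak{t}\subset\mathfrak{so}(8)$, for which $\mathscr{P}(\mathfrak{h})\ne0$ while $L(\mathscr{P}(\mathfrak{h}))$ is the proper ideal $\pi^{\mathbb{H}}_{1,0}(\mathfrak{sp}(2))$; the generation condition must therefore be tracked throughout the classification, as you do correctly indicate at the end.
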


We will discuss the proof of this theorem below in Section
\ref{sec6}. From Corollary~\ref{cor1} and Theorem~\ref{th15} we
get the classification of weakly irreducible not irreducible
Berger algebras $\mathfrak{g}\subset\mathfrak{sim}(n)$.

\begin{theorem}
\label{th16} A subalgebra
$\mathfrak{g}\subset\mathfrak{so}(1,n+1)$ is weakly irreducible
not irreducible  Berger algebra  if and only if~$\mathfrak{g}$ is
conjugated to one of the  subalgebras
$\mathfrak{g}^{1,\mathfrak{h}}$, $\mathfrak{g}^{2,\mathfrak{h}}$,
$\mathfrak{g}^{3,\mathfrak{h},\varphi},\mathfrak{g}^{4,\mathfrak{h},m,\psi}
\subset\mathfrak{sim}(n)$, where
$\mathfrak{h}\subset\mathfrak{so}(n)$ is the holonomy algebra of a
Riemannian manifold.
\end{theorem}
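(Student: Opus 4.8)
The plan is to assemble the structural results of Sections~\ref{sec3} and~\ref{sec4} with Leistner's Theorem~\ref{th15}, so that the statement becomes a matter of organization rather than new computation. Recall first from Section~\ref{sec3} that every weakly irreducible not irreducible subalgebra $\mathfrak{g}\subset\mathfrak{so}(1,n+1)$ is conjugate to a weakly irreducible subalgebra of $\mathfrak{sim}(n)=\mathfrak{so}(1,n+1)_{\mathbb{R}p}$, and that by Theorem~\ref{th10} any such subalgebra is one of the four types $\mathfrak{g}^{1,\mathfrak{h}}$, $\mathfrak{g}^{2,\mathfrak{h}}$, $\mathfrak{g}^{3,\mathfrak{h},\varphi}$, $\mathfrak{g}^{4,\mathfrak{h},m,\psi}$, each carrying an orthogonal part $\mathfrak{h}\subset\mathfrak{so}(n)$. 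Since such a $\mathfrak{g}$ preserves the isotropic line $\mathbb{R}p$ it is automatically not irreducible, so the theorem reduces to the single equivalence: $\mathfrak{g}$ is a Berger algebra if and only if its orthogonal part $\mathfrak{h}$ is the holonomy algebra of a Riemannian manifold.

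The direction asserting that a Riemannian holonomy orthogonal part yields a Berger algebra is immediate: it is exactly the content of Corollary~\ref{cor2}, so no further work is needed there, and weak irreducibility is guaranteed by Theorem~\ref{th10}.

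For the converse, I would start from Corollary~\ref{cor1}, which converts the hypothesis that $\mathfrak{g}$ is a Berger algebra into the statement that $\mathfrak{h}$ is a weak Berger algebra. Then Theorem~\ref{th14}(I) supplies an orthogonal decomposition~\eqref{eq4.4} together with the compatible splitting~\eqref{eq4.5} of $\mathfrak{h}$ into ideals $\mathfrak{h}_1\oplus\cdots\oplus\mathfrak{h}_s\oplus\{0\}$ with each $\mathfrak{h}_i\subset\mathfrak{so}(n_i)$ acting irreducibly. By Corollary~\ref{cor3} each $\mathfrak{h}_i$ is again a weak Berger algebra, and being irreducible it is, by Leistner's Theorem~\ref{th15}, the holonomy algebra of some Riemannian manifold $M_i$. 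Realizing the trivial summand by a flat factor $\mathbb{R}^{n_{s+1}}$ and forming the product $M_1\times\cdots\times M_s\times\mathbb{R}^{n_{s+1}}$, the fact that the holonomy algebra of a product is the direct sum of the holonomy algebras (the statement preceding Theorem~\ref{th6}) identifies the holonomy algebra of this product with $\mathfrak{h}_1\oplus\cdots\oplus\mathfrak{h}_s\oplus\{0\}=\mathfrak{h}$ acting on $\mathbb{R}^n$. Hence $\mathfrak{h}$ is a Riemannian holonomy algebra, completing the equivalence.

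The genuinely hard part is entirely absorbed into the cited Leistner Theorem~\ref{th15}; granted that result, the argument is purely structural. The one point requiring care is the compatibility of the two decompositions into irreducible pieces, namely the weak Berger splitting from Theorem~\ref{th14}(I) on one side and the de~Rham product structure on the other, but this matches precisely because both reduce to the irreducible summands, where the correspondence is supplied by Leistner's theorem, and because orthogonal direct sums of Riemannian holonomy algebras are realized by Riemannian products.
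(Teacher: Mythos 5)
Your proposal is correct and follows the same route the paper takes: the paper derives Theorem~\ref{th16} directly from Corollary~\ref{cor1} and Leistner's Theorem~\ref{th15}, using Theorem~\ref{th10} for the list of types and Theorem~\ref{th14} with Corollary~\ref{cor3} to reduce to the irreducible case. You have merely spelled out the reduction steps (and the easy converse via Corollary~\ref{cor2}) that the paper leaves implicit.
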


Let us turn back to the statement of Theorem~\ref{th13}. Note that
the elements determining
$R\in\mathscr{R}(\mathfrak{g}^{1,\mathfrak{h}})$ from
Theorem~\ref{th13} depend on the choice of the vectors
$p,q\in\mathbb{R}^{1,n+1}$. Consider a real number $\mu\ne 0$, the
vector $p'=\mu p$ and an arbitrary isotropic vector~$q'$ such that
$g(p',q')=1$. There exists a unique vector $W\in E$ such that
$$
q'=\frac{1}{\mu}\biggl(-\frac{1}{2}g(W,W)p+W+q\biggr).
$$
The corresponding space~$E'$ has the form
$$
E'=\{-g(X,W)p+X\mid X\in E\}.
$$
We will consider the map
$$
E\ni X\mapsto X'=-g(X,W)p+X\in E'.
$$
It is easy to show that the tensor~$R$ is determined by the
elements $\widetilde\lambda$, $\widetilde v$, $\widetilde R_0$,
$\widetilde P$,~$\widetilde T$, where, i.e., we have
\begin{equation}
\label{eq4.6}
\begin{gathered}
\widetilde \lambda=\lambda,\quad \widetilde{v}=\frac{1}{\mu}(\vec
v-\lambda W)',\quad \widetilde
P(X')=\frac{1}{\mu}(P(X)+R_0(X,W))',
\\
\widetilde R_0(X',Y')Z'=(R_0(X,Y)Z)'.
\end{gathered}
\end{equation}

Let $R\in\mathscr{R}(\mathfrak{g}^{1,\mathfrak{h}})$. The
corresponding Ricci tensor has the form:
\begin{alignat}{2}
\label{eq4.7} \operatorname{Ric}(p,q)&=\lambda,&\quad
\operatorname{Ric}(X,Y)&=\operatorname{Ric}(R_0)(X,Y),
\\
\label{eq4.8} \operatorname{Ric}(X,q)&=g(X,\vec
v-\widetilde{\operatorname{Ric}}(P)),&\quad
\operatorname{Ric}(q,q)&=-\operatorname{tr}T,
\end{alignat}
where
$\widetilde{\operatorname{Ric}}(P)=\displaystyle\sum_{i=1}^nP(e_i)e_i$.
The scalar curvature satisfies
$$
s=2\lambda+s_0,
$$
where $s_0$ is the scalar curvature of the  tensor~$R_0$.

The Ricci operator has the following form:
\begin{gather}
\label{eq4.9} \operatorname{Ric}(p)=\lambda p,\qquad
\operatorname{Ric}(X)=g(X,\vec{v}-\widetilde{\operatorname{Ric}}(P))p+
\operatorname{Ric}(R_0)(X),
\\
\label{eq4.10} \operatorname{Ric}(q)=-(\operatorname{tr} T)p-
\widetilde{\operatorname{Ric}}(P)+\vec{v}+\lambda q.
\end{gather}

\subsection{Curvature tensor of Walker manifolds}
\label{ssec4.2} Each Lorentzian manifold $(M,g)$ with the holonomy
algebra $\mathfrak{g}\subset\mathfrak{sim}(n)$  (locally) admits a
parallel distribution of isotropic lines~$\ell$. These manifolds
are called the Walker  manifolds~\cite{37},~\cite{120}.

The vector bundle $\mathscr{E}=\ell^\bot/\ell$ is called the
screen bundle. The holonomy algebra of the induced connection
in~$\mathscr{E}$ coincides with the orthogonal part
$\mathfrak{h}\subset\mathfrak{so}(n)$ of the holonomy algebra of
the manifold $(M,g)$.

On a Walker  manifold $(M,g)$ there exist local coordinates
 $v,x^1,\dots,x^n,u$ such that the metric~$g$ is of the from
\begin{equation}
\label{eq4.11} g=2\,dv\,du+h+2A\,du+H\,(du)^2,
\end{equation}
where $h=h_{ij}(x^1,\dots,x^n,u)\,dx^i\,d x^j$ is a family of
Riemannian metrics depending on the parameter~$u$,
$A=A_i(x^1,\dots,x^n,u)\,dx^i$ is a family of 1-forms depending
on~$u$, and~$H$ is a local function on~$M$.

Note that the holonomy algebra of the metric~$h$ is contained in
the orthogonal part $\mathfrak{h}\subset\mathfrak{so}(n)$ of the
holonomy algebra of the metric~$g$, but this inclusion can be
strict.

The vector field~$\partial_v$ defines the parallel distribution of
isotropic lines and it is recurrent, i.e., it holds
$$
\nabla\partial_v=\frac{1}{2}\partial_vH\,du\otimes\partial_v.
$$
Therefore the vector field~$\partial_v$ is proportional to a
parallel  vector field if and only if $d(\partial_vH\,du)=0$,
which is equivalent to the equalities
$$
\partial_v\partial_iH=\partial_v^2H=0.
$$
In this case the coordinates can be chosen in such a way that
$\nabla\partial_v=0$ and $\partial_vH=0$. The  holonomy algebras
of type~2 and~4 annihilate the vector~$p$, consequently the
corresponding  manifolds admit  (local) parallel isotropic
 vector fields, and the local coordinates can be chosen in such a way that  $\partial_vH=0$.
 In contrast, the holonomy algebras of types~1 and~3 do not annihilate this vector, and consequently
 the corresponding manifolds admit only recurrent isotropic vector
 fields, in this case it holds~$d(\partial_vH\,du)\ne 0$.

An important class of Walker manifolds represent pp-waves, which
are defined locally by~\eqref{eq4.11} with $A=0$,
$h=\displaystyle\sum_{i=1}^n(dx^i)^2$, and $\partial_vH=0$.
Pp-waves are precisely Walker manifolds with commutative holonomy
algebras
$\mathfrak{g}\subset\mathbb{R}^n\subset\mathfrak{sim}(n)$.

Boubel~\cite{32} constructed the coordinates
\begin{equation}
\label{eq4.12} v,x_1=(x_1^1,\dots,x_1^{n_1}),\dots,x_{s+1}=
(x_{s+1}^1,\dots,x_{s+1}^{n_{s+1}}),u,
\end{equation}
corresponding to the decomposition~\eqref{eq4.4}. This means that
\begin{gather}
\label{eq4.13} h=h_1+\cdots+h_{s+1},\quad
h_\alpha=\sum_{i,j=1}^{n_\alpha}h_{\alpha ij}\,dx_{\alpha}^i\,
dx_{\alpha}^j,\quad h_{s+1}=\sum_{i=1}^{n_{s+1}}(dx_{s+1}^i)^2,
\\
\nonumber A=\sum_{\alpha=1}^{s+1} A_\alpha,\quad
A_\alpha=\sum_{k=1}^{n_\alpha} A^\alpha_k\,dx^k_\alpha,\quad
A_{s+1}=0,
\\
\label{eq4.14} \frac{\partial}{\partial x^k_\beta}h_{\alpha ij}=
\frac{\partial}{\partial x^k_\beta}A^\alpha_i=0,\quad\text{если }
\beta\ne\alpha.
\end{gather}

Consider the field of frames
\begin{equation}
\label{eq4.15} p=\partial_v,\quad
X_i=\partial_i-A_i\partial_v,\quad
q=\partial_u-\frac{1}{2}H\partial_v.
\end{equation}
Consider the distribution $E$ generated by the vector fields
$X_1,\cdots,X_n$. The fibers of this distribution can be
identified with the tangent spaces to the Riemannian manifolds
with the Riemannian metrics $h(u)$. Denote by $R_0$ the tensor
corresponding to  the family of the curvature tensors of the
metrics $h(u)$ under this identification. Similarly denote by
$\operatorname{Ric}(h)$ the corresponding Ricci endomorphism
acting on sections of $E$. Now the curvature tensor~$R$ of the
metric~$g$ is uniquely determined by a function~$\lambda$, a
section $\vec{v}\in \Gamma(E)$, a symmetric field of endomorphisms
 $T\in
\Gamma(\operatorname{End}(E))$, $T^*=T$, the curvature tensor
$R_0=R(h)$ and by a tensor $P\in \Gamma(E^*\otimes
\mathfrak{so}(E))$. These tensors can be expressed in terms of the
coefficients of the metric~\eqref{eq4.11}. Let
$P(X_k)X_j=P_{jk}^iX_i$ and $T(X_j)=\displaystyle\sum_iT_{ij}X_j$.
Then
$$
h_{il}P_{jk}^l=g(R(X_k,q)X_j,X_i),\quad T_{ij}=-g(R(X_i,q)q,X_j).
$$
The direct computations show that
\begin{align}
\label{eq4.16} \lambda&=\frac{1}{2}\partial^2_vH,\quad
\vec{v}=\frac{1}{2}(\partial_i\partial_vH-A_i\partial^2_vH)h^{ij}X_j,
\\
\label{eq4.17}
h_{il}P_{jk}^l&=-\frac{1}{2}\nabla_kF_{ij}+\frac{1}{2}\nabla_k\dot{h}_{ij}
-\dot\Gamma^l_{kj}h_{li},
\\
\nonumber
T_{ij}&=\frac{1}{2}\nabla_i\nabla_jH-\frac{1}{4}(F_{ik}+\dot
h_{ik})(F_{jl}+ \dot
h_{jl})h^{kl}-\frac{1}{4}(\partial_vH)(\nabla_iA_j+\nabla_jA_i)
\\
\nonumber &\qquad-\frac{1}{2}(A_i\partial_j\partial_vH+
A_j\partial_i\partial_vH)-\frac{1}{2}(\nabla_i\dot
A_j+\nabla_j\dot A_i)
\\
\label{eq4.18}
&\qquad+\frac{1}{2}A_iA_j\partial_v^2H+\frac{1}{2}\ddot{h}_{ij}+
\frac{1}{4}\dot h_{ij}\partial_vH,
\end{align}
where
$$
F=dA,\quad F_{ij}=\partial_iA_j-\partial_jA_i,
$$
is the differential of the 1-form~$A$, and the covariant
derivatives are taken with respect to the metric~$h$, the dot
denotes the partial derivative with respect to the variable~$u$.
In the case of $h$, $A$ and $H$ independent of~$u$, the curvature
tensor of the metric~\eqref{eq4.11} is found in~\cite{76}.
In~\cite{76} is also found the Ricci tensor of an arbitrary
metric~\eqref{eq4.11}.

It is important to note that the Walker coordinates are not
defined canonically, e.g.,  significant is the observation
from~\cite{76} showing that if
$$
H=\lambda v^2+vH_1+H_0,\qquad \lambda\in\mathbb{R},\quad
\partial_vH_1=\partial_vH_0=0,
$$
then the coordinates transformation
$$
v\mapsto v-f(x^1,\dots,x^n,u),\quad x^i \mapsto x^i,\quad u
\mapsto u
$$
changes the metric~\eqref{eq4.11} in the following way:
\begin{equation}
\label{eq4.19} A_i\mapsto A_i+\partial_{i}f,\quad H_1\mapsto
H_1+2\lambda f,\quad H_0\mapsto H_0+H_1 f+\lambda f^2+2\dot f.
\end{equation}

\section{The spaces of weak curvature tensors}
\label{sec5}

Although Leistner proved that the subalgebras
$\mathfrak{h}\subset\mathfrak{so}(n)$ spanned by the images of the
elements from the space~$\mathscr{P}(\mathfrak{h})$ are exhausted
by the holonomy algebras of Riemannian spaces, he did not found
the spaces~$\mathscr{P}(\mathfrak{h})$. Here we give the result of
computations of these spaces from~\cite{59}, this gives the
complete structure of the space of the curvature tensors for the
holonomy algebras $\mathfrak{g}\subset\mathfrak{sim}(n)$.

Let  $\mathfrak{h}\subset\mathfrak{so}(n)$ be an irreducible
subalgebra. Consider the $\mathfrak{h}$-equivariant map
$$
\widetilde{\operatorname{Ric}}\colon\mathscr{P}(\mathfrak{h})
\to\mathbb{R}^n,\qquad
\widetilde{\operatorname{Ric}}(P)=\sum_{i=1}^nP(e_i)e_i.
$$
The definition of this map does not depend on the choice of the
orthogonal basis $e_1,\dots,e_n$ of the space~$\mathbb{R}^n$.
Denote by~$\mathscr{P}_0(\mathfrak{h})$ the kernel of the
map~$\widetilde{\operatorname{Ric}}$. Let
$\mathscr{P}_1(\mathfrak{h})$ be the orthogonal complement of this
space in~$\mathscr{P}(\mathfrak{h})$. Thus,
$$
\mathscr{P}(\mathfrak{h})=\mathscr{P}_0(\mathfrak{h})\oplus
\mathscr{P}_1(\mathfrak{h}).
$$
Since the  subalgebra $\mathfrak{h}\subset\mathfrak{so}(n)$ is
irreducible and the map $\widetilde{\operatorname{Ric}}$ is
$\mathfrak{h}$-equivariant, the  space
$\mathscr{P}_1(\mathfrak{h})$ is either trivial, or it is
isomorphic to~$\mathbb{R}^n$. The
spaces~$\mathscr{P}(\mathfrak{h})$ for
$\mathfrak{h}\subset\mathfrak{u}(n/2)$ are found in~\cite{100}.
In~\cite{59} we compute the spaces~$\mathscr{P}(\mathfrak{h})$ for
the remaining Riemannian holonomy algebras. The main result is
Table~\ref{tab1}, where are given the
spaces~$\mathscr{P}(\mathfrak{h})$ for all irreducible holonomy
algebras $\mathfrak{h}\subset\mathfrak{so}(n)$ of Riemannian
manifolds (for a compact Lie algebra $\mathfrak{h}$ the expression
$V_\Lambda$ denotes the irreducible representation of
$\mathfrak{h}$  given by the irreducible representation of the Lie
algebra $\mathfrak{h}\otimes \mathbb{C}$ with the highest weight
$\Lambda$; $(\odot^2(\mathbb{C}^m)^*\otimes\mathbb{C}^m)_0$
denotes the subspace
in~$\odot^2(\mathbb{C}^m)^*\otimes\mathbb{C}^m$ consisting of
tensors such that the contraction of the upper index with any down
index gives zero).

\begin{table}

\centering

\small \caption{Spaces $\mathscr{P}(\mathfrak{h})$ for irreducible
holonomy algebras of Riemannian manifolds
$\mathfrak{h}\subset\mathfrak{so}(n)$}

\renewcommand{\arraystretch}{1.2}
\medskip
\label{tab1}
\begin{tabular}{|c|c|c|c|}
\hline $\mathfrak{h}\subset\mathfrak{so}(n)$&
$\mathscr{P}_1(\mathfrak{h})$&$\mathscr{P}_0(\mathfrak{h})$&
$\dim\mathscr{P}_0(\mathfrak{h})$
\\ \hline
$\mathfrak{so}(2)$&$\mathbb{R}^2$&0&0
\\ \hline
$\mathfrak{so}(3)$&$\mathbb{R}^3$&$V_{4\pi_1}$&5
\\ \hline
$\mathfrak{so}(4)$&$\mathbb{R}^4$&$V_{3\pi_1+\pi_1'}\oplus
V_{\pi_1+3\pi_1'}$&16
\\ \hline
$\mathfrak{so}(n),\,n\geqslant
5$&$\mathbb{R}^n$&$V_{\pi_1+\pi_2}$&
$\dfrac{(n-2)n(n+2)}{3}\vphantom{\Biggl\}}$
\\ \hline
$\mathfrak{u}(m),\,n=2m\geqslant 4$&
$\mathbb{R}^n$&$(\odot^2(\mathbb{C}^m)^*\otimes\mathbb{C}^m)_0$&
$m^2(m-1)$
\\ \hline
$\mathfrak{su}(m),\, n=2m\geqslant 4$&0
&$(\odot^2(\mathbb{C}^m)^*\otimes\mathbb{C}^m)_0$&$m^2(m-1)$
\\ \hline
$\mathfrak{sp}(m)\oplus\mathfrak{sp}(1),\, n=4m\geqslant 8$&
$\mathbb{R}^n$&$\odot^3(\mathbb{C}^{2m})^*$&$\dfrac{m(m+1)(m+2)}{3}\vphantom{\Biggl\}}$
\\ \hline
$\mathfrak{sp}(m),\, n=4m\geqslant 8$&0&
$\odot^3(\mathbb{C}^{2m})^*$&$\dfrac{m(m+1)(m+2)}{3}\vphantom{\Biggl\}}$
\\ \hline
$G_2\subset\mathfrak{so}(7)$&0&$V_{\pi_1+\pi_2}$&64
\\ \hline
$\mathfrak{spin}(7)\subset\mathfrak{so}(8)$&0&$V_{\pi_2+\pi_3}$&112
\\ \hline
\begin{tabular}{l}
$\mathfrak{h}\subset\mathfrak{so}(n),\,n\geqslant 4$,\\[-1mm]
is a symmetric\\[-1mm]
Berger algebra
\end{tabular}&$\mathbb{R}^n$&0&0
\\ \hline
\end{tabular}
\end{table}

Consider the natural  $\mathfrak{h}$-equivariant map
$$
\tau\colon\mathbb{R}^n\otimes\mathscr{R}(\mathfrak{h})\to
\mathscr{P}(\mathfrak{h}),\qquad \tau(u\otimes R)=R(\,\cdot\,,u).
$$
The next theorem will be used to get explicit form of some
 $P\in\mathscr{P}(\mathfrak{h})$. The proof of the theorem follows from the results
 of the papers~\cite{2},~\cite{100}
and Table~\ref{tab1}.

\begin{theorem}
\label{th17} For an arbitrary irreducible subalgebra
$\mathfrak{h}\subset\mathfrak{so}(n)$, $n\geqslant 4$, the
$\mathfrak{h}$-equivariant map $\tau\colon\mathbb{R}^n\otimes
\mathscr{R}(\mathfrak{h})\to\mathscr{P}(\mathfrak{h})$ is
surjective. Moreover,
$\tau(\mathbb{R}^n\otimes\mathscr{R}_0(\mathfrak{h}))=
\mathscr{P}_0(\mathfrak{h})$ and $\tau(\mathbb{R}^n\otimes
\mathscr{R}_1(\mathfrak{h}))=\mathscr{P}_1(\mathfrak{h})$.
\end{theorem}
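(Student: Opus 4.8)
The plan is to deduce everything from the single equivariant trace identity below, after which the two claimed equalities
$\tau(\mathbb{R}^n\otimes\mathscr{R}_0(\mathfrak{h}))=\mathscr{P}_0(\mathfrak{h})$ and $\tau(\mathbb{R}^n\otimes\mathscr{R}_1(\mathfrak{h}))=\mathscr{P}_1(\mathfrak{h})$ become a matter of comparing $\mathfrak{h}$-modules. The map $\tau$ is well defined, since for $R\in\mathscr{R}(\mathfrak{h})$ and $u\in\mathbb{R}^n$ the map $R(\,\cdot\,,u)$ lies in $\mathscr{P}(\mathfrak{h})$ as noted before the statement, and it is manifestly $\mathfrak{h}$-equivariant. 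Because $\mathscr{P}(\mathfrak{h})=\mathscr{P}_0(\mathfrak{h})\oplus\mathscr{P}_1(\mathfrak{h})$, surjectivity of $\tau$ will follow at once from the two equalities, so the whole theorem reduces to proving them.

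First I would relate the two trace maps $\widetilde{\operatorname{Ric}}$ and $\operatorname{Ric}$. Using the pair symmetry~\eqref{eq2.1} and the skew-symmetry of $R(e_i,Y)\in\mathfrak{so}(n)$, for all $u,Y\in\mathbb{R}^n$ one finds
\[
g\bigl(\widetilde{\operatorname{Ric}}(\tau(u\otimes R)),Y\bigr)=\sum_i g(R(e_i,u)e_i,Y)=\sum_i g(R(e_i,Y)e_i,u)=-\operatorname{Ric}(R)(Y,u),
\]
so that $\widetilde{\operatorname{Ric}}(\tau(u\otimes R))=-\operatorname{Ric}(R)(u)$, the Ricci operator of $R$ evaluated at $u$. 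Two consequences are immediate. If $R\in\mathscr{R}_0(\mathfrak{h})$ then $\operatorname{Ric}(R)=0$, hence $\tau(u\otimes R)\in\ker\widetilde{\operatorname{Ric}}=\mathscr{P}_0(\mathfrak{h})$; and if $R\in\mathscr{R}_1(\mathfrak{h})$, where $\operatorname{Ric}(R)=c\,g$ for a constant $c$ (the only $\mathfrak{h}$-invariant symmetric form up to scale, by irreducibility of $\mathfrak{h}$ on $\mathbb{R}^n$), then $\widetilde{\operatorname{Ric}}(\tau(u\otimes R))=-c\,u$. Thus $\tau(\mathbb{R}^n\otimes\mathscr{R}_i(\mathfrak{h}))\subseteq\mathscr{P}_i(\mathfrak{h})$ for $i=0,1$, and it remains to prove each restriction is onto.

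For the index-$1$ part: if $\mathscr{R}_1(\mathfrak{h})=0$ then by the Alekseevsky decomposition~\cite{2} $\mathfrak{h}$ is one of the Ricci-flat algebras $\mathfrak{su}(m),\mathfrak{sp}(m),G_2,\mathfrak{spin}(7)$, for which Table~\ref{tab1} gives $\mathscr{P}_1(\mathfrak{h})=0$, so both sides vanish. If $\mathscr{R}_1(\mathfrak{h})\ne 0$, I would pick $0\ne R\in\mathscr{R}_1(\mathfrak{h})$; since $\mathscr{R}_0(\mathfrak{h})\cap\mathscr{R}_1(\mathfrak{h})=0$ its constant $c$ is nonzero, whence $u\mapsto\widetilde{\operatorname{Ric}}(\tau(u\otimes R))=-c\,u$ is onto $\mathbb{R}^n$. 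As $\widetilde{\operatorname{Ric}}$ is injective on $\mathscr{P}_1(\mathfrak{h})$ and maps the submodule $\tau(\mathbb{R}^n\otimes\mathscr{R}_1(\mathfrak{h}))\subseteq\mathscr{P}_1(\mathfrak{h})$ onto $\mathbb{R}^n$, the inclusion is forced to be an equality.

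For the index-$0$ part the argument is representation-theoretic: by equivariance $\tau(\mathbb{R}^n\otimes\mathscr{R}_0(\mathfrak{h}))$ is an $\mathfrak{h}$-submodule of $\mathscr{P}_0(\mathfrak{h})$, and it is nonzero whenever $\mathscr{R}_0(\mathfrak{h})\ne 0$, since a nonzero curvature tensor cannot satisfy $R(\,\cdot\,,u)=0$ for all $u$. Reading $\mathscr{P}_0(\mathfrak{h})$ off Table~\ref{tab1}, it is $\mathfrak{h}$-irreducible in every case except $\mathfrak{so}(4)$ (for $\mathfrak{u}(m)$ it is irreducible of complex type), so Schur's lemma yields the equality immediately. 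The one genuinely reducible case, $\mathfrak{so}(4)$ with $\mathscr{R}_0(\mathfrak{h})=W^{+}\oplus W^{-}$ the self-dual and anti-self-dual Weyl tensors and $\mathscr{P}_0(\mathfrak{h})=V_{3\pi_1+\pi_1'}\oplus V_{\pi_1+3\pi_1'}$, must be handled by hand, checking that $\tau$ is nonzero on each $\mathfrak{su}(2)$-factor separately. I expect this last verification — confirming that the image meets both irreducible components in the reducible $\mathfrak{so}(4)$ case rather than collapsing into a single one — to be the main obstacle, everything else being forced by the trace identity and Schur's lemma.
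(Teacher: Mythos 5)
Your strategy is essentially the one the paper intends: the text gives no detailed argument, deferring the proof to the knowledge of $\mathscr{R}(\mathfrak{h})$ from~\cite{2}, of $\mathscr{P}(\mathfrak{h})$ from Table~\ref{tab1}, and a comparison of $\mathfrak{h}$-modules, and your trace identity $\widetilde{\operatorname{Ric}}(\tau(u\otimes R))=-\operatorname{Ric}(R)(u)$ combined with Schur's lemma is the right engine for that comparison. There is, however, one genuine logical gap. The inclusion $\tau(\mathbb{R}^n\otimes\mathscr{R}_1(\mathfrak{h}))\subseteq\mathscr{P}_1(\mathfrak{h})$ does \emph{not} follow from the trace identity: that identity only shows that $\widetilde{\operatorname{Ric}}$ is nonzero on $\tau(u\otimes R)$ for $0\ne R\in\mathscr{R}_1(\mathfrak{h})$ and $u\ne 0$, i.e.\ that this submodule is not contained in $\mathscr{P}_0(\mathfrak{h})=\ker\widetilde{\operatorname{Ric}}$. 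Since $\mathscr{P}_1(\mathfrak{h})$ is \emph{defined} as the orthogonal complement of $\mathscr{P}_0(\mathfrak{h})$, a copy of $\mathbb{R}^n$ mapping isomorphically onto $\mathbb{R}^n$ under $\widetilde{\operatorname{Ric}}$ could a priori sit diagonally across $\mathscr{P}_0(\mathfrak{h})\oplus\mathscr{P}_1(\mathfrak{h})$ whenever $\mathscr{P}_0(\mathfrak{h})$ contains a summand isomorphic to the standard module. You must add the observation, read off from Table~\ref{tab1}, that $\mathscr{P}_0(\mathfrak{h})$ never contains an irreducible constituent isomorphic to $\mathbb{R}^n$; then the projection of $\tau(\mathbb{R}^n\otimes\mathscr{R}_1(\mathfrak{h}))$ onto $\mathscr{P}_0(\mathfrak{h})$ vanishes and your argument closes. (The same inspection of the table, together with~\cite{2}, supplies the fact that $\mathscr{R}_0(\mathfrak{h})\ne 0$ whenever $\mathscr{P}_0(\mathfrak{h})\ne 0$, which your Schur step tacitly uses.)

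The $\mathfrak{so}(4)$ case that you leave open is not really an obstacle: it yields to the same counting of constituents rather than to a computation by hand. Writing $\mathbb{R}^4$ as the real form of $\mathbb{C}^2\boxtimes\mathbb{C}^2$ and $\mathscr{R}_0(\mathfrak{so}(4))=W^+\oplus W^-$ with $W^+$ the real form of $\odot^4\mathbb{C}^2\boxtimes\mathbb{C}$, one finds that the complexification of $\mathbb{R}^4\otimes W^+$ is $(\odot^5\mathbb{C}^2\oplus\odot^3\mathbb{C}^2)\boxtimes\mathbb{C}^2$, whose only constituent in common with $\mathscr{P}_0(\mathfrak{so}(4))=V_{3\pi_1+\pi_1'}\oplus V_{\pi_1+3\pi_1'}$ is $V_{3\pi_1+\pi_1'}$; since $\tau$ is nonzero on $\mathbb{R}^4\otimes W^+$ (a nonzero $R$ cannot satisfy $R(\,\cdot\,,u)=0$ for all $u$), its image is exactly $V_{3\pi_1+\pi_1'}$, and symmetrically for $W^-$. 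With these two repairs the proposal becomes a complete proof along the lines the paper indicates.
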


Let $n\geqslant 4$, and $\mathfrak{h}\subset\mathfrak{so}(n)$ be
an irreducible subalgebra. From Theorem~\ref{th17} it follows that
an arbitrary $P\in\mathscr{P}_1(\mathfrak{h})$ can be written in
the form~$R(\,\cdot\,,x)$, where $R\in\mathscr{R}_0(\mathfrak{h})$
and~$x\in\mathbb{R}^n$. Similarly, any
$P\in\mathscr{P}_0(\mathfrak{h})$ can be represented in the form
 $\displaystyle\sum_i
R_i(\,\cdot\,,x_i)$ for some $R_i\in\mathscr{R}_1(\mathfrak{h})$
and  $x_i\in\mathbb{R}^n$.

\textbf{The explicit form of some
$P\in\mathscr{P}(\mathfrak{h})$}. Using the results obtained above
and results from~\cite{2}, we can now find explicitly the
spaces~$\mathscr{P}(\mathfrak{h})$.

From the results of the paper~\cite{100} it follows that
$$
\mathscr{P}(\mathfrak{u}(m))\simeq\odot^2(\mathbb{C}^m)^*\otimes\mathbb{C}^m.
$$
Let us give the explicit form of this isomorphism. Let
$$
S\in
\odot^2(\mathbb{C}^m)^*\otimes\mathbb{C}^m\subset(\mathbb{C}^m)^*
\otimes \mathfrak{gl}(m,\mathbb{C}).
$$
Consider the identification
$$ \mathbb{C}^m=\mathbb{R}^{2m}=\mathbb{R}^m\oplus
i\mathbb{R}^m
$$
and chose a basis $e_1,\dots,e_m$ of the space~$\mathbb{R}^m$.
Define the complex numbers~$S_{abc}$, $a,b,c=1,\dots,m$, such that
$$
S(e_a)e_b=\sum_c S_{acb}e_c.
$$
We have $S_{abc}=S_{cba}$. Define the map
$S_1\colon\mathbb{R}^{2m}\to\mathfrak{gl}(2m,\mathbb{R})$ by the
conditions
$$
S_1(e_a)e_b=\sum_c \overline {S_{abc}}e_c,\qquad
S_1(ie_a)=-iS_1(e_a),\qquad S_1(e_a)ie_b=iS_1(e_a)e_b.
$$
It is easy to check that
$$
P=S-S_1\colon\mathbb{R}^{2m}\to\mathfrak{gl}(2m,\mathbb{R})
$$
belongs to~$\mathscr{P}(\mathfrak{u}(n))$ and each element of the
space~$\mathscr{P}(\mathfrak{u}(n))$ is of this form. The obtained
element belongs to the space~$\mathscr{P}(\mathfrak{su}(n))$ if
and only if $\displaystyle\sum_b S_{abb}=0$ for all $a=1,\dots,m$,
i.e., $S\in (\odot^2(\mathbb{C}^m)^*\otimes\mathbb{C}^m)_0$. If
$m=2k$, i.e., $n=4k$, then~$P$ belongs to
$\mathscr{P}(\mathfrak{sp}(k))$ if and only if
$S(e_a)\in\mathfrak{sp}(2k,\mathbb{C})$, $a=1,\dots,m$, i.e.,
$$
S\in(\mathfrak{sp}(2k,\mathbb{C}))^{(1)}\simeq\odot^3(\mathbb{C}^{2k})^*.
$$

In~\cite{72} it is shown that each
$P\in\mathscr{P}(\mathfrak{u}(m))$ satisfies
$$
g(\widetilde{\operatorname{Ric}}(P),X)=
-\operatorname{tr}_\mathbb{C}P(JX),\qquad X\in\mathbb{R}^{2m}.
$$

In~\cite{2} it is shown that an arbitrary  $R\in
\mathscr{R}_1(\mathfrak{so}(n))
\oplus\mathscr{R}^{\,\prime}(\mathfrak{so}(n))$ has the form
${R=R_S}$, where $S\colon\mathbb{R}^{n}\to\mathbb{R}^{n}$ is a
symmetric linear map, and
\begin{equation} \label{eq5.1}
R_S(X,Y)=SX\wedge Y+ X\wedge SY.
\end{equation}
It is easy to check that
$$
\tau\bigl(\mathbb{R}^n,\mathscr{R}_1(\mathfrak{so}(n))\oplus
\mathscr{R}^{\,\prime}(\mathfrak{so}(n))\bigr)=\mathscr{P}(\mathfrak{so}(n)).
$$
This equality and~\eqref{eq5.1} show that the
space~$\mathscr{P}(\mathfrak{so}(n))$ is spanned by the
elements~$P$ of the form
$$
P(y)=Sy\wedge x+y\wedge Sx,
$$
where $x\in\mathbb{R}^{n}$ and $S\in\odot^2\mathbb{R}^n$ are
fixed, and $y\in\mathbb{R}^{n}$ is an arbitrary vector. For
such~$P$ we have
$\widetilde{\operatorname{Ric}}(P)=(\operatorname{tr} S-S)x$. This
means that the space~$\mathscr{P}_0(\mathfrak{so}(n))$ is spanned
by elements~$P$ of the form
$$
P(y)=Sy\wedge x,
$$
where $x\in\mathbb{R}^{n}$ and $S\in\odot^2\mathbb{R}^n$ satisfy
$\operatorname{tr}S=0$, $Sx=0$, and $y\in\mathbb{R}^{n}$ is an
arbitrary vector.

The isomorphism
$\mathscr{P}_1(\mathfrak{so}(n))\simeq\mathbb{R}^n$ is defined in
the following way: $x\in\mathbb{R}^n$ corresponds to the element
$P=x\wedge\cdot\in\mathscr{P}_1(\mathfrak{so}(n))$, i.e.,
$P(y)=x\wedge y$ for all $y\in\mathbb{R}^n$.

Each $P\in\mathscr{P}_1(\mathfrak{u}(m))$ has the form
$$
P(y)=-\frac{1}{2}g(Jx,y)J+\frac{1}{4}(x\wedge y+Jx\wedge Jy),
$$
where $J$ is the complex structure on~$\mathbb{R}^{2m}$, the
vector $x\in\mathbb{R}^{2m}$ is fixed, and the vector
$y\in\mathbb{R}^{2m}$ is arbitrary.

Each $P\in\mathscr{P}_1(\mathfrak{sp}(m)\oplus\mathfrak{sp}(1))$
has the form
$$
P(y)=-\frac{1}{2}\sum_{\alpha=1}^3g(J_\alpha x,y)J_\alpha+
\frac{1}{4}\biggl(x\wedge y+\sum_{\alpha=1}^3J_\alpha x\wedge
J_\alpha y\biggr),
$$
where $(J_1,J_2,J_3)$ is quaternionic structure
on~$\mathbb{R}^{4m}$, $x\in\mathbb{R}^{4m}$ is fixed, and
$y\in\mathbb{R}^{4m}$ is an arbitrary vector.

For the adjoint representation
$\mathfrak{h}\subset\mathfrak{so}(\mathfrak{h})$ of a simple
compact Lie algebra~$\mathfrak{h}$ different
from~$\mathfrak{so}(3)$, an arbitrary element
$P\in\mathscr{P}(\mathfrak{h})=\mathscr{P}_1(\mathfrak{h})$ has
the form
$$
P(y)=[x,y].
$$
If $\mathfrak{h}\subset\mathfrak{so}(n)$ is a symmetric Berger
 algebra, then
$$
\mathscr{P}(\mathfrak{h})=\mathscr{P}_1(\mathfrak{h})=
\{R(\,\cdot\,,x)\mid x\in\mathbb{R}^n\},
$$
where $R$ is a generator of the space
$\mathscr{R}(\mathfrak{h})\simeq\mathbb{R}$.

In general, let $\mathfrak{h}\subset\mathfrak{so}(n)$ be an
irreducible subalgebra, and $P\in\mathscr{P}_1(\mathfrak{h})$.
Then $\widetilde{\operatorname{Ric}}(P)\wedge\cdot\in
\mathscr{P}_1(\mathfrak{so}(n))$. Moreover, it is easy to check
that
$$
\widetilde{\operatorname{Ric}}\biggl(P+\frac{1}{n-1}
\widetilde{\operatorname{Ric}}(P)\wedge\cdot\biggr)=0,
$$
i.e.,
$$
P+\frac{1}{n-1}\widetilde{\operatorname{Ric}}(P)\wedge\cdot\in
\mathscr{P}_0(\mathfrak{so}(n)).
$$
Thus the inclusion
$$
\mathscr{P}_1(\mathfrak{h})\subset\mathscr{P}(\mathfrak{so}(n))=
\mathscr{P}_0(\mathfrak{so}(n))\oplus\mathscr{P}_1(\mathfrak{so}(n))
$$
has the form
$$
P\in P_1(\mathfrak{h})\mapsto\biggl(P+\frac{1}{n-1}
\widetilde{\operatorname{Ric}}(P)\wedge\cdot,
-\frac{1}{n-1}\widetilde{\operatorname{Ric}}(P)\wedge\cdot\biggr)
\in\mathscr{P}_0(\mathfrak{so}(n))\oplus\mathscr{P}_1(\mathfrak{so}(n)).
$$
This construction defines the tensor
$W=P+(1/(n-1))\widetilde{\operatorname{Ric}}(P)\wedge\cdot$
analogues to the Weyl tensor for $P\in\mathscr{P}(\mathfrak{h})$,
and this tensor is a component of the Weyl tensor of a Lorentzian
manifold.

\section{About the classification of weak Berger algebras}
\label{sec6}

One of the crucial instant of the classification of the holonomy
algebras of Lorentzian manifolds is the result by Leistner about
the classification of  irreducible weak Berger algebras
$\mathfrak{h}\subset\mathfrak{so}(n)$. Leistner classified all
such subalgebras and it turned out that the obtained list
coincides with the list of irreducible holonomy algebras of
Riemannian manifolds. The natural problem is to give a simple
direct proof to this fact.  In~\cite{68} we give such a proof for
the case of semisimple not simple Lie algebras
$\mathfrak{h}\subset\mathfrak{so}(n)$.

In~paper~\cite{55}, the first version of which  was published in
April  2003 on the web page \href{www.arXiv.org}{www.arXiv.org},
the Leistner theorem~\ref{th15} was proved for $n\leqslant 9$. For
that,  irreducible subalgebras
$\mathfrak{h}\subset\mathfrak{so}(n)$ with $n\leqslant 9$ were
listed (see Table~\ref{tab2}). The second column of the table
contains the irreducible holonomy algebras of Riemannian
manifolds.  The third column of the table contains algebras that
are not the holonomy algebras of Riemannian manifolds.

For a semisimple compact Lie algebra~$\mathfrak{h}$ we denote by
 $\pi^\mathbb{K}_{\Lambda_1,\dots,\Lambda_l}(\mathfrak{h})$
the image of the representation
$\pi^\mathbb{K}_{\Lambda_1,\dots,\Lambda_l}\colon \mathfrak{h}\to
\mathfrak{so}(n)$ that is determined by the complex representation
$\rho_{\Lambda_1,\dots,\Lambda_l}\colon
\mathfrak{h}(\mathbb{C})\to\mathfrak{gl}(U)$ given by the labels
$\Lambda_1,\dots,\Lambda_l$ on the Dynkin diagram (here
$\mathfrak{h}(\mathbb{C})$ is the complexification of the
algebra~$\mathfrak{h}$, $U$ is a complex vector  space),
$\mathbb{K}=\mathbb{R}$, $\mathbb{H}$ or~$\mathbb{C}$ if
$\rho_{\Lambda_1,\dots,\Lambda_l}$ is real, quaternionic or
complex, respectively. The symbol~$\mathfrak{t}$ denotes the
one-dimensional center.

\begin{table}[h]

\centering

\small

\renewcommand{\arraystretch}{1.3} \tabcolsep=0.55em

\caption{Irreducible subalgebras в~$\mathfrak{so}(n)$
($n\leqslant9$)} \label{tab2}

\smallskip

\begin{tabular}{|c|c|c|}
\hline $n$ & irreducible holonomy algebras & other irreducible
\\[-1mm]
& of $n$-dimensional Riemannian manifolds & subalgebras in
$\mathfrak{so}(n)$
\\ \hline
$n=1$ & &
\\ \hline
$n=2$ & $\mathfrak{so}(2)$ &
\\ \hline
$n=3$ & $\pi^\mathbb{R}_2(\mathfrak{so}(3))$ &
\\ \hline
$n=4$
&$\pi^\mathbb{R}_{1,1}(\mathfrak{so}(3)\oplus\mathfrak{so}(3))$,
$\pi^\mathbb{C}_1(\mathfrak{su}(2))$,
$\pi^\mathbb{C}_1(\mathfrak{su}(2))\oplus\mathfrak{t}$ &
\\ \hline
$n=5$ & $\pi^\mathbb{R}_{1,0}(\mathfrak{so}(5))$,
$\pi^\mathbb{R}_4(\mathfrak{so}(3))$ & \\ \hline $n=6$ &
$\pi^\mathbb{R}_{1,0,0}(\mathfrak{so}(6))$,
$\pi^\mathbb{C}_{1,0}(\mathfrak{su}(3))$,
$\pi^\mathbb{C}_{1,0}(\mathfrak{su}(3))\oplus\mathfrak{t}$ &
\\ \hline
$n=7$ & $\pi^\mathbb{R}_{1,0,0}(\mathfrak{so}(7))$,
$\pi^\mathbb{R}_{1,0}(\mathfrak{g}_2)$ &
$\pi^\mathbb{R}_6(\mathfrak{so}(3))$ \\ \hline $n=8$ &
$\pi^\mathbb{R}_{1,0,0,0}(\mathfrak{so}(8))$,
$\pi^\mathbb{C}_{1,0}(\mathfrak{su}(4))$,
$\pi^\mathbb{C}_{1,0}(\mathfrak{su}(4))\oplus\mathfrak{t}$, &
$\pi^\mathbb{C}_3(\mathfrak{so}(3))$,
\\
&$\pi^\mathbb{H}_{1,0}(\mathfrak{sp}(2))$,
$\pi^\mathbb{R}_{1,0,1}(\mathfrak{sp}(2)\oplus\mathfrak{sp}(1)),$
$\pi^\mathbb{R}_{0,0,1}(\mathfrak{so}(7)),$
&$\pi^\mathbb{C}_3(\mathfrak{so}(3))\oplus\mathfrak{t}$,
\\
& $\pi^\mathbb{R}_{1,3}(\mathfrak{so}(3)\oplus\mathfrak{so}(3))$,
$\pi^\mathbb{R}_{1,1}(\mathfrak{su}(3))$ &
$\pi^\mathbb{H}_{1,0}(\mathfrak{sp}(2))\oplus\mathfrak{t}$
\\ \hline
$n=9$ & $\pi^\mathbb{R}_{1,0,0,0}(\mathfrak{so}(9))$,
$\pi^\mathbb{R}_{2,2}(\mathfrak{so}(3)\oplus\mathfrak{so}(3))$&
$\pi^\mathbb{R}_8(\mathfrak{so}(3))$
\\
\hline
\end{tabular}
\end{table}

For algebras that are not the holonomy algebras of Riemannian
manifolds, with the help of a computer program the
spaces~$\mathscr{P}(\mathfrak{h})$ were found as the solutions of
the corresponding systems of linear equations. It turned out that
$$
\mathscr{P}(\pi^\mathbb{H}_{1,0}(\mathfrak{sp}(2)))=
\mathscr{P}(\pi^\mathbb{H}_{1,0}(\mathfrak{sp}(2))\oplus\mathfrak{t}),
$$
i.e., $L(\mathscr{P}(\pi^\mathbb{H}_{1,0}(\mathfrak{sp}(2))\oplus
\mathfrak{t}))=\pi^\mathbb{H}_{1,0}(\mathfrak{sp}(2))$, and
$\mathfrak{sp}(2)\oplus\mathfrak{t}$ is not a weak Berger algebra.
For other algebras of the third column we have
$\mathscr{P}(\mathfrak{h})=0$. Hence the Lie algebras from the
third column of Table~\ref{tab2} are not weak Berger algebras.

It turned out that by that time Leistner already proved
Theorem~\ref{th15} and published its proof as a preprint in the
cases when~$n$ is even and the representation
$\mathfrak{h}\subset\mathfrak{so}(n)$ is of complex type, i.e.,
$\mathfrak{h}\subset\mathfrak{u}(n/2)$. In this case
$\mathscr{P}(\mathfrak{h})\simeq
(\mathfrak{h}\otimes\mathbb{C})^{(1)}$, where
$(\mathfrak{h}\otimes\mathbb{C})^{(1)}$ is the first prolongation
of the  subalgebra
$\mathfrak{h}\otimes\mathbb{C}\subset\mathfrak{gl}(n/2,\mathbb{C})$.
Using this fact and the classification of irreducible
representations with non-trivial prolongations, Leistner showed
that each weak Berger subalgebra
$\mathfrak{h}\subset\mathfrak{u}(n/2)$  is the holonomy algebra of
a Riemannian manifold.

The case of subalgebras $\mathfrak{h}\subset\mathfrak{so}(n)$  of
real type (i.e. not of complex type) is much more difficult. In
this case Leistner considered the complexification
$\mathfrak{h}\otimes\mathbb{C}\subset\mathfrak{so}(n,\mathbb{C})$,
which is irreducible. Using the classification of irreducible
representations of complex semisimple Lie algebras, he found a
criteria in terms of weights for such representation
$\mathfrak{h}\otimes\mathbb{C}\subset\mathfrak{so}(n,\mathbb{C})$
to be a weak Berger algebra. Next Leistner considered case by case
simple Lie algebras $\mathfrak{h}\otimes\mathbb{C}$, and then
semisimple Lie algebras (the problem is reduced to the semisimple
Lie algebras of the form
$\mathfrak{sl}(2,\mathbb{C})\oplus\mathfrak{k}$, where
$\mathfrak{k}$ is simple, and again different possibilities for
$\mathfrak{k}$ were considered). The complete proof is published
in~\cite{100}.

We consider the case of semisimple not simple irreducible
subalgebras $\mathfrak{h}\subset\mathfrak{so}(n)$ with irreducible
complexification
$\mathfrak{h}\otimes\mathbb{C}\subset\mathfrak{so}(n,\mathbb{C})$.
In a simple way we show that it is enough to treat the case when
$\mathfrak{h}\otimes\mathbb{C}=\mathfrak{sl}(2,\mathbb{C})\oplus\mathfrak{k}$,
where $\mathfrak{k}\subsetneq\mathfrak{sp}(2m,\mathbb{C})$ is a
proper irreducible subalgebra, and the representation space is the
tensor product $\mathbb{C}^2\otimes\mathbb{C}^{2m}$. We show that
in this case $\mathcal{P}(\mathfrak{h})$ coincides with
$\mathbb{C}^2\otimes\mathfrak{g}_1$, where $\mathfrak{g}_1$ is the
first Tanaka prolongation of the non-positively graded Lie algebra
$$
\mathfrak{g}_{-2}\oplus\mathfrak{g}_{-1}\oplus\mathfrak{g}_0,
$$
here $\mathfrak{g}_{-2}=\mathbb{C}$,
$\mathfrak{g}_{-1}=\mathbb{C}^{2m}$,
$\mathfrak{g}_0=\mathfrak{k}\oplus\mathbb{C}
\operatorname{id}_{\mathbb{C}^{2m}}$, and the grading is defined
by the element $-\operatorname{id}_{\mathbb{C}^{2m}}$. We prove
that if~$\mathscr{P}(\mathfrak{h})$ is non-trivial,
then~$\mathfrak{g}_1$ is isomorphic~$\mathbb{C}^{2m}$, the second
Tanaka prolongation~$\mathfrak{g}_2$ is isomorphic
to~$\mathbb{C}$, and $\mathfrak{g}_3=0$. Then the full Tanaka
prolongation defines the simple $|2|$-graded complex Lie algebra
$$
\mathfrak{g}_{-2}\oplus\mathfrak{g}_{-1}\oplus\mathfrak{g}_0\oplus
\mathfrak{g}_{1}\oplus\mathfrak{g}_2.
$$
It is well known that simply connected indecomposable symmetric
Riemannian manifolds $(M,g)$ are in one-two-one correspondence
with simple $\mathbb{Z}_2$-graded Lie algebras
$\mathfrak{g}=\mathfrak{h}\oplus\mathbb{R}^n$ such that
$\mathfrak{h}\subset\mathfrak{so}(n)$. If the symmetric space is
quaternionic-K\"ahlerian, then
$\mathfrak{h}=\mathfrak{sp}(1)\oplus\mathfrak{f}\subset\mathfrak{so}(4k)$,
where $n=4k$, and $\mathfrak{f}\subset\mathfrak{sp}(k)$. The
complexification of the algebra
$\mathfrak{h}\oplus\mathbb{R}^{4k}$ coincides with
$(\mathfrak{sl}(2,\mathbb{C})
\oplus\mathfrak{k})\oplus(\mathbb{C}^2\otimes\mathbb{C}^{2k})$,
where $\mathfrak{k}=\mathfrak{f}\otimes\mathbb{C}\subset
\mathfrak{sp}(2k,\mathbb{C})$. Let~$e_1$, $e_2$ be the standard
basis of the  space~$\mathbb{C}^2$, and let
$$
F=\begin{pmatrix}0&0\\1&0\end{pmatrix},\qquad
H=\begin{pmatrix}1&0\\0&-1\end{pmatrix},\qquad
E=\begin{pmatrix}0&1\\0&0\end{pmatrix}
$$
be the basis of the Lie algebra $\mathfrak{sl}(2,\mathbb{C})$. We
get the following $\mathbb{Z}$-graded Lie algebra
$\mathfrak{g}\otimes\mathbb{C}$:
\begin{align*}
\mathfrak{g}\otimes\mathbb{C}&=\mathfrak{g}_{-2}\oplus\mathfrak{g}_{-1}
\oplus\mathfrak{g}_0\oplus\mathfrak{g}_1\oplus\mathfrak{g}_2
\\
&=\mathbb{C}F\oplus
e_2\otimes\mathbb{C}^{2k}\oplus(\mathfrak{k}\oplus \mathbb{C}
H)\oplus e_1\otimes\mathbb{C}^{2k}\oplus\mathbb{C} E.
\end{align*}
Conversaly, each such $\mathbb{Z}$-graded Lie algebra defines (up
to the duality) a simply connected quaternionic-K\"ahlerian
symmetric space. This gives the proof.

\section{Construction of metrics and the  classification theorem}
\label{sec7}

Above we have got the classification of  weakly irreducible Berger
algebras contained in~$\mathfrak{sim}(n)$. In this section we will
show that all these algebras can be realized as the holonomy
algebras of Lorentzian manifolds, we will noticeably simplify the
construction of the metrics from~\cite{56}. By that we complete
the classification of the  holonomy algebras of Lorentzian
manifolds.

The metrics realizing the Berger algebras of types~1 and~2
constructed B\'erard-Bergery and Ikemakhen~\cite{21}. These matrices
have the form
$$
g=2\,dv\,du+h+(\lambda v^2+H_0)\,(du)^2,
$$
where $h$ is a  Riemannian metric on~$\mathbb{R}^n$ with the
holonomy algebra $\mathfrak{h}\subset\mathfrak{so}(n)$,
$\lambda\in\mathbb{R}$, and $H_0$ is a generic function of the
variables $x^1,\dots, x^n$. If $\lambda\ne 0$, then the holonomy
algebra of this metric coincides
with~$\mathfrak{g}^{1,\mathfrak{h}}$; if $\lambda=0$, then the
holonomy algebra of the metric~$g$ coincides
with~$\mathfrak{g}^{2,\mathfrak{h}}$.

In~\cite{56} we gave a unified construction of metrics with all
possible holonomy algebras. Here we simplify this construction.

\begin{lemma}
\label{lem1} For an arbitrary holonomy algebra
$\mathfrak{h}\subset\mathfrak{so}(n)$ of a Riemannian manifold
there exists a $P\in\mathscr{P}(\mathfrak{h})$ such that the
vector space ${P(\mathbb{R}^n)\subset\mathfrak{h}}$ generates the
Lie algebra~$\mathfrak{h}$.
\end{lemma}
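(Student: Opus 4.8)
The plan is to reduce the statement to the case of an irreducible orthogonal algebra and then to produce the desired $P$ by a genericity argument. First I would record that any Riemannian holonomy algebra $\mathfrak{h}\subset\mathfrak{so}(n)$ is automatically a weak Berger algebra: since $\mathfrak{h}$ is a Berger algebra one has $L(\mathscr{R}(\mathfrak{h}))=\mathfrak{h}$, and because $R(\,\cdot\,,Z)\in\mathscr{P}(\mathfrak{h})$ for every $R\in\mathscr{R}(\mathfrak{h})$ and $Z\in\mathbb{R}^n$, the values $R(X,Z)$ already span $\mathfrak{h}$; hence $L(\mathscr{P}(\mathfrak{h}))=\mathfrak{h}$. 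Next I would apply Theorem~\ref{th14}: decompose $\mathbb{R}^n=\mathbb{R}^{n_1}\oplus\cdots\oplus\mathbb{R}^{n_s}\oplus\mathbb{R}^{n_{s+1}}$ and $\mathfrak{h}=\mathfrak{h}_1\oplus\cdots\oplus\mathfrak{h}_s\oplus\{0\}$ into commuting ideals, with $\mathfrak{h}_i\subset\mathfrak{so}(n_i)$ acting irreducibly on $\mathbb{R}^{n_i}$ and annihilating the other summands, together with the splitting $\mathscr{P}(\mathfrak{h})=\mathscr{P}(\mathfrak{h}_1)\oplus\cdots\oplus\mathscr{P}(\mathfrak{h}_s)$. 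By Corollary~\ref{cor3} each $\mathfrak{h}_i$ is itself an irreducible weak Berger algebra. If for each $i$ I can find $P_i\in\mathscr{P}(\mathfrak{h}_i)$ whose image Lie-generates $\mathfrak{h}_i$, then, viewing each $P_i$ as an element of $\mathscr{P}(\mathfrak{h})$ extended by zero on the other summands and valued in $\mathfrak{h}_i$, the tensor $P=P_1+\cdots+P_s$ satisfies $P(\mathbb{R}^n)=\bigoplus_i P_i(\mathbb{R}^{n_i})$; since the $\mathfrak{h}_i$ commute, the subalgebra generated by this sum is $\bigoplus_i\mathfrak{h}_i=\mathfrak{h}$. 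Thus everything reduces to the irreducible case.

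So suppose $\mathfrak{h}\subset\mathfrak{so}(n)$ is irreducible and weak Berger. The case $\mathfrak{h}=\mathfrak{so}(2)$ is trivial, any nonzero $P$ having $P(\mathbb{R}^2)=\mathfrak{so}(2)$; in general $\mathfrak{h}$ is compact, and by Schur's lemma (the commutant being $\mathbb{R}$, $\mathbb{C}$ or $\mathbb{H}$) its center is at most one-dimensional. The set of \emph{bad} tensors $\mathscr{B}=\{P\in\mathscr{P}(\mathfrak{h})\mid P(\mathbb{R}^n)\text{ lies in some proper subalgebra of }\mathfrak{h}\}$ is a closed cone, and I want $\mathscr{B}\ne\mathscr{P}(\mathfrak{h})$. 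For a fixed proper subalgebra $\mathfrak{k}\subsetneq\mathfrak{h}$ put $\mathscr{B}_\mathfrak{k}=\mathscr{P}(\mathfrak{h})\cap\operatorname{Hom}(\mathbb{R}^n,\mathfrak{k})$. The weak Berger property is exactly what makes each $\mathscr{B}_\mathfrak{k}$ a \emph{proper} subspace: if $\mathscr{B}_\mathfrak{k}=\mathscr{P}(\mathfrak{h})$ then $L(\mathscr{P}(\mathfrak{h}))\subseteq\mathfrak{k}\subsetneq\mathfrak{h}$, a contradiction. Moreover the $\mathfrak{h}$-action $P_\xi(X)=[\xi,P(X)]-P(\xi X)$ shows $\mathscr{B}_\mathfrak{k}$ is invariant under the connected subgroup $K\subset H$ with algebra $\mathfrak{k}$, so $\mathscr{B}=\bigcup_{\mathfrak{k}}H\cdot\mathscr{B}_\mathfrak{k}$ is $H$-invariant.

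The decisive structural input is that a compact Lie algebra with at most one-dimensional center has only finitely many conjugacy classes of maximal subalgebras (a standard fact going back to Dynkin). Since containment in a proper subalgebra is the same as containment in a maximal one, $\mathscr{B}$ is a \emph{finite} union $\bigcup_{i=1}^r H\cdot\mathscr{B}_{\mathfrak{k}_i}$ of closed $H$-saturations of proper subspaces. Because the vector space $\mathscr{P}(\mathfrak{h})$ is irreducible as a variety, it is not a finite union of proper closed subsets; hence it suffices to check that each $H\cdot\mathscr{B}_{\mathfrak{k}_i}$ is a proper subset, and then any $P$ in the nonempty Zariski-open complement generates $\mathfrak{h}$.

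I expect the properness of $H\cdot\mathscr{B}_\mathfrak{k}$ to be the main obstacle. It follows from the estimate $\dim(H\cdot\mathscr{B}_\mathfrak{k})\le\dim\mathscr{B}_\mathfrak{k}+\dim(H/K)$, so I must verify $\operatorname{codim}\mathscr{B}_\mathfrak{k}>\dim(H/K)$ for every maximal $\mathfrak{k}$. That this can fail for higher-dimensional abelian algebras, where every hyperplane is a maximal subalgebra and the conjugacy classes are infinite, is precisely why the reduction to irreducible summands (each with at most one-dimensional center) is needed. The required codimension bound can be read off case by case from the explicit description of the modules $\mathscr{P}(\mathfrak{h})$ in Section~\ref{sec5} and Table~\ref{tab1}; alternatively, and more concretely, one bypasses the counting by exhibiting a generating $P$ by hand from those same formulas, for instance $P(y)=x\wedge y$ already generates $\mathfrak{so}(n)$ because $[x\wedge y_1,x\wedge y_2]=-\,y_1\wedge y_2$ together with the elements $x\wedge y$ recovers all of $\wedge^2\mathbb{R}^n$, and similarly for each remaining entry of Table~\ref{tab1}.
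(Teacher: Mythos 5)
Your reduction to the irreducible case via Theorem~\ref{th14} and Corollary~\ref{cor3} is exactly the paper's own reduction, and your observation that a Riemannian holonomy algebra is weak Berger is correct. The gap is in the irreducible case, and you have in fact flagged it yourself: the genericity argument hinges on the bound $\operatorname{codim}\mathscr{B}_{\mathfrak{k}}>\dim(H/K)$ for every conjugacy class of maximal subalgebras $\mathfrak{k}\subsetneq\mathfrak{h}$, and without this bound the closed set $H\cdot\mathscr{B}_{\mathfrak{k}}$ may have full dimension and the Baire/dimension-count conclusion fails. The weak Berger property only gives that each individual $\mathscr{B}_{\mathfrak{k}}$ is a proper subspace; it does not control the dimension of its $H$-saturation, and the lemma is genuinely not a formal consequence of $L(\mathscr{P}(\mathfrak{h}))=\mathfrak{h}$ (the images $P(\mathbb{R}^n)$ could a priori each generate a proper subalgebra while jointly spanning $\mathfrak{h}$). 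So the "decisive structural input" you cite (finiteness of conjugacy classes of maximal subalgebras) reduces the problem to a finite list of inequalities but does not discharge any of them, and you verify none.

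Your fallback --- exhibiting a generating $P$ explicitly from Section~\ref{sec5} --- is what the paper actually does, but the phrase "similarly for each remaining entry of Table~\ref{tab1}" hides the hard cases. For $\mathfrak{so}(n)$, $\mathfrak{u}(m)$, $\mathfrak{sp}(m)\oplus\mathfrak{sp}(1)$ and the symmetric Berger algebras one has $\mathscr{P}_1(\mathfrak{h})\simeq\mathbb{R}^n$ with explicit closed-form elements, and your computation for $\mathfrak{so}(n)$ is the right kind of check. But for $G_2\subset\mathfrak{so}(7)$ and $\mathfrak{spin}(7)\subset\mathfrak{so}(8)$ one has $\mathscr{P}_1(\mathfrak{h})=0$ and $\mathscr{P}_0(\mathfrak{h})$ is only identified abstractly as $V_{\pi_1+\pi_2}$ resp.\ $V_{\pi_2+\pi_3}$; Section~\ref{sec5} gives no usable closed form there, which is precisely why the paper writes down explicit linear maps $P$ in terms of generators $A_i$ of these Lie algebras and verifies $P\in\mathscr{P}(\mathfrak{h})$ by computer. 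For $\mathfrak{su}(m)$ and $\mathfrak{sp}(m)$ the paper also has to choose $S\in(\odot^2(\mathbb{C}^m)^*\otimes\mathbb{C}^m)_0$ not supported on any proper $\mathbb{C}^{m_0}$, a condition your sketch does not formulate. In short: correct skeleton, but the substantive content of the lemma --- producing a single generating $P$ for each irreducible entry of Table~\ref{tab1}, in particular for $G_2$ and $\mathfrak{spin}(7)$ --- is missing.
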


\begin{proof}
First we suppose that the  subalgebra
$\mathfrak{h}\subset\mathfrak{so}(n)$ is irreducible. If
$\mathfrak{h}$ is one of the holonomy algebras~$\mathfrak{so}(n)$,
$\mathfrak{u}(m)$, $\mathfrak{sp}(m)\oplus\mathfrak{sp}(1)$, then
for~$P$ it is enough to take one of the tensors described in
Section~\ref{sec5} for an arbitrary non-zero fixed
$X\in\mathbb{R}^n$. It is obvious that
$P(\mathbb{R}^n)\subset\mathfrak{h}$ generates the Lie
algebra~$\mathfrak{h}$. Similarly if
$\mathfrak{h}\subset\mathfrak{so}(n)$ is a symmetric Berger
algebra, then we can consider a non-zero $X\in\mathbb{R}^n$ and
put $P=R(X,\,\cdot\,)$, where $R$ is the curvature tensor of the
corresponding symmetric space. For~$\mathfrak{su}(m)$ we use the
isomorphism $\mathscr{P}(\mathfrak{su}(m))\simeq
(\odot^2(\mathbb{C}^m)^*\otimes\mathbb{C}^m)_0$ from
Section~\ref{sec5} and take $P$ determined by an element  $S\in
(\odot^2(\mathbb{C}^{m})^*\otimes\mathbb{C}^{m})_0$ that does not
belong to the space
$(\odot^2(\mathbb{C}^{m_0})^*\otimes\mathbb{C}^{m_0})_0$ for any
$m_0<m$. We do the same for~$\mathfrak{sp}(m)$.

The subalgebra $G_2\subset\mathfrak{so}(7)$ is generated by the
following matrices~\cite{15}:
\begin{alignat*}{4}
A_1&=E_{12}-E_{34},&\quad A_2&=E_{12}-E_{56},&\quad
A_3&=E_{13}+E_{24},&\quad A_4&=E_{13}-E_{67},
\\
A_5&=E_{14}-E_{23},&\quad A_6&=E_{14}-E_{57},&\quad
A_7&=E_{15}+E_{26},&\quad A_8&=E_{15}+E_{47},
\\
A_9&=E_{16}-E_{25},&\quad A_{10}&=E_{16}+E_{37},&\quad
A_{11}&=E_{17}-E_{36},&\quad A_{12}&=E_{17}-E_{45},
\\
A_{13}&=E_{27}-E_{35},&\quad A_{14}&=E_{27}+E_{46},&&&&
\end{alignat*}
where $E_{ij}\in\mathfrak{so}(7)$ ($i<j$) is the skew-symmetric
matrix such that
$(E_{ij})_{kl}=\delta_{ik}\delta_{jl}-\delta_{il}\delta_{jk}$.

Consider the linear map $P\in\operatorname{Hom}(\mathbb{R}^7,G_2)$
given by the formulas
\begin{alignat*}{4}
P(e_1)&=A_6,&\quad P(e_2)&=A_4+A_5,&\quad P(e_3)&=A_1+A_7,&\quad
P(e_4)&=A_1,
\\
P(e_5)&=A_4,&\quad P(e_6)&=-A_5+A_6,&\quad P(e_7)&=A_7.&&
\end{alignat*}
Using the computer it is easy to check that
$P\in\mathscr{P}(G_2)$, and the elements $A_1,A_4,A_5,A_6,A_7\in
G_2$ generate the Lie algebra~$G_2$.

The subalgebra $\mathfrak{spin}(7)\subset\mathfrak{so}(8)$ is
generated by the following matrices~\cite{15}:
\begin{alignat*}{4}
A_{1}&=E_{12}+E_{34},&\quad A_{2}&=E_{13}-E_{24},&\quad
A_{3}&=E_{14}+E_{23},&\quad A_{4}&=E_{56}+E_{78},
\\
A_{5}&=-E_{57}+E_{68},&\quad A_{6}&=E_{58}+E_{67},&\quad
A_{7}&=-E_{15}+E_{26},&\quad A_{8}&=E_{12}+E_{56},
\\
A_{9}&=E_{16}+E_{25},&\quad A_{10}&=E_{37}-E_{48},&\quad
A_{11}&=E_{38}+E_{47},&\quad A_{12}&=E_{17}+E_{28},
\\
A_{13}&=E_{18}-E_{27},&\quad A_{14}&=E_{35}+E_{46},&\quad
A_{15}&=E_{36}-E_{45},&\quad A_{16}&=E_{18}+E_{36},
\\
A_{17}&=E_{17}+E_{35},&\quad A_{18}&=E_{26}-E_{48},&\quad
A_{19}&=E_{25}+E_{38},&\quad A_{20}&=E_{23}+E_{67},
\\
A_{21}&=E_{24}+E_{57}.&&&&&&
\end{alignat*}
The linear map
$P\in\operatorname{Hom}(\mathbb{R}^8,\mathfrak{spin}(7))$, defined
by the formulas
\begin{alignat*}{4} P(e_1)&=0,&\quad
P(e_2)&=-A_{14},&\quad P(e_3)&=0,&\quad P(e_4)&=A_{21},
\\
P(e_5)&=A_{20},&\quad P(e_6)&=A_{21}-A_{18},&\quad
P(e_7)&=A_{15}-A_{16}, &\quad P(e_8)&=A_{14}-A_{17},
\end{alignat*}
belongs to the  space $\mathscr{P}(\mathfrak{spin}(7))$, and the
elements $A_{14},A_{15}-A_{16},A_{17},A_{18},$ $A_{20},A_{21}\in
\mathfrak{spin}(7)$ generate the Lie algebra $\mathfrak{spin}(7)$.

In the case of an arbitrary holonomy algebra
$\mathfrak{h}\subset\mathfrak{so}(n)$ the statement of the theorem
follows from Theorem~\ref{th14}.
\end{proof}

Consider an arbitrary holonomy algebra
$\mathfrak{h}\subset\mathfrak{so}(n)$ of a Riemannian manifold. We
will use the fact that $\mathfrak{h}$ ia a weak Berger algebra,
i.e., $L(\mathscr{P}(\mathfrak{h}))=\mathfrak{h}$. The initial
construction requires a fixation of enough number of elements
 $P_1,\dots,P_N\in\mathscr{P}(\mathfrak{h})$ such that their
 images generate~$\mathfrak{h}$. The just proven lemma allows to consider a single
 $P\in\mathscr{P}(\mathfrak{h})$. Recall that
for~$\mathfrak{h}$ the decompositions~\eqref{eq4.4}
and~\eqref{eq4.5} take a place. We will assume that the basis
$e_1,\dots,e_n$ of the space~$\mathbb{R}^n$ is concerned with the
decomposition~\eqref{eq4.4}. Let $m_0=n_1+\dots+n_s=n-n_{s+1}$.
Then, $\mathfrak{h}\subset\mathfrak{so}(m_0)$, and~$\mathfrak{h}$
does not annihilate any non-trivial subspace
in~$\mathbb{R}^{m_0}$. Note that in the case of the Lie algebras
$\mathfrak{g}^{4,\mathfrak{h},m,\psi}$ we have $0<m_0\leqslant m$.
Define the numbers~$P_{ji}^k$ such that $P(e_i)e_j=P_{ji}^ k e_
k$. Consider on~$\mathbb{R}^{n+2}$ the following metric:
\begin{equation}
\label{eq7.1} g=2\,dv\,du
+\sum^{n}_{i=1}(dx^i)^2+2A_i\,dx^i\,du+H\cdot(du)^2,
\end{equation}
where
\begin{equation}
\label{eq7.2} A_i=\frac{1}{3}(P^i_{jk}+P^i_{kj})x^j x^ k,
\end{equation}
and $H$ is a function that will depend on the type of the holonomy
algebra that we wish to construct.

For the Lie algebra $\mathfrak{g}^{3,\mathfrak{h},\varphi}$ define
the numbers $\varphi_{i}=\varphi(P(e_ i))$.

For the Lie algebra $\mathfrak{g}^{4,\mathfrak{h},m,\psi}$ define
the numbers~$\psi_{ij}$, $j=m+1,\dots,n$ such that
\begin{equation}
\label{eq7.3} \psi(P(e_i))=-\sum^{n}_{j=m+1}\psi_{ij}e_j.
\end{equation}

\begin{theorem}
\label{th18} The holonomy algebra~$\mathfrak{g}$ of the metric~$g$
at the point~$0$ depends on the function~$H$ in the following way:

\begin{center}
\begin{tabular}{|c|cl|}
\hline $H$& &$\mathfrak{g}$
\\ \hline
\rule{0pt}{18pt}$v^2+\displaystyle\sum_{i=m_0+1}^{n}(x^i)^2$&
&$\mathfrak{g}^{1,\mathfrak{h}}$
\\[4mm]\hline
$\displaystyle\sum_{i=m_0+1}^{n}(x^i)^2$&
&\rule{0pt}{18pt}$\mathfrak{g}^{2,\mathfrak{h}}$
\\[4mm]\hline
\rule{0pt}{18pt} $2v
\varphi_{i}x^i+\displaystyle\sum_{i=m_0+1}^{n}(x^i)^2$ &
&$\mathfrak{g}^{3,\mathfrak{h},\varphi}$
\\[4mm]\hline
\rule{0pt}{18pt}$2\displaystyle\sum_{j=m+1}^n\psi_{ij}x^i x^j+
\displaystyle\sum_{i=m_0+1}^{m}(x^i)^2$&
&$\mathfrak{g}^{4,\mathfrak{h},m,\psi}$
\\[4mm]
\hline
\end{tabular}
\end{center}
\end{theorem}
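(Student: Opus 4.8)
The plan is to treat \eqref{eq7.1} as a polynomial, hence analytic, Walker metric of the form \eqref{eq4.11} with flat, $u$-independent screen metric $h=\sum_i(dx^i)^2$ and $u$-independent $A$, so that in \eqref{eq4.16}--\eqref{eq4.18} every term carrying $\dot h$, $\ddot h$, $\dot A$, or a Christoffel symbol of $h$ drops out. I would first record that $R_0=0$ (the screen is flat), and then carry out the one genuinely essential calculation: that the $P$-component of the curvature equals the fixed $P\in\mathscr{P}(\mathfrak{h})$. From \eqref{eq7.2} one gets $F_{ij}=\partial_iA_j-\partial_jA_i$ linear in $x$; substituting into \eqref{eq4.17} and using the skew-symmetry $P^i_{jk}=-P^j_{ik}$ together with the Bianchi identity \eqref{eq4.1} collapses the three terms to $(P^{\mathrm{curv}})^i_{jk}=P^i_{jk}$. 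As a by-product, $F=-2\sum_k x^kP(e_k)$ takes values in $\mathfrak{h}$ everywhere. Finally I would evaluate $\lambda=\tfrac12\partial_v^2H$, $\vec v$, and $T$ at $0$ for each of the four functions $H$ and check, via Theorem~\ref{th13}, that $R|_0$ satisfies exactly the defining relations of $\mathscr{R}(\mathfrak{g}^{1,\mathfrak{h}})$, $\mathscr{R}(\mathfrak{g}^{2,\mathfrak{h}})$, $\mathscr{R}(\mathfrak{g}^{3,\mathfrak{h},\varphi})$, $\mathscr{R}(\mathfrak{g}^{4,\mathfrak{h},m,\psi})$ respectively (e.g.\ $\lambda=1$ for type~1; $\lambda=0$ and $g(\vec v,\cdot)=\varphi(P(\cdot))$ for type~3; $\lambda=0$, $\vec v=0$, and $\mathrm{pr}_{\mathbb{R}^{n-m}}\!\circ T=\psi\circ P$ for type~4).

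\textbf{Lower bound.} By the Ambrose--Singer Theorem~\ref{th3}, $R|_0\in\mathfrak{g}$. From \eqref{eq4.3}, $R(X_i,q)|_0$ has $\mathfrak{so}(n)$-part $P(e_i)$, and by Lemma~\ref{lem1} these generate $\mathfrak{h}$; taking brackets inside $\mathfrak{g}\subset\mathfrak{sim}(n)$ shows the orthogonal projection of $\mathfrak{g}$ is all of $\mathfrak{h}$. Next I would produce translations: the set $\mathfrak{g}\cap\mathbb{R}^n$ is an ideal, hence $\mathfrak{h}$-invariant, so it suffices to find one nonzero vector in each irreducible summand $\mathbb{R}^{n_i}$ of \eqref{eq4.4}; these are supplied by $R(X,Y)|_0=p\wedge(P(X)Y-P(Y)X)$, which is nonzero on $\mathbb{R}^{n_i}$, since otherwise $g(P(X)Y,Z)$ would be symmetric in $(X,Y)$ and skew in $(Y,Z)$, forcing $P|_{\mathbb{R}^{n_i}}=0$ against $L(\mathscr{P}(\mathfrak{h}_i))=\mathfrak{h}_i$. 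The remaining coordinate directions $i>m_0$ give $R(X_i,q)|_0=-p\wedge e_i$ (there $P(e_i)=0$ and $T(e_i)|_0=e_i$), yielding $\mathbb{R}^{n-m_0}$ (resp.\ $\mathbb{R}^{m-m_0}$ for type~4). Thus $\mathbb{R}^n$ (resp.\ $\mathbb{R}^m$) lies in $\mathfrak{g}$, and subtracting translation parts leaves $\mathfrak{h}$ itself in $\mathfrak{g}$. The twists appear directly in $R(X_i,q)|_0$: for type~3 its dilation coefficient is $-\varphi(P(e_i))$ while its $\mathfrak{so}(n)$-part is $P(e_i)$, so $\{(\varphi(A),A)\}$ is generated; for type~4 its translation part is $\psi(P(e_i))$, so $\{(0,A,\psi(A))\}$ is generated; for type~1 the dilation $-p\wedge q$ comes from $R(p,q)|_0$ with $\lambda=1$. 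In each case this gives $\mathfrak{g}\supseteq\mathfrak{g}^{i,\dots}$.

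\textbf{Upper bound and the main obstacle.} Since the metric is analytic I would invoke Theorem~\ref{th5}, so $\mathfrak{g}$ is generated by $R|_0$ and all $\nabla^{k}R|_0$. The vector $p=\partial_v$ is recurrent, so the line $\mathbb{R}p$ is parallel and $\mathfrak{g}\subseteq\mathfrak{sim}(n)$ by Theorem~\ref{th2}; moreover the orthogonal (screen) part of $\mathfrak{g}$ is the holonomy of the induced connection on $\mathscr{E}=\ell^\perp/\ell$, whose connection form is governed by the $\mathfrak{h}$-valued field $F$, so this part lies in $\mathfrak{h}$. Together these give $\mathfrak{g}\subseteq\mathfrak{g}^{1,\mathfrak{h}}$, which already finishes type~1. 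For types~2 and~4 one has $\partial_vH=0$, hence $p$ is parallel and the dilation is killed, giving $\mathfrak{g}\subseteq\mathfrak{g}^{2,\mathfrak{h}}$.

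The genuinely delicate point---what I expect to be the main obstacle---is the upper bound for types~3 and~4, namely showing that $\mathfrak{g}$ does not exceed the \emph{twisted} subalgebra. Here one must verify that the twist relations ($a=\varphi(A)$ for type~3; the $\psi$-twist of the $\mathbb{R}^{n-m}$ translations for type~4) persist through every covariant derivative $\nabla^{k}R|_0$, equivalently that no free dilation (type~3) nor untwisted $\mathbb{R}^{n-m}$ translation (type~4) is ever produced. For type~3 the absence of a free dilation is tied to $\partial_v^2H=0$; for type~4 the restriction of translations to $\mathbb{R}^m\oplus\psi(\mathfrak{h})$ must be shown stable under differentiation. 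I would handle this either by producing the additional parallel structures whose stabilizer in $\mathfrak{g}^{1,\mathfrak{h}}$ is exactly $\mathfrak{g}^{3,\mathfrak{h},\varphi}$ resp.\ $\mathfrak{g}^{4,\mathfrak{h},m,\psi}$, or by a direct induction on $k$ computing the dilation and $\mathbb{R}^{n-m}$-components of $\nabla^{k}R|_0$ and matching them to $\varphi$ resp.\ $\psi$; the bookkeeping to all orders is the crux of the argument.
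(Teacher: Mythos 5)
Your overall strategy coincides with the paper's: compute the curvature of the metric \eqref{eq7.1} in the frame \eqref{eq4.15}, use Lemma~\ref{lem1} together with the Ambrose--Singer theorem for the lower bound $\mathfrak{g}\supseteq\mathfrak{g}^{i,\dots}$, and control the covariant derivatives of the curvature for the upper bound. Your curvature computation ($R_0=0$, the $P$-component of $R$ equal to the chosen $P$ via skew-symmetry and the identity \eqref{eq4.1}, the values of $\lambda$, $\vec v$, $T$ at $0$) and your lower-bound argument (generation of $\mathfrak{h}$ by the $P(e_i)$, the $\mathfrak{h}$-invariance of $\mathfrak{g}\cap\mathbb{R}^n$ combined with irreducibility of the $\mathbb{R}^{n_i}$, the extra translations coming from $T$, and the read-off of the $\varphi$- and $\psi$-twists from $R(X_i,q)|_0$) are correct and essentially identical to the paper's. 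The upper bound for types 1 and 2, via $\mathfrak{g}\subseteq\mathfrak{sim}(n)$, the $\mathfrak{h}$-valued screen connection form and the parallelism of $\partial_v$, is also sound.

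The gap is exactly where you locate it: the upper bound for types 3 and 4 is declared to be ``the crux'' and then left to a choice between two unexecuted strategies. The first option (parallel structures whose stabilizer is exactly $\mathfrak{g}^{3,\mathfrak{h},\varphi}$ or $\mathfrak{g}^{4,\mathfrak{h},m,\psi}$) is not available in any evident form, since these subalgebras are graphs of $\varphi$ and $\psi$ rather than annihilators of tensors; so the induction is what must be done, and it is what the paper does. The concrete inputs it supplies, and which your sketch lacks, are: (i)~the connection operators in the frame \eqref{eq4.15}, namely $\Gamma_p=0$, each $\Gamma_k$ a pure translation with direction in $\mathbb{R}^{m_0}$, and $\Gamma_q$ with $\mathfrak{so}(n)$-part $\sum_kx^kP(e_k)\in\mathfrak{h}$; (ii)~the fact that the operators $R(X_a,X_b)$, written in this frame, have constant components and hence lie in $\mathfrak{g}^{i,\dots}$ at every point near $0$, not only at $0$; and (iii)~the fact that $\operatorname{ad}(\Gamma_a)$ preserves $\mathfrak{g}^{3,\mathfrak{h},\varphi}$ resp.\ $\mathfrak{g}^{4,\mathfrak{h},m,\psi}$, which is where $\varphi|_{\mathfrak{h}'}=0$, $\psi|_{\mathfrak{h}'}=0$ and $\mathfrak{h}(\mathbb{R}^{n-m})=0$ enter. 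With (i)--(iii) in hand, the recurrence \eqref{eq7.4} propagates membership in $\mathfrak{g}^{i,\dots}$ through all $\nabla^{\alpha}R$, because differentiating a function valued in a fixed linear subspace and bracketing with the $\Gamma_a$ both stay inside that subspace. Without these verifications the ``bookkeeping to all orders'' never starts, so the proof is incomplete precisely at its decisive step.
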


From Theorems~\ref{th16} and~\ref{th18} we get the main
classification Theorem.

\begin{theorem}
\label{th19} A subalgebra
$\mathfrak{g}\subset\mathfrak{so}(1,n+1)$ is weakly irreducible
not irreducible holonomy algebra of a Lorentzian manifold if and
only if~$\mathfrak{g}$ is conjugated to one of the following
subalgebras~$\mathfrak{g}^{1,\mathfrak{h}}$,
$\mathfrak{g}^{2,\mathfrak{h}}$,
$\mathfrak{g}^{3,\mathfrak{h},\varphi}$,
$\mathfrak{g}^{4,\mathfrak{h},m,\psi}\subset\mathfrak{sim}(n)$,
where $\mathfrak{h}\subset\mathfrak{so}(n)$ is the holonomy
algebra of a Riemannian manifold.
\end{theorem}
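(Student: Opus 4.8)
The plan is to prove Theorem~\ref{th19} by combining the algebraic classification of weakly irreducible not irreducible Berger algebras with the explicit metric construction. The necessity direction is already established: if $\mathfrak{g}\subset\mathfrak{so}(1,n+1)$ is the weakly irreducible not irreducible holonomy algebra of a Lorentzian manifold, then $\mathfrak{g}$ is in particular a Berger algebra (by the Ambrose--Singer Theorem~\ref{th3} together with the first Bianchi identity, which gives $L(\mathscr{R}(\mathfrak{g}))=\mathfrak{g}$), and so Theorem~\ref{th16} immediately forces $\mathfrak{g}$ to be conjugate to one of $\mathfrak{g}^{1,\mathfrak{h}}$, $\mathfrak{g}^{2,\mathfrak{h}}$, $\mathfrak{g}^{3,\mathfrak{h},\varphi}$, $\mathfrak{g}^{4,\mathfrak{h},m,\psi}$ with $\mathfrak{h}\subset\mathfrak{so}(n)$ a Riemannian holonomy algebra. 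Thus the content of the theorem lies entirely in the sufficiency direction.

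For sufficiency, I would invoke the realization result Theorem~\ref{th18} directly. Given a Riemannian holonomy algebra $\mathfrak{h}\subset\mathfrak{so}(n)$ and any one of the four types of extension data, Lemma~\ref{lem1} supplies a single tensor $P\in\mathscr{P}(\mathfrak{h})$ whose image $P(\mathbb{R}^n)$ generates $\mathfrak{h}$. Using this $P$ one writes down the metric~\eqref{eq7.1} with $A_i$ as in~\eqref{eq7.2} and with the appropriate choice of the function $H$ from the table in Theorem~\ref{th18}. The assertion of Theorem~\ref{th18} is precisely that the holonomy algebra of this metric at the origin equals the prescribed $\mathfrak{g}^{1,\mathfrak{h}}$, $\mathfrak{g}^{2,\mathfrak{h}}$, $\mathfrak{g}^{3,\mathfrak{h},\varphi}$, or $\mathfrak{g}^{4,\mathfrak{h},m,\psi}$. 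Hence every algebra appearing in Theorem~\ref{th16} is genuinely realized, and the two theorems together close the equivalence.

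The one subtlety I would make explicit is the matching between the two lists. Theorem~\ref{th16} characterizes the weakly irreducible not irreducible \emph{Berger} algebras, while Theorem~\ref{th18} realizes exactly the same four families as honest holonomy algebras; since every holonomy algebra is a Berger algebra, the realized family cannot be larger than the Berger family, and Theorem~\ref{th18} shows it is not smaller. The constraints on the data in Theorem~\ref{th10} (namely $\mathfrak{z}(\mathfrak{h})\neq\{0\}$ and $\varphi|_{\mathfrak{h}'}=0$ for type~3, and $\dim\mathfrak{z}(\mathfrak{h})\geqslant n-m$ with $\psi|_{\mathfrak{h}'}=0$ for type~4) are automatically the same constraints under which the functions $H$ in Theorem~\ref{th18} make sense, so no mismatch arises. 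I would state this compatibility as the single verification needed to splice the necessity and sufficiency halves together.

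The main obstacle in this argument is not in the present theorem at all but is entirely absorbed into Theorem~\ref{th18}, whose proof requires computing the holonomy algebra of the metric~\eqref{eq7.1} via the Ambrose--Singer generators $\nabla^{(k)}R$ (Theorem~\ref{th5}) and checking both that the curvature and its derivatives generate $\mathfrak{g}$ and that nothing beyond $\mathfrak{g}$ is produced. Granting Theorem~\ref{th18} and Lemma~\ref{lem1}, the proof of Theorem~\ref{th19} is a one-line synthesis: necessity from Theorem~\ref{th16}, sufficiency from Theorem~\ref{th18}, and the lists coincide.
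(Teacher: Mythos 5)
Your proposal is correct and coincides with the paper's own argument: the paper derives Theorem~\ref{th19} in one line from Theorem~\ref{th16} (necessity, since every holonomy algebra is a Berger algebra) and Theorem~\ref{th18} (sufficiency, via the metrics~\eqref{eq7.1} built from the single $P$ of Lemma~\ref{lem1}). Your additional remark on the compatibility of the two lists is a sensible, if routine, explicit check of what the paper leaves implicit.
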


\begin{proof}[of Theorem~\ref{th18}]
Consider the field of frames~\eqref{eq4.15}. Let $X_p=p$ and
$X_q=q$. The indices~$a,b,c,\ldots$ will take all the values of
the indices of the basis vector fields. The components of the
connection~$\Gamma^c_{ba}$ are defined by the formula
$\nabla_{X_a}X_b=\Gamma^c_{ba}X_c$. The constructed metrics are
analytic. From the proof of Theorem~9.2 and~\cite{94} it follows
that~$\mathfrak{g}$ is generated by the elements of the form
$$
\nabla_{X_{a_\alpha}}\cdots\nabla_{X_{a_1}}R(X_a,X_b)(0)\in
\mathfrak{so}(T_0M,g_0)=\mathfrak{so}(1,n+1), \qquad
\alpha=0,1,2,\dots,
$$
where $\nabla$ is the Levi-Civita connection defined by the
metric~$g$, and~$R$ is the curvature tensor. The components of the
curvature tensor are defined by the equality
$$
R(X_a,X_b)X_c=\sum_dR^d_{cab}X_d.
$$
Note that the following recurrent formula takes a place:
\begin{align}
\nonumber \nabla_{a_\alpha}\cdots\nabla_{a_1}R^d_{cab}&=
X_{a_\alpha}\nabla_{a_{\alpha-1}}\cdots\nabla_{a_1}R^d_{cab}
\\
\label{eq7.4}
&\qquad+[\Gamma_{a_\alpha},\nabla_{X_{a_{\alpha-1}}}\cdots
\nabla_{X_{a_1}}R(X_a,X_b)]^d_c,
\end{align}
where~$\Gamma_{a_\alpha}$ denotes the operator with the
matrix~$(\Gamma_{ba_\alpha}^a)$. Since we consider the Walker
matric, it holds $\mathfrak{g}\subset\mathfrak{sim}(n)$.

Taking into account the said above it is not hard to find the
holonomy algebra~$\mathfrak{g}$. Let us make the computations for
the algebras of the fourth type. The proof for other types is
similar. Let $H=2\displaystyle\sum_{j=m+1}^n\psi_{ij}x^i
x^j+\displaystyle\sum_{i=m_0+1}^{m}(x^i)^2$. We must prove the
equality $\mathfrak{g}=\mathfrak{g}^{4,\mathfrak{h},m,\psi}$. It
is clear that $\nabla\partial_v=0$. Hence,
$\mathfrak{g}\subset\mathfrak{so}(n)\ltimes\mathbb{R}^n$.

The possibly non-zero Lie brackets of the basis vector fields are
the following:
\begin{gather*}
[X_i,X_j]=-F_{ij}p=2P_{ik}^jx^kp,\qquad [X_i,q]=C^p_{iq}p,
\\
C^p_{iq}=-\frac{1}{2}\partial_iH=\begin{cases}
-\displaystyle\sum_{j=m+1}^n\psi_{ij}x^j,&1\leqslant i\leqslant
m_0,
\\
-x^i, & m_0+1\leqslant i\leqslant m,
\\
-\psi_{ki}x^k,& m+1\leqslant i\leqslant n.
\end{cases}
\end{gather*}
Using this, it is easy to find the matrices of the
operators~$\Gamma_a$, namely, $\Gamma_p=0$,
\begin{alignat*}{2}
\Gamma_k&=\begin{pmatrix} 0& Y_k^t &0
\\
0&0&-Y_k\\0&0&0
\end{pmatrix},&\qquad
Y_k^t&=(P^k_{1i}x^i,\dots,P^k_{m_0i}x^i,0,\dots,0),
\\
\Gamma_q&=\begin{pmatrix} 0& Z^t &0
\\
0&(P^i_{jk}x^k)&-Z
\\
0&0&0
\end{pmatrix},&\qquad
Z^t&=-(C^p_{1q},\dots,C^p_{nq}).
\end{alignat*}

It is enough to compute the following components of the
 curvature tensor:
\begin{gather*}
R^k_{jiq}=P^ k_{ji},\quad R^k_{jil}=0,\quad R^k_{qij}=-P^i_{jk},
\\
R^j_{qjq}=-1,\quad m_0+1\leqslant j\leqslant m,\qquad
R^l_{qjq}=-\psi_{jl},\quad m+1\leqslant l\leqslant n.
\end{gather*}
This implies
\begin{gather*}
\operatorname{pr}_{\mathfrak{so}(n)}\bigl(R(X_i,q)(0)\bigr)=P(e_i),\quad
\operatorname{pr}_{\mathbb{R}^n}\bigl(R(X_i,q)(0)\bigr)=\psi(P(e_i)),
\\
\operatorname{pr}_{\mathbb{R}^n}\bigl(R(X_j,q)(0)\bigr)=-e_j,\qquad
m_0+1\leqslant j\leqslant m,
\\
\operatorname{pr}_{\mathbb{R}^n}\bigl(R(X_i,X_j)(0)\bigr)=P(e_j)e_i-P(e_i)e_j.
\end{gather*}
We get the inclusion
$\mathfrak{g}^{4,\mathfrak{h},m,\psi}\subset\mathfrak{g}$. The
formula~\eqref{eq7.4} and the induction allow to get the inverse
inclusion. The theorem is proved.
\end{proof}

Let us consider two \textit{examples}. Fom the proof of
Lemma~\ref{lem1} it follows that the holonomy algebra of the
metric
$$
g=2\,dv\,du+\sum^{7}_{i=1}(dx^i)^2+2\sum^{7}_{i=1}A_i\,dx^{i}\,du,
$$
where
\begin{align*}
A_1&=\frac{2}{3}(2x^2x^3+x^1x^4+2x^2x^4+2x^3x^5+x^5x^7),
\\
A_2&=\frac{2}{3}(-x^1x^3-x^2x^3-x^1x^4+2x^3x^6+x^6x^7),
\\
A_3&=\frac{2}{3}(-x^1x^2+(x^2)^2-x^3x^4-(x^4)^2-x^1x^5-x^2x^6),
\\
A_4&=\frac{2}{3}(-(x^1)^2-x^1x^2+(x^3)^2+x^3x^4),
\\
A_5&=\frac{2}{3}(-x^1x^3-2x^1x^7-x^6x^7),
\\
A_6&=\frac{2}{3}(-x^2x^3-2x^2x^7-x^5x^7),
\\
A_7&=\frac{2}{3}(x^1x^5+x^2x^6+2x^5x^6),
\end{align*} at the point $0\in\mathbb{R}^9$ coincides
with~$\mathfrak{g}^{2,G_2}\subset\mathfrak{so}(1,8)$. Similarly,
the  holonomy algebra of the metric
$$
g=2\,dv\,du+\sum^{8}_{i=1}(dx^i)^2+2\sum^{8}_{i=1}A_i\,dx^{i}\,du,
$$
where
\begin{alignat*}{2}
A_1&=-\frac{4}{3}x^7x^8,&\quad
A_2&=\frac{2}{3}((x^4)^2+x^3x^5+x^4x^6-(x^6)^2),
\\
A_3&=-\frac{4}{3}x^2x^5,&\quad
A_4&=\frac{2}{3}(-x^2x^4-2x^2x^6-x^5x^7+2x^6x^8),
\\
A_5&=\frac{2}{3}(x^2x^3+2x^4x^7+x^6x^7),&\quad
A_6&=\frac{2}{3}(x^2x^4+x^2x^6+x^5x^7-x^4x^8),
\\
A_7&=\frac{2}{3}(-x^4x^5-2x^5x^6+x^1x^8),&\quad
A_8&=\frac{2}{3}(-x^4x^6+x^1x^7),
\end{alignat*}
at the point $0\in\mathbb{R}^{10}$ coincides with
с~$\mathfrak{g}^{2,\mathfrak{spin}(7)}\subset\mathfrak{so}(1,9)$.

\section{Einstein equation}
\label{sec8}

In this section we consider the relation of the  holonomy algebras
and Einstein equation. We will find the holonomy algebras of
Einstein Lorentzian manifolds. Then we will show that in the case
of a non-zero cosmological constant, on a Walker manifold exist
special coordinates allowing to essentially simplify the Einstein
equation. Examples of Einstein metrics will be given. This topic
is motivated by the paper of theoretical physicists Gibbons and
Pope~\cite{76}. The results of this section are published
in~\cite{60},~\cite{61},~\cite{62},~\cite{74}.

\subsection[{Holonomy algebras of Einstein Lorentzian manifolds}] {Holonomy algebras of Einstein Lorentzian
manifolds}\label{ssec8.1}
Consider a  Lorentzian manifold $(M,g)$
with the holonomy algebra $\mathfrak{g}\subset\mathfrak{sim}(n)$.
First of all in~\cite{72} the following theorem was proved.

\begin{theorem}
\label{th20} Let $(M,g)$ be a locally indecomposable Lorentzian
Einstein manifold admitting a parallel distribution of isotropic
lines. Then the holonomy of $(M,g)$ is either of type 1 or 2. If
the cosmological constant of $(M,g)$ is non-zero, then the
holonomy algebra of $(M,g)$ is of type 1. If $(M,g)$ admits
locally a parallel isotropic vector field, then $(M,g)$ is
Ricci-flat. \end{theorem}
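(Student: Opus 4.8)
The plan is to feed the Einstein condition into the decomposition of the curvature tensor from Theorem~\ref{th13} and then to rule out types~3 and~4 by a single ideal-confinement argument. First I would note that local indecomposability makes the holonomy algebra $\mathfrak{g}$ weakly irreducible (Theorem~\ref{th6}), while the parallel distribution of isotropic lines puts $\mathfrak{g}\subset\mathfrak{sim}(n)$ and makes it not irreducible; so by Theorem~\ref{th10} it is of one of the four types. Writing the Einstein equation as $\operatorname{Ric}=\Lambda\cdot\operatorname{id}$ and comparing with the Ricci operator~\eqref{eq4.9},~\eqref{eq4.10}, I read off that at every point the data $(\lambda,\vec v,R_0,P,T)$ of $R_x$ satisfy $\lambda=\Lambda$, $\vec v=\widetilde{\operatorname{Ric}}(P)$, $\operatorname{tr}T=0$ and $\operatorname{Ric}(R_0)=\Lambda\cdot\operatorname{id}_{\mathbb{R}^n}$ (so the screen tensor $R_0$ is itself Einstein with constant $\Lambda$).

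Two of the three assertions then follow quickly. By Theorem~\ref{th13} the whole space $\mathscr{R}(\mathfrak{g})$ has $\lambda=0$ in types~2, 3 and~4, so the relation $\lambda=\Lambda$ shows that $\Lambda\neq0$ is compatible only with type~1, giving the second assertion. For the third, if $V$ is a local parallel isotropic vector field then $R(\,\cdot\,,\,\cdot\,)V=0$, hence $\operatorname{Ric}(\,\cdot\,,V)=0$; Einstein then gives $\Lambda\,g(\,\cdot\,,V)=0$, and since $g$ is nondegenerate and $V\neq0$ this forces $\Lambda=0$, i.e.\ $(M,g)$ is Ricci-flat.

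The heart is to exclude types~3 and~4, where the mechanism is that the Einstein condition kills the defining coupling $\varphi$ (resp.\ $\psi$). The key algebraic input is the lemma that every $P\in\mathscr{P}_0(\mathfrak{h})$ is $\mathfrak{h}'$-valued: by Theorem~\ref{th14} one reduces to irreducible factors, and on the only factors carrying a center — the unitary ones, where $\mathscr{P}_0(\mathfrak{u}(k))=\mathscr{P}(\mathfrak{su}(k))$ is $\mathfrak{su}$-valued, and the $\mathfrak{so}(2)$-factors, where $\mathscr{P}_0=0$ — the image avoids the center. Since $\varphi|_{\mathfrak{h}'}=0$ and $\psi|_{\mathfrak{h}'}=0$, this yields $\varphi\circ P_0=0$ and $\psi\circ P_0=0$. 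In type~4 one has $\vec v=0$ identically on $\mathscr{R}(\mathfrak{g}^{4,\mathfrak{h},m,\psi})$, so $\widetilde{\operatorname{Ric}}(P)=0$, i.e.\ $P\in\mathscr{P}_0(\mathfrak{h})$, whence $\psi\circ P=0$ at every point. In type~3 I would substitute $\vec v=\widetilde{\operatorname{Ric}}(P)$ into the constraint $g(\vec v,\,\cdot\,)=\varphi(P(\,\cdot\,))$ and use the lemma to pass to the $\mathscr{P}_1$-part, getting $g(\widetilde{\operatorname{Ric}}(P_1),X)=\varphi(P_1(X))$. This decouples factorwise: where $\varphi$ vanishes, injectivity of $\widetilde{\operatorname{Ric}}$ on $\mathscr{P}_1$ forces $P_1=0$; on a central unitary factor the explicit form of $\mathscr{P}_1$ from Section~\ref{sec5} turns it into $\kappa x=c\,Jx$ with $J$ the central complex structure, so $x=0$ since $J$ has no real eigenvectors. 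Either way $P_1=0$, hence $\varphi\circ P=0$ and $\vec v=0$.

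Once the coupling is trivial, I would finish uniformly. Every curvature operator $R(p,q)$, $R(X,q)$, $R(X,Y)$ then has vanishing $\mathbb{R}$-component and rotation part in $\ker\varphi$ (type~3, using also $R_0\in\mathscr{R}(\ker\varphi)$), respectively in $\ker\psi$ (type~4, using $R_0\in\mathscr{R}(\ker\psi)$ and $\operatorname{pr}_{\mathbb{R}^{n-m}}T=\psi\circ P=0$), and translation part in $\mathbb{R}^n$ resp.\ $\mathbb{R}^m$. Thus $R$ takes values in $\ker\varphi\ltimes\mathbb{R}^n\subsetneq\mathfrak{g}^{3,\mathfrak{h},\varphi}$, resp.\ in $\ker\psi\ltimes\mathbb{R}^m\subsetneq\mathfrak{g}^{4,\mathfrak{h},m,\psi}$. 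A short bracket computation — using $\mathfrak{h}'\subset\ker\varphi,\ker\psi$ and that $\mathfrak{h}$ acts trivially on $\mathbb{R}^{n-m}$ — shows this proper subalgebra is an ideal, hence $\operatorname{Ad}(G)$-invariant, so by Theorem~\ref{th2} it corresponds to a $\nabla$-parallel subbundle of $\mathfrak{so}(TM)$. Since parallel transport preserves that subbundle and $R$ is a section of it, the Ambrose–Singer Theorem~\ref{th3} confines the holonomy algebra to this proper ideal, contradicting that it is the full type-3 or type-4 algebra; so only types~1 and~2 survive. The main obstacle is the type-3 step just above: establishing that the coupling genuinely vanishes requires both the lemma $\mathscr{P}_0\subset\ker\varphi$ and the complex-structure (no real eigenvector) obstruction, and is the single place where the explicit computation of $\mathscr{P}(\mathfrak{h})$ from Section~\ref{sec5} is essential.
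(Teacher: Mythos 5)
Your proposal is correct and follows exactly the route the paper's machinery is built for (the paper itself only records the pointwise Einstein conditions~\eqref{eq8.1} and cites~\cite{72} for the proof): feed $\operatorname{Ric}=\Lambda g$ into the curvature decomposition of Theorem~\ref{th13}, use Table~\ref{tab1} to show $\mathscr{P}_0(\mathfrak{h})\subset\operatorname{Hom}(\mathbb{R}^n,\mathfrak{h}')$ and hence kill the couplings $\varphi\circ P$ and $\psi\circ P$, and then confine all curvature values to a proper $\operatorname{Ad}$-invariant ideal, contradicting the Ambrose--Singer Theorem~\ref{th3}. The only point to tighten is that the irreducible factors with nontrivial centre include, besides $\mathfrak{so}(2)$ and $\mathfrak{u}(k)$, the Hermitian symmetric Berger algebras; these are harmless because Table~\ref{tab1} gives $\mathscr{P}_0=0$ for them, and the generator $R(\,\cdot\,,x)$ of $\mathscr{P}_1$ again reduces your type-3 constraint to $\kappa x=c\,Jx$ with $\kappa\ne0$, forcing $x=0$.
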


The classification complete the following two theorems
from~\cite{60}.

\begin{theorem}
\label{th21} Let $(M,g)$ be a locally indecomposable
$n+2$-dimensional Lorentzian manifold admitting a parallel
distribution of isotropic lines. If $(M,g)$ is Ricci-flat, then
one of the following statements holds.

{\rm (I)} The holonomy algebra $\mathfrak{g}$ of the manifold
$(M,g)$ is of type~1, and in the decomposition~\eqref{eq4.5} for
$\mathfrak{h}\subset\mathfrak{so}(n)$ at least one of the
subalgebras $\mathfrak{h}_i\subset\mathfrak{so}(n_i)$ coincides
with one of the Lie algebras: $\mathfrak{so}(n_i)$,
$\mathfrak{u}(n_i/2)$,
$\mathfrak{sp}(n_i/4)\oplus\mathfrak{sp}(1)$ or with a symmetric
 Berger algebra.

{\rm (II)} The holonomy algebra~$\mathfrak{g}$ of the manifold
$(M,g)$ is of type 2, and in the decomposition~\eqref{eq4.5} for
$\mathfrak{h}\subset\mathfrak{so}(n)$ each subalgebra
$\mathfrak{h}_i\subset\mathfrak{so}(n_i)$ coincides with one of
the Lie algebras: $\mathfrak{so}(n_i)$, $\mathfrak{su}(n_i/2)$,
$\mathfrak{sp}(n_i/4)$, $G_2\subset\mathfrak{so}(7)$,
$\mathfrak{spin}(7)\subset\mathfrak{so}(8)$.
\end{theorem}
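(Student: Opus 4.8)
The plan is to dichotomize according to Theorem~\ref{th20}: since a Ricci-flat manifold is Einstein with zero cosmological constant, its holonomy algebra $\mathfrak{g}$ is of type~$1$ or type~$2$, and its orthogonal part decomposes as $\mathfrak{h}=\mathfrak{h}_1\oplus\cdots\oplus\mathfrak{h}_s$ into irreducible ideals $\mathfrak{h}_i\subset\mathfrak{so}(n_i)$ by Theorem~\ref{th14}. First I would translate Ricci-flatness into conditions on the curvature tensor $R\in\mathscr{R}(\mathfrak{g})$ written as in Theorem~\ref{th13}: by the Ricci formulas \eqref{eq4.7}--\eqref{eq4.10}, $\operatorname{Ric}=0$ holds at every point if and only if $\lambda=0$, $R_0\in\mathscr{R}_0(\mathfrak{h})$, $\vec v=\widetilde{\operatorname{Ric}}(P)$ and $\operatorname{tr}T=0$. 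I would also record that $\widetilde{\operatorname{Ric}}$ respects the splitting $\mathscr{P}(\mathfrak{h})=\mathscr{P}(\mathfrak{h}_1)\oplus\cdots\oplus\mathscr{P}(\mathfrak{h}_s)$ of Theorem~\ref{th14}, so that $P\in\mathscr{P}_0(\mathfrak{h})$ precisely when each component lies in $\mathscr{P}_0(\mathfrak{h}_i)$, and that by Table~\ref{tab1} the space $\mathscr{P}_1(\mathfrak{h}_i)$ is nonzero exactly for $\mathfrak{h}_i$ equal to $\mathfrak{so}(n_i)$, $\mathfrak{u}(n_i/2)$, $\mathfrak{sp}(n_i/4)\oplus\mathfrak{sp}(1)$ or a symmetric Berger algebra, and is zero for $\mathfrak{su}(n_i/2)$, $\mathfrak{sp}(n_i/4)$, $G_2$ and $\mathfrak{spin}(7)$.

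For the type~$2$ case, which should yield statement~(II), note that $\lambda=0$ and $\vec v=0$ already hold structurally (the description of $\mathscr{R}(\mathfrak{g}^{2,\mathfrak{h}})$ in Theorem~\ref{th13}); hence Ricci-flatness reduces to $\widetilde{\operatorname{Ric}}(P)=0$, i.e. $P\in\mathscr{P}_0(\mathfrak{h})$ at each point, together with $R_0\in\mathscr{R}_0(\mathfrak{h})$. The $\mathfrak{h}$-valued part of $R$ is spanned by the operators $R_0(X,Y)$ and $P(X)$, and for $n_i\geqslant 4$ one has $L(\mathscr{R}_0(\mathfrak{h}_i))=L(\mathscr{P}_0(\mathfrak{h}_i))$ by Theorem~\ref{th17}. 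Since the decomposition \eqref{eq4.5} is parallel, applying the Ambrose--Singer Theorem~\ref{th3} with the curvature operators at all points (so that no covariant derivatives need be tracked) gives $\mathfrak{h}_i=L(\mathscr{P}_0(\mathfrak{h}_i))$ for every $i$. It then remains to read off from Table~\ref{tab1} the irreducible Riemannian holonomy algebras for which the image of $\mathscr{P}_0$ generates the whole algebra: this happens for $\mathfrak{so}(n_i)$, $\mathfrak{su}(n_i/2)$, $\mathfrak{sp}(n_i/4)$, $G_2$ and $\mathfrak{spin}(7)$, while for $\mathfrak{u}(n_i/2)$, $\mathfrak{sp}(n_i/4)\oplus\mathfrak{sp}(1)$ and the symmetric Berger algebras $L(\mathscr{P}_0(\mathfrak{h}_i))$ is only a proper ideal; the few low-dimensional factors $\mathfrak{so}(2),\mathfrak{so}(3)$ are excluded directly. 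This gives list~(II).

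For the type~$1$ case, giving statement~(I), the key is that type~$1$ together with Ricci-flatness forces $\vec v$ to be nonzero somewhere. A short computation from Theorem~\ref{th13} using $(X\wedge Y)Z=g(X,Z)Y-g(Y,Z)X$ yields $R(p,q)p=\lambda p$, $R(X,q)p=g(\vec v,X)\,p$, and $R(X,Y)p=0$, $R(p,X)=0$; thus the curvature annihilates the isotropic line $\ell=\mathbb{R}p$ at every point if and only if $\lambda\equiv 0$ and $\vec v\equiv 0$. Were that so, then since $\ell$ is parallel every parallel transport scales $p$, so each Ambrose--Singer generator $\tau_\gamma^{-1}R_y(X,Y)\tau_\gamma$ would annihilate $p$; hence $\mathfrak{g}$ would fix $p$ and be of type~$2$, contrary to hypothesis. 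As $\lambda\equiv 0$ by Ricci-flatness, we conclude $\vec v\neq 0$ at some point, so $\widetilde{\operatorname{Ric}}(P)\neq 0$ there and $P$ has a nonzero component in some $\mathscr{P}_1(\mathfrak{h}_i)$. By Table~\ref{tab1} this forces that $\mathfrak{h}_i$ to be $\mathfrak{so}(n_i)$, $\mathfrak{u}(n_i/2)$, $\mathfrak{sp}(n_i/4)\oplus\mathfrak{sp}(1)$ or a symmetric Berger algebra, which is exactly the assertion that at least one $\mathfrak{h}_i$ lies in list~(I).

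The step I expect to be the main obstacle is the generation argument in the type~$2$ case, namely upgrading the pointwise membership $P\in\mathscr{P}_0(\mathfrak{h})$ to the algebraic identity $\mathfrak{h}_i=L(\mathscr{P}_0(\mathfrak{h}_i))$. The clean route avoiding covariant-derivative bookkeeping is to use Ambrose--Singer in the form that spans $\mathfrak{g}$ by curvature operators at all points, combined with the parallelism of \eqref{eq4.5} and with the identification $L(\mathscr{R}_0(\mathfrak{h}_i))=L(\mathscr{P}_0(\mathfrak{h}_i))$ from Theorem~\ref{th17}. Once this is in place, both lists follow by inspecting Table~\ref{tab1}, the precise dichotomy being $\mathscr{P}_1(\mathfrak{h}_i)\neq 0$ for list~(I) versus $L(\mathscr{P}_0(\mathfrak{h}_i))=\mathfrak{h}_i$ for list~(II).
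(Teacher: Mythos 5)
Your proof is correct and follows the intended route: the paper states Theorem~\ref{th21} without proof (citing~\cite{60}), but the argument there is precisely the one you assemble from Theorem~\ref{th13}, the Ricci formulas \eqref{eq4.7}--\eqref{eq4.10}, Theorems~\ref{th14} and~\ref{th17}, and Table~\ref{tab1} --- in type~2 Ricci-flatness forces $P\in\mathscr{P}_0(\mathfrak{h})$ and $R_0\in\mathscr{R}_0(\mathfrak{h})$ at every point, so Ambrose--Singer yields $\mathfrak{h}_i=L(\mathscr{P}_0(\mathfrak{h}_i))$ and list~(II) is read off from the table, while in type~1 the holonomy cannot annihilate $p$, forcing $\vec v\not\equiv 0$ and hence $\mathscr{P}_1(\mathfrak{h}_i)\ne 0$ for some~$i$, which is list~(I). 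The only cosmetic slip is the remark about excluding $\mathfrak{so}(3)$: since $\mathscr{P}_0(\mathfrak{so}(3))=V_{4\pi_1}\ne 0$ and $\mathfrak{so}(3)$ is simple, it generates itself and legitimately occurs in list~(II) as $\mathfrak{so}(n_i)$ with $n_i=3$, so nothing needs to be excluded there.
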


\begin{theorem}
\label{th22} Let $(M,g)$ be a locally indecomposable
$n+2$-dimensional Lorentzian manifold admitting a parallel
distribution of isotropic lines. If $(M,g)$ is Einstein and not
Ricci-flat, then the holonomy algebra $\mathfrak{g}$ of $(M,g)$ is
of type 1, and in the decomposition~\eqref{eq4.5} for
$\mathfrak{h}\subset\mathfrak{so}(n)$ each subalgebra
$\mathfrak{h}_i\subset\mathfrak{so}(n_i)$ coincides with one of
the Lie algebras: $\mathfrak{so}(n_i)$, $\mathfrak{u}(n_i/2)$,
$\mathfrak{sp}(n_i/4)\oplus\mathfrak{sp}(1)$ or with a symmetric
 Berger algebra. Moreover, it holds $n_{s+1}=0$.
\end{theorem}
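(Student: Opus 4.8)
The plan is to push the Einstein equation through the explicit description of the curvature and Ricci tensors of a type~1 holonomy algebra and to read off, block by block, which orthogonal parts can occur. Since $(M,g)$ is Einstein and not Ricci-flat, its cosmological constant $\Lambda$ is non-zero, so Theorem~\ref{th20} already forces $\mathfrak{g}$ to be of type~1; thus $\mathfrak{g}=\mathfrak{g}^{1,\mathfrak{h}}$ and the curvature tensor $R$ at the base point is described by $(\lambda,\vec v,R_0,P,T)$ as in Theorem~\ref{th13}, with $R_0\in\mathscr{R}(\mathfrak{h})$. I would then impose $\operatorname{Ric}=\Lambda g$ using the Ricci operator \eqref{eq4.9}--\eqref{eq4.10}: comparing the $p$-, $\mathbb{R}^n$- and $q$-components gives $\lambda=\Lambda$, $\vec v=\widetilde{\operatorname{Ric}}(P)$ and $\operatorname{tr}T=0$, together with the only condition that matters for the classification,
\begin{equation*}
\operatorname{Ric}(R_0)=\Lambda\,\operatorname{id}_{\mathbb{R}^n},
\end{equation*}
i.e.\ the screen curvature $R_0$ is Einstein with the same non-zero constant~$\Lambda$.

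Next I would establish $n_{s+1}=0$. This is immediate from the structure of type~1 algebras: the orthogonal part $\mathfrak{h}$ annihilates the flat summand $\mathbb{R}^{n_{s+1}}$ in \eqref{eq4.5}, and $R_0$ takes values in $\mathfrak{h}$, so $R_0(Z,X)Y=0$ whenever $Y\in\mathbb{R}^{n_{s+1}}$ and $Z,X\in\mathbb{R}^n$ are arbitrary. Hence $\operatorname{Ric}(R_0)(\,\cdot\,,Y)=0$ for such $Y$, and evaluating the Einstein condition on a non-zero $Y\in\mathbb{R}^{n_{s+1}}$ yields $\Lambda\,g(Y,Y)=\operatorname{Ric}(R_0)(Y,Y)=0$; since $g$ is positive definite on $\mathbb{R}^n$ and $\Lambda\ne0$, this forces $\mathbb{R}^{n_{s+1}}=0$, that is, $n_{s+1}=0$.

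With $n_{s+1}=0$ we have $\mathbb{R}^n=\bigoplus_{i=1}^s\mathbb{R}^{n_i}$ and $\mathfrak{h}=\bigoplus_{i=1}^s\mathfrak{h}_i$ with each $\mathfrak{h}_i\subset\mathfrak{so}(n_i)$ irreducible (Theorem~\ref{th14}). I would then check that $\operatorname{Ric}(R_0)$ is block-diagonal for this decomposition: for $X\in\mathbb{R}^{n_i}$ and $Y\in\mathbb{R}^{n_j}$ with $i\ne j$, only summands with $e_a\in\mathbb{R}^{n_j}$ contribute to $\operatorname{Ric}(R_0)(X,Y)=\sum_a g(R_0(e_a,X)Y,e_a)$, because $R_0(e_a,X)Y\in\mathbb{R}^{n_j}$; applying the pair symmetry \eqref{eq2.1} rewrites such a term as $g(R_0(Y,e_a)e_a,X)$ with $R_0(Y,e_a)e_a\in\mathbb{R}^{n_j}\perp\mathbb{R}^{n_i}$, so it vanishes. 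Consequently the Einstein condition localizes to $\operatorname{Ric}(R_0^{(i)})=\Lambda\,\operatorname{id}_{\mathbb{R}^{n_i}}$ on each block, where $R_0^{(i)}\colon\wedge^2\mathbb{R}^{n_i}\to\mathfrak{h}_i$ is obtained by restricting $R_0$ to $\wedge^2\mathbb{R}^{n_i}$ and projecting to $\mathfrak{h}_i$; restricting the first Bianchi identity to $\mathbb{R}^{n_i}$ shows $R_0^{(i)}\in\mathscr{R}(\mathfrak{h}_i)$, and the same localization gives $\operatorname{Ric}(R_0)\big|_{\mathbb{R}^{n_i}}=\operatorname{Ric}(R_0^{(i)})$.

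Finally I would invoke Alekseevsky's decomposition $\mathscr{R}(\mathfrak{h}_i)=\mathscr{R}_0(\mathfrak{h}_i)\oplus\mathscr{R}_1(\mathfrak{h}_i)\oplus\mathscr{R}^{\,\prime}(\mathfrak{h}_i)$ recalled in \S\ref{ssec2.3}. The tensor $R_0^{(i)}$ has scalar curvature $\Lambda\,n_i\ne0$, and within $\mathscr{R}(\mathfrak{h}_i)$ the scalar curvature is carried only by the $\mathscr{R}_1$-summand (the tensors in $\mathscr{R}_0$ have vanishing Ricci and those in $\mathscr{R}^{\,\prime}$ have trace-free Ricci); hence $\mathscr{R}_1(\mathfrak{h}_i)\ne0$. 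Among the irreducible holonomy algebras of Riemannian manifolds this excludes exactly $\mathfrak{su}(n_i/2)$, $\mathfrak{sp}(n_i/4)$, $G_2$ and $\mathfrak{spin}(7)$, for which $\mathscr{R}(\mathfrak{h}_i)=\mathscr{R}_0(\mathfrak{h}_i)$, and leaves precisely $\mathfrak{so}(n_i)$, $\mathfrak{u}(n_i/2)$, $\mathfrak{sp}(n_i/4)\oplus\mathfrak{sp}(1)$ and the symmetric Berger algebras, which is the asserted list. I expect the main obstacle to be the passage from the single global equation $\operatorname{Ric}=\Lambda g$ to the per-block statement: one must verify both the block-diagonality of $\operatorname{Ric}(R_0)$ and that the block data $R_0^{(i)}$ genuinely lie in $\mathscr{R}(\mathfrak{h}_i)$ with non-trivial $\mathscr{R}_1$-part, which is exactly where the pair symmetry \eqref{eq2.1}, the Bianchi identity, and the Ricci-flatness of the special Riemannian holonomy algebras all enter.
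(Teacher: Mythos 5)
Your proposal is correct and follows essentially the route the paper intends: Theorem~\ref{th20} for the type-1 reduction, the component form \eqref{eq4.7}--\eqref{eq4.10} of the Ricci tensor (equivalently \eqref{eq8.1}) to turn $\operatorname{Ric}=\Lambda g$ into $\operatorname{Ric}(R_0)=\Lambda\operatorname{id}$, the annihilation of $\mathbb{R}^{n_{s+1}}$ by $\mathfrak{h}$ to force $n_{s+1}=0$, and the Alekseevsky decomposition of $\mathscr{R}(\mathfrak{h}_i)$ from \S\ref{ssec2.3} to single out the algebras with $\mathscr{R}_1(\mathfrak{h}_i)\ne0$. The block-diagonality verification via \eqref{eq2.1} and the restriction of the Bianchi identity are exactly the right technical points, so the argument goes through as written.
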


\subsection{Examples of Einstein metrics}
\label{ssec8.2} In this section we show the existence of  metrics
for each holonomy algebra obtained in the previous section.

From~\eqref{eq4.7} and~\eqref{eq4.8} it follows that the Einstein
equation
$$
\operatorname{Ric}=\Lambda g
$$
for the metric~\eqref{eq4.11} can be rewritten in notation of
Section~\ref{ssec4.2} in the following way:
\begin{equation}
\label{eq8.1} \lambda=\Lambda,\quad \operatorname{Ric}(h)=\Lambda
h,\quad \vec{v}=\widetilde{\operatorname{Ric}}(P),\quad
\operatorname{tr}T=0.
\end{equation}

First of all consider the metric~\eqref{eq4.11} such that $h$ is
an Einstein Riemannian metric with the holonomy
algebra~$\mathfrak{h}$ and non-zero cosmological
constant~$\Lambda$, and $A=0$. Let
$$
H=\Lambda v^2+H_0,
$$
where $H_0$ is a function depending on the coordinates
$x^1,\dots,x^n$. Then the first three equation from~\eqref{eq8.1}
hold true. From~\eqref{eq4.18} it follows that the last equation
has the form
$$
\Delta H_0=0,
$$
where
\begin{equation}
\label{eq8.2}
\Delta=h^{ij}(\partial_{i}\partial_{j}-\Gamma^k_{ij}\partial_k)
\end{equation}
 is the Laplace-Beltrami operator of the metric~$h$. Choosing a
generic harmonic function~$H_0$,  we get that the metric~$g$ is an
Einstein metric and it is indecomposable. From Theorem~\ref{th22}
it follows that
$\mathfrak{g}=(\mathbb{R}\oplus\mathfrak{h})\ltimes\mathbb{R}^n$.

Choosing in the same construction $\Lambda=0$, we get a Ricci-flat
metric with the holonomy algebra
$\mathfrak{g}=\mathfrak{h}\ltimes\mathbb{R}^n$.

Let us construct a Ricci-flat metric with the holonomy algebra
$\mathfrak{g}=(\mathbb{R}\oplus\mathfrak{h})\ltimes\mathbb{R}^n$,
where $\mathfrak{h}$ is as in Part~(I) of Theorem~\ref{th21}. For
that we use the construction of Section~\ref{sec7}. Consider
$P\in\mathscr{P}(\mathfrak{h})$
with~$\widetilde{\operatorname{Ric}}(P)\ne 0$. Recall that
$h_{ij}=\delta_{ij}$. Let
$$
H=vH_1+H_0,
$$
where~$H_1$ and $H_0$ are functions of the coordinates
$x^1,\dots,x^n$. The third equation from~\eqref{eq8.1} takes the
form
$$
\partial_k H_1=2\sum_iP^k_{ii},
$$
hence it is enough to take
$$
H_1=2\sum_{i,k}P^k_{ii}x^k.
$$
The last equation has the form
$$
\frac{1}{2}\sum_i\partial_i^2H_0-\frac{1}{4}\sum_{i,j}F^2_{ij}-
\frac{1}{2}H_1\sum_i\partial_iA_i-2A_i\sum_kP^i_{kk}=0.
$$
Note that
$$
F_{ij}=2P^j_{ik}x^k,\qquad
\sum_i\partial_iA_i=-2\sum_{i,k}P_{ii}^kx^k.
$$
We get an equation of the form
$\displaystyle\sum_i\partial^2_iH_0=K$, where $K$ is a polynomial
of degree two. A partial solution of this equation can be found in
the form
$$
H_0=\frac{1}{2}(x^1)^2K_2+\frac{1}{6}(x^1)^3 \partial_1 K_1+
\frac{1}{24}(x^i)^4(\partial_i)^2K,
$$
where
$$
K_1=K-\frac{1}{2}(x^i)^2(\partial_i)^2K,\qquad
K_2=K_1-x^1\partial_1K_1.
$$

In order to make the metric~$g$ indecomposable it is enough to add
to the obtained function~$H_0$ the harmonic function
$$
(x^1)^2+\cdots+(x^{n-1})^2-(n-1)(x^{n})^2.
$$
Since  $\partial_v\partial_iH\ne 0$, then the holonomy algebra of
the metric~$g$ is either of type~1 or~3. From Theorem~\ref{th21}
it follows that
$\mathfrak{g}=(\mathbb{R}\oplus\mathfrak{h})\ltimes\mathbb{R}^n$.

It is possible to construct in a similar way an example of a
Ricci-flat metric with the holonomy algebra
$\mathfrak{h}\ltimes\mathbb{R}^n$, where $\mathfrak{h}$ is as in
Part~(II) of Theorem~\ref{th21}. For that it is enough to consider
a $P\in\mathscr{P}(\mathfrak{h})$
with~$\widetilde{\operatorname{Ric}}(P)=0$, take $H_1=0$ and to
obtain a required~$H_0$.

We have proved the following theorem.

\begin{theorem}
\label{th23} Let $\mathfrak{g}$ be an algebra from
Theorem~{\rm\ref{th21}} or~{\rm\ref{th22}}, then there exists
an~$(n+2)$-dimensional Einstein Lorentzian manifold (or a Ricci
flat manifold) with the holonomy algebra~$\mathfrak{g}$.
\end{theorem}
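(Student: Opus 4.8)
The plan is to realize every algebra $\mathfrak{g}$ on the lists of Theorems~\ref{th21} and~\ref{th22} by an explicit Walker metric of the form~\eqref{eq4.11}, exploiting the fact that for such metrics the Einstein equation $\operatorname{Ric}=\Lambda g$ decouples into the system~\eqref{eq8.1}: $\lambda=\Lambda$, $\operatorname{Ric}(h)=\Lambda h$, $\vec v=\widetilde{\operatorname{Ric}}(P)$ and $\operatorname{tr}T=0$. The strategy is to choose the transverse Riemannian data $(h,A)$ so that the first three equations hold by construction, leaving a single scalar equation for the function $H$; then I would invoke Theorems~\ref{th20}--\ref{th22} to confirm that the Einstein (or Ricci-flat) condition forces the holonomy to be exactly the prescribed $\mathfrak{g}$.

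For the Einstein, not Ricci-flat case (Theorem~\ref{th22}) I would start from an Einstein Riemannian metric $h$ on $\mathbb{R}^n$ with nonzero cosmological constant $\Lambda$ and prescribed holonomy $\mathfrak{h}$ (such Riemannian metrics exist for each $\mathfrak{h}$ on that list), set $A=0$, and take $H=\Lambda v^2+H_0$. Then $\lambda=\Lambda$, $\operatorname{Ric}(h)=\Lambda h$ and $\vec v=0=\widetilde{\operatorname{Ric}}(P)$ hold automatically, and by~\eqref{eq4.18} the remaining equation $\operatorname{tr}T=0$ reduces to the Laplace equation $\Delta H_0=0$ for the Laplace--Beltrami operator~\eqref{eq8.2} of $h$. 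A generic harmonic $H_0$ makes the metric locally indecomposable, and Theorem~\ref{th22} identifies the holonomy as $(\mathbb{R}\oplus\mathfrak{h})\ltimes\mathbb{R}^n$. Taking instead $\Lambda=0$ with $h$ a Ricci-flat Riemannian metric (so that $\mathfrak{h}$ ranges over the holonomies of Part~(II) of Theorem~\ref{th21}) yields a Ricci-flat Lorentzian metric with holonomy $\mathfrak{h}\ltimes\mathbb{R}^n$.

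The more delicate Ricci-flat algebras of type~1 in Part~(I) of Theorem~\ref{th21} require $\vec v=\widetilde{\operatorname{Ric}}(P)\ne0$, so $A=0$ is no longer admissible. Here I would use the realization scheme of Section~\ref{sec7}: fix a single $P\in\mathscr{P}(\mathfrak{h})$ with $\widetilde{\operatorname{Ric}}(P)\ne0$ (available by Lemma~\ref{lem1}), define $A_i$ by~\eqref{eq7.2}, take a flat $h=\sum(dx^i)^2$ and $H=vH_1+H_0$. The equation $\vec v=\widetilde{\operatorname{Ric}}(P)$ then becomes $\partial_kH_1=2\sum_iP^k_{ii}$, solved by the linear function $H_1=2\sum_{i,k}P^k_{ii}x^k$, while $\operatorname{tr}T=0$ collapses to a Poisson equation $\sum_i\partial_i^2H_0=K$ with $K$ a polynomial of degree two, which has an explicit polynomial solution; adding a harmonic quadratic such as $(x^1)^2+\cdots+(x^{n-1})^2-(n-1)(x^n)^2$ secures indecomposability. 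The type~2 algebras of Part~(II) are handled identically but with $\widetilde{\operatorname{Ric}}(P)=0$ and $H_1=0$.

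The main obstacle I anticipate is not solving the scalar PDEs, which are linear and explicitly integrable, but controlling the holonomy so that it is neither too small (the metric must be locally indecomposable, which genericity of the harmonic terms is designed to guarantee) nor of the wrong type. This control comes precisely from the rigidity encoded in Theorems~\ref{th20}--\ref{th22}: once the Einstein condition is imposed, the type of the holonomy and the admissible orthogonal parts $\mathfrak{h}_i$ from~\eqref{eq4.5} are already pinned down, so it suffices to certify that the constructed metric is Einstein and indecomposable with the correct $\mathfrak{h}$ appearing through $P$. Reconciling the algebraic constraint $\vec v=\widetilde{\operatorname{Ric}}(P)$ with the requirement that $P(\mathbb{R}^n)$ generate $\mathfrak{h}$ (Lemma~\ref{lem1}) is the one place where the Riemannian input and the Lorentzian construction must be matched carefully.
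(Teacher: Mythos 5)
Your proposal follows essentially the same route as the paper: it decouples the Einstein equation into the system~\eqref{eq8.1}, realizes the non-Ricci-flat case by taking an Einstein Riemannian metric $h$ with $A=0$ and a generic harmonic $H_0$ (invoking Theorem~\ref{th22} to pin down the holonomy), and realizes the Ricci-flat type~1 and type~2 cases via the Section~\ref{sec7} construction with flat $h$, the 1-form $A$ built from a single $P\in\mathscr{P}(\mathfrak{h})$, a linear $H_1$ solving $\vec v=\widetilde{\operatorname{Ric}}(P)$, a polynomial solution of the resulting Poisson equation for $H_0$, and an added harmonic quadratic to force indecomposability. This matches the paper's argument in all essentials, including the final appeal to Theorems~\ref{th21} and~\ref{th22} to identify the holonomy type.
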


\begin{example}
\label{ex1} In Section~\ref{sec7} we constructed metrics with the
holonomy algebras
$$
\mathfrak{g}^{2,G_2}\subset\mathfrak{so}(1,8) \quad
\text{and}\quad
\mathfrak{g}^{2,\mathfrak{spin}(7)}\subset\mathfrak{so}(1,9).
$$
Choosing in the just described way the function~$H$, we get
Ricci-flat metrics with the same holonomy algebras.
\end{example}

\subsection{Lorentzian manifolds with totally isotropic Ricci operator}
\label{ssec8.3} In the previous section we have seen that unlike
the case of  Riemannian manifold, Lorentzian manifolds with any of
the holonomy algebras are not automatically Ricci-flat nor
Einstein. Now we will see that never the less the Lorentzian
manifolds with some holonomy algebras  automatically satisfy a
weaker condition on the Ricci tensor.

A Lorentzian manifold $(M,g)$ is called \textit{totally
Ricci-isotropic} if the image of its Ricci operator is isotropic,
i.e.,
$$
g\bigl(\operatorname{Ric}(x),\operatorname{Ric}(y)\bigr)=0
$$
for all vector fields $X$ and $Y$. Obviously, any Ricci-flat
Lorentzian manifold is totally Ricci-isotropic.  If $(M,g)$ is a
spin manifold and it admits a parallel spinor, then it is totally
Ricci-isotropic~\cite{40},~\cite{52}.

\begin{theorem}
\label{th24} Let $(M,g)$ be a locally indecomposable
$n+2$-dimensional Lorentzian manifold admitting a parallel
distribution of isotropic lines. If $(M,g)$ is totally
Ricci-isotropic, then its holonomy algebra is the same  as in
Theorem~{\rm\ref{th21}}.
\end{theorem}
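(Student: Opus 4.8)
The plan is to reduce the assertion to the Ricci-flat case of Theorem~\ref{th21} by showing that total Ricci-isotropy imposes on the curvature tensor the very same constraints as Ricci-flatness, except for one trace condition that has no bearing on the holonomy algebra.

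First I would use that the Ricci operator is $g$-self-adjoint, so the hypothesis $g(\operatorname{Ric}(x),\operatorname{Ric}(y))=0$ for all $x,y$ is equivalent to $\operatorname{Ric}^2=0$. Feeding the explicit operator~\eqref{eq4.9},~\eqref{eq4.10} into this and evaluating $g(\operatorname{Ric}(a),\operatorname{Ric}(b))$ on the Witt basis $p,e_1,\dots,e_n,q$, one finds $g(\operatorname{Ric}(p),\operatorname{Ric}(q))=\lambda^2$, forcing $\lambda=0$; next $g(\operatorname{Ric}(X),\operatorname{Ric}(Y))=g(\operatorname{Ric}(R_0)(X),\operatorname{Ric}(R_0)(Y))$, which by positive-definiteness of $g|_{\mathbb{R}^n}$ forces $\operatorname{Ric}(R_0)=0$; and finally, using $\lambda=0$, $g(\operatorname{Ric}(q),\operatorname{Ric}(q))=g\bigl(\vec v-\widetilde{\operatorname{Ric}}(P),\,\vec v-\widetilde{\operatorname{Ric}}(P)\bigr)$, forcing $\vec v=\widetilde{\operatorname{Ric}}(P)$. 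No restriction on $\operatorname{tr}T$ arises. Hence total Ricci-isotropy is equivalent to the three conditions $\lambda=0$, $\operatorname{Ric}(R_0)=0$ and $\vec v=\widetilde{\operatorname{Ric}}(P)$.

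Comparing with~\eqref{eq8.1} at $\Lambda=0$, these are exactly the Ricci-flat constraints, save that $\operatorname{tr}T=0$ is now dropped. By~\eqref{eq4.2},~\eqref{eq4.3} the tensor $T$ occurs in the curvature only through $-p\wedge T(X)$, so it contributes solely to the translational ideal $\mathbb{R}^n\subset\mathfrak{sim}(n)$, which is present in every type and is already generated by the terms $p\wedge(P(X)Y-P(Y)X)$ of $R(X,Y)$. Consequently $\operatorname{tr}T$ is irrelevant to the holonomy, and every step of the proof of Theorem~\ref{th21} that is driven by the three surviving conditions carries over unchanged.

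What must still be checked is that the exclusion of types~3 and~4, together with the resulting lists, uses only these three conditions and not the Einstein property: for Ricci-flat metrics types~3,~4 are ruled out through Theorem~\ref{th20}, but a totally Ricci-isotropic metric need not be Einstein. I would argue the exclusion directly from $\vec v=\widetilde{\operatorname{Ric}}(P)$. For type~3 the defining relation $g(\vec v,\cdot)=\varphi\circ P$ (with $\varphi\ne0$, $\varphi|_{\mathfrak{h}'}=0$) combines with $\vec v=\widetilde{\operatorname{Ric}}(P)$ to give $g(\widetilde{\operatorname{Ric}}(P),\cdot)=\varphi\circ P$; writing $P=P_0+P_1$ along $\mathscr{P}(\mathfrak{h})=\mathscr{P}_0(\mathfrak{h})\oplus\mathscr{P}_1(\mathfrak{h})$ and using that $\mathscr{P}_0$ has vanishing central part (because $\mathscr{P}_0(\mathfrak{u}(m_i))=\mathscr{P}(\mathfrak{su}(m_i))$ is $\mathfrak{su}(m_i)$-valued, while $\mathscr{P}_0=0$ on symmetric summands) this reduces to $g(\widetilde{\operatorname{Ric}}(P_1),\cdot)=\varphi\circ P_1$. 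With the explicit $P_1$ from Section~\ref{sec5}, the left-hand side is a nonzero multiple of $g(x,\cdot)$ while the right-hand side is a multiple of $g(Jx,\cdot)$; since $g(x,\cdot)$ and $g(Jx,\cdot)$ are independent for $x\ne0$, this forces $P_1=0$, so $\vec v=\widetilde{\operatorname{Ric}}(P)=0$ and $\varphi\circ P=0$. Then $R_0\in\mathscr{R}(\ker\varphi)$ and $P$ both take values in $\ker\varphi$, so the orthogonal part generated by the curvature lies in $\ker\varphi\subsetneq\mathfrak{h}$—and this persists for the covariant derivatives exactly as in the proof of Theorem~\ref{th20}—contradicting that the orthogonal part equals $\mathfrak{h}$. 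The same computation with $\psi$ in place of $\varphi$, where now $\vec v=0$ gives $\widetilde{\operatorname{Ric}}(P)=0$ and hence $\psi\circ P=0$, removes the twist defining type~4. With types~3,~4 gone, the usual $\mathscr{P}_0/\mathscr{P}_1$ bookkeeping finishes: $\widetilde{\operatorname{Ric}}(P)\ne0$ for some curvature, i.e.\ $\mathscr{P}_1(\mathfrak{h}_i)\ne0$ for some $i$, is list~(I) of type~1, whereas $\vec v=0$ together with $\operatorname{Ric}(R_0)=0$ reduces $\mathfrak{u}(m_i)\to\mathfrak{su}(m_i)$ and $\mathfrak{sp}(m_i)\oplus\mathfrak{sp}(1)\to\mathfrak{sp}(m_i)$ and discards symmetric summands, giving list~(II) of type~2. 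The main obstacle is precisely this type~3/4 exclusion, where the absent Einstein hypothesis must be replaced by the fine structure of $\mathscr{P}(\mathfrak{h})$.
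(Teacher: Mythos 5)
Your proposal is correct and proceeds exactly along the lines the paper intends: the survey states Theorem~\ref{th24} without proof (referring to~\cite{60}), and the argument there is precisely your reduction --- formulas~\eqref{eq4.9},~\eqref{eq4.10} show that total Ricci-isotropy is equivalent to $\lambda=0$, $\operatorname{Ric}(R_0)=0$, $\vec v=\widetilde{\operatorname{Ric}}(P)$, i.e.\ to the Ricci-flat curvature constraints~\eqref{eq8.1} minus the holonomy-irrelevant condition $\operatorname{tr}T=0$, after which the proof of Theorem~\ref{th21} (including the exclusion of types~3 and~4 via the structure of $\mathscr{P}(\mathfrak{h})$ and Table~\ref{tab1}) applies unchanged. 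Your explicit check that the type~3/4 exclusion rests only on $\vec v=\widetilde{\operatorname{Ric}}(P)$ and the fine structure of $\mathscr{P}_0(\mathfrak{h})\oplus\mathscr{P}_1(\mathfrak{h})$, rather than on the full Einstein hypothesis, is carried out correctly and addresses the only point where the reduction could have failed.
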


\begin{theorem}
\label{th25} Let $(M,g)$ be a locally indecomposable
$n+2$-dimensional Lorentzian manifold admitting a parallel
distribution of isotropic lines. If the holonomy algebra of
$(M,g)$ is of type 2 and in the decomposition~\eqref{eq4.5} of the
algebra $\mathfrak{h}\subset\mathfrak{so}(n)$ each subalgebra
$\mathfrak{h}_i\subset\mathfrak{so}(n_i)$ coincides with one of
the Lie algebras  $\mathfrak{su}(n_i/2)$, $\mathfrak{sp}(n_i/4)$,
$G_2\subset\mathfrak{so}(7)$,
$\mathfrak{spin}(7)\subset\mathfrak{so}(8)$, then the  manifold
$(M,g)$ is totally Ricci-isotropic.
\end{theorem}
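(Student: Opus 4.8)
The plan is to work pointwise and simply read off the Ricci operator from the explicit formulas of Section~\ref{ssec4.1}. Fix $x\in M$ and identify $T_xM$ with $\mathbb{R}^{1,n+1}$ and the holonomy algebra with $\mathfrak{g}^{2,\mathfrak{h}}\subset\mathfrak{sim}(n)$ as in Section~\ref{sec3}. Since $\mathfrak{g}$ is the holonomy algebra, the Ambrose--Singer Theorem together with the first Bianchi identity gives $R_x\in\mathscr{R}(\mathfrak{g}^{2,\mathfrak{h}})$, and by Theorem~\ref{th13} the invariants determining $R_x$ satisfy $\lambda=0$ and $\vec v=0$. Substituting these into the Ricci operator formulas \eqref{eq4.9} and \eqref{eq4.10} yields
\begin{gather*}
\operatorname{Ric}(p)=0,\qquad
\operatorname{Ric}(X)=-g\bigl(X,\widetilde{\operatorname{Ric}}(P)\bigr)\,p+\operatorname{Ric}(R_0)(X),\\
\operatorname{Ric}(q)=-(\operatorname{tr}T)\,p-\widetilde{\operatorname{Ric}}(P).
\end{gather*}
Thus it suffices to establish the two vanishing statements $\operatorname{Ric}(R_0)=0$ and $\widetilde{\operatorname{Ric}}(P)=0$: once both hold, $\operatorname{Ric}(p)=\operatorname{Ric}(X)=0$ and $\operatorname{Ric}(q)=-(\operatorname{tr}T)\,p$, so the image of $\operatorname{Ric}$ lies in the isotropic line $\mathbb{R}p$. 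Since $g(p,p)=0$, any two values of $\operatorname{Ric}$ are $g$-orthogonal, which is precisely total Ricci-isotropy.

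The second step is to reduce both quantities to the irreducible blocks $\mathfrak{h}_i$ of the decompositions~\eqref{eq4.4}--\eqref{eq4.5}. For the weak curvature tensor this is handed to us directly by Theorem~\ref{th14}(II): one has $\mathscr{P}(\mathfrak{h})=\mathscr{P}(\mathfrak{h}_1)\oplus\cdots\oplus\mathscr{P}(\mathfrak{h}_s)$, each summand $P_i$ extends by zero off $\mathbb{R}^{n_i}$, and $\widetilde{\operatorname{Ric}}(P)=\sum_i\widetilde{\operatorname{Ric}}(P_i)$ with $\widetilde{\operatorname{Ric}}(P_i)\in\mathbb{R}^{n_i}$. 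By hypothesis each $\mathfrak{h}_i$ is one of $\mathfrak{su}(n_i/2)$, $\mathfrak{sp}(n_i/4)$, $G_2$, $\mathfrak{spin}(7)$, and for exactly these algebras Table~\ref{tab1} records $\mathscr{P}_1(\mathfrak{h}_i)=0$, i.e. $\mathscr{P}(\mathfrak{h}_i)=\ker\widetilde{\operatorname{Ric}}$. Hence every $\widetilde{\operatorname{Ric}}(P_i)$ vanishes and $\widetilde{\operatorname{Ric}}(P)=0$.

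For $R_0$ I would first establish the analogous block decomposition $\mathscr{R}(\mathfrak{h})=\bigoplus_i\mathscr{R}(\mathfrak{h}_i)$. Using the pair-symmetry \eqref{eq2.1}, together with the fact that each $\mathfrak{h}_i$ preserves $\mathbb{R}^{n_i}$ and annihilates the other blocks, one shows that $R_0(X,Y)=0$ whenever $X,Y$ lie in different blocks; feeding this back into the first Bianchi identity then forces, for $X,Y$ in one block $\mathbb{R}^{n_i}$, that $R_0(X,Y)$ annihilates every other block, so $R_0=\bigoplus_i R_0^{(i)}$ with $R_0^{(i)}\in\mathscr{R}(\mathfrak{h}_i)$ (the trivial block $\mathbb{R}^{n_{s+1}}$ carrying no curvature). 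Consequently $\operatorname{Ric}(R_0)=\sum_i\operatorname{Ric}(R_0^{(i)})$, and since each of $\mathfrak{su}(n_i/2)$, $\mathfrak{sp}(n_i/4)$, $G_2$, $\mathfrak{spin}(7)$ satisfies $\mathscr{R}(\mathfrak{h}_i)=\mathscr{R}_0(\mathfrak{h}_i)$ (the Ricci-flat Riemannian holonomies recalled in Section~\ref{ssec2.3}), every term vanishes and $\operatorname{Ric}(R_0)=0$.

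I expect the only genuinely delicate point to be the block decomposition of $\mathscr{R}(\mathfrak{h})$, in particular ruling out cross-block curvature components; this is the place where pair-symmetry and the first Bianchi identity must be combined carefully. The $\mathscr{P}$-side is immediate from Theorem~\ref{th14}(II) and Table~\ref{tab1}, and everything after the two vanishing statements is a direct substitution into the already-derived formulas \eqref{eq4.9}--\eqref{eq4.10}.
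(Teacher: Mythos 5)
Your proposal is correct and follows essentially the route the paper intends: substitute the type‑2 constraints $\lambda=0$, $\vec v=0$ from Theorem~\ref{th13} into the Ricci formulas \eqref{eq4.9}--\eqref{eq4.10}, then kill $\widetilde{\operatorname{Ric}}(P)$ via Theorem~\ref{th14} and Table~\ref{tab1} and $\operatorname{Ric}(R_0)$ via the block decomposition of $\mathscr{R}(\mathfrak{h})$ together with $\mathscr{R}(\mathfrak{h}_i)=\mathscr{R}_0(\mathfrak{h}_i)$ for the listed algebras; your pair‑symmetry/Bianchi argument for the block decomposition is sound. The paper only adds, as an alternative, the observation that such manifolds locally admit parallel spinor fields and are therefore totally Ricci‑isotropic.
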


Note that this theorem can  be also proved by the following
argument. Locally $(M,g)$ admits a spin structure.
From~\cite{72},~\cite{100} it follows that  $(M,g)$  admits
locally  parallel spinor fields, hence the manifold $(M,g)$ is
totally Ricci-isotropic.

\subsection{Simplification of the Einstein equation}
\label{ssec8.4} The Einstein equation for the
metric~\eqref{eq4.11} considered Gibbons and Pope~\cite{76}. First
of all the Einstein equation implies that
$$
H=\Lambda v^2+vH_1+H_0,\qquad
\partial_vH_1=\partial_vH_0=0.
$$
Next, it is equivalent to the system of equations
\begin{align}
&\Delta
H_0-\frac{1}{2}F^{ij}F_{ij}-2A^i\partial_{i}H_1-H_1\nabla^iA_i+
2\Lambda A^iA_i-2\nabla^i\dot{A}_i \nonumber
\\
\label{eq8.3} &\qquad+\frac{1}{2}\dot h^{ij}\dot
h_{ij}+h^{ij}\ddot h_{ij}+ \frac{1}{2}h^{ij}\dot h_{ij}H_1=0,
\\
\label{eq8.4} &\nabla^jF_{ij}+\partial_{i}H_1-2\Lambda
A_i+\nabla^j\dot h_{ij}-
\partial_{i}(h^{jk}\dot h_{jk})=0,
\\
\label{eq8.5} &\Delta H_1-2\Lambda \nabla^i A_i-\Lambda h^{ij}\dot
h_{ij}=0,
\\
\label{eq8.6} &\operatorname{Ric}_{ij}=\Lambda h_{ij},
\end{align}
where the operator~$\Delta$ is given by the formula~\eqref{eq8.2}.
The equations can be obtained considering the
equations~\eqref{eq8.1} and applying the formulas from
Section~\ref{ssec4.2}.

The Walker coordinates are not defined uniquely. E.g.,
Schimming~\cite{110} showed that if  $\partial_v H=0$, then the
coordinates can be chosen in such a way that $A=0$ and $H=0$. The
main theorem of this subsection gives a possibility to find
similar coordinates and by that to simplify  the Einstein equation
for the case $\Lambda\ne 0$.

\begin{theorem}
\label{th26} Let $(M,g)$ be a Lorentzian  manifold of dimension
$n+2$
 admitting a parallel distribution of isotropic lines. If $(M,g)$ is Einstein with a non-zero
 cosmological constant $\Lambda$, then  there exist local coordinates
 $v,x^1,\dots,x^n,u$ such that the metric~$g$ has the form
$$
g=2\,dv\,du+h+(\Lambda v^2+H_0)\,(du)^2,
$$
where $\partial_v H_0=0$, and  $h$ is a $u$-family of Einsteinа
Riemannian metrics with the cosmological constant~$\Lambda$
satisfying the equations
\begin{align}
\label{eq8.7} \Delta H_0+\frac{1}{2}h^{ij}\ddot{h}_{ij} &=0,
\\*
\label{eq8.8} \nabla^j\dot h_{ij}&=0,
\\*
\label{eq8.9} h^{ij}\dot{h}_{ij} &=0,
\\*
\label{eq8.10} \operatorname{Ric}_{ij}&=\Lambda h_{ij}.
\end{align}
Conversely, any such metric is Einstein.
\end{theorem}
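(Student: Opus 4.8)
The plan is to start from arbitrary Walker coordinates, in which the parallel distribution of isotropic lines yields the form $g = 2\,dv\,du + h + 2A\,du + H\,(du)^2$ as in~\eqref{eq4.11}, and then to use the coordinate freedom to gauge away the off-diagonal form $A$ and the part of $H$ that is linear in $v$. By the Gibbons--Pope analysis recalled just before the theorem, the Einstein condition with $\Lambda\ne 0$ already forces $H=\Lambda v^2+vH_1+H_0$ with $\partial_vH_1=\partial_vH_0=0$, and is equivalent to the system~\eqref{eq8.3}--\eqref{eq8.6}. Thus the entire content of the forward direction reduces to exhibiting coordinates in which $A=0$ and $H_1=0$; once that is arranged I will simply re-read~\eqref{eq8.3}--\eqref{eq8.6}.

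First I would kill $H_1$. By~\eqref{eq4.19} (with $\lambda=\Lambda$), the shift $v\mapsto v-f$, $x^i\mapsto x^i$, $u\mapsto u$ sends $H_1\mapsto H_1+2\Lambda f$; since $\Lambda\ne 0$, the choice $f=-H_1/(2\Lambda)$ annihilates the new $H_1$, at the cost of replacing $A$ by $A+df$. This is where the hypothesis $\Lambda\ne 0$ enters decisively. Next I would kill $A$ by a $u$-dependent reparametrization of the leaves $\{u=\mathrm{const}\}$: putting $u=\tilde u$, $v=\tilde v$, $x^i=X^i(\tilde x,\tilde u)$, a direct computation of the $d\tilde x^k\,d\tilde u$-component of the metric gives the transformed coefficient $\tilde A_k=X^i_{,k}\bigl(A_i+h_{ij}X^j_{,\tilde u}\bigr)$. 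Hence $\tilde A=0$ is achieved by solving the flow ODE $X^j_{,\tilde u}=-h^{ji}A_i$ with $X^i(\tilde x,\tilde u_0)=\tilde x^i$, which is solvable locally. Because this transformation leaves $v$ untouched, the coefficient of $\tilde v$ in $g_{\tilde u\tilde u}$ is still $H_1=0$ and the coefficient of $\tilde v^2$ is still $\Lambda$; so the step preserves both $H_1=0$ and the Walker form, and after the two moves $g=2\,dv\,du+h+(\Lambda v^2+H_0)\,(du)^2$.

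It then remains to read off the equations. In these coordinates the metric is still Einstein (the condition is coordinate independent), so its coefficients satisfy~\eqref{eq8.3}--\eqref{eq8.6} with $A=0$, $H_1=0$, and hence $F=dA=0$. Equation~\eqref{eq8.6} is exactly~\eqref{eq8.10}; equation~\eqref{eq8.5} collapses to $-\Lambda\,h^{ij}\dot h_{ij}=0$, giving~\eqref{eq8.9} since $\Lambda\ne 0$; equation~\eqref{eq8.4} then reduces, using $F=0$ and $h^{jk}\dot h_{jk}=0$, to~\eqref{eq8.8}. Finally, in~\eqref{eq8.3} the surviving terms are $\Delta H_0+\frac{1}{2}\dot h^{ij}\dot h_{ij}+h^{ij}\ddot h_{ij}$; differentiating~\eqref{eq8.9} in $u$ yields $\dot h^{ij}\dot h_{ij}=-h^{ij}\ddot h_{ij}$, and substituting this turns~\eqref{eq8.3} into~\eqref{eq8.7}. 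This shows that $h$ is a $u$-family of Einstein metrics with cosmological constant $\Lambda$ satisfying~\eqref{eq8.7}--\eqref{eq8.10}.

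For the converse I would substitute $A=0$, $H_1=0$, $H=\Lambda v^2+H_0$ directly into~\eqref{eq8.3}--\eqref{eq8.6} and check, using~\eqref{eq8.7}--\eqref{eq8.10} together with the identity $\partial_u(h^{ij}\dot h_{ij})=0$, that all four equations hold, so the metric is Einstein. The main obstacle I anticipate is the elimination of $A$: one must verify that the leaf-reparametrization is genuinely available (local solvability of the flow and invertibility of $X^i_{,k}$) and, crucially, that it does not reintroduce a term linear in $v$. The clean point that makes everything work is that the two remaining gauge conditions~\eqref{eq8.8} and~\eqref{eq8.9} need not be imposed by hand, but fall out of the Einstein equations once $A$ and $H_1$ have been removed.
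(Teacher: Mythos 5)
Your proposal is correct and follows exactly the route the paper sets up (the paper itself only states Theorem~\ref{th26}, deferring the proof to~\cite{74}): use the shift $v\mapsto v-f$ from~\eqref{eq4.19} with $f=-H_1/(2\Lambda)$ to remove the $v$-linear term, then a $u$-dependent reparametrization $x^i=X^i(\tilde x,\tilde u)$ solving $X^j_{,\tilde u}=-h^{ji}A_i$ to remove $A$ without reintroducing it, and finally read~\eqref{eq8.3}--\eqref{eq8.6} with $A=H_1=0$ to obtain~\eqref{eq8.7}--\eqref{eq8.10}. The gauge computations (the transformed coefficient $\tilde A_k=X^i_{,k}(A_i+h_{ij}X^j_{,\tilde u})$, the preservation of the Walker form and of $H_1=0$, and the use of $\partial_u(h^{ij}\dot h_{ij})=0$ to pass from~\eqref{eq8.3} to~\eqref{eq8.7}) all check out.
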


Thus, we reduced the Einstein equation with $\Lambda\neq 0$ for
Lorentzian metrics to the problems of finding the families of
Einstein Riemannian metrics satisfying
Equations~\eqref{eq8.8},~\eqref{eq8.9} and functions $H_0$
satisfying  Equation~\eqref{eq8.7}.

\subsection{The case of dimension~4}
\label{ssec8.5} Let us consider the case of dimension~4, i.e.,
$n=2$. We will write $x=x^1$, $y=x^2$.

Ricci-flat Walker metrics in dimension 4 are found in \cite{92}.
They are given by $h=(dx)^2+(dy)^2$, $A_2=0$,
$H=-(\partial_xA_1)v+H_0$, where $A_1$ is a harmonic function and
$H_0$ is a solution of a Poisson equation.

 In \cite{102} all
4-dimensional Einstein Walker metrics with $\Lambda\ne 0$ are
described. The coordinates can be chosen in such a way that $h$ is
an independent of $u$ metric  of constant curvature. Next,
$A=W\,dz+\overline W\,d\overline z$, $W=i\partial_z L$, where
$z=x+iy$, and  $L$ is the $\mathbb{R}$-valued function given by
the formula
\begin{equation}
\label{eq8.11} L=2\operatorname{Re}\biggl(\phi\partial_z(\ln P_0)-
\frac{1}{2}\partial_z\phi\biggr), \qquad
2P_0^2=\biggl(1+\frac{\Lambda}{|\Lambda|}z\overline{z}\biggr)^2,
\end{equation}
where $\phi=\phi(z,u)$ is an arbitrary function holomorphic in $z$
and smooth in $u$. Finally, $H=\Lambda^2v+H_0$, and the function
$H_0=H_0(z,\overline{z},u)$ can be found in a similar way.

In this section we give examples of Einstein Walker metrics with
$\Lambda\ne 0$ such that $A=0$, and $h$ depends on $u$. The
solutions from \cite{102} are not useful for constructing examples
of such form, since ``simple'' functions $\phi(z,u)$ define
``complicated'' forms~$A$.  Similar examples can be constructed in
dimension 5, this case is discussed in~\cite{76},~\cite{78}.

Note that in dimension 2 (and~3) any Einstein Riemannian metric
  has constant sectional  curvature, hence any such metrics with the
  same $\Lambda$ are locally isometric, and the coordinates can be
  chosen in such a way that $\partial_u h=0$. As in \cite{102}, it is not hard to show
  that if $\Lambda>0$, then we may   assume that
 $h=\bigl((d x)^2+\sin^2 x\,(dy)^2\bigr)/\Lambda$,
$H=\Lambda v^2+H_0$, and the   Einstein equation is reduced to the
system
\begin{equation}
\label{eq8.12}
\begin{gathered}
\Delta_{S^2}f=-2f,\qquad \Delta_{S^2}H_0=2\Lambda\biggl(2
f^2-(\partial_xf)^2+ \frac{(\partial_yf)^2}{\sin^2x}\biggr),
\\
\Delta_{S^2}=\partial_x^2+\frac{\partial_y^2}{\sin^2 x}+\cot
x\,\partial_x.
\end{gathered}
\end{equation}
The function~$f$ determines the 1-form~$A$:
$$
A=-\frac{\partial_yf}{\sin x}\,dx+\sin x\,\partial_xf\,dy.
$$
Similarly, if $\Lambda<0$, then we consider
$$
h=\frac{1}{-\Lambda\cdot x^2}\bigl((d x)^2+(d y)^2\bigr)
$$
and get
\begin{equation}
\label{eq8.13}
\begin{gathered}
\Delta_{L^2}f=2f,\qquad \Delta_{L^2}H_0=-4\Lambda f^2-2\Lambda
x^2\bigl((\partial_xf)^2+ (\partial_yf)^2\bigr),
\\
\Delta_{L^2}=x^2(\partial_x^2+\partial_y^2),
\end{gathered}
\end{equation}
and $A=-\partial_yf\,dx+\partial_xf\,dy$. Thus in order to find
partial solutions of the system of
equtions~\eqref{eq8.7}--\eqref{eq8.10}, it is
 convenient first to find~$f$ and then, changing
 the coordinates, to get rid of the 1-form~$A$. After such a coordinate
 change, the metric
 $h$~does not depend on~$u$ if and only if $A$ is a Killing form
 for~$h$~\cite{76}. If
$\Lambda>0$, then this happens if and only if
$$
f=c_1(u)\sin x\sin y+c_2(u)\sin x\cos y+c_3(u)\cos x;
$$
for $\Lambda<0$ this is equivalent to the equality
$$
f=c_1(u)\frac{1}{x}+c_2(u)\frac{y}{x}+c_3(u)\frac{x^2+y^2}{x}\,.
$$

The functions $\phi(z,u)=c(u)$, $c(u)z$,~$c(u)z^2$
from~\eqref{eq8.11} determine the Killing form~$A$~\cite{74}. For
other functions~$\phi$, the form~$A$ has a complicated structure.
Let $g$ be an Einstein metric of the form~\eqref{eq4.11}
with~$\Lambda\ne 0$, $A=0$, and $H=\Lambda v^2+H_0$. The curvature
tensor~$R$ of the metric~$g$ has the form
$$
R(p,q)=\Lambda p\wedge q,\quad R(X,Y)=\Lambda X\wedge Y,\quad
R(X,q)=-p\wedge T(X),\quad R(p,X)=0.
$$
The metric~$g$ is indecomposable if and only if $T\ne 0$. In this
case the holonomy algebra coincides with~$\mathfrak{sim}(2)$.

For the Weyl tensor we have
\begin{alignat*}{2}
W(p,q)&=\frac{\Lambda}{3} p\wedge q,&\qquad
W(p,X)&=-\frac{2\Lambda}{3} p\wedge X,
\\
W(X,Y)&=\frac{\Lambda}{3} X\wedge Y,&\qquad
W(X,q)&=-\frac{2\Lambda}{3} X\wedge q -p\wedge T(X).
\end{alignat*}
In~\cite{81} it is shown that the Petrov type of the metric $g$ is
either~II or D~(and it may change
 from  point to point). From the Bel criteria it follows that~$g$
is of type~II at a point $m\in M$ if and only if $T_m\ne 0$,
otherwise~$g$ is of type~D. Since the endomorphism~$T_m$ is
symmetric and trace-free, it is either zero or it has rank~2.
Consequently, $T_m=0$ if and only if $\det T_m=\nobreak0$.

\begin{example}
\label{ex2} Consider the function $f=c(u)x^2$, then
$A=2xc(u)\,dy$. Choose $H_0=-\Lambda x^4c^2(u)$. In order to get
rid of the form~$A$, we solve the system of equations
$$
\frac{dx(u)}{du}=0,\qquad \frac{dy(u)}{du}=2\Lambda c(u) x^3(u)
$$
with the initial dates $x(0)=\widetilde x$ and $y(0)=\widetilde
y$. We get the transformation
$$
v=\widetilde v,\quad x=\widetilde x, \quad y=\widetilde y+2\Lambda
b(u)\widetilde x^{\,3},\quad u=\widetilde u,
$$
where the function~$b(u)$ satisfies $db(u)/du=c(u)$ and $b(0)=0$.
With respect to the new coordinates it holds
\begin{align*}
g&=2\,dv\,du+h(u)+\bigl(\Lambda v^2+3\Lambda
x^4c^2(u)\bigr)\,(du)^2,
\\
h(u)&=\frac{1}{-\Lambda\cdot
x^2}\bigl(\bigl(36\Lambda^2b^2(u)x^4+1\bigr) \,(dx)^2+12\Lambda
b(u)x^2\,dx\,dy+(dy)^2\bigr).
\end{align*}
Let $c(u)\equiv 1$, then $b(u)=u$ and $\det T=-9\Lambda^4
x^4(x^4+v^2)$. The equality ${\det T_m=0}$ ($m=(v,x,y,u)$) is
equivalent to the equality $v=0$. The metric~$g$ is
indecomposable. This metric is of Petrov type~D on the set
$\{(0,x,y,u)\}$ and of type~II on its complement.
\end{example}

\begin{example}
\label{ex3} The function $f=\ln\bigl(\tan(x/2)\bigr)\cos x+1$ is a
partial solution of the first equation in~\eqref{eq8.12}. We get
$A=\bigl(\cos x-\ln(\cot(x/2))\sin^2x\bigr)\,dy$. Consider the
transformation
$$
\widetilde v=v,\quad \widetilde x=x,\quad \widetilde y=y -\Lambda
u\biggl(\ln\biggl(\tan\frac{x}{2}\biggr)- \frac{\cos
x}{\sin^2x}\biggr),\quad \widetilde u=u.
$$
With respect to the new coordinates we have
\begin{gather*}
g=2\,dv\,du+h(u)+(\Lambda v^2+\widetilde H_0)\,(du)^2,
\\
h(u)=\biggl(\frac{1}{\Lambda}+\frac{4\Lambda
u^2}{\sin^4x}\biggr)\,(dx)^2 +\frac{4u}{\sin
x}\,dx\,dy+\frac{\sin^2x}{\Lambda}\,(dy)^2,
\end{gather*}
where~$\widetilde H_0$ satisfies the equation $\Delta_h\widetilde
H_0=-\dfrac{1}{2}h^{ij}\ddot h_{ij}$. An example of such a
function~$\widetilde H_0$ is
$$
\widetilde H_0=-\Lambda\biggl(\frac{1}{\sin^2x}+
\ln^2\biggl(\cot\frac{x}{2}\biggr)\biggr).
$$
It holds
$$
\det T=-\frac{\Lambda^4}{\sin^4x}\biggl(v^2+
\biggl(\ln\biggl(\cot\frac{x}{2}\biggr)\cos x-1\biggr)^2\biggr).
$$
Hence the metric~$g$ is of Petrov type~D on the set
$$
\biggl\{(0,x,y,u)\biggm|\ln\biggl(\cot\frac{x}{2}\biggr)\cos
x-1=0\biggr\}
$$
and of type~II on the complement to this set. The metric is
indecomposable.
\end{example}

\section{Riemannian and  Lorentzian manifolds with recurrent spinor fields}
\label{sec9}

Let $(M,g)$ be a pseudo-Riemannian spin manifold of signature
$(r,s)$, and $S$ the corresponding complex spinor bundle with the
induced connection $\nabla^S$. A spinor field $s\in \Gamma(S)$ is
called \textit{recurrent} if
\begin{equation}
\label{eq9.1} \nabla^S_Xs=\theta(X)s
\end{equation}
for all vector fields $X\in \Gamma(TM)$ (here $\theta$ is a
complex-valued 1-form). If $\theta=0$, then $s$ is \textit{a
parallel} spinor field. For a recurrent spinor field $s$ there
exists a locally defined non-vanishing function $f$ such that the
field~$fs$ is parallel  if and only if $d\theta=0$. If the
manifold $M$ is simply connected, then such function is defined
globally.

The study of Riemannian spin manifolds carrying parallel spinor
fields was initiated by Hitchin~\cite{83}, and then it was
continued by Friedrich \cite{54}. Wang characterized simply
connected Riemannian spin manifolds admitting parallel spinor
field in terms of their holonomy groups \cite{121}. A similar
result was obtained by Leistner for  Lorentzian
manifolds~\cite{98},~\cite{99},  by Baum and Kath for
pseudo-Riemannian manifolds with irreducible holonomy groups
\cite{15}, and by Ikemakhen in the case of pseudo-Riemannian
manifolds of neutral signature $(n,n)$ admitting two complementary
parallel isotropic distributions \cite{86}.

Friedrich~\cite{Fr} considered Equation~\eqref{eq9.1} on a
Riemannian spin manifold assuming that $\theta$ is a real-valued
1-form. He proved that this equation implies  that the Ricci
tensor is zero and $d\theta=0$. Below we will see that this
statement does not hold for Lorentzian manifolds. Example~1
from~\cite{54} provides a solution~$s$ to Equation \eqref{eq9.1}
with $\theta= i\omega$, $d\omega\ne 0$ for a real-valued 1-form
$\omega$ on the compact Riemannian manifold $(M,g)$ being the
product of the non-flat torus~$T^2$ and the circle $S^1$. In fact,
the recurrent spinor field~$s$ comes from a locally defined
recurrent spinor field on the non-Ricci-flat K\"ahler manifold
$T^2$; the existence of the last spinor field shows the below
given Theorem~\ref{th27}.

The spinor bundle~$S$ of a pseudo-Riemannian manifold $(M,g)$
admits a parallel one-dimensional complex subbundle if and only if
$(M,g)$ admits non-vanishing recurrent spinor fields in a
neighborhood of each point such that these fields are proportional
on the intersections of the domains of their definitions. In the
present section we study some classes of pseudo-Riemannian
manifolds $(M,g)$ whose spinor bundles admit parallel
one-dimensional complex subbundles.

\subsection{Riemannian manifolds}
\label{ssec9.1} Wang~\cite{121} showed that a simply connected
locally indecomposable  Riemannian manifold $(M,g)$ admits a
parallel spinor field if and only if its holonomy algebra
$\mathfrak{h}\subset\mathfrak{so}(n)$ is one of
$\mathfrak{su}(n/2)$, $\mathfrak{sp}(n/4)$, $G_2$,
$\mathfrak{spin}(7)$.

In~\cite{67} the following results for Riemannian manifolds with
recurrent spinor fields are obtained.

\begin{theorem}
\label{th27} Let $(M,g)$ be a locally indecomposable
$n$-dimensional simply connected
 Riemannian spin manifold. Then its spinor bundle $S$ admits a parallel one-dimensional complex subbundle
 if and only if either
the holonomy algebra $\mathfrak{h}\subset\mathfrak{so}(n)$ of the
manifold $(M,g)$ is one of $\mathfrak{u}(n/2)$,
$\mathfrak{su}(n/2)$, $\mathfrak{sp}(n/4)$,
$G_2\subset\mathfrak{so}(7)$,
$\mathfrak{spin}(7)\subset\mathfrak{so}(8)$, or $(M,g)$  is a
locally
 symmetric K\"ahlerian manifold.
\end{theorem}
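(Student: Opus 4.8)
The plan is to convert the geometric statement into an algebraic condition on the holonomy representation in the spinor module, and then to go through Berger's list case by case. The two directions of the equivalence are handled by the same algebraic analysis.

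First I would apply the holonomy principle. A parallel one-dimensional complex subbundle $L\subset S$ corresponds, by the spinor analogue of Theorems~\ref{th1} and~\ref{th2}, to a complex line $L_x=\mathbb{C}s\subset S_x$ invariant under the holonomy group. Since $M$ is simply connected, the holonomy group is connected, so invariance under the group is equivalent to the infinitesimal condition
\begin{equation*}
A\cdot s=\theta(A)\,s\qquad\text{for all } A\in\mathfrak{h},
\end{equation*}
where $\mathfrak{h}\subset\mathfrak{so}(n)$ acts on $s$ through the spin representation and $\theta\colon\mathfrak{h}\to\mathbb{C}$ is linear. Computing $\theta([A,B])s=(AB-BA)\cdot s=0$ shows that $\theta$ vanishes on the commutant $\mathfrak{h}'=[\mathfrak{h},\mathfrak{h}]$, hence factors through the center $\mathfrak{z}(\mathfrak{h})$. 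Thus everything splits into the case $\theta=0$ (a genuinely parallel spinor) and the case $\theta\neq0$ (which forces $\mathfrak{z}(\mathfrak{h})\neq\{0\}$).

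Next, since $(M,g)$ is Riemannian and locally indecomposable, its holonomy algebra $\mathfrak{h}$ is irreducible (weak irreducibility coincides with irreducibility in the definite case), so by Theorem~\ref{th8} either $(M,g)$ is locally symmetric or $\mathfrak{h}$ is one of $\mathfrak{so}(n)$, $\mathfrak{u}(m)$, $\mathfrak{su}(m)$, $\mathfrak{sp}(m)$, $\mathfrak{sp}(m)\oplus\mathfrak{sp}(1)$, $\mathfrak{spin}(7)$, $G_2$. For necessity: if $\theta=0$ then $s$ is parallel, and Wang's classification recalled above leaves exactly $\mathfrak{su}(m)$, $\mathfrak{sp}(m)$, $G_2$, $\mathfrak{spin}(7)$; if $\theta\neq0$ then $\mathfrak{z}(\mathfrak{h})\neq\{0\}$, and among the non-symmetric Berger algebras only $\mathfrak{u}(m)$ is non-semisimple (for $n=2$ the case $\mathfrak{so}(2)=\mathfrak{u}(1)$ is already of this type), while for an irreducible symmetric space a nonzero isotropy center is precisely the Hermitian, i.e. K\"ahler, condition. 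A non-flat irreducible symmetric space is Einstein with nonzero scalar curvature, hence not Ricci-flat and admits no parallel spinor, so in the symmetric case only $\theta\neq0$ can occur and the space must be K\"ahler. Assembling these gives exactly the asserted list.

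For sufficiency I would exhibit the subbundle in each case. For $\mathfrak{su}(m)$, $\mathfrak{sp}(m)$, $G_2$, $\mathfrak{spin}(7)$ Wang's theorem already provides a parallel spinor, hence a parallel line with $\theta=0$. The remaining cases all have holonomy contained in $\mathfrak{u}(m)$ (the non-symmetric $\mathfrak{u}(m)$ case, together with every K\"ahler symmetric space), and here I would construct the canonical recurrent spinor explicitly: realizing the spin module of $\mathfrak{so}(2m)$ as $\Lambda^\bullet(\mathbb{C}^m)$ with Clifford action by wedging and contraction, the line $\Lambda^0=\mathbb{C}$ is preserved by $\mathfrak{u}(m)$, annihilated by $\mathfrak{su}(m)=\mathfrak{h}'$, and acted on by the one-dimensional center through a nonzero character $\theta$. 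The hard part is exactly this last step: organizing the spin representation under the inclusion $\mathfrak{u}(m)\hookrightarrow\mathfrak{so}(2m)$ and verifying that the distinguished pure-spinor line is invariant with $\theta\neq0$ on the center (equivalently, that this line cannot be rescaled to a parallel spinor unless the holonomy drops into $\mathfrak{su}(m)$). Simple connectedness then globalizes the pointwise line $\mathbb{C}s$ to the desired parallel subbundle $L\subset S$.
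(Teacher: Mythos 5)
Your proposal is correct and follows essentially the route the paper intends: the survey states Theorem~\ref{th27} without proof (citing the original article), but the surrounding Section~\ref{sec9} sets up exactly your reduction — a parallel complex line in $S$ corresponds via the holonomy principle to a spinor with $A\cdot s=\theta(A)s$, $\theta$ killing $\mathfrak{h}'=[\mathfrak{h},\mathfrak{h}]$, so the $\theta=0$ case is Wang's classification and the $\theta\ne0$ case forces $\mathfrak{z}(\mathfrak{h})\ne\{0\}$, leaving only $\mathfrak{u}(n/2)$ on Berger's list or a Hermitian (K\"ahler) symmetric space, with sufficiency supplied by the invariant line $\Lambda^{0}\subset\Lambda^{\bullet}\mathbb{C}^{n/2}$ (twisted by a square root of the determinant) in the spin module. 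The argument is complete in outline and I see no gap.
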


\begin{corollary}
\label{cor4} Let $(M,g)$ be a simply connected  Riemannian spin
manifold with irreducible holonomy algebra and without non-zero
parallel spinor fields. Then the spinor bundle $S$ admits a
parallel one-dimensional complex subbundle if and only if $(M,g)$
is a  K\"ahlerian  manifold and it is not Ricci-flat.
\end{corollary}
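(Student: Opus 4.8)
The plan is to deduce the corollary directly from Theorem~\ref{th27}, Wang's theorem on parallel spinor fields, and the Berger list (Theorem~\ref{th8}). First I would note that for a Riemannian metric weak irreducibility coincides with irreducibility, so by Theorem~\ref{th6} the manifold $(M,g)$ is locally indecomposable; since it is also simply connected, $G=G^0$ and Theorem~\ref{th27} applies without modification.

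For the forward implication, assume that $S$ admits a parallel one-dimensional complex subbundle. Theorem~\ref{th27} then forces the holonomy algebra $\mathfrak{h}$ to be one of $\mathfrak{u}(n/2)$, $\mathfrak{su}(n/2)$, $\mathfrak{sp}(n/4)$, $G_2$, $\mathfrak{spin}(7)$, or else $(M,g)$ to be a locally symmetric K\"ahlerian manifold. By Wang's theorem the four algebras $\mathfrak{su}(n/2)$, $\mathfrak{sp}(n/4)$, $G_2$, $\mathfrak{spin}(7)$ are exactly those giving rise to a non-zero parallel spinor field, so the hypothesis that $(M,g)$ carries no non-zero parallel spinor field rules them out. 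Hence either $\mathfrak{h}=\mathfrak{u}(n/2)$ or $(M,g)$ is locally symmetric K\"ahlerian, and in both cases $(M,g)$ is K\"ahlerian. It remains to check that it is not Ricci-flat: if $\mathfrak{h}=\mathfrak{u}(n/2)$, then $\mathfrak{h}\not\subset\mathfrak{su}(n/2)$, and since a K\"ahler metric is Ricci-flat precisely when its holonomy reduces into $\mathfrak{su}(n/2)$, the metric is not Ricci-flat; if instead $(M,g)$ is locally symmetric with irreducible holonomy, it is an irreducible Riemannian symmetric space, hence Einstein with non-zero Einstein constant (a Ricci-flat irreducible symmetric space would be flat, contradicting the irreducibility of $\mathfrak{h}$), so again not Ricci-flat.

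For the converse, assume $(M,g)$ is K\"ahlerian and not Ricci-flat. Being K\"ahlerian gives $\mathfrak{h}\subset\mathfrak{u}(n/2)$, so by Theorem~\ref{th8} the irreducible algebra $\mathfrak{h}$ is one of $\mathfrak{u}(n/2)$, $\mathfrak{su}(n/2)$, $\mathfrak{sp}(n/4)$, or a symmetric Berger algebra contained in $\mathfrak{u}(n/2)$ (the holonomy of an irreducible Hermitian symmetric space). The algebras $\mathfrak{su}(n/2)$ and $\mathfrak{sp}(n/4)$ produce Ricci-flat metrics and are excluded by hypothesis, so either $\mathfrak{h}=\mathfrak{u}(n/2)$ or $(M,g)$ is locally symmetric K\"ahlerian; in either case Theorem~\ref{th27} supplies the desired parallel one-dimensional complex subbundle of $S$.

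The substantive input is Theorem~\ref{th27}, which I take as given; the only real work is the case analysis matching its conclusion against the two hypotheses. The step most in need of care is the Ricci-flatness bookkeeping: in both directions one must invoke the equivalence ``K\"ahler metric is Ricci-flat $\iff$ holonomy $\subset\mathfrak{su}(n/2)$'' to separate the Calabi--Yau and hyper-K\"ahler cases (where parallel spinors exist and the metric is Ricci-flat) from the genuinely unitary and Hermitian-symmetric cases, and to confirm that an irreducible Hermitian symmetric space is never Ricci-flat. Everything else is a direct translation between the spinor condition, Wang's holonomy criterion, and the Berger list.
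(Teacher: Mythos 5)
Your argument is correct and is exactly the intended derivation: the paper states this corollary without proof as an immediate consequence of Theorem~\ref{th27}, and your case analysis --- excluding $\mathfrak{su}(n/2)$, $\mathfrak{sp}(n/4)$, $G_2$, $\mathfrak{spin}(7)$ via Wang's parallel-spinor criterion, and handling the Ricci-flatness bookkeeping through the equivalence with holonomy in $\mathfrak{su}(n/2)$ together with the Einstein property of irreducible symmetric spaces --- is the natural way to fill it in. No gaps.
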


\begin{corollary}
\label{cor5} Let $(M,g)$ be a simply connected complete
 Riemannian spin manifold without non-zero parallel spinor
  fields and with not irreducible holonomy algebra.
  Then its spinor bundle $S$ admits a parallel one-dimensional complex subbundle
if and only if $(M,g)$ is a direct product of a K\"ahlerian not
Ricci-flat spin manifold and of a Riemannian spin manifold with a
non-zero parallel spinor field.
\end{corollary}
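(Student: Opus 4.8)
The plan is to reduce the whole question to the irreducible de Rham factors and then apply Corollary~\ref{cor4} factor by factor, the only genuinely analytic inputs being how spinors behave under Riemannian products. First I would invoke Theorem~\ref{th6}: since $(M,g)$ is simply connected and complete it splits isometrically as $M=N_0\times N_1\times\cdots\times N_t$, where $N_0=\mathbb{R}^{n_0}$ is flat and each $N_i$ ($i\geqslant 1$) is simply connected, complete and locally indecomposable, hence has irreducible holonomy algebra $\mathfrak{h}_i\subset\mathfrak{so}(n_i)$ (for Riemannian metrics weak irreducibility coincides with irreducibility). Because $\pi_1(M)=0$, the Künneth formula for $H^2(\,\cdot\,;\mathbb{Z}_2)$ together with multiplicativity of the Stiefel--Whitney classes forces $w_2(N_i)=0$ for every $i$, so each factor, and each subproduct, is again spin. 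The holonomy group is $G=G^0=H_0\times\cdots\times H_t$ with $H_0=\{\operatorname{id}\}$.

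Next I would translate the spinor condition into representation theory. Since $M$ is simply connected, a parallel one-dimensional complex subbundle of $S$ corresponds, by the spinor analogue of Theorem~\ref{th2}, to a complex line in the spinor module $\Sigma$ invariant under $G$, and a non-zero parallel spinor to a $G$-fixed vector. Under a Riemannian product the spinor module decomposes as a tensor product $\Sigma\cong\Sigma_0\otimes\Sigma_1\otimes\cdots\otimes\Sigma_t$ of the spinor modules of the factors (the standard Clifford-algebra identification, up to multiplicities governed by parity), with $H_i$ acting only on the $i$-th tensor factor. The combinatorial heart is a short lemma: for such a tensor product a $G$-invariant line exists iff each $H_i$ preserves a line in $\Sigma_i$, and a $G$-fixed vector exists iff each $H_i$ fixes a vector in $\Sigma_i$. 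Both directions follow by writing a candidate tensor in Schmidt form $\xi=\sum_k s_k\otimes w_k$ with the $w_k$ linearly independent and observing that the action of a single $H_i$ forces each $s_k$ to span an invariant line (respectively to be fixed). The flat factor contributes trivially, $H_0$ being trivial and all of $\Sigma_0$ consisting of fixed spinors.

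Then I would run the classification on each irreducible factor. By Wang's theorem $M$ carries a parallel spinor iff every $N_i$ does, iff every $\mathfrak{h}_i$ lies in $\{\mathfrak{su},\mathfrak{sp},G_2,\mathfrak{spin}(7)\}$; the hypothesis that $(M,g)$ has no parallel spinor thus means at least one factor lies outside this list. Assuming $S$ admits a parallel line subbundle, the lemma gives each $N_i$ a parallel spinor line subbundle, so Corollary~\ref{cor4} applies to each irreducible factor: $N_i$ either carries a parallel spinor (and is then Ricci-flat with holonomy $\mathfrak{su}$, $\mathfrak{sp}$, $G_2$ or $\mathfrak{spin}(7)$) or is Kählerian and not Ricci-flat. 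Grouping the factors, let $M_2$ be the product of $N_0$ with all parallel-spinor factors and $M_1$ the product of the Kählerian non-Ricci-flat factors; then $M_2$ carries a parallel spinor (tensor the factor spinors), $M_1$ is Kählerian and not Ricci-flat, and both are spin by the Künneth remark. The no-parallel-spinor hypothesis makes $M_1$ nontrivial, so $M=M_1\times M_2$ is exactly the asserted product. For the converse, a Kählerian $M_1$ has holonomy in $U(m)$, and $U(m)\subset\operatorname{Spin}(2m)$ fixes the pure-spinor line in $\Sigma$, giving a parallel line subbundle on $M_1$; tensoring with the line spanned by a parallel spinor on $M_2$ produces one on $M$.

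I expect the main obstacle to be the careful handling of the spinor module of a product---the tensor-product identification together with its $\mathbb{Z}_2$-grading and the dimension-parity subtleties (notably the factor-of-two discrepancy arising when several factors are odd-dimensional)---and making the dictionary between ``parallel line subbundle of $S$'' and ``$G$-invariant line in $\Sigma$'' fully precise. Once this dictionary and the Schmidt-form lemma are secured, everything else is bookkeeping resting on Corollary~\ref{cor4}, Wang's theorem and the de~Rham splitting.
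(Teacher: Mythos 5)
The survey states Corollary~\ref{cor5} without proof, attributing it to~\cite{67}, so there is no in-paper argument to compare against; judged on its own, your proposal is correct and follows exactly the route one would expect the cited reference to take: the de~Rham splitting of Theorem~\ref{th6}, the tensor-product decomposition of the spinor module of a product together with the Schmidt-rank lemma reducing invariant lines (respectively fixed vectors) to the individual factors, and Corollary~\ref{cor4} applied to each irreducible factor. The only points genuinely demanding care --- the parity bookkeeping in $\Delta_n\cong\Delta_{n_1}\otimes\cdots\otimes\Delta_{n_t}$ and the dictionary between parallel line subbundles of $S$ and lines in $\Delta_n$ invariant under the lifted holonomy group --- are precisely the ones you flag, and neither affects the conclusion, since the extra multiplicity space carries the trivial action and the Schmidt argument is insensitive to it.
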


\begin{theorem}
\label{th28} Let $(M,g)$ be a locally  indecomposable
$n$-dimensional simply connected non-Ricci-flat K\"ahlerian spin
manifold. Then its spinor bundle $S$ admits exactly two parallel
one-dimensional complex subbundles.
\end{theorem}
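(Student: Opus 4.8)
The plan is to reduce the statement to counting holonomy-invariant complex lines in the spinor module, and then to carry out that count. First I would use that $M$ is simply connected, so the holonomy group $G_x\subset\operatorname{Spin}(n)$ is connected. By the spinor analogue of Theorem~\ref{th2}, parallel one-dimensional complex subbundles of $S$ correspond bijectively to complex lines $L\subset S_x$ invariant under $G_x$, equivalently to lines that are eigenlines of every operator $d\rho(A)$, $A\in\mathfrak h$, where $\mathfrak h$ is the holonomy algebra and $\rho$ the spin representation. Since $(M,g)$ is locally indecomposable, Kähler and non-Ricci-flat, $\mathfrak h$ is an irreducible subalgebra of $\mathfrak u(m)\subset\mathfrak{so}(2m)$ (here $n=2m$) containing the complex structure $J$, the generator of its one-dimensional centre. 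By the Berger classification (Theorem~\ref{th8}) the only such non-Ricci-flat possibilities are $\mathfrak h=\mathfrak u(m)$ and the isotropy algebra of an irreducible Hermitian symmetric space; in both cases $\mathfrak h=\mathbb R J\oplus\mathfrak h'$ with $\mathfrak h'=[\mathfrak h,\mathfrak h]$ semisimple. (It is exactly the exclusion of Ricci-flatness that guarantees $J\in\mathfrak h$, rather than $J$ merely centralizing $\mathfrak{su}(m)$.)

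Next I would invoke the standard decomposition of the spin module under $\mathfrak u(m)$,
$$
S_x\cong\bigoplus_{k=0}^{m} S_k,\qquad S_k\cong\Lambda^kV\otimes\Sigma,
$$
where $V=\mathbb C^m$ is the standard module and $\Sigma$ a fixed one-dimensional twist (a square root of the canonical module, existing since $M$ is spin and irrelevant to the invariance of lines). The element $J$ acts on $S_k$ by a scalar, and these $m+1$ scalars are pairwise distinct; hence any $G_x$-invariant line $L$, being a $J$-eigenline, lies in a single summand $S_k$. Inside $S_k$ the line $L$ is a common eigenline of $d\rho(\mathfrak h')$, so $\mathfrak h'$ acts on $L$ by a character; as $\mathfrak h'$ is semisimple this character vanishes, and therefore $L\subset(\Lambda^kV)^{\mathfrak h'}\otimes\Sigma$. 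Thus everything reduces to computing the trivial $\mathfrak h'$-isotypic components of the exterior powers. The extreme cases are immediate: $\Lambda^0V=\mathbb C$ is trivial, and $\Lambda^mV=\det V$ carries the zero character of the semisimple $\mathfrak h'$, so each contributes exactly one invariant line. These two lines are precisely the pure-spinor subbundles $S_0$ and $S_m$, which are parallel because the holonomy preserves the grading; this already yields the two required subbundles and the existence half matching Theorem~\ref{th27}.

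It remains to show $(\Lambda^kV)^{\mathfrak h'}=0$ for $0<k<m$, which is the main obstacle. For $\mathfrak h'=\mathfrak{su}(m)$ (the case $\mathfrak h=\mathfrak u(m)$) this is immediate, since $\Lambda^kV$ is an irreducible fundamental module of dimension $\binom{m}{k}>1$ and so contains no trivial summand. For the Hermitian symmetric holonomies I would argue through the classification of irreducible Hermitian symmetric spaces, examining $\Lambda^\bullet V$ for the holomorphic tangent representation $V$. The points to verify are that $V$ is always of real or complex type but never of quaternionic type (a quaternionic $V$ would produce an invariant in $\Lambda^2V$ and break the count), that in the self-dual case — the complex quadric, where $\mathfrak h'=\mathfrak{so}(m)$ and $V$ is the standard module — the invariant sits in $\operatorname{Sym}^2V$ and not in any $\Lambda^kV$ with $0<k<m$, and that for the remaining cases no trivial summand occurs in the intermediate exterior powers. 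For the Grassmannians this is transparent from skew Howe duality, $\Lambda^k(\mathbb C^p\otimes\mathbb C^q)=\bigoplus_{\lambda\vdash k}S_\lambda(\mathbb C^p)\otimes S_{\lambda'}(\mathbb C^q)$, which admits a joint $\mathfrak{su}(p)\oplus\mathfrak{su}(q)$-invariant only for $k=0$ and $k=pq=m$; the remaining exceptional cases are settled by a direct weight inspection. Granting these verifications, $(\Lambda^kV)^{\mathfrak h'}=0$ for $0<k<m$, so the only invariant lines are the two extreme pure-spinor lines. By the correspondence of the first paragraph, $S$ then admits exactly two parallel one-dimensional complex subbundles, namely $S_0$ and $S_m$, which proves the theorem.
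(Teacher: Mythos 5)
The survey does not reproduce a proof of Theorem~\ref{th28}; it only refers to~\cite{67} and states the general counting principle (cf.\ the last sentence of Theorem~\ref{Threc5}) that the number of parallel one-dimensional complex subbundles of $S$ equals the number of complex lines in the spinor module preserved by the holonomy algebra. Your proposal implements exactly this principle, and the skeleton is correct: the reduction via the holonomy principle to $G_x$-invariant lines in $S_x$; the observation that non-Ricci-flatness forces $J\in\mathfrak h$, so that $\mathfrak h=\mathbb{R}J\oplus\mathfrak h'$ with $\mathfrak h'$ semisimple and $\mathfrak h$ either $\mathfrak u(m)$ or a Hermitian symmetric isotropy algebra; the decomposition $S_x\cong\bigoplus_k\Lambda^kV\otimes\Sigma$ with pairwise distinct $J$-eigenvalues, which confines any invariant line to a single $S_k$ and forces the $\mathfrak h'$-character on it to vanish; and the identification of $S_0$ and $S_m$ as the two extreme lines. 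This is the natural (and, as far as the survey indicates, the intended) route.

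The one place where your argument is not yet a proof is the assertion $(\Lambda^kV)^{\mathfrak h'}=0$ for $0<k<m$ in the locally symmetric case. You verify it only for the Grassmannians (skew Howe duality) and the quadrics, and dispose of $SO(2r)/\operatorname{U}(r)$, $\operatorname{Sp}(r)/\operatorname{U}(r)$, $E_6/(\operatorname{Spin}(10)\cdot T)$ and $E_7/(E_6\cdot T)$ with the phrase ``direct weight inspection''. These statements are true, but they are the entire content of the exactness claim and do require work: for the two classical series one needs the plethysms $\Lambda^k(\Lambda^2\mathbb{C}^r)$ and $\Lambda^k(\operatorname{Sym}^2\mathbb{C}^r)$ and the observation that the unique rectangular (hence $\mathfrak{su}(r)$-trivial) Schur functor occurring is the one in top degree, and the two exceptional cases are not settled by any one-line duality argument. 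A uniform treatment is available via Kostant's theorem for the abelian nilradical $\mathfrak p^+$: the decomposition of $\Lambda^k\mathfrak p^+$ under $\mathfrak k$ is multiplicity free with highest weights $w\rho-\rho$, $w\in W^1$, $\ell(w)=k$, and one checks that such a weight is proportional to the grading fundamental weight only for $w=e$ and for the longest element of $W^1$. Either supply this (or the equivalent case-by-case computation) or cite it; without it the proof establishes ``at least two'' but not ``exactly two'' for the symmetric holonomies. Everything else in the proposal is sound.
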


\subsection{Lorentzian manifolds}
\label{ssec9.2} The holonomy algebras of Lorentzian spin manifolds
admitting non-zero parallel spinor fields are classified
in~\cite{98},~\cite{99}. We suppose now that the spinor bundle of
$(M,g)$ admits a parallel one-dimensional complex subbundle and
$(M,g)$ does not admit any parallel spinor.

\begin{theorem}
\label{th29} Let $(M,g)$ be a simply connected complete Lorentzian
spin manifold. Suppose that $(M,g)$ does not admit a parallel
spinor. In this case the spinor bundle $S$ admits a parallel
one-dimensional complex subbundle if and only if one of the
following conditions holds:

{\rm1)}  $(M,g)$ is a direct product of $(\mathbb{R},-(dt)^2)$ and
of a Riemannian spin manifold $(N,h)$ such that the spinor bundle
of $(N,h)$  admits a parallel one-dimensional complex subbundle
and $(N,h)$ does not admit any non-zero parallel spinor field;

{\rm2)}  $(M,g)$ is a direct product of an indecomposable
Lorentzian spin manifold and of Riemannian spin manifold $(N,h)$
such that the spinor bundles of both manifolds admit parallel
one-dimensional complex subbundles and at least one of these
manifolds does not admit any non-zero parallel spinor field.
\end{theorem}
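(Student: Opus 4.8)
The plan is to translate both spinorial conditions into the language of holonomy-invariant subspaces and then to exploit the global splitting of $(M,g)$. Since $M$ is simply connected, the full holonomy group coincides with its connected component, and by the holonomy principle a parallel one-dimensional complex subbundle of $S$ corresponds to a complex line $\ell\subset S_x$ invariant under the holonomy group $G$ (on which $G$ acts by a character), while a non-zero parallel spinor corresponds to a $G$-fixed vector in $S_x$. As $M$ is simply connected and complete, Theorem~\ref{th6} gives a global decomposition $M=N_0\times N_1\times\cdots\times N_t$ with $N_0$ flat and the $N_i$ ($i\geqslant1$) locally indecomposable; each factor is again simply connected, complete and spin. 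Because the signature is $(1,n+1)$, the timelike direction lies in exactly one factor. If it lies in the flat factor, then $N_0\cong(\mathbb{R},-(dt)^2)\times\mathbb{R}^k$ and, absorbing $\mathbb{R}^k$ together with all $N_i$ into a single Riemannian factor $N$, we are in the situation $M=(\mathbb{R},-(dt)^2)\times N$. Otherwise the timelike direction lies in some indecomposable $N_1$, which is then an indecomposable Lorentzian manifold, all remaining factors are Riemannian and may be grouped into $N$, giving $M=M_L\times N$.

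The second step is a purely representation-theoretic lemma. The holonomy group of the product is $G=\prod_i H_i$ (with $H_0=\{\mathrm{id}\}$), and under restriction to the factors the spinor representation of $\mathrm{Spin}(1,n+1)$ becomes the tensor product of the spinor representations of the $N_i$, possibly up to an overall multiplicity and a direct-sum doubling dictated by the dimensions modulo $8$ and by the real, complex or quaternionic type. The key claim is that a complex line in a tensor product $\bigotimes_i\Sigma_i$ invariant under $\prod_i H_i$ is necessarily a tensor product $\bigotimes_i\ell_i$ of lines $\ell_i\subset\Sigma_i$ each semi-invariant under $H_i$, and that the fixed space is $\bigl(\bigotimes_i\Sigma_i\bigr)^{G}=\bigotimes_i\Sigma_i^{H_i}$. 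Both are proved by freezing all but one group factor and using linear independence of a basis of the complementary tensor factors to force semi-invariance, respectively invariance, in the isolated factor. I would then record the two consequences actually needed: $M$ admits a parallel one-dimensional complex subbundle if and only if every factor does, and $M$ admits a non-zero parallel spinor if and only if every factor does.

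With these consequences the classification assembles quickly. The flat factor has trivial holonomy, hence always carries parallel spinors and imposes no obstruction; in particular the factor $(\mathbb{R},-(dt)^2)$ admits a parallel spinor. Each Riemannian indecomposable factor is governed by Theorem~\ref{th27} for the existence of a parallel one-dimensional complex subbundle and by Wang's theorem~\cite{121} for the existence of a parallel spinor, while the indecomposable Lorentzian factor, when present, is kept abstract, its parallel spinors being classified in~\cite{98},~\cite{99} if needed. Now impose the hypotheses: $M$ admits a parallel one-dimensional complex subbundle, so by the lemma every factor does, and $M$ has no parallel spinor, so by the lemma at least one factor has none. In the first case the timelike $\mathbb{R}$-factor has a parallel spinor, whence the obstruction is carried by $N$, giving statement~1. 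In the second case both $M_L$ and $N$ admit parallel one-dimensional complex subbundles while at least one of them has no parallel spinor, which is statement~2. The converse implications are the easy direction: given the data of statement~1 or~2, the tensor product of the semi-invariant lines of the factors is a parallel one-dimensional complex subbundle of $S$, and since some factor has no fixed spinor the fixed space $\bigotimes_i\Sigma_i^{H_i}$ vanishes, so $M$ carries no parallel spinor.

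The main obstacle is the second step: rather than the invariant-line lemma itself, which is elementary, the delicate point is the precise decomposition of the spinor module of the product under $\prod_i\mathrm{Spin}$, including the Clifford-algebraic bookkeeping of dimension modulo $8$ and of real, complex or quaternionic type. One must check that this bookkeeping only introduces multiplicities and an overall direct sum of copies of tensor products of the factor spinor modules, so that no invariant line arises outside the tensor products of semi-invariant lines of the factors and so that the identity $(\bigotimes_i\Sigma_i)^G=\bigotimes_i\Sigma_i^{H_i}$ survives the doubling. Once this is secured, the reduction to Theorem~\ref{th27}, Corollary~\ref{cor4} and~\cite{98},~\cite{99} is routine.
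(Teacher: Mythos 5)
The paper itself gives no proof of Theorem~\ref{th29}; it is quoted from~\cite{67}, and the surrounding text (in particular Corollary~\ref{cor5}, which is the exact Riemannian analogue, and the reliance on Theorem~\ref{th6}) makes clear that the intended argument is precisely the one you outline: global Wu splitting, identification of the unique Lorentzian factor, and reduction to a statement about invariant lines and fixed vectors in a tensor product of spinor modules. So your overall route is the right one. Two points, however, need repair or completion.

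First, your ``key claim'' is false as stated: a $\prod_i H_i$-invariant complex line in $\bigotimes_i\Sigma_i$ need \emph{not} be a pure tensor $\bigotimes_i\ell_i$. If $A_i\subset\Sigma_i$ denotes the $\chi_i$-eigenspace of semi-invariant vectors for a fixed character $\chi_i$ of $H_i$ and some $A_i$ has dimension greater than one, then any line in $\bigotimes_i A_i$ is invariant, and most such lines are not pure tensors. Your freezing argument does show that a generator $v=\sum_k a_k\otimes b_k$ (with the $b_k$ linearly independent) forces every $a_k$ to be semi-invariant with a common character, and that is all you actually use: the existence of an invariant line in the product implies the existence of a semi-invariant line in each factor, and conversely; likewise $\bigl(\bigotimes_i\Sigma_i\bigr)^{G}=\bigotimes_i\Sigma_i^{H_i}$ is correct. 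You should state the lemma in this weaker, correct form, since the purity claim is what a referee would strike out.

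Second, the Clifford-algebraic bookkeeping you defer is genuinely needed but is standard and harmless for \emph{complex} spinor modules, which is the setting of the theorem: for signatures adding up, one has $\Delta_{r+s}\cong\Delta_r\otimes\Delta_s$ when at least one of $r,s$ is even, and $\Delta_{r+s}\cong(\Delta_r\otimes\Delta_s)\oplus(\Delta_r\otimes\Delta_s)$ when both are odd (compare dimensions $2^{[m/2]}$). A direct sum $V\oplus V$ admits an invariant line if and only if $V$ does, and $(V\oplus V)^G=V^G\oplus V^G$, so neither the doubling nor the real/quaternionic type of the factors affects either of the two statements you need; iterating over the factors of the Wu decomposition closes the gap. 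One should also note that simple connectedness guarantees the holonomy representation lifts to $\operatorname{Spin}(1,n+1)$ and that the lift of the product group acts factorwise, so the reduction to $\prod_i H_i$ acting on the tensor product is legitimate. With these two repairs the argument is complete and reduces, as you say, to Theorem~\ref{th27}, Corollary~\ref{cor4} and the results of~\cite{98},~\cite{99}.
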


Consider locally indecomposable Lorentzian manifolds $(M,g)$.
Suppose that the spinor bundle of the manifold $(M,g)$  admits a
parallel one-dimensional complex subbundle~$l$. Let  $s\in
\Gamma(l)$ be a local non-vanishing section of the bundle~$l$.
 Let $p\in\Gamma(TM)$ be its Dirac current. The vector field~$p$ is defined from the equality

$$
g(p,X)=-\langle X \cdot s,s\rangle,
$$
where $\langle\,\cdot\,{,}\,\cdot\,\rangle$ is a Hermitian product
on $S$. It turns out that~$p$ is a recurrent vector field. In the
case of Lorentzian manifolds, the Dirac current satisfies
$g(p,p)\leqslant 0$ and the zeros of $p$ coincide with the zeros
of the field~$s$. Since $s$ is non-vanishing and $p$ is a
recurrent field, then either $g(p,p)<0$, or $g(p,p)=0$. In the
first case the manifold is decomposable. Thus we get that $p$ is
an isotropic recurrent vector field, and the manifold $(M,g)$
admits a parallel distribution of isotropic lines, i.e., its
holonomy algebra is contained in~$\mathfrak{sim}(n)$.

In~\cite{98},~\cite{99} it is shown that $(M,g)$ admits a parallel
spinor field if and only if
$\mathfrak{g}=\mathfrak{g}^{2,\mathfrak{h}}=\mathfrak{h}\ltimes\mathbb{R}^n$
and in the decomposition~\eqref{eq4.5} for the subalgebra
$\mathfrak{h}\subset\mathfrak{so}(n)$, each of the subalgebras
$\mathfrak{h}_i\subset\mathfrak{so}(n_i)$ coincides with one of
the Lie algebras $\mathfrak{su}(n_i/2)$, $\mathfrak{sp}(n_i/4)$,
$G_2\subset\mathfrak{so}(7)$,
$\mathfrak{spin}(7)\subset\mathfrak{so}(8)$.

В~\cite{67} we prove the following theorem.

\begin{theorem}
\label{Threc5} Let $(M,g)$ be a simply connected locally
indecomposable $(n+2)$-dimensional Lorentzian spin manifold. Then
its spinor bundle $S$ admits a parallel 1-dimensional complex
subbundle if and only if $(M,g)$ admits a parallel distribution of
isotropic lines (i.e., its holonomy algebra $\mathfrak{g}$ is
contained in $\mathfrak{sim}(n)$), and  in the
decomposition~\eqref{eq4.5} for the subalgebra
$\mathfrak{h}=\operatorname{pr}_{\mathfrak{so}(n)}\mathfrak{g}$
each of the subalgebra $\mathfrak{h}_i\subset\mathfrak{so}(n_i)$
coincedes with one of the Lie  algebras $\mathfrak{u}(n_i/2)$,
$\mathfrak{su}(n_i/2)$, $\mathfrak{sp}(n_i/4)$, $G_2$,
$\mathfrak{spin}(7)$ or with the holonomy algebra of an
indecomposable K\"ahlerian symmetric space. The number of parallel
1-dimensional complex subbundles of $S$ equals to the number of
$1$-dimensional complex subspaces of $\Delta_n$ preserved by the
algebra $\mathfrak{h}$.
\end{theorem}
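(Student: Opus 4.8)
The plan is to translate the analytic statement into a purely algebraic one about invariant lines in the spinor module, then to reduce that algebraic question — via the nilpotent structure of $\mathfrak{sim}(n)$ and a tensor-product factorization of the spinor module — to the already settled Riemannian case of Theorem~\ref{th27}. The necessity of a parallel isotropic line distribution, hence $\mathfrak g\subset\mathfrak{sim}(n)$, is already supplied by the Dirac-current argument preceding the theorem: the Dirac current of a non-vanishing local section of the invariant subbundle is recurrent and, because the manifold is locally indecomposable, isotropic.

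First I would invoke the fundamental correspondence. Applying Theorem~\ref{th2} to the spinor bundle $S$ with its induced connection, parallel one-dimensional complex subbundles of $S$ correspond bijectively to $G_x$-invariant complex lines in the spinor module $S_x$. Since $M$ is simply connected, $G_x=G^0_x$ is connected, so these are exactly the lines invariant under the holonomy algebra $\mathfrak g$ acting on the spinor representation $\Delta_{n+2}$. This reduces both the characterization and the counting claim to an algebraic problem; in particular, once the reduction to $\Delta_n$ below is carried out, the final counting statement becomes a tautology.

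Next comes the key reduction inside $\mathfrak{sim}(n)=(\mathbb R\oplus\mathfrak h)\ltimes\mathbb R^n$. Under the spinor representation the abelian ideal $\mathbb R^n$ acts on a spinor $v$ by $v\mapsto\tfrac12\,p\cdot X\cdot v$, and since $p$ is isotropic and $g(p,X)=0$ one has $p\cdot X=-X\cdot p$ and $(p\cdot X)^2=-p\cdot p\cdot X\cdot X=0$; these operators are nilpotent. If $v$ spans a $\mathfrak g$-invariant line, each such operator acts on $\mathbb C v$ by a scalar which must vanish, so $X\cdot(p\cdot v)=0$ for all $X\in\mathbb R^n$; as Clifford multiplication by any non-zero Euclidean vector is invertible, this forces $p\cdot v=0$, i.e.\ $v\in W:=\ker(p\cdot\,)$. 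The subspace $W$ is annihilated by $\mathbb R^n$, is preserved by $\mathfrak{so}(n)$ and by the grading element $\mathbb R$ (which acts on it as a scalar), and is isomorphic as an $\mathfrak{so}(n)$-module to $\Delta_n$. Hence, for every type $1$–$4$ of $\mathfrak g$ (the data $\varphi,\psi$ and the grading acting on $W$ only by scalars or by zero), the $\mathfrak g$-invariant lines in $\Delta_{n+2}$ coincide with the $\mathfrak h$-invariant lines in $\Delta_n\cong W$. This establishes the counting statement.

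Finally I would identify which orthogonal parts admit such a line. Using the decomposition~\eqref{eq4.5} and the factorization $\Delta_n\cong\Delta_{n_1}\otimes\cdots\otimes\Delta_{n_{s+1}}$ (graded tensor product) under $\mathbb R^n=\mathbb R^{n_1}\oplus\cdots\oplus\mathbb R^{n_{s+1}}$, a line is $\mathfrak h$-invariant exactly when $\mathfrak h$ acts on it by a character; splitting the character over the ideals shows such a line exists iff each $\mathfrak h_i$ preserves a line in $\Delta_{n_i}$, the trivial factor on $\mathbb R^{n_{s+1}}$ imposing no condition. Each $\mathfrak h_i$ is an irreducible Riemannian holonomy algebra, so by the spinorial correspondence it preserves a line in $\Delta_{n_i}$ iff a locally indecomposable Riemannian manifold with that holonomy admits a parallel complex line subbundle of its spinor bundle; Theorem~\ref{th27} then pins this down to $\mathfrak h_i$ being one of $\mathfrak u(n_i/2)$, $\mathfrak{su}(n_i/2)$, $\mathfrak{sp}(n_i/4)$, $G_2$, $\mathfrak{spin}(7)$ or the holonomy algebra of an indecomposable K\"ahlerian symmetric space, which is precisely the asserted list. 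The main obstacle I expect is the spinorial bookkeeping: proving cleanly that $\ker(p\cdot\,)\cong\Delta_n$ as $\mathfrak{so}(n)$-modules, that the tensor factorization is compatible with the grading, and the lemma that an invariant line in a tensor product of modules over a direct sum of algebras is a pure tensor of invariant lines.
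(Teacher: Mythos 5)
Your proposal is correct and follows essentially the route the paper indicates: the survey itself only records the Dirac-current argument forcing $\mathfrak g\subset\mathfrak{sim}(n)$ and refers to~\cite{67} for the rest, and the reduction you carry out --- invariant lines via Theorem~\ref{th2}, nilpotency of $p\cdot X$ forcing the line into $\ker(p\cdot\,)\cong\Delta_n$, the tensor factorization over the decomposition~\eqref{eq4.5}, and the appeal to Theorem~\ref{th27} for each irreducible factor --- is precisely the standard argument behind that reference and behind the parallel-spinor case of~\cite{98},~\cite{99}. The only small caution is your auxiliary lemma: an invariant line in $\Delta_{n_1}\otimes\cdots\otimes\Delta_{n_{s+1}}$ need not be spanned by a pure tensor when a factor contains several invariant lines with the same character, but the existence equivalence you need (and the count, which the theorem states directly in terms of $\mathfrak h$-invariant lines of $\Delta_n$) is unaffected.
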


\section{Conformally flat Lorentzian manifolds with special
holonomy groups} \label{sec10}

In this section will be given a local classification of
conformally flat Lorentzian manifolds with special holonomy
groups. The corresponding local metrics  are certain extensions of
Riemannian spaces of constant sectional curvature to Walker
metrics. This result is published in~\cite{64},~\cite{66}.

 Kurita~\cite{96} proved that
a conformally flat Riemannian manifold  is either a product of two
spaces of constant sectional curvature, or it is a product of a
space of constant sectional curvature with an interval, or its
restricted holonomy group is the identity component of the
orthogonal group. The last condition represents the generic  case,
and among various manifolds satisfying the last condition one can
emphasize only the spaces of constant sectional curvature. It is
clear that there are no conformally flat Riemannian manifolds with
special holonomy groups.

 In~\cite{66} we generalize the Kurita Theorem to the case of
pseudo-Riemannian manifolds.  It turns out that in additional to
the above listed possibilities a conformally flat
pseudo-Riemannian manifold may have weakly irreducible not
irreducible holonomy group. We give a complete local description
of conformally flat  Lorentzian manifolds $(M,g)$ with weakly
irreducible not irreducible holonomy groups.

On a Walker manifold $(M,g)$  we define the canonical function
$\lambda$ from the equality
$$
\operatorname{Ric}(p)=\lambda p,
$$
where $\operatorname{Ric}$ is the Ricci operator. If the
metric~$g$ is written in the form~\eqref{eq4.11}, then
$\lambda=(1/2)\partial^2_vH$, and the scalar curvature of the
metric~$g$ satisfies
$$
s=2\lambda+s_0,
$$
where $s_0$ is the scalar curvature of the metric~$h$. The form of
a conformally flat Walker matric will depend on the vanishing of
the function~$\lambda$. In the general case we obtain the
following result.

\begin{theorem}
\label{th31} Let $(M,g)$ be a conformally flat Walker manifold
(i.e., the Weyl  curvature tensor equals to zero) of dimension
$n+2\geqslant 4$. Then in a neighborhood of each point of~$M$
there exist coordinates $v,x^1,\dots,x^n,u$ such that
$$
g=2\,dv\,du+\Psi\sum_{i=1}^n(dx^i)^2+2A\,du+
(\lambda(u)v^2+vH_1+H_0)\,(du)^2,
$$
where
\begin{gather*}
\Psi=4\biggl(1-\lambda(u)\sum_{k=1}^n(x^k)^2\biggr)^{-2},
\\
A=A_i\,dx^i,\quad
A_i=\Psi\biggl(-4C_k(u)x^kx^i+2C_i(u)\sum_{k=1}^n(x^k)^2\biggr),
\\
H_1=-4C_k(u)x^k\sqrt{\Psi}-\partial_u\ln\Psi+K(u),
\\
H_0(x^1,\dots,x^n,u)=\hspace*{90mm} \\
=\begin{cases}
\dfrac{4}{\lambda^2(u)}\Psi\displaystyle\sum_{k=1}^nC^2_k(u)&
\\
\qquad{}+\sqrt{\Psi}\,\biggl(a(u)\displaystyle\sum_{k=1}^n(x^k)^2
+D_k(u)x^k+D(u)\biggr),&\text{if} \ \lambda(u)\ne 0,
\\[4mm]
16\biggl(\,\displaystyle\sum_{k=1}^n(x^k)^2\biggr)^2
\displaystyle\sum_{k=1}^nC^2_k(u)&
\\
\qquad{}+\widetilde a(u)\displaystyle\sum_{k=1}^n(x^k)^2+
\widetilde D_k(u)x^k+\widetilde D(u),& \text{if} \ \lambda(u)=0,
\end{cases}
\end{gather*}
for some functions  $\lambda(u)$, $a(u)$, $\widetilde a(u)$,
$C_i(u)$, $D_i(u)$, $D(u)$, $\widetilde D_i(u)$, $\widetilde
D(u)$.

The scalar curvature of the metric~$g$ is equal to
$-(n-2)(n+1)\lambda(u)$.
\end{theorem}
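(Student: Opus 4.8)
The plan is to prove Theorem~\ref{th31} by combining the conformal flatness condition (vanishing of the Weyl tensor) with the explicit Walker form~\eqref{eq4.11} of the metric and the curvature formulas from Section~\ref{ssec4.2}. First I would write the Weyl tensor $W$ of the metric $g$ using the decomposition of the curvature tensor from Theorem~\ref{th13} (adapted to the Walker setting via~\eqref{eq4.16}--\eqref{eq4.18}) and set $W=0$. Since the orthogonal part of the holonomy is assumed weakly irreducible not irreducible, the screen bundle $\mathscr{E}=\ell^\bot/\ell$ carries the induced connection whose curvature is essentially $R_0=R(h)$; the vanishing of the component of $W$ that lives in $\odot^2\mathbb{R}^n$ forces the family of Riemannian metrics $h(u)$ to be conformally flat, and together with the remaining Weyl components it forces $h(u)$ to have \emph{constant} sectional curvature for each fixed $u$. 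This is the analogue of the Kurita result~\cite{96} in the transverse directions, and it is what produces the conformal factor $\Psi=4\bigl(1-\lambda(u)\sum_k(x^k)^2\bigr)^{-2}$, the standard stereographic form of a space form with curvature governed by $\lambda(u)$.

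Next I would exploit the canonical function $\lambda$ defined by $\operatorname{Ric}(p)=\lambda p$, which by~\eqref{eq4.16} equals $\tfrac12\partial_v^2H$; conformal flatness couples $\lambda$ to the constant curvature of the screen, so I expect $\lambda$ to depend only on $u$ and to be exactly the curvature parameter of $h(u)$. This explains the quadratic dependence $H=\lambda(u)v^2+vH_1+H_0$ and the stated scalar curvature $-(n-2)(n+1)\lambda(u)$: the $\lambda(u)v^2$ term is dictated by $\partial_v^2H=2\lambda$, while the linear-in-$v$ and $v$-free parts $H_1,H_0$ remain to be pinned down. I would then impose the off-diagonal Weyl components, namely those valued in $\mathscr{P}(\mathfrak{h})$ and the mixed $p\wedge\,\cdot\,$ pieces coming from~\eqref{eq4.17}. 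These are first-order linear conditions on the 1-form $A$ and on $H_1$; solving them yields the displayed forms $A_i=\Psi\bigl(-4C_k(u)x^kx^i+2C_i(u)\sum_k(x^k)^2\bigr)$ and $H_1=-4C_k(u)x^k\sqrt{\Psi}-\partial_u\ln\Psi+K(u)$, the functions $C_i(u)$ arising as integration constants (in the $x$-variables) of the transverse equations.

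The remaining, and most laborious, step is to determine $H_0$. Here the last nontrivial Weyl component, corresponding to the $T\in\odot^2\mathbb{R}^n$ part of the curvature via~\eqref{eq4.18}, must vanish; since $T$ involves $\nabla_i\nabla_jH$, this becomes a second-order inhomogeneous linear PDE for $H_0$ on the space form with metric $\Psi\sum_i(dx^i)^2$. I would solve it by separating the two cases $\lambda(u)\ne0$ and $\lambda(u)=0$, because the structure of the space form degenerates from a genuine sphere/hyperbolic form to a flat one when $\lambda=0$; in each case a particular solution is built from $\Psi$, $\sqrt{\Psi}$, and elementary polynomials in $x^k$, which accounts for the two-branch formula for $H_0$ with its free functions $a(u),\widetilde a(u),D_i(u),D(u),\widetilde D_i(u),\widetilde D(u)$. \textbf{The main obstacle} I anticipate is precisely this last PDE for $H_0$: one must verify that the explicit ansatz solves the inhomogeneous equation and that the general solution differs from it only by the stated harmonic-type freedom, which requires care with the covariant derivatives $\nabla_i\nabla_jH$ taken with respect to the nonflat conformal metric $\Psi\sum_i(dx^i)^2$ and with the $u$-dependence of $\Psi$ feeding into the $\dot h$ and $\ddot h$ terms of~\eqref{eq4.18}. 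The conformal flatness of the transverse space form is what makes these covariant computations tractable, and the final scalar-curvature value serves as an independent consistency check on the whole construction.
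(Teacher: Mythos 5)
Your overall strategy is the right one and, as far as one can tell, coincides with the one behind Theorem~\ref{th31}: the survey itself states this theorem without proof (deferring to~\cite{64},~\cite{66}), but everything it does say --- the canonical function $\lambda=\frac12\partial_v^2H$, the relation $s=2\lambda+s_0$, and the description of the metrics as ``extensions of Riemannian spaces of constant sectional curvature to Walker metrics'' --- matches your plan of imposing $W=0$ on the curvature components \eqref{eq4.16}--\eqref{eq4.18} and reading off, in turn, the constant curvature of $h(u)$, the form of $A$ and $H_1$, and finally a second-order equation for $H_0$. Your consistency check also works out: with $\Psi=4\bigl(1-\lambda\sum_k(x^k)^2\bigr)^{-2}$ the metric $h(u)$ has sectional curvature $-\lambda(u)$, so $s_0=-n(n-1)\lambda$ and $s=2\lambda+s_0=-(n-2)(n+1)\lambda$ as claimed.

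Two steps of your sketch are thinner than they should be. First, you only ``expect'' $\lambda$ to depend on $u$ alone; this must be extracted from specific components of $W=0$, and for $n=2,3$ the screen--screen part of the Weyl tensor carries no information (every Riemannian metric in dimension at most $3$ is conformally flat), so the constant-curvature conclusion for $h(u)$ cannot begin, as you propose, with ``the vanishing of the component of $W$ in $\odot^2\mathbb{R}^n$ forces $h(u)$ to be conformally flat''; it has to come from the mixed components in all dimensions covered by the theorem ($n\geqslant 2$). Second, and more importantly, the equations $W=0$ determine $h$, $A$, $H_1$, $H_0$ only up to the non-uniqueness of Walker coordinates; the very specific displayed normal forms (the stereographic shape of $\Psi$, the special-conformal shape of $A_i$, the term $-\partial_u\ln\Psi$ in $H_1$) are reached only after a gauge-fixing argument using transformations of the type~\eqref{eq4.19} together with $u$-dependent isometries of the space forms $h(u)$. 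Without that normalization you cannot claim that the solution of your equation for $H_0$ lands in the stated two-branch form; with it, your plan goes through.
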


If the function~$\lambda$ is locally zero, or it is non-vanishing,
then the above metric may be simplified.

\begin{theorem}
\label{th32} Let $(M,g)$ be a conformally flat Walker Lorentzian
manifold of dimension $n+2\geqslant 4$.

{\rm1)} If the function $\lambda$ is non-vanishing at a point,
then in a neighborhood of this point there exist coordinates
$v,x^1,\dots,x^n,u$ such that
$$
g=2\,dv\,du+\Psi\sum_{i=1}^n(dx^i)^2+(\lambda(u)v^2+vH_1+H_0)\,(du)^2,
$$
where
\begin{gather*}
\Psi=4\biggl(1-\lambda(u)\sum_{k=1}^n(x^k)^2\biggr)^{-2},
\\*[3mm]
H_1=-\partial_u\ln\Psi,\qquad
H_0=\sqrt{\Psi}\,\biggl(a(u)\sum_{k=1}^n(x^k)^2+
D_k(u)x^k+D(u)\biggr).
\end{gather*}

{\rm2)} If $\lambda\equiv 0$ in a neighborhood of a point, then in
a neighborhood of this point there exist coordinates
$v,x^1,\dots,x^n,u$ such that
$$
g=2\,dv\,du+\sum_{i=1}^n(dx^i)^2+2A\,du+(vH_1+H_0)\,(du)^2,
$$
where
\begin{gather*}
A=A_i\,dx^i,\quad A_i=C_i(u)\sum_{k=1}^n(x^k)^2,\quad
H_1=-2C_k(u)x^k,
\\
{\begin{align*}
H_0&=\sum_{k=1}^n(x^k)^2\biggl(\frac{1}{4}\sum_{k=1}^n(x^k)^2
\sum_{k=1}^nC^2_k(u)-(C_k(u)x^k)^2+\dot C_k(u)x^k+a(u)\biggr)
\\
&\qquad+D_k(u)x^k+D(u).
\end{align*}}
\end{gather*}
In particular, if all~$C_i$ equal~$0$, then the metric can be
rewritten in the form
\begin{equation}
\label{eq10.1}
g=2\,dv\,du+\sum_{i=1}^n(dx^i)^2+a(u)\sum_{k=1}^n(x^k)^2\,(du)^2.
\end{equation}
\end{theorem}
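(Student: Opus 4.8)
The plan is to derive both cases of the theorem from the general conformally flat Walker metric of Theorem~\ref{th31} by applying coordinate transformations that preserve the Walker structure. Two kinds of such transformations are available: the fibre shift $v\mapsto v-f(x^1,\dots,x^n,u)$ of~\eqref{eq4.19}, which alters $A_i$ only by the exact form $\partial_i f$ and adjusts $H_1,H_0$ correspondingly, and $u$-dependent changes $x^i\mapsto x^i(\tilde x,u)$ of the transverse coordinates, whose effect on the metric I would compute directly. The latter are exactly the transformations used to remove the $1$-form $A$ in the four-dimensional Examples~\ref{ex2} and~\ref{ex3}, which I would use as a template.

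I would begin with part~2), the case $\lambda\equiv0$, since it is the simpler one. Here the conformal factor degenerates to the constant $\Psi=4$, so after the rescaling $x^i\mapsto x^i/2$ the screen part becomes the flat metric $\sum_i(dx^i)^2$. Substituting $\lambda=0$ into the expression for $A_i$ from Theorem~\ref{th31} and using the identity $\partial_i\bigl(C_k x^k\sum_j(x^j)^2\bigr)=C_i\sum_j(x^j)^2+2C_k x^k x^i$, I would choose $f$ in~\eqref{eq4.19} so as to strip off the exact part of $A$, leaving $A_i=C_i(u)\sum_k(x^k)^2$ after relabelling the functions $C_i$. The same shift, followed by a reparametrisation absorbing the constant term $K(u)$ and the linear part of $H_0$, puts $H_1$ and $H_0$ into the stated polynomial forms; this is a bookkeeping computation. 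When all $C_i$ vanish one has $A=0$ and $H_1=0$, and a final fibre shift removes the $D_k(u)x^k+D(u)$ terms of $H_0$, leaving precisely $a(u)\sum_k(x^k)^2$ and hence the metric~\eqref{eq10.1}.

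For part~1), the case $\lambda(u)\ne0$, the screen metric $\Psi\sum_i(dx^i)^2$ with $\Psi=4\bigl(1-\lambda(u)\sum_k(x^k)^2\bigr)^{-2}$ is, for each fixed $u$, a metric of constant sectional curvature written in stereographic coordinates. The key observation is that the $1$-form $A$ of Theorem~\ref{th31} is, up to a constant factor, the $\Psi\delta$-dual of the vector field $\sum_i C_i(u)K_i$, where $K_i=2x^i\sum_j x^j\partial_j-\bigl(\sum_j(x^j)^2\bigr)\partial_i$ is the $i$-th special conformal field; a direct computation shows that this field is conformal Killing for the screen metric. I would eliminate $A$ by integrating the nonautonomous flow $\partial_u x^i=-\bigl(\sum_j C_j(u)K_j\bigr)^i$ and using the resulting $u$-dependent coordinate change to cancel the cross term $2A\,du$ against the term $2\,\Psi\,(\partial_u x^i)\,dx^i\,du$ produced by the $u$-dependence of the change. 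Because the flow consists of conformal automorphisms of the screen space form, the screen metric keeps its stereographic shape with the same curvature function $\lambda(u)$; tracking the induced changes then collapses $H_1$ to $-\partial_u\ln\Psi$ and absorbs the term $\tfrac{4}{\lambda^2}\Psi\sum_k C_k^2$ of $H_0$, so that $H_0$ reduces to $\sqrt{\Psi}\bigl(a(u)\sum_k(x^k)^2+D_k(u)x^k+D(u)\bigr)$ after a final fibre shift.

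I expect the main obstacle to be precisely this last step of part~1): one must check that integrating the conformal flow really returns the screen metric to the standard stereographic form rather than to some M\"obius-transported variant, which may require an additional recentring of the transverse coordinates, and one must carry the $u$-derivatives of $H_1$ and $H_0$ through a nonautonomous flow without error. This is where $\lambda\ne0$ is essential: the relevant conformal generators carry the curvature-corrected form above rather than the flat translations, and the quadratic term $\tfrac{4}{\lambda^2}\Psi\sum_k C_k^2$ is visible only for $\lambda\ne0$. As a safeguard I would also verify the final claimed metrics directly, substituting them into the Walker curvature formulas of Section~\ref{ssec4.2} to confirm vanishing of the Weyl tensor and the stated scalar curvature, which provides an independent check of the normalisation.
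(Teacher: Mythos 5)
Your overall strategy---deriving Theorem~\ref{th32} from Theorem~\ref{th31} by Walker-preserving coordinate changes, namely the fibre shift $v\mapsto v-f$ of~\eqref{eq4.19} together with $u$-dependent diffeomorphisms of the transverse coordinates---is the natural one (the survey itself gives no proof here, deferring to~\cite{66}), and your handling of part~2) is essentially sound: with $\lambda\equiv0$ one indeed has $A_i=-8\,\partial_i\bigl(C_kx^kr^2\bigr)+16\,C_i r^2$ with $r^2=\sum_j(x^j)^2$, so a fibre shift strips the exact part and a rescaling gives the stated form. But the critical step of part~1) fails as written. The field $\sum_iC_i(u)K_i$ dual to $A$ is conformal but \emph{not} Killing for the screen metric $\Psi\delta$ of constant curvature $-\lambda(u)$, so integrating its flow does not preserve $\Psi\delta$; it carries it to a M\"obius-transported copy, and the ``recentring'' you postpone is a $u$-dependent diffeomorphism whose own generator reinserts a cross term $2\widetilde A\,du$ of exactly the type you set out to remove---the argument is circular at the one point you yourself flag as the main obstacle. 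The missing idea is the decomposition $A=d_xf+\frac1\lambda\sum_kC_k(u)\,\omega_k$, where $\omega_k$ is the $\Psi\delta$-dual of the genuine Killing field $V_k=(1+\lambda r^2)\partial_k-2\lambda x^kx^l\partial_l$. This follows from $K_i=\frac1\lambda(\partial_i-V_i)$ together with $\Psi\,dx^k=\frac12\omega_k+\frac12\,d\bigl(4x^k/(1-\lambda r^2)\bigr)$, and one verifies directly that $d\bigl(A-\frac1\lambda C_k\omega_k\bigr)=0$. The exact part is then absorbed by $v\mapsto v-f$, while the Killing part is cancelled by the flow of $-\frac1\lambda\sum_kC_k(u)V_k$, which preserves $\Psi\delta$ on the nose, so the screen metric keeps its stereographic shape with the same $\lambda(u)$; the factor $1/\lambda$ is exactly where $\lambda\ne0$ enters, alongside the removal of $K(u)$ from $H_1$ via $H_1\mapsto H_1+2\lambda f$ with $f=f(u)$.

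A second, smaller repair: in part~2) with all $C_i=0$, the linear term $D_k(u)x^k$ of $H_0$ cannot be removed by a fibre shift, since keeping $A=0$ forces $f=f(u)$ and then only $D(u)$ is affected. One needs the $u$-dependent translation $x^i\mapsto x^i+b^i(u)$ accompanied by the compensating shift $v\mapsto v-\dot b^ix^i+\cdots$, with $b^i$ solving a second-order linear ODE; this is the standard pp-wave normalization, but it is a different transformation from the one you invoke. Your closing proposal to substitute the final metrics into the curvature formulas of Section~\ref{ssec4.2} as an independent check of the normalization is sensible and worth carrying out.
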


Thus Theorem \ref{th32} gives the local form of a conformally flat
Walker metric in the neighborhoods of  points where~$\lambda$ is
non-zero or constantly zero. Such points represent a dense subset
of the manifold. Theorem \ref{th31} describes also the metric in
the neighborhoods of points at that the function $\lambda$
vanishes, but it is not locally zero, i.g. in the neighborhoods of
isolated zero points of~$\lambda$.

Next, we find the holonomy algebras of the obtained metrics and
check which of the  metrics are decomposable.

\begin{theorem}
\label{th33} Let $(M,g)$ be as in Theorem~{\rm\ref{th31}}.

{\rm1)} The manifold $(M,g)$ is locally indecomposable if and only
if there exists a coordinate system with one of the properties:
\begin{enumerate}
\item[$\bullet$] $\dot\lambda\not\equiv 0$;
\item[$\bullet$] $\dot\lambda\equiv 0$, $\lambda\ne 0$,
i.e.,~$g$ can be written as in the first part of
Theorem~{\rm\ref{th32}}, and
$$\displaystyle\sum_{k=1}^nD^2_k+(a+\lambda D)^2\not\equiv 0;$$
\item[$\bullet$] $\lambda\equiv 0$, i.e.,~$g$ can be written as in the second part of
Theorem~{\rm\ref{th32}},~and
$$
\displaystyle\sum_{k=1}^nC^2_k+a^2\not\equiv 0.
$$
\end{enumerate}

Otherwise, the metric can be written in the form
$$
g=\Psi\sum_{k=1}^n(dx^k)^2+2\,dv\,du+\lambda v^2\,(du)^2,\qquad
\lambda\in\mathbb{R}.
$$
The holonomy algebra of this metric is trivial if and only if
$\lambda=0$. If $\lambda\ne 0$, then the holonomy algebra is
isomorphic to $\mathfrak{so}(n)\oplus\mathfrak{so}(1,1)$.

{\rm2)} Suppose that the manifold~$(M,g)$ is  locally
indecomposable. Then its holonomy algebra is isomorphic to
$\mathbb{R}^n\subset\mathfrak{sim}(n)$ if and only if
$$\lambda^2+\displaystyle\sum_{k=1}^nC^2_k\equiv0$$ for all coordinate systems.
In this case $(M,g)$ is a pp-wave, and $g$ is given
by~\eqref{eq10.1}. If for each coordinate system it holds
$$\lambda^2+\displaystyle\sum_{k=1}^nC^2_k\not\equiv 0,$$
then the  holonomy algebra is isomorphic to $\mathfrak{sim}(n)$.
\end{theorem}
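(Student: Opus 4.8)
The plan is to work directly with the explicit metrics produced in Theorems~\ref{th31} and~\ref{th32}, compute their curvature, and extract the invariants $\lambda,\vec v,R_0,P,T$ that parametrize $R\in\mathscr{R}(\mathfrak{g}^{1,\mathfrak h})$ in Theorem~\ref{th13}, using the Walker curvature formulas~\eqref{eq4.16}--\eqref{eq4.18} together with the hypothesis $W=0$. The crucial preliminary observation is that the transverse metric $h=\Psi\sum_i(dx^i)^2$ is, for each fixed $u$, a metric of constant sectional curvature whose curvature is governed by $\lambda(u)$: hence the corresponding tensor $R_0=R(h)$ is the standard constant-curvature tensor, so that $R_0\equiv 0$ precisely when $\lambda\equiv 0$, while for $\lambda\neq 0$ the operators $R_0(X,Y)$ already span all of $\mathfrak{so}(n)$ (for $n\geqslant 2$).

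First I would determine the orthogonal part $\mathfrak h=\operatorname{pr}_{\mathfrak{so}(n)}\mathfrak g$, which coincides with the holonomy of the induced connection on the screen bundle. If $\lambda\neq 0$ on an open set, then from~\eqref{eq4.2} the component $R_0(X,Y)$ already forces $\mathfrak h=\mathfrak{so}(n)$. If $\lambda\equiv 0$, then $R_0=0$ and the whole $\mathfrak{so}(n)$-valued part of the curvature is carried by $P$, which by~\eqref{eq4.17} is governed by the functions $C_i(u)$; conformal flatness kills the Weyl-type component of $P$ (cf.\ the Weyl-analogue constructed in Section~\ref{sec5}), so $P$ lies in $\mathscr{P}_1(\mathfrak{so}(n))$ and has the form $P(y)=x\wedge y$. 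Consequently, if $\sum_kC_k^2\equiv 0$ then $P=0$, giving $\mathfrak h=\{0\}$ and holonomy contained in $\mathbb{R}^n$, which together with the explicit pp-wave form~\eqref{eq10.1} yields holonomy exactly $\mathbb{R}^n$; whereas if $\sum_kC_k^2\not\equiv 0$ then $P\neq 0$ and the images $P(\mathbb{R}^n)$ together with covariant derivatives of $R$ generate $\mathfrak{so}(n)$, so again $\mathfrak h=\mathfrak{so}(n)$.

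Next I would account for the homothety generator $p\wedge q$, which is exactly what distinguishes type~1 from types~2--4 and thus forces the dichotomy of part~2. From~\eqref{eq4.2} this generator is present whenever $\lambda\neq 0$; when $\lambda\equiv 0$ but $\sum_kC_k^2\not\equiv 0$, I would produce it from a covariant derivative of the curvature via the recurrence~\eqref{eq7.4}, showing that differentiating the $P$-component yields a nonzero $p\wedge q$ contribution. The key claim to establish carefully is therefore: \emph{whenever $\mathfrak h=\mathfrak{so}(n)$ occurs, the homothety is also forced}, ruling out types~2, 3, 4 and giving $\mathfrak g=(\mathbb{R}\oplus\mathfrak{so}(n))\ltimes\mathbb{R}^n=\mathfrak{sim}(n)$. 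This exactly reproduces part~2: holonomy $\mathbb{R}^n$ iff $\lambda^2+\sum_kC_k^2\equiv 0$, and $\mathfrak{sim}(n)$ otherwise.

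For part~1 I would invoke the Wu Theorem~\ref{th6}: $(M,g)$ is locally indecomposable iff $\mathfrak g$ is weakly irreducible, and by Theorem~\ref{th10} a weakly irreducible subalgebra of $\mathfrak{sim}(n)$ is of one of the four types. The decomposable case then corresponds to $\mathfrak g$ reducible, preserving the nondegenerate splitting $\mathbb{R}^n\oplus\mathbb{R}^{1,1}$, which matches the product metric $\Psi\sum_k(dx^k)^2+2\,dv\,du+\lambda v^2\,(du)^2$ with constant $\lambda$, whose holonomy is trivial for $\lambda=0$ and $\mathfrak{so}(n)\oplus\mathfrak{so}(1,1)$ for $\lambda\neq 0$. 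I would show that when none of the three bullet conditions holds, the admissible coordinate changes~\eqref{eq4.19} (and change of Witt frame~\eqref{eq4.6}) reduce $A$, $H_1$, $H_0$ to this product form, while each bullet condition makes $\vec v$ or $T$ nonzero in a manner not removable by those transformations. The main obstacle I anticipate is precisely the \emph{coordinate-invariance} of the stated criteria: I must track how $\dot\lambda$, $\sum_kD_k^2+(a+\lambda D)^2$, and $\sum_kC_k^2+a^2$ transform under the admissible changes, using~\eqref{eq4.6} for $(\lambda,\vec v,R_0,P,T)$ and~\eqref{eq4.19} for the metric coefficients, so that the phrases ``for all coordinate systems'' and ``for some coordinate system'' become genuine invariants. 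This bookkeeping, rather than any single computation, is where the real work lies.
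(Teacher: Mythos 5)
The paper itself does not prove Theorem~\ref{th33}; it is quoted from~\cite{64},~\cite{66}, so there is no in-text argument to compare against. Judged on its own terms, your outline identifies the right invariants and the right overall strategy (compute $\lambda,\vec v,R_0,P,T$ from the explicit metrics, determine the orthogonal part $\mathfrak h$, then decide the type), and several of your intermediate claims are correct and easy to verify: for the metrics of Theorem~\ref{th32} with $\lambda\equiv0$ one gets from~\eqref{eq4.17} that $P(y)=y\wedge C$ with $C=\sum_kC_k(u)e_k$, so $P\ne0$ iff $\sum_kC_k^2\not\equiv0$, and the Lie algebra generated by $\{y\wedge C\}$ is indeed all of $\mathfrak{so}(n)$.

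The genuine gap is in your ``key claim'' that $\mathfrak h=\mathfrak{so}(n)$ forces the homothety and hence type~1. When $\lambda\not\equiv0$ this is immediate from $R(p,q)=-\lambda p\wedge q-p\wedge\vec v$, but when $\lambda\equiv0$ and $C\not\equiv0$ your proposed mechanism --- extracting a nonzero $p\wedge q$ contribution from $\nabla R$ via~\eqref{eq7.4} --- does not by itself settle the question, because a type~3 algebra $\mathfrak g^{3,\mathfrak h,\varphi}$ also contains elements with nonzero $p\wedge q$ component; producing such an element does not distinguish type~1 from type~3. The clean route is the criterion of Section~\ref{ssec4.2}: types~2 and~4 annihilate $p$ and hence force $d(\partial_vH\,du)=0$, whereas here $\partial_vH=H_1=-2C_kx^k$ gives $d(H_1\,du)=-2C_i\,dx^i\wedge du\ne0$; this excludes types~2 and~4, and type~3 is then excluded because $\mathfrak z(\mathfrak{so}(n))=0$ for $n\geqslant3$ --- but for $n=2$ (dimension~4, which the theorem covers) $\mathfrak{so}(2)$ is abelian and type~3 must be ruled out by a separate computation that your sketch does not supply. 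The second gap is one you name yourself: the entire coordinate-invariance analysis in part~1 (that the three bullet conditions are preserved or destroyed under~\eqref{eq4.6} and~\eqref{eq4.19}, and that their failure actually permits reduction to the product metric) is the substance of the statement, and deferring it as ``bookkeeping'' leaves part~1 unproved rather than proved by a different method.
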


Possible holonomy algebras of conformally flat 4-dimensional
Lorentzian manifolds are classified  in \cite{81}, in this paper
it was posed the problem to construct an example of
 conformally flat metric with the holonomy algebra
$\mathfrak{sim}(2)$ (which is denoted in \cite{81} by $R_{14}$).
An attempt to construct such metric was done in \cite{75}. We show
that the metric constructed there is in fact decomposable and its
holonomy algebra is $\mathfrak{so}(1,1)\oplus\mathfrak{so}(2)$.
 Thus in this paper we get conformally flat metrics with the
holonomy algebra $\mathfrak{sim}(n)$ for the first time, and even
more,  we find all such metrics.

The field equations of Nordstr\"om's theory of gravitation, which
was originated before Einstein's theory have the form
$$W=0,\qquad s=0$$
(see~\cite{106},~\cite{119}). All metrics from Theorem \ref{th31}
in dimension 4 and metrics from the second part of Theorem
\ref{th32} in bigger dimensions
 provide examples of solutions of these equations. Thus we have found all solutions to
Nordstr\"om's gravity with holonomy algebras contained in
$\mathfrak{sim}(n)$. Above we have seen that  it is impossible to
obtain the complete solution of the Einstein equation on
Lorentzianы manifolds with such holonomy algebras.

An important fact is that a simply connected conformally flat spin
Lorentzian manifold  admits the space of conformal Killing spinors
of maximal dimension~\cite{12}.

It would be interesting to obtain examples of conformally flat
Lorentzian manifolds satisfying some global geometric properties,
e.g., important are globally hyperbolic Lorentzian manifolds with
special holonomy groups~\cite{17},~\cite{19}.

The projective equivalence of 4-dimensional conformally flat
Lorentzian metrics with special holonomy algebras was studied
recently in~\cite{80}. There are many interesting works about
conformally flat (pseudo-)Riemannian, and in particular Lorentzian
manifolds. Let us mention some of them:~\cite{6},~\cite{84},
\cite{93},~\cite{115}.

\section{2-symmetric Lorentzian manifolds}
\label{sec11}

In this section we discuss the classification of 2-symmetric
 Lorentzian manifolds obtained in~\cite{5}.

Symmetric pseudo-Riemannian manifolds constitute  an important
class of spaces. A direct generalization of these manifolds is
provided by the so-called $k$-symmetric pseudo-Riemannian spaces
$(M,g)$ satisfying the conditions
$$
\nabla^k R=0,\quad \nabla^{k-1} R\ne 0,
$$
where $k\geqslant 1$. In the case of Riemannian manifolds, the
condition $\nabla^k R=0$ implies $\nabla R=\nobreak0$~\cite{117}.
On the other hand, there exist pseudo-Riemannian $k$-symmetric
spaces for $k\geqslant 2$~\cite{28},~\cite{90},~\cite{112}.

Indecomposable  simply connected Lorentzian symmetric spaces are
exhausted by the de~Sitter, the anti-de~Sitter spaces and by the
Cahen-Wallach spaces, which are special pp-waves.
Kaigorodov~\cite{90} considered different generalizations of
Lorentzian symmetric spaces.

The paper by Senovilla~\cite{112} starts systematic investigation
of 2-symmetric Lorentzian spaces. In this paper it is proven that
any 2-symmetric Lorentzian space admits a parallel isotropic
vector field. In the paper~\cite{28} a classification of
four-dimensional 2-symmetric Lorentzian spaces is obtained, for
that the Petrov classification of the Weyl tensors~\cite{108} was
used.

In~\cite{5} we generalize the result~\cite{28} to the case of
arbitrary dimension.

\begin{theorem}
\label{th34} Let $(M,g)$ be a locally indecomposable Lorentzian
manifold of dimension $n+2$. Then $(M,g)$ is 2-symmetric if and
only if locally there exist coordinates $v,x^1,\dots,x^n,u$ such
that
$$
g=2\,dv\,du+\sum_{i=1}^n(dx^i)^2+(H_{ij}u+F_{ij})x^ix^j\,(du)^2,
$$
where $H_{ij}$ is a nonzero diagonal real matrix with the diagonal
elements $\lambda_1\leqslant\cdots\leqslant\lambda_n$, а $F_{ij}$
is a symmetric real matrix.
\end{theorem}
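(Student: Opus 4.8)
The plan is to start from Senovilla's theorem~\cite{112}, quoted above: every $2$-symmetric Lorentzian manifold carries a parallel isotropic vector field $p$. Hence $(M,g)$ is a Walker manifold and, by local indecomposability, its holonomy algebra $\mathfrak g\subset\mathfrak{sim}(n)$ is weakly irreducible; since $p$ is parallel, $\mathfrak g$ annihilates $p$ and is therefore of type $2$ or $4$. I would fix Walker coordinates $v,x^1,\dots,x^n,u$ with $p=\partial_v$ and $\partial_vH=0$, and use the explicit description of the curvature from Theorem~\ref{th13} (with $\lambda=0$, $\vec v=0$): $R$ is determined by the screen curvature $R_0\in\mathscr R(\mathfrak h)$, the weak curvature tensor $P\in\mathscr P(\mathfrak h)$ and the symmetric endomorphism $T$, where $\mathfrak h=\operatorname{pr}_{\mathfrak{so}(n)}\mathfrak g$ is the orthogonal part. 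The goal is then to show that $\mathfrak h=\{0\}$, i.e.\ that $(M,g)$ is a pp-wave, and afterwards to pin down the profile $H$.

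Two consequences of $\nabla^2R=0$ drive the argument. First, antisymmetrizing in the two differentiation slots turns $\nabla^2R=0$ into the purely algebraic semisymmetry relation $R(X,Y)\cdot R=0$, where the curvature acts as a derivation. Second, projecting $\nabla^2R=0$ onto the screen bundle $\mathscr E=\ell^\bot/\ell$ (whose induced connection has holonomy $\mathfrak h$) yields $(\nabla^{\mathscr E})^2R_0=0$; restricted to a slice $u=\mathrm{const}$ this is exactly $(\nabla^{h(u)})^2R^{h(u)}=0$ for the $u$-family of transverse Riemannian metrics, so by the Riemannian rigidity cited as~\cite{117} each $h(u)$ is locally symmetric, $\nabla^{h(u)}R^{h(u)}=0$.

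The crux, and the step I expect to be hardest, is to upgrade ``transverse locally symmetric'' to ``transverse flat'', equivalently $R_0=0$ and $P=0$. Here I would feed $\nabla^{\mathscr E}R_0=0$ and the weak-Berger structure of $\mathfrak h$ (the orthogonal decomposition of Theorem~\ref{th14} and the identification of $\mathfrak h$ with a Riemannian holonomy algebra, Theorems~\ref{th15}--\ref{th16}) into the mixed transverse--null components of the relations $R\cdot R=0$ and $\nabla^2R=0$. The point to establish is that a nonzero parallel screen curvature $R_0$ forces the wave data $(P,T)$ to be compatible in a way that either violates $R\cdot R=0$ or produces a nondegenerate parallel factor, contradicting weak irreducibility; hence $R_0=0$, and then the analysis of $\mathscr P(\mathfrak h)$ forces $P=0$, so $\mathfrak h=\{0\}$ and $\mathfrak g\subset\mathbb R^n$. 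Thus $(M,g)$ is a pp-wave, $g=2\,dv\,du+\sum_i(dx^i)^2+H\,(du)^2$, and this reduction is the principal obstacle.

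It then remains to determine $H$, which is a direct computation. For a pp-wave the only curvature component is $K_{ij}:=R_{uiuj}=-\frac12\partial_i\partial_jH$, and one finds $\nabla_kR_{uiuj}=\partial_kK_{ij}$, $\nabla_uR_{uiuj}=\dot K_{ij}$. Imposing $\nabla^2R=0$ gives $\partial_k\partial_lK_{ij}=0$, $\partial_k\dot K_{ij}=0$ and $\ddot K_{ij}+\frac12\partial_mH\,\partial_mK_{ij}=0$. The first relation makes $K$ affine in $x$; substituting into the third and using that $\partial_i\partial_j\partial_mH$ is then a constant totally symmetric tensor forces it to vanish, so the cubic part of $H$ is zero and $K=K(u)$ is independent of $x$. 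The remaining equations reduce to $\ddot K=0$, i.e.\ $K$ is affine in $u$. Absorbing the terms of $H$ of degree $\le1$ in $x$ (which do not affect the curvature) by an affine change of coordinates, I obtain $H=(H_{ij}u+F_{ij})x^ix^j$; diagonalizing the constant symmetric matrix $H_{ij}$ by an orthogonal rotation of the $x^i$ (which preserves this form and the symmetry of $F_{ij}$) and ordering its eigenvalues $\lambda_1\le\dots\le\lambda_n$ gives the stated normal form, with $\nabla R\ne0$ equivalent to $H_{ij}\ne0$. The converse is immediate: for such $H$ the tensor $K$ is $x$-independent and affine in $u$, whence $\nabla_kR=0$, $\nabla_uR\leftrightarrow H_{ij}$, and $\nabla^2R=0$ while $\nabla R\ne0$.
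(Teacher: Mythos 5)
Your final pp-wave computation (determining $H$ once $\mathfrak{g}=\mathbb{R}^n$ is known) is correct and matches what the paper does at that stage. But the heart of the theorem is the reduction $\mathfrak{h}=\operatorname{pr}_{\mathfrak{so}(n)}\mathfrak{g}=\{0\}$, and there you have only named a target, not proved it: the sentences ``the point to establish is that a nonzero parallel screen curvature $R_0$ forces \dots'' and ``then the analysis of $\mathscr{P}(\mathfrak{h})$ forces $P=0$'' are assertions, not arguments. The second assertion is particularly dangerous as stated: $R_0=0$ does \emph{not} by itself constrain $P$ or $\mathfrak{h}$ --- the metrics of Section~\ref{sec7} have flat transverse metric $h=\sum_i(dx^i)^2$ (so $R_0=0$ identically) and yet realize every $\mathfrak{g}^{2,\mathfrak{h}}$, with $\mathfrak{h}$ generated entirely by the values of $P$. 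So whatever kills $P$ must come from $\nabla^2R=0$ again, and you give no mechanism. Moreover, the algebraic relation $R\cdot R=0$ you extract by antisymmetrization is only semisymmetry, which is vastly weaker than $2$-symmetry (there are many semisymmetric Walker metrics with $\mathfrak{h}\ne 0$), so it is not clear that your two ingredients --- semisymmetry plus transverse local symmetry of the $h(u)$ --- suffice even in principle.

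For comparison, the paper closes this gap by a representation-theoretic computation: since $\nabla R$ is a parallel non-vanishing tensor field, its value at a point is a non-zero $\mathfrak{g}$-invariant element of $\mathscr{R}^\nabla(\mathfrak{g})$, so one computes the spaces $\mathscr{R}^\nabla(\mathfrak{g})_\mathfrak{g}$ for each type of weakly irreducible $\mathfrak{g}\subset\mathfrak{sim}(n)$ with irreducible orthogonal part. For types~1 and~3 this space vanishes (contradiction); for $\mathfrak{g}=\mathfrak{h}\ltimes\mathbb{R}^n$ with $\mathfrak{h}$ irreducible it is one-dimensional, forcing $\nabla R=f\,du\otimes h^{ij}(p\wedge\partial_i)\otimes(p\wedge\partial_j)$, and then the observation that $R-uf\,h^{ij}(p\wedge\partial_i)\otimes(p\wedge\partial_j)$ is parallel (hence a $\mathfrak{g}$-invariant algebraic curvature tensor, necessarily of pp-wave form) yields the contradiction with $\mathfrak{h}\ne 0$. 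Unless you can supply an argument of comparable force for your ``crux'' step, the proposal does not prove the theorem.
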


From  the Wu Theorem  it follows that any 2-symmetric Lorentzian
manifold  is locally a product of an indecomposable  2-symmetric
Lorentzian manifold and of  a locally symmetric Riemannian
manifold. In~\cite{29} it is shown that a simply connected
geodesically complete  2-symmetric Lorentzian manifold is the
product of~$\mathbb{R}^{n+2}$ with the metric from
Theorem~\ref{th34} and of (possibly trivial) Riemannian symmetric
space.

The proof of Theorem~\ref{th34} given in~\cite{5}, demonstrates
the methods of the theory of the holonomy groups in the best way.
Let $\mathfrak{g}\subset\mathfrak{so}(1,n+1)$ be the holonomy
algebra of the manifold $(M,g)$. Consider the
space~$\mathscr{R}^\nabla(\mathfrak{g})$ of covariant derivatives
of the algebraic curvature tensors of type~$\mathfrak{g}$,
consisting of the linear maps from~$\mathbb{R}^{1,n+1}$
to~$\mathscr{R}(\mathfrak{g})$ that satisfy the second Bianchi
identity. Let
$\mathscr{R}^\nabla(\mathfrak{g})_\mathfrak{g}\subset
\mathscr{R}^\nabla(\mathfrak{g})$ be the subspace annihilated by
the algebra~$\mathfrak{g}$.

The tensor~$\nabla R$ is  parallel and non-zero, hence its value
at each point of the  manifold belongs to the  space
$\mathscr{R}^\nabla(\mathfrak{g})_\mathfrak{g}$. The space
$\mathscr{R}^\nabla(\mathfrak{so}(1,n+1))_\mathfrak{g}$ is
trivial~\cite{116}, therefore
$\mathfrak{g}\subset\mathfrak{sim}(n)$.

The corner stone of the proof is the equality
$\mathfrak{g}=\mathbb{R}^n\subset\mathfrak{sim}(n)$, i.e.,
$\mathfrak{g}$ is the algebra of type~ 2 with trivial orthogonal
part~$\mathfrak{h}$. Such manifold is a pp-wave (see
Section~\ref{ssec4.2}), i.e., locally it holds
$$
g=2\,dv\,du+\sum_{i=1}^n(dx^i)^2+H\,(du)^2,\qquad
\partial_v H=0,
$$
and the equation $\nabla^2 R=0$ can be easily solved.

Suppose that the orthogonal part
$\mathfrak{h}\subset\mathfrak{so}(n)$ of the holonomy
algebra~$\mathfrak{g}$  is non-trivial. The subalgebra
$\mathfrak{h}\subset\mathfrak{so}(n)$ can be decomposed into
irreducible parts, as in Section~\ref{ssec4.1}. Using the
coordinates~\eqref{eq4.12} allows to assume that the  subalgebra
$\mathfrak{h}\subset\mathfrak{so}(n)$ is irreducible.
If~$\mathfrak{g}$ is of type~1 or~3, then simple algebraic
computations show that
$\mathscr{R}^\nabla(\mathfrak{g})_\mathfrak{g}=0$.

We are left with the case
$\mathfrak{g}=\mathfrak{h}\ltimes\mathbb{R}^n$, where the
subalgebra $\mathfrak{h}\subset\mathfrak{so}(n)$ is irreducible.
In this case the space
$\mathscr{R}^\nabla(\mathfrak{g})_\mathfrak{g}$ is
one-dimensional, which allows us to find the explicit form of the
tensor~$\nabla R$, namely, if the metric~$g$ has the
from~\eqref{eq4.11}, then
$$
\nabla R=f\,du\otimes h^{ij} (p\wedge \partial_i)\otimes (p\wedge
\partial_j)
$$
for some function~$f$.

Next,  using the last equality it was proved that  $\nabla W=0$,
i.e., the Weyl conformal tensor $W$ is parallel. The results of
the paper~\cite{49}  show that either $\nabla R=0$, or $W=0$, or
the manifold under the consideration is a pp-wave. The first
condition contradicts the assumption $\nabla R\ne 0$, the last
condition contradicts the assumption  $\mathfrak{h}\ne 0$. From
the results of Section~\ref{sec10} it follows that the condition
 $W=0$ implies the equality $\mathfrak{h}=0$,
i.e., we again get a contradiction.

It turns out that the last step of the proof from~\cite{5} can be
appreciably simplified and it is not necessary to consider the condition $\nabla
W=0$. Indeed, let us turn back to the equality for~$\nabla R$. It
is easy to check that $\nabla du=0$. Therefore the equality
 $\nabla^2R=0$ implies $\nabla(f h^{ij} (p\wedge
\partial_i)\otimes (p\wedge
\partial_j))=0$. Consequently,
$$
\nabla (R-uf h^{ij} (p\wedge \partial_i)\otimes (p\wedge
\partial_j))=0.
$$
The value of the tensor field $R-uf h^{ij}(p\wedge
\partial_i)\otimes (p\wedge \partial_j)$ at each point of the manifold
belongs to the space~$\mathscr{R}(\mathfrak{g})$ and it is
annihilated by the holonomy algebra~$\mathfrak{g}$. This
immediately implies that
$$
R-uf h^{ij}(p\wedge\partial_i)\otimes (p\wedge \partial_j)= f_0
h^{ij}(p\wedge \partial_i)\otimes(p\wedge \partial_j)
$$
for some function~$f_0$, i.e., $R$ is the curvature tensor of a
pp-wave, which contradicts the condition $\mathfrak{h}\ne 0$.
Thus, $\mathfrak{h}=0$, and
$\mathfrak{g}=\mathbb{R}^n\subset\mathfrak{sim}(n)$.

Theorem~\ref{th34} was reproved in~\cite{29} by another method.

\end{fulltext}


\end{document}